\numberwithin{equation}{section}
\newcommand{\st}{\h:\h}
\newcommand{\h}{\hspace{1mm}}
\newcommand{\sprod}[2]{\langle #1, #2 \rangle}
\theoremstyle{plain}
\newtheorem{lemma}{Lemma}[section]
\newtheorem{corollary}[lemma]{Corollary}
\newtheorem{proposition}[lemma]{Proposition}
\newtheorem{remark}[lemma]{Remark}
\newtheorem{definition}[lemma]{Definition}
\newtheorem{theorem}[lemma]{Theorem}
\newtheorem{conjecture}[lemma]{Conjecture}
\newtheoremstyle{named}{}{}{\itshape}{}{\bfseries}{.}{.5em}{\thmname{#1}\thmnumber{ #2}. (\thmnote{#3})}
\theoremstyle{named}
\newtheorem{namedTheorem}{Theorem}
\newtheorem*{namedTheorem*}{Theorem}
\newtheorem{namedCorollary}{Corollary}
\def\be{\begin{eqnarray}}
\def\ee{\end{eqnarray}}
\def\beal{\begin{aligned}}
\def\enal{\end{aligned}}
\newcommand{\norm}[1]{\left\lVert#1\right\rVert}
\newcommand{\norms}[1]{\lVert#1\rVert}
\newcommand{\vabs}[1]{\left| #1 \right|}
\newcommand{\vabss}[1]{| #1 |}
\newcommand{\paren}[1]{\left(#1\right)}
\newcommand{\claus}[1]{\left\{#1\right\}}
\newcommand{\boxClaus}[1]{\left[#1\right]}
\newcommand{\ol}[1]{\overline{#1}}
\newcommand{\conj}[1]{\overline{#1}}
\newcommand{\tl}{\tilde}
\newcommand{\wt}{\widetilde}
\newcommand{\wh}{\widehat}
\renewcommand{\Re}{\mathrm{Re\, }}
\renewcommand{\Im}{\mathrm{Im\,}}
\renewcommand{\arg}{\mathrm{arg\,}}
\newcommand{\Diff}{\mathrm{Diff}}
\newcommand{\reals}{\mathbb{R}}
\newcommand{\naturals}{\mathbb{N}}
\newcommand{\complexs}{\mathbb{C}}
\newcommand{\torus}{\mathbb{T}}
\newcommand{\integers}{\mathbb{Z}}
\newcommand{\al}{\alpha}
\newcommand{\de}{\delta}
\newcommand{\D}{\Delta}
\newcommand{\la}{\lambda}
\newcommand{\La}{\Lambda}
\newcommand{\g}{\gamma}
\newcommand{\G}{\Gamma}
\newcommand{\s}{\sigma}
\newcommand{\Si}{\Sigma}
\newcommand{\tht}{\theta}
\newcommand{\phiA}{\varphi}
\newcommand{\zetaB}{\tilde{\zeta}}
\newcommand{\Dt}[2]{\dfrac{d #1}{d {#2}}}
\newcommand{\AAA}{\mathcal{A}}
\newcommand{\BB}{\mathcal{B}}
\newcommand{\CC}{\mathcal{C}}
\newcommand{\DD}{\mathcal{D}}
\newcommand{\FF}{\mathcal{F}}
\newcommand{\GG}{\mathcal{G}}
\newcommand{\HH}{\mathcal{H}}
\newcommand{\II}{\mathcal{I}}
\newcommand{\LL}{\mathcal{L}}
\newcommand{\OO}{\mathcal{O}}
\newcommand{\PP}{\mathcal{P}}
\newcommand{\RRR}{\mathcal{R}}
\newcommand{\SSS}{\mathcal{S}}
\newcommand{\UU}{\mathcal{U}}
\newcommand{\VV}{\mathcal{V}}
\newcommand{\WW}{\mathcal{W}}
\newcommand{\XX}{\mathcal{X}}
\newcommand{\Lip}{\mathrm{Lip}}
\definecolor{myGreen}{RGB}{0, 200, 0}
\definecolor{myOrange}{RGB}{255, 100, 0}
\definecolor{myYellow}{RGB}{255, 200, 0}
\definecolor{myBlue}{RGB}{0, 200, 255}
\definecolor{myPurple}{RGB}{200, 0, 200}
\newcommand{\Id}{\mathrm{Id}}
\newcommand{\unstable}{{\mathrm{u}}}
\newcommand{\stable}{{\mathrm{s}}}
\newcommand{\CInn}{\Theta}
\newcommand{\UReals}{U_{\reals}}
\newcommand{\UComplexs}{U_{\complexs}}
\newcommand{\HInicial}{h}
\newcommand{\Poi}{\mathrm{Poi}}
\newcommand{\scaB}{\mathrm{sca}}
\newcommand{\equi}{\mathrm{eq}}
\newcommand{\osc}{\mathrm{osc}}
\newcommand{\Lyap}{\mathrm{Lya}}
\newcommand{\red}{\mathrm{red}}
\newcommand{\local}{\mathrm{loc}}
\newcommand{\pend}{\mathrm{p}}
\newcommand{\lap}{\la_{\mathrm{p}}}
\newcommand{\Lap}{\La_{\mathrm{p}}}
\newcommand{\ladp}{\dot{\la}_{\mathrm{p}}}
\newcommand{\Ladp}{\dot{\La}_{\mathrm{p}}}
\newcommand{\separatrix}{\sigma_{\mathrm{p}}}
\newcommand{\PiExtA}{\Pi_{A,\betaBow}}
\newcommand{\Ltres}{\mathfrak{L}}
\newcommand{\LtresLa}{\mathfrak{L}_{\Lambda}}
\newcommand{\Ltresx}{\mathfrak{L}_x}
\newcommand{\Ltresy}{\mathfrak{L}_y}
\newcommand{\Law}{J}
\newcommand{\Iw}{I}
\newcommand{\Iwh}{\widehat{I}_{\rho,\de}}
\newcommand{\po}{\mathfrak{P}}
\newcommand{\lapo}{\lambda_{\mathfrak{P}} }
\newcommand{\Lapo}{\Lambda_{\mathfrak{P}} }
\newcommand{\xpo}{x_{\mathfrak{P}} }
\newcommand{\ypo}{y_{\mathfrak{P}} }
\newcommand{\omegar}{\omega_{\rho,\de}}
\newcommand{\torusC}{\torus_{d}}
\newcommand{\banda}{d}
\newcommand{\ZcalR}{\mathcal{Z}}
\newcommand{\normZR}[1]{\lVert #1 \rVert}
\newcommand{\normZRprod}[1]{\lVert #1 \rVert^{\times}}
\newcommand{\zuU}{z^{\mathrm{u}}}
\newcommand{\zsU}{z^{\mathrm{s}}}
\newcommand{\zdU}{z^{\diamond}}
\newcommand{\zuUper}{z_1^{\mathrm{u}}}
\newcommand{\zsUper}{z_1^{\mathrm{s}}}
\newcommand{\zdUper}{z_1^{\diamond}}
\newcommand{\DmigU}{D^{\mathrm{u}}}
\newcommand{\DmigS}{D^{\mathrm{s}}}
\newcommand{\DmigD}{D^{\mathrm{\diamond}}}
\newcommand{\zuD}{Z^{\mathrm{u}}}
\newcommand{\zsD}{Z^{\mathrm{s}}}
\newcommand{\zdD}{Z^{\diamond}}
\newcommand{\zuDper}{Z_1^{\mathrm{u}}}
\newcommand{\zsDper}{Z_1^{\mathrm{s}}}
\newcommand{\zdDper}{Z_1^{\diamond}}
\newcommand{\Ycal}[1]{\mathcal{Y}_{#1}}
\newcommand{\normX}[2]{\lVert#1\rVert_{#2}}
\newcommand{\normY}[2]{\lVert#1\rVert_{#2}}
\newcommand{\normYprod}[2]{\lVert#1\rVert_{#2}^{\times}}
\newcommand{\tauU}{\tau^{\mathrm{u}}}
\newcommand{\tauS}{\tau^{\mathrm{s}}}
\newcommand{\qG}{v}
\newcommand{\pG}{w}
\newcommand{\qU}{v_1}
\newcommand{\wqU}{\widehat{v}_1}
\newcommand{\qUu}{v_1^{\mathrm{u}}}
\newcommand{\qUs}{v_1^{\mathrm{s}}}
\newcommand{\wqUu}{\widehat{v}_1^{\mathrm{u}}}
\newcommand{\wqUs}{\widehat{v}_1^{\mathrm{s}}}
\newcommand{\pU}{w_1}
\newcommand{\wpU}{\widehat{w}_1}
\newcommand{\pUu}{w_1^{\mathrm{u}}}
\newcommand{\pUs}{w_1^{\mathrm{s}}}
\newcommand{\wpUu}{\widehat{w}_1^{\mathrm{u}}}
\newcommand{\wpUs}{\widehat{w}_1^{\mathrm{s}}}
\newcommand{\qD}{v_2}
\newcommand{\wqD}{\widehat{v}_2}
\newcommand{\qDu}{v_2^{\mathrm{u}}}
\newcommand{\qDs}{v_2^{\mathrm{s}}}
\newcommand{\wqDu}{\widehat{v}_2^{\mathrm{u}}}
\newcommand{\wqDs}{\widehat{v}_2^{\mathrm{s}}}
\newcommand{\pD}{w_2}
\newcommand{\wpD}{\widehat{w}_2}
\newcommand{\pDu}{w_2^{\mathrm{u}}}
\newcommand{\pDs}{w_2^{\mathrm{s}}}
\newcommand{\wpDu}{\widehat{w}_2^{\mathrm{u}}}
\newcommand{\wpDs}{\widehat{w}_2^{\mathrm{s}}}
\newcommand{\betaBow}{\beta}
\newcommand{\rhoPeriodicOrbit}{\rho_0}
\newcommand{\cttTheoScalingDomainA}{c_0}
\newcommand{\cttTheoScalingDomainB}{c_1}
\newcommand{\cttTheoScaling}{b_0}
\newcommand{\cttTheoLtres}{b_1}
\newcommand{\cttPeriodicOrbit}{b_2}
\newcommand{\cttExistenciaD}{b_3}
\newcommand{\cttExistenciaU}{b_3} 
\newcommand{\cttExistenciaUD}{b_4}
\newcommand{\cttExistenciaDtech}{b_3} 
\newcommand{\cttPeriodicOrbitTech}{b_6}
\newcommand{\rhoNormalForm}{\varrho_0}
\newcommand{\rhoGlobal}{\widehat{\varrho}_0}
\newcommand{\rhoLocal}{\varrho_0}
\title{Coorbital homoclinic and chaotic dynamics in the
Restricted 3-Body Problem} 
\author[1,4]{Inmaculada Baldom\'a}
\author[2]{Mar Giralt\thanks{Corresponding author.\\
		\emph{E-mail adresses:} \href{mailto:immaculada.baldoma@upc.edu}{immaculada.baldoma@upc.edu} (I. Baldom\'a), 
		\href{mailto:mar.giralt@obspm.fr}{mar.giralt@obspm.fr} (M. Giralt),
		\href{mailto:guardia@ub.edu}{guardia@ub.edu} (M. Guardia).}}
\author[3,4]{Marcel Guardia}
\affil[1]{Departament de Matem\`atiques \& IMTECH, Universitat Polit\`ecnica de Catalunya, Diagonal 647, 08028 Barcelona, Spain}
\affil[2]{IMCCE, CNRS, Observatoire de Paris, Universit\'e PSL, Sorbonne Universit\'e, 77 Avenue Denfert-Rochereau, 75014 Paris, France}
\affil[3]{Departament de Matem\`atiques i Inform\`atica, Universitat de Barcelona, Gran Via, 585, 08007 Barcelona, Spain}
\affil[4]{Centre de Recerca Matem\`atica, Campus de Bellaterra, Edifici C, 08193 Barcelona, Spain}
\date{\today}
\begin{document}

\maketitle 

\begin{abstract}
%
The description of unstable motions in the  Restricted Planar Circular $3$-Body Problem, modeling the dynamics of a Sun-Planet-Asteriod system, is one of the fundamental problems in Celestial Mechanics.
%
%
The goal of this paper is to analyze  homoclinic and instability phenomena at coorbital motions,  that is when   the  negligible mass Asteroid is at $1:1$ mean motion resonance with the Planet (i.e. nearly equal periods)  and performs close to circular motions. Several bodies in our Solar system belong to such regimes.
%
%
%
%

In this paper, we obtain the following results.
First, we prove that, for a  sequence of ratios between the masses of the Planet and the Sun going to 0, there  exist a $2$-round homoclinic orbit to the Lagrange point $L_3$, i.e. homoclinic orbits that approach the critical point twice.
%
%
%
%
Second, we construct chaotic motions (hyperbolic sets with symbolic dynamics) as a consequence of the existence  of transverse homoclinic orbits to Lyapunov periodic orbits associated to $L_3$.
%
Finally, we prove that the RPC3BP possesses Newhouse domains by proving that  the energy level  unfolds generically a quadratic homoclinic tangency to a periodic orbit.
%
%
\end{abstract}

\newpage
\tableofcontents


\newpage

\section{Introduction} 
\label{section:introduction}

One of the oldest questions in dynamical systems is to assert whether the Solar System is stable. More precisely, consider the $N$ body problem, that is the motion of $N$ punctual masses under Newtonian gravitational force in the planetary regime (one massive body, the Sun, and $N-1$ light bodies, the planets). For this model, one would like  to determine whether the orbits of the planets stay close to ellipses over long time scales or undergo strong deviations due to the mutual gravitational interaction between planets.

The Arnold-Herman-F\'ejoz  Theorem ensures that there is  a positive measure set of stable motions lying on   quasiperiodic invariant tori, see~\cite{Arnold63,Rob95,Fejoz04,ChPi11}. 
However, the ``gaps'' left by the invariant tori  in the phase space leave room for instability.
M. Herman, in his ICM lecture~\cite{Her98}, referred to this problem as ``the oldest problem in dynamical systems'' and, related to it, he posed the following conjecture. Consider the $N$-body problem in space, with $N\geq 3$ and assume that the center of mass is fixed at the origin and that, on the energy surface of level $e$, the flow is  $\mathcal{C}^\infty$-reparametrized  such that 
the collisions now occur only in infinite time.

\begin{conjecture}
  \label{conj:global}
  Is for every $e$ the non-wandering set of the Hamiltonian flow of $H_e$ on $H_e^{-1} (0)$ nowhere dense in $H_e^{-1} (0)$?
\end{conjecture}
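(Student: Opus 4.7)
Herman's conjecture is a famously deep open problem which, to my knowledge, is out of reach in its full generality; the paper cites it as motivation rather than as a theorem it claims to prove. What one can realistically plan is a \emph{strategy} for attacking it, built on the philosophy the rest of this paper implements for the restricted $3$-body problem. The natural decomposition is to split the regularized energy surface $H_e^{-1}(0)$ into a ``stable part'', where the Arnold–Herman–Féjoz theorem gives a positive-measure union of KAM tori \cite{Arnold63,Fejoz04,ChPi11}, and a complementary ``unstable part'' where instability phenomena of Arnold-diffusion or homoclinic type are expected. Since each KAM torus has codimension at least one in $H_e^{-1}(0)$, the stable part is itself nowhere dense, so the whole content of the conjecture concerns the non-wandering set on the unstable complement.

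\textbf{Main steps.} First, one would identify the singular layers of the integrable secular/averaged system and show that near each of them the dynamics is conjugate (after reduction) to a near-integrable Hamiltonian with normally hyperbolic invariant cylinders. Second, along each such cylinder one would construct transverse homoclinic intersections, apply a $\lambda$-lemma, and produce a locally maximal hyperbolic set whose projection to the energy surface has empty interior (hyperbolic sets with symbolic dynamics carry topological entropy but are Cantor-like in the transverse direction). Third, one would have to patch these local hyperbolic pieces into a global description of $\Omega$, checking that no open region of $H_e^{-1}(0)$ is entirely occupied by recurrent orbits. The role of the present paper in this picture is to provide, in the simplest non-trivial setting (one planet, restricted, planar, $1\!:\!1$ resonance at $L_3$), concrete tools, namely homoclinic orbits, symbolic dynamics near Lyapunov periodic orbits, and a quadratic tangency unfolding, that one would eventually need to replicate in wider regimes of the $N$-body problem.

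\textbf{Main obstacle.} The substantive difficulty is precisely the third theorem announced in the abstract: the RPC3BP possesses Newhouse domains. In a Newhouse domain, generic Hamiltonian systems have a residual subset of parameters with infinitely many elliptic periodic orbits accumulating on a wild hyperbolic set; those elliptic islands produce \emph{open} sets of recurrent orbits in every neighbourhood of the tangency, which is exactly the kind of behaviour that could, a priori, spoil the nowhere-denseness of $\Omega$. Reconciling the generic persistence of Newhouse phenomena with Herman's conjecture would require showing either (i) that the union of elliptic islands inside a Newhouse domain is itself nowhere dense on the energy surface, or (ii) that the parameters of the physical $N$-body problem avoid the bad residual set, neither of which is currently understood. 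Beyond this, extending even the restricted-problem techniques of this paper (exponentially small splitting of separatrices, Melnikov-type computations near $L_3$, normal forms in the coorbital regime) to the full spatial $N$-body setting would be a major programme in itself, because the hyperbolicity exploited here is exponentially weak in the mass ratio and degenerates exactly where the KAM picture is cleanest.
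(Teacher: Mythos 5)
You are right: Conjecture~\ref{conj:global} is Herman's open conjecture, quoted here purely as motivation for studying unstable motions, and the paper offers no proof of it --- the text explicitly describes it as ``nowadays wide open.'' There is consequently no argument in the paper against which to compare your sketch; your assessment of the statement's status is accurate. Your closing observation about Newhouse domains is apt: the paper itself proves their existence in the RPC3BP, and the resulting accumulation of elliptic islands near wild hyperbolic sets is precisely the kind of phenomenon whose compatibility with the nowhere-denseness asked for in Herman's conjecture is delicate and not currently understood.
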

%
	
%
Note that this conjecture is not restricted to the planetary regime but is formulated for any value of the masses of the bodies. This conjecture is nowadays wide open. Even results proving unstable motions in Celestial Mechanics models are rather scarce. Most of these results deal with nearly integrable settings, either the planetary regime or the hierarchical regime (when bodies are increasingly separated) and are tipically of two different types: chaotic motions (i.e. existence of Smale horsehoes) or Arnold diffusion (see Section \ref{subsection:introStateArt} for references).

One of the main sources of instabilities are resonances where, typically, hyperbolic invariant objects with invariant manifolds appear. These invariant manifolds structure the global dynamics and act as ``highways'' for the unstable motions. Among these resonances, mean motion resonances play a fundamental role in the global dynamics of the Solar System (see, for instance,  \cite{morbidelli2002, FGKR16}). They appear when two (or more) bodies have rationally dependent periods. 

The aim of this article is to study  instability  phenomena   and  how (some) invariant manifolds structure the global dynamics at the $1:1$ mean motion resonance nearly circular orbits. Such region of the phase space is usually called  \emph{coorbital motions}, since two of the bodies, at short time scales, perform approximately the same circular orbit. Many bodies in our Solar System (satellites, asteroids) belong to this region. We focus on the simplest model where such dynamics arise, that is  the  Restricted Planar Circular $3$-Body Problem (RPC$3$BP).
%

The Restricted Circular $3$-Body Problem 
models the motion of a body of negligible mass under the gravitational influence of two massive bodies, called the primaries, which perform a circular motion.
If one also assumes that the massless body moves on the same plane as the primaries one has the  RPC$3$BP.

Let us name the two primaries $S$ (star) and $P$ (planet) and
normalize their masses so that $m_S=1-\mu$ and $m_P=\mu$, with $\mu \in \left( 0, \frac{1}{2} \right]$. 
Choosing a suitable rotating coordinate system, the positions of the primaries can be fixed at $q_S=(\mu,0)$ and  $q_P=(\mu-1, 0)$ and then, the position and momenta of the third body, $(q,p) \in \reals^2 \times \reals^2$, are governed by the Hamiltonian system associated to the two degrees of freedom Hamiltonian
\begin{equation}\label{def:hamiltonianInitialNotSplit} 
	h(q,p;\mu)=h_0(q,p) + \mu h_1(q;\mu)
\end{equation}
where
\begin{equation}\label{def:hamiltonianInitialSplit} 
	\begin{split}
		\HInicial_0(q,p) &= \frac{||p||^2}{2} 
		- q^T \left( \begin{matrix} 0 & 1 \\ -1 & 0 \end{matrix} \right) p 
		-\frac1{||q||},
		\\
		\mu \HInicial_1(q;\mu) &=
		\frac1{||q||} 
		-\frac{(1-\mu)}{||q-(\mu,0)||} 
		- \frac{\mu}{||q-(\mu-1,0)||}.
	\end{split}
\end{equation}
%
This Hamiltonian is autonomous and the conservation of $h$ corresponds to the preservation of the classical Jacobi constant.

We  analyze this model for $\mu>0$ small enough at coorbital motions. That is, when the orbit of the third body is close to the orbit of the Planet. It is a well known fact that \eqref{def:hamiltonianInitialNotSplit}  has five critical points, usually called Lagrange points, which, for $\mu>0$ small enough, lie at the coorbital motions region, (see Figure~\ref{fig:L3perturbed}).
%
%
The three collinear Lagrange points, $L_1$, $L_2$ and $L_3$, are of center-saddle type whereas, for small $\mu$, the triangular ones, $L_4$ and $L_5$, are of center-center type  (see, for instance, \cite{Szebehely}).

\begin{figure}
\centering
\begin{overpic}[scale=0.4]{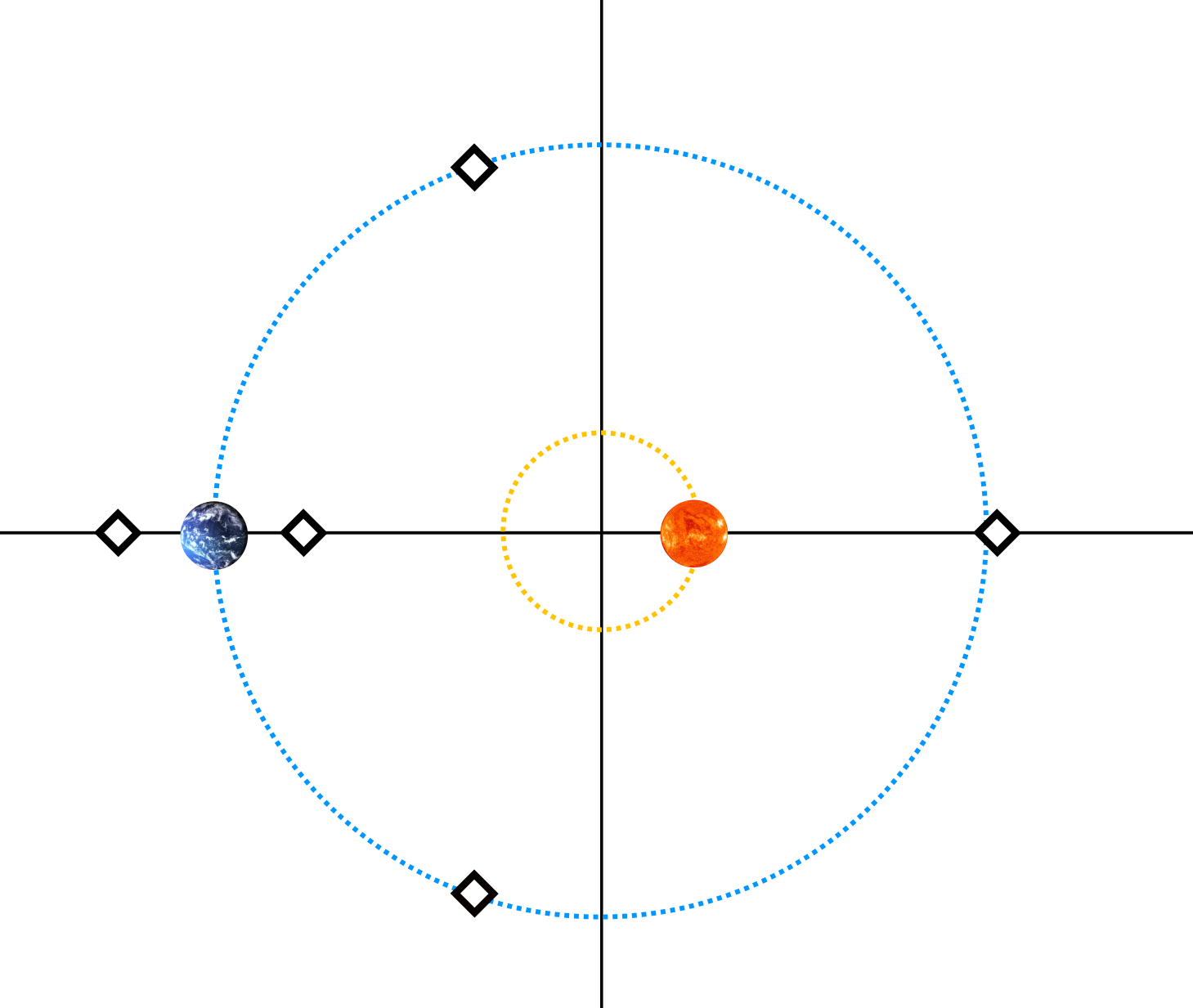}
	\put(60,33){{\color{orange} $S$ }}
	\put(11,31){{\color{blue} $P$ }}
	\put(23,45){{\color{black} $L_1$ }}
	\put(7,45){{\color{black} $L_2$ }}
	\put(87,42){{\color{black} $L_3$ }}
	\put(37,75){{\color{black} $L_5$ }}	
	\put(37,2){{\color{black} $L_4$ }}
\end{overpic}
\caption{Projection onto the $q$-plane of the Lagrange equilibrium points for the RPC$3$BP on rotating coordinates.}
	\label{fig:L3perturbed}
\end{figure}
%
%
There is numerical evidence that the invariant manifolds of  $L_3$ play a fundamental role in structuring the global dynamics at coorbital motions. 
Indeed, its  center-stable and center-unstable invariant manifolds act as boundaries of \emph{effective stability} of the stability domains around $L_4$ and $L_5$ 
(see \cite{GJMS01v4, SSST13}). 
The invariant manifolds of $L_3$ are also relevant in creating transfer orbits from the small primary to $L_3$ in the RPC$3$BP (see \cite{HTL07, TFRPGM10})  or between primaries in the Bicircular 4-Body Problem (see \cite{JorNic20, JorNic21}).

Over the past years, one of the main focus of the study of the  dynamics ``close''  to  $L_3$ and its invariant manifolds has been  the so called ``horseshoe-shaped orbits'', first analyzed in~\cite{Brown1911},
which are quasi-periodic orbits that encompass the critical points $L_4$, $L_3$ and $L_5$.
The interest on these types of orbits arises when modeling the motion of co-orbital satellites, the most famous being Saturn's satellites Janus and Epimetheus and near Earth asteroids.
Recently, in~\cite{NPR20}, the authors have proved the existence of $2$-dimensional elliptic invariant tori on which the trajectories mimic the motions followed by Janus and Epimetheus
(see also  \cite{CorsHall03, BFPC13, CPY19}, and \cite{DerMur81a, DerMur81b, LlibreOlle01, BarrabesMikkola05, BarrabesOlle2006} for numerical studies).

The mentioned results deal with stable coorbital motions. On the contrary, the purpose of this paper is to prove that unstable  motions and homoclinic orbits coexist with them.
In particular,
\begin{enumerate}
 \item We construct  homoclinic orbits to $L_3$ for a sequence of mass ratios $\mu$ tending to zero.
The papers~\cite{articleInner, articleOuter,BCGG23} prove that the 1-dimensional stable and unstable manifolds of $L_3$ do not meet the first time they intersect a given transverse section for $\mu$ small enough. However, we prove that, for the sequence values of $\mu$,  they do meet the second time they hit the section (see Theorem \ref{TheoremD} in Section \ref{subsection:introHomoclinicPhenomena} below).
 \item We prove the existence of Smale horseshoes, that is of hyperbolic invariant sets with symbolic dynamics. This is a consequence of the existence of transverse homoclinic orbits to certain Lyapunov periodic orbits which lie on the center manifold of $L_3$. See Theorem \ref{TheoremB} in Section \ref{sec:chaoticcoorbital}.
\item We construct Newhouse domains for the RPC3BP by showing that there exist a Lyapunov periodic orbit around $L_3$ with a quadratic homoclinic tangency which unfolds generically with respect to the energy (see Theorem  \ref{TheoremC}). This leads to the existence of hyperbolic sets with Hausdorff dimension arbitrarily close to maximal and to the existence of an infinite number of elliptic islands (see Theorem \ref{thm:Newhousemain} in Section \ref{sec:introNewhouse}).
\end{enumerate}

A key point to obtain these results  is an asymptotic formula for  the distance between the $1$-dimensional stable and unstable invariant manifolds of the point $L_3$ (at a first crossing with a suitable transverse section) for $\mu>0$ small enough which was proven by the authors in \cite{articleInner, articleOuter,BCGG23} (see Theorem \ref{TheoremA} below).

Together with the already mentioned KAM results provided in \cite{NPR20}, the  results presented in this paper show mixed dynamics at coorbital motions. In other words, the coexistence of stable motions (KAM regime) and unstable motions. As far as the authors know this is one of the first papers to build Newhouse domains in Celestial Mechanics (see \cite{GorodetskiK12}). See Section~\ref{subsection:introStateArt}  for a brief discussion on  related  previous results.

\renewcommand\thenamedTheorem{\Alph{namedTheorem}}
\renewcommand\thenamedCorollary{\Alph{namedCorollary}}

\subsection{Homoclinic connections to \texorpdfstring{$L_3$}{L3}}
\label{subsection:introHomoclinicPhenomena}

The critical point $L_3$ and its eigenvalues  satisfy that, as $\mu \to 0$,
\begin{equation}\label{def:pointL3}
	(q_1,q_2,p_1,p_2) = (d_{\mu},0,0,d_{\mu}), 
	\qquad \text{with} \quad
	d_{\mu} = 1 + \frac{5}{12} \mu + \OO(\mu^3)
\end{equation}
and
\begin{equation}\label{eq:specRot}
	\mathrm{Spec}  = \claus{\pm \sqrt{\mu} \, \rho_{\mathrm{eig}}(\mu), \pm i  \, \omega_{\mathrm{eig}}(\mu) },
	\quad
	\text{with} \quad \left\{
	\begin{array}{l}
		\rho_{\mathrm{eig}}(\mu)=\sqrt{\frac{21}{8}}  + \OO(\mu),\\[0.4em]
		\omega_{\mathrm{eig}}(\mu)=1 + \frac{7}{8}\mu + \OO(\mu^2),
	\end{array}\right.
\end{equation}
(see \cite{Szebehely}). 
Therefore, $L_3$ possesses one-dimensional unstable and stable manifolds, which we denote as $W^{\unstable}(L_3)$ and $W^{\stable}(L_3)$.
Notice that, due to the different size in the eigenvalues, the system possesses two time scales which translates to rapidly rotating dynamics coupled with a slow hyperbolic behavior around the critical point $L_3$.

\begin{figure}
	\centering
	\begin{overpic}[scale=0.26]{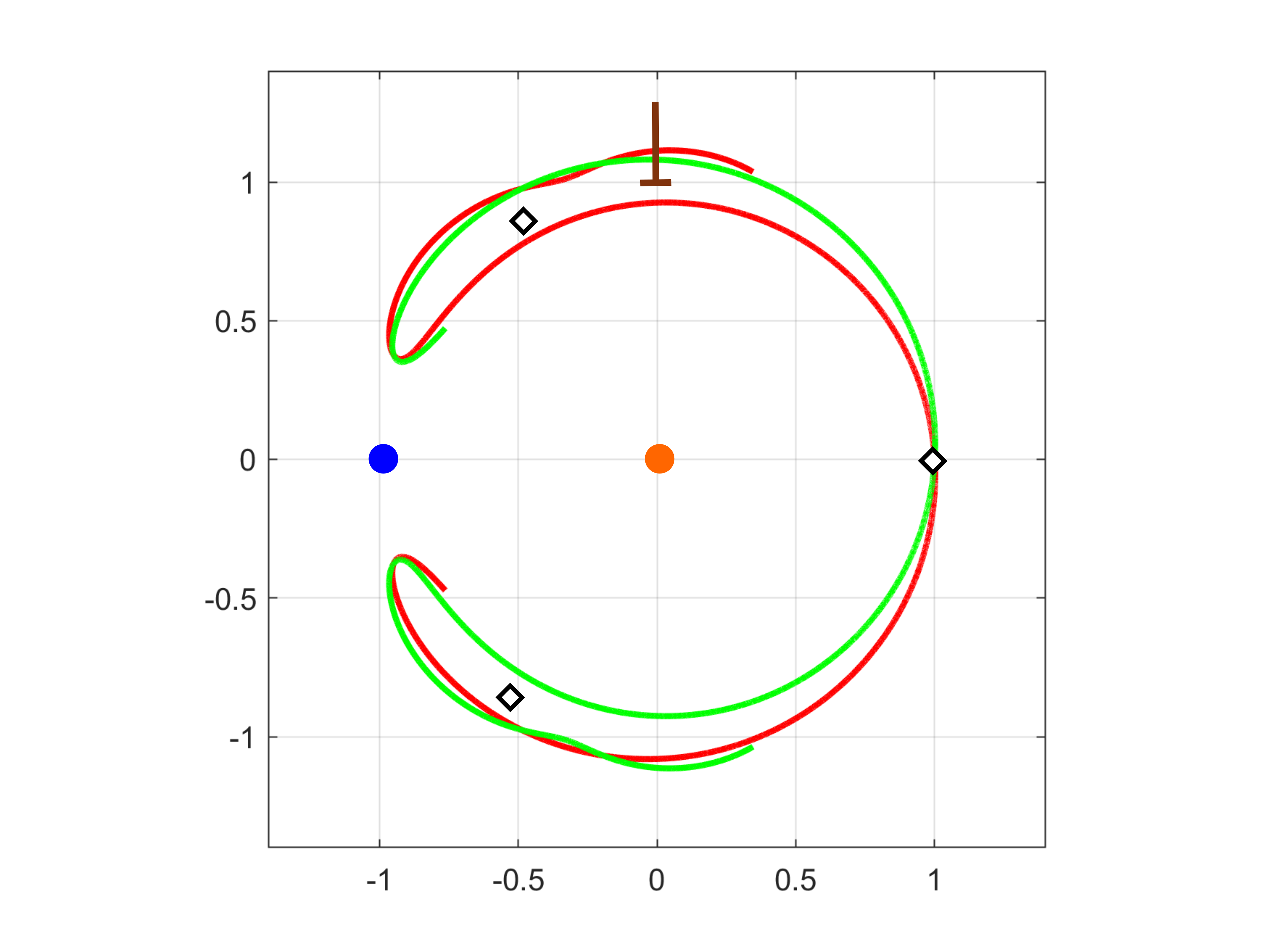}
		\put(51,34){\color{orange} $S$ }
		\put(28,34){\color{blue} $P$ }
		\put(75,38){{\color{black} $L_3$ }}
		\put(38,62){{\color{black} $L_5$ }}	
		\put(38,13){{\color{black} $L_4$ }}
		\put(53,64.5){{\color{brown} $\Sigma$ }}
		\put(65,60){\color{myGreen} $W^{\stable,+}(L_3)$}
		\put(50,50){\color{red} $W^{\unstable,+}(L_3)$}
		\put(50,25){\color{myGreen} $W^{\stable,-}(L_3)$}
		\put(65,13){\color{red} $W^{\unstable,-}(L_3)$}
	\end{overpic}
	\caption{Projection onto the $q$-plane of the unstable (red) and stable (green) manifolds of $L_3$, for ${\mu=0.0028}$. }
	\label{fig:perturbedInvariantManifolds1d}
\end{figure}

The manifolds $W^{\unstable}(L_3)$ and $W^{\stable}(L_3)$ have two branches each.
One pair, which we denote by $W^{\unstable,+}(L_3)$ and $W^{\stable,+}(L_3)$ circumvents $L_5$ whereas the other circumvents $L_4$ and it is denoted as  $W^{\unstable,-}(L_3)$ and $W^{\stable,-}(L_3)$, see Figure~\ref{fig:perturbedInvariantManifolds1d}.
Notice that the Hamiltonian system associated to  $\HInicial$ in \eqref{def:hamiltonianInitialNotSplit} is reversible with respect to the involution
\begin{equation}
	\label{def:involutionCartesians}
	\Psi(q,p)=(q_1,-q_2,-p_1,p_2).
\end{equation}
Therefore, by~\eqref{def:pointL3}, $L_3$ belongs to the symmetry axis given by $\Psi$ and the $+$ branches of the invariant manifolds of $L_3$ are symmetric to the $-$ ones.

In the papers \cite{articleInner, articleOuter, BCGG23}, the authors provide an asymptotic formula for the distance between the 1-dimensional stable and unstable manifolds of $L_3$ at a transverse section.  To present this  formula, we introduce the classical symplectic polar coordinates
\begin{align}\label{def:changePolars}
	q= 
	r \begin{pmatrix}
		\cos \tht \\ 
		\sin \tht
	\end{pmatrix},
	\qquad
	p = 
	R
	\begin{pmatrix}
		\cos \tht \\ 
		\sin \tht
	\end{pmatrix} 
	- \frac{G}{r} \begin{pmatrix}
		\sin \tht \\ 
		-\cos \tht
	\end{pmatrix},
\end{align} 
where 
$R$ is the radial linear momentum  and $G$ is the angular momentum.
We consider as well the $3$-dimensional section 
\begin{equation}\label{definition:sectionPolars}
\Sigma = \claus{(r,\tht,R,G) \in \reals \times \torus \times \reals^2 
	\st r>1, \, \tht=\frac{\pi}2 \,}
\end{equation}
and denote by $(r^{\unstable}_*,\frac{\pi}2, R^{\unstable}_*,G^{\unstable}_*)$ and $(r^{\stable}_*,\frac{\pi}2,R^{\stable}_*,G^{\stable}_*)$ the first crossing of the invariant manifolds with this section (see Figure \ref{fig:perturbedInvariantManifolds1d}). 
The next theorem measures the distance between these points for $0< \mu\ll 1$.
%

\begin{namedTheorem}[Distance between the unstable and stable manifolds of $L_3$]
	\label{TheoremA}
	There exists $\mu_0>0$ such that, for $\mu \in (0,\mu_0)$,
	\[
	\norm{(r^{\unstable}_*,R^{\unstable}_*,G^{\unstable}_*)-(r^{\stable}_*,R^{\stable}_*,G^{\stable}_*)}
	=
	\sqrt[3]{4} \,
	\mu^{\frac13} e^{-\frac{A}{\sqrt{\mu}}} 
	\boxClaus{\vabs{\CInn}+\OO\paren{\frac1{\vabs{\log \mu}}}},
	\]
	where the constant $\CInn \in \complexs$ 
satisfies
		\begin{equation}\label{eq:nonvanishingStokes}
		 \Theta\neq 0
		\end{equation}
and the constant $A>0$ is given by the real-valued integral
		\begin{equation}\label{def:integralA}
			A= \int_0^{\frac{\sqrt{2}-1}{2}} \frac{2}{1-x}\sqrt\frac{x}{3(x+1)(1-4x-4x^2)}  dx\approx 0.177744.
		\end{equation}
\end{namedTheorem}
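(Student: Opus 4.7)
The approach is the classical complex--matching method for exponentially small splitting of separatrices, adapted to the singular perturbation structure revealed by~\eqref{eq:specRot}: the hyperbolic eigenvalues are of size $\sqrt{\mu}$ while the elliptic ones are of size one, so $\varepsilon := \sqrt{\mu}$ plays the role of the small parameter in a fast--slow Hamiltonian system. The plan has four main steps.

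First, I would perform a partial normal form around $L_3$ to decouple, at the formal level, the fast angular dynamics from the slow hyperbolic ones, obtaining an averaged one degree of freedom Hamiltonian whose separatrix describes the leading-order homoclinic loop. After rescaling time by $\sqrt{\mu}$, the averaged system admits a $\mu$--independent limit, and one computes its unperturbed separatrix explicitly; its closest complex-time singularities lie at imaginary distance $A/\sqrt{\mu}$ from the real axis, where $A$ is precisely the integral in~\eqref{def:integralA}, obtained as the imaginary part of an action-type integral along the unperturbed homoclinic.

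Second, I would construct the branches $W^{\unstable,+}(L_3)$ and $W^{\stable,+}(L_3)$ as fixed points of a suitable integral operator, parametrized by a complex ``time'' variable in a domain that reaches $\mu$--close to the singularity of the unperturbed separatrix, say up to imaginary height $A/\sqrt{\mu} - \kappa|\log \mu|$. Standard Banach fixed--point arguments in weighted spaces of analytic functions give existence and $\mu$--uniform bounds, as well as good control on the difference $\Delta = W^{\unstable,+} - W^{\stable,+}$ measured at the real section $\Sigma$ defined in~\eqref{definition:sectionPolars}; straightforward outer estimates only yield an upper bound of the form $\mathcal{O}(e^{-A/\sqrt{\mu}+\kappa'|\log\mu|})$, which is not sharp enough to read off $\Theta$.

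Third, to capture both the prefactor $\sqrt[3]{4}\,\mu^{1/3}$ and the constant $\Theta$, I would rescale the dependent variables near each complex singularity of the unperturbed separatrix to blow up the singularity and derive an \emph{inner equation} that is independent of $\mu$ at leading order. The Stokes constant $\Theta \in \complexs$ of this inner equation, defined as the obstruction to analytically continuing the inner unstable solution into the domain of the inner stable one across the relevant Stokes line, governs the leading asymptotics. Matching the inner and outer parametrizations on a common overlap domain, together with a contour deformation of the difference $\Delta$ onto a path passing above the pair of symmetric singularities, yields the announced formula, with the prefactor $\sqrt[3]{4}\,\mu^{1/3}$ emerging from the homogeneity of the inner rescaling and the residue at the singularity.

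The main obstacle is the nonvanishing condition~\eqref{eq:nonvanishingStokes}. The constant $\Theta$ has no known closed form: it is defined via a Borel--resummation-type procedure on the inner equation, and proving $\Theta \neq 0$ generally requires either a computer-assisted argument with validated numerics for the inner problem, or a delicate analytic study exploiting its reversible Hamiltonian structure. Every other step is an involved but by now well-established implementation of exponentially small splitting technology, whereas the inner Stokes constant is genuinely model--dependent and constitutes the true analytic heart of Theorem~\ref{TheoremA}.
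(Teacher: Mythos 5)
Your proposal is essentially the route taken by the paper: Theorem~\ref{TheoremA} is not proved directly here but is cited from the authors' companion works, which precisely implement the program you describe — singular scaling $\delta=\mu^{1/4}$ producing a pendulum--oscillator first order with an explicit separatrix (Section~\ref{section:reformulation} of this paper), Perron-type fixed-point parametrizations of $W^{\unstable,\stable}(L_3)$ in complex domains reaching logarithmically close to the separatrix singularity at $\pm iA$, an inner/outer matching near those singularities yielding a $\mu$-independent inner equation whose Stokes constant is $\Theta$, and finally a computer-assisted proof that $\Theta\neq 0$. The only cosmetic slip is that, in the rescaled time used to obtain the $\mu$-independent pendulum, the separatrix singularities sit at imaginary height $A$ (not $A/\sqrt\mu$); the factor $1/\sqrt\mu$ in the exponent of $e^{-A/\sqrt\mu}$ instead comes from the oscillator frequency $1/\delta^2=1/\sqrt\mu$ in those variables.
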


The asymptotic formula in the theorem is obtained in the papers \cite{articleInner, articleOuter}. Then, in \cite{BCGG23}, by means of a computer assisted proof, we show that the constant $\Theta$ is not zero. The distance between the  stable and unstable manifolds of $L_3$ is exponentially small with respect to $\sqrt{\mu}$. This is due to the rapidly rotating dynamics of the system (see~\eqref{eq:specRot}) and it is usually known as a \emph{beyond all orders phenomenon}, since the difference between the manifolds cannot be detected by expanding the manifolds in series of powers of $\mu$.
%
%
Due to the symmetry in~\eqref{def:involutionCartesians}, an analogous result holds for the opposite branches.
%

The goal of this section is to analyze the existence of homoclinic orbits to $L_3$. To this end, let us introduce the following definition.

\begin{definition}\label{defi:multiround}
Let $\G(t)$ be an  homoclinic orbit of \eqref{def:hamiltonianInitialNotSplit}  to the critical point $L_3$ and $B_{\mu}$ a ball centered at $L_3$ of radius $\mu$.
Then, we say that  $\G(t)$ is $k$-round if 
\[
\conj{\bigcup_{t\in \reals} \G(t)} \setminus B_{\mu}
\quad
\text{ has } k \text{ connected components}.
\]
\end{definition}
%
%
Theorem \ref{TheoremA} implies the following corollary.

\begin{namedCorollary}[$1$-round homoclinic connections]
	\label{corollaryA}%
There exists $\mu_0>0$ such that, for $\mu \in (0,\mu_0)$, the Hamiltonian system associated to \eqref{def:hamiltonianInitialNotSplit} does not have $1$-round homoclinic connections to~$L_3$.
\end{namedCorollary}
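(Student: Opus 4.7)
The plan is a direct reduction to Theorem~A: I will show that any $1$-round homoclinic orbit to $L_3$ must correspond to an intersection of the one-dimensional branches $W^{\unstable}(L_3)$ and $W^{\stable}(L_3)$ at their first crossing with the section $\Sigma$, and Theorem~A excludes this coincidence for $\mu$ small enough.

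Let $\Gamma$ be a $1$-round homoclinic orbit to $L_3$. Since $W^{\unstable}(L_3)$ and $W^{\stable}(L_3)$ are each one-dimensional with two branches, $\Gamma$ leaves $L_3$ along some branch $W^{\unstable,\sigma_1}(L_3)$ and returns along some branch $W^{\stable,\sigma_2}(L_3)$ with $\sigma_1,\sigma_2\in\{+,-\}$. Using the reversor $\Psi$ from \eqref{def:involutionCartesians}, which maps $W^{\unstable,\pm}(L_3)$ onto $W^{\stable,\mp}(L_3)$, I reduce to the case $\sigma_1=+$. By Definition~\ref{defi:multiround}, the excursion $\Gamma\setminus B_\mu$ is a single connected arc; using the global parametrization of $W^{\unstable,+}(L_3)$ developed in \cite{articleInner,articleOuter}, this arc stays in the upper half-plane $\{q_2>0\}$ and crosses $\Sigma$ exactly once before re-entering $B_\mu$, which in particular forces $\sigma_2=+$. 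At that unique crossing, $\Gamma$ must lie simultaneously on the first intersection of $W^{\unstable,+}(L_3)$ with $\Sigma$ and on the first backward intersection of $W^{\stable,+}(L_3)$ with $\Sigma$, hence
\[
(r^{\unstable}_*,R^{\unstable}_*,G^{\unstable}_*)=(r^{\stable}_*,R^{\stable}_*,G^{\stable}_*).
\]
By Theorem~A and the non-vanishing \eqref{eq:nonvanishingStokes} of $\Theta$, the norm of the left-hand side minus the right-hand side is equivalent to $\sqrt[3]{4}\,\mu^{1/3}e^{-A/\sqrt{\mu}}|\Theta|>0$ as $\mu\to 0$, so the equality fails for all $\mu\in(0,\mu_0)$ with $\mu_0$ sufficiently small, a contradiction.

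The hard part is not the quantitative input, which is supplied by Theorem~A, but the geometric reduction: I need to justify that for small $\mu$ a $1$-round excursion from $L_3$ stays in a single half-plane (so that the $W^{\unstable,+}\!/W^{\stable,-}$ mixed configuration is automatically ruled out) and that it intersects $\Sigma$ at exactly one point (so that the sole candidate for the intersection is the first crossing controlled by Theorem~A). Both properties rest on the global parametrization of $W^{\unstable,\pm}(L_3)$ and $W^{\stable,\pm}(L_3)$ far from the equilibrium built in \cite{articleInner,articleOuter}; once these are in hand, the exponentially small but strictly positive splitting of Theorem~A closes the argument.
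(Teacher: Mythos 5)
Your overall strategy is the right one and is what the paper tacitly relies on: the corollary is stated with no proof beyond ``Theorem~\ref{TheoremA} implies the following corollary'', and the reduction you describe (force the homoclinic to produce a coincidence of the first crossings of $W^{\unstable,+}$ and $W^{\stable,+}$ with $\Sigma$, then invoke the non-vanishing splitting) is the natural way to make this implication precise. The two geometric inputs you flag as needing justification --- that the excursion crosses $\Sigma$ exactly once before re-approaching $L_3$, and that re-entry identifies the branch of $W^{\stable}$ --- are indeed the facts that have to be imported from the global parametrizations of the manifolds developed in \cite{articleInner,articleOuter}.

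There is, however, a concrete gap in the reversibility step. Since $\Psi(W^{\unstable,\pm})=W^{\stable,\mp}$ and $\Psi$ reverses time, a homoclinic that leaves along $W^{\unstable,\sigma_1}$ and returns along $W^{\stable,\sigma_2}$ is sent by $\Psi$ to one with branch pair $(-\sigma_2,-\sigma_1)$. Thus $\Psi$ exchanges $(+,+)\leftrightarrow(-,-)$ but \emph{fixes} the mixed pairs $(+,-)$ and $(-,+)$; your ``reduce to $\sigma_1=+$'' therefore does not dispatch the $(-,+)$ case. The repair stays within your framework: the same geometric argument you run for $\sigma_1=+$, mirror-reflected, shows that if $\sigma_1=-$ then re-entry into $B_\mu$ can only occur along $W^{\stable,-}$, so the mixed case $(-,+)$ (as well as $(+,-)$) is excluded by geometry alone; then $\Psi$ legitimately collapses $(-,-)$ onto $(+,+)$, where Theorem~\ref{TheoremA} applies. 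As a side remark, the ``stays in $\{q_2>0\}$'' claim can be bypassed: Theorem~\ref{TheoremA} together with a Gronwall estimate gives that $W^{\unstable,+}$ first returns to $\partial B_\mu$ within $\OO(\mu^{1/3}e^{-A/\sqrt{\mu}})\ll\mu$ of $W^{\stable,+}$, whereas $W^{\stable,+}$ and $W^{\stable,-}$ are $\OO(\mu)$ apart on $\partial B_\mu$; this already forces $\sigma_2=\sigma_1$ without tracking the sign of $q_2$ along the whole loop.
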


%
%
%
%


\begin{figure}
\centering
\begin{overpic}[scale=0.45]{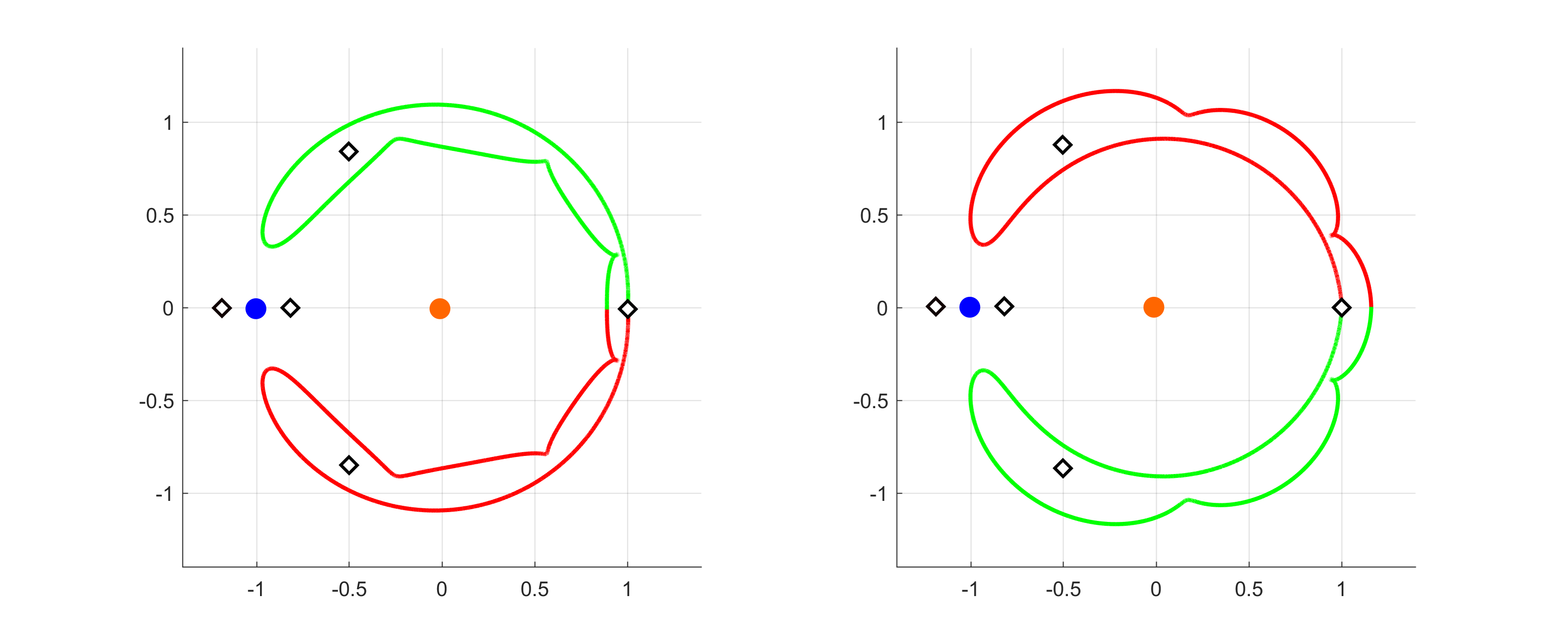}
\put(30,35){\color{myGreen} $W^{\stable,+}(L_3)$}
\put(75,35){\color{red} $W^{\unstable,+}(L_3)$}
\put(75,5.5){\color{myGreen} $W^{\stable,-}(L_3)$}
\put(30,5.5){\color{red} $W^{\unstable,-}(L_3)$}
	\end{overpic}
	\caption{Projection onto the $q$-plane for examples of $2$-round homoclinic connection to $L_3$. (Left) $\mu=0.012144$,  (right) $\mu=0.004192$. }
	\label{fig:reconnectionsPlot}
\end{figure}
 
This corollary does not prevent the existence of multi-round homoclinic orbits. Indeed, E.~Barrab\'es, J.M.~Mondelo and M.~Oll\'e in \cite{BMO09} analyzed numerically the existence of multi-round homoclinic connections to $L_3$ in the RPC$3$BP and  conjectured the existence of $2$-round homoclinic orbits for a sequence of mass ratios $\claus{\mu_n}_{n \in \naturals}$ satisfying $\mu_n\to 0$ as $n \to \infty$.
%
The first result of this paper proves  this conjecture.
 
\begin{namedTheorem}[$2$-round homoclinic connections]
\label{TheoremD}
%
%
There exists a sequence $\{{\mu}_n\}_{n \geq N_0}$ with $N_0$ big enough, of the form
\begin{align*}
\mu_n 
=
\frac{A}{n\pi\rho_{\mathrm{eig}}(0)}
\paren{1+ 
	\OO \paren{\frac1{\log n}}},
\qquad \text{for } n\gg 1,
\end{align*}
where  $\rho_{\mathrm{eig}}(\mu)$ is  given in \eqref{eq:specRot} and $A>0$ is the constant introduced in \eqref{def:integralA},
such that the Hamiltonian system~\eqref{def:hamiltonianInitialNotSplit}
has a $2$-round homoclinic connection to the equilibrium point $L_3$. These homoclinic orbits coincide with $W^{\unstable,+}(L_3)$ and $W^{\stable,-}(L_3)$.
\end{namedTheorem}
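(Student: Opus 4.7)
The plan is to exploit the reversibility $\Psi$ from~\eqref{def:involutionCartesians}, which fixes $L_3$ and satisfies $\Psi(W^{\unstable,+}(L_3))=W^{\stable,-}(L_3)$, to reduce the $2$-round condition $W^{\unstable,+}(L_3)=W^{\stable,-}(L_3)$ to a scalar equation in $\mu$. Writing $\Lambda:=\mathrm{Fix}(\Psi)=\{q_2=0,\,p_1=0\}$, the coincidence is equivalent to $W^{\unstable,+}(L_3)$ meeting $\Lambda$ at some point different from $L_3$. Indeed, if the orbit $\gamma\subset W^{\unstable,+}(L_3)$ satisfies $\gamma(t_0)\in\Lambda$, then $t\mapsto\Psi\gamma(2t_0-t)$ is also an orbit coinciding with $\gamma$ at $t_0$, hence equal to $\gamma$; therefore $\gamma(t)\to L_3$ as $t\to+\infty$ along $\Psi(W^{\unstable,+}(L_3))=W^{\stable,-}(L_3)$. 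Conversely, if $W^{\unstable,+}(L_3)=W^{\stable,-}(L_3)$, the corresponding non-constant orbit is $\Psi$-invariant, so $\Psi$ restricted to it is an involution on $\reals$ with no $L_3$-fixed point and must therefore admit a finite fixed point in $\Lambda$. The problem reduces to: find $\mu_n$ for which $W^{\unstable,+}(L_3)$ hits $\Lambda$.

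Next, I would follow $W^{\unstable,+}(L_3)$ past its first crossing with $\Sigma$. Theorem~\ref{TheoremA}, together with~\eqref{eq:nonvanishingStokes}, gives $\delta(\mu):=\|p^{\unstable}_*-p^{\stable}_*\|=\sqrt[3]{4}\mu^{1/3}e^{-A/\sqrt{\mu}}(|\Theta|+\OO(1/|\log\mu|))>0$. Consequently $W^{\unstable,+}(L_3)$ shadows $W^{\stable,+}(L_3)$ and, after a bounded time, enters a fixed neighborhood $U$ of $L_3$. In $U$ I would introduce a partial normal form
\[ H=-\lambda(\mu)\,xy+\omega(\mu)\,\zeta\bar\zeta+\OO_3,\qquad\lambda=\sqrt{\mu}\,\rho_{\mathrm{eig}}(\mu),\ \omega=\omega_{\mathrm{eig}}(\mu), \]
with the linearization of $\Psi$ acting as $(x,y,\zeta,\bar\zeta)\mapsto(y,x,\bar\zeta,\zeta)$ so that $\Lambda\cap U=\{x=y,\,\zeta=\bar\zeta\}$ to leading order. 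Standard saddle-crossing estimates yield entry data $(x,y)\sim(1,\delta)$, exit data $(x,y)\sim(\delta,1)$, closest approach of order $\sqrt{\delta}$, and transit time
\[ T(\mu)=\frac{1}{\lambda(\mu)}\log\frac{1}{\delta(\mu)}+\OO(1)=\frac{A}{\mu\,\rho_{\mathrm{eig}}(\mu)^{2}}\bigl(1+\OO(1/|\log\mu|)\bigr). \]

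During this transit the center variable rotates by
\[ \phi(\mu):=\omega(\mu)\,T(\mu)+\phi_0(\mu)=\frac{A}{\mu\,\rho_{\mathrm{eig}}(0)}\bigl(1+\OO(1/|\log\mu|)\bigr), \]
where $\phi_0=\OO(1)$ collects the phases accumulated outside $U$. Matching the complex Stokes datum of Theorem~\ref{TheoremA} with the linear saddle-passage map produces an asymptotic formula for the signed distance from the exiting branch of $W^{\unstable,+}(L_3)$ to $\Lambda$, measured on a local transverse section:
\[ D_2(\mu)=c_0\,\mu^{1/3}e^{-A/\sqrt{\mu}}\,\sin\bigl(\phi(\mu)+\arg\Theta\bigr)\bigl(1+\OO(1/|\log\mu|)\bigr), \]
with $c_0\neq 0$ independent of $\mu$. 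Since $|\phi'(\mu)|\sim A/(\mu^2\rho_{\mathrm{eig}}(0))$, every interval of length $\sim\pi\rho_{\mathrm{eig}}(0)\mu^2/A$ about any candidate value contains a simple zero of $D_2$; solving $\phi(\mu_n)+\arg\Theta\equiv 0\pmod{\pi}$ to leading order yields $\mu_n=\tfrac{A}{n\pi\,\rho_{\mathrm{eig}}(0)}(1+\OO(1/\log n))$ for every $n\geq N_0$. By the symmetric reduction of the first paragraph, this produces the desired $2$-round homoclinic orbit coinciding with $W^{\unstable,+}(L_3)\cup W^{\stable,-}(L_3)$.

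The technically heaviest step is the derivation of the oscillatory formula for $D_2(\mu)$. Because $T(\mu)\sim A/\mu$ is so long, any $\OO(\mu)$ remainder in the normal form would integrate into an $\OO(1)$ phase error and destroy the intermediate-value argument, so one must push the normal form to sufficiently high order or, preferably, match a complex-time integrable model in the spirit of~\cite{articleInner,articleOuter} to control $\phi(\mu)$ modulo $o(1)$. Identifying the constant $c_0$ and the phase shift $\arg\Theta$ requires tracking the complex Stokes data of Theorem~\ref{TheoremA} through the saddle-passage map; here the non-vanishing~\eqref{eq:nonvanishingStokes} of $\Theta$ is essential, supplying both the non-degenerate amplitude of $D_2$ and the well-defined phase offset that makes the intermediate-value step effective.
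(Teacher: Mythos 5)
Your overall strategy matches the paper's: reduce via the reversibility $\Psi$ to the question of whether $W^{\unstable,+}(L_3)$ hits the fixed set $\Lambda=\mathrm{Fix}(\Psi)$, extend the unstable manifold from the section where Theorem~\ref{TheoremA} gives its position, track its passage near the saddle-center $L_3$ using a normal form, and extract the sequence $\mu_n$ from an oscillation condition. The symmetry reduction in your first paragraph is correct and is essentially the paper's.

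The genuine gap is in the local analysis near $L_3$. You write a \emph{truncated} normal form $H=-\lambda xy+\omega\zeta\bar\zeta+\OO_3$ and invoke ``standard saddle-crossing estimates.'' You then correctly observe that because the transit time $T(\mu)\sim A/\mu$ is so long, an $\OO(\mu)$ remainder would accumulate into an $\OO(1)$ phase error and destroy the argument, and you suggest ``pushing the normal form to sufficiently high order'' or ``matching a complex-time integrable model.'' Neither suggestion actually closes the gap: any finite-order truncation leaves a remainder that, integrated over a time $\sim A/\mu$, can still spoil the required $o(1)$ phase control, and a complex-time matching near the saddle-center is not what is needed here (the inner/outer machinery of \cite{articleInner,articleOuter} is for the invariant manifolds far from $L_3$, not for the local passage). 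What the paper uses instead is a \emph{full, convergent} Moser saddle-center normal form --- taken from J\'ez\'equel--Bernard--Lombardi \cite{JBL16} precisely because the classical Moser theorem does not control the radius of convergence uniformly as $\mu\to 0$. In the resulting coordinates the Hamiltonian depends only on the two action variables $\qU\pU$ and $\qD^2+\pD^2$, so the local flow is \emph{exactly} integrable (see \eqref{proof:localCoordinatesSolution}); no remainder accumulates at all, and the transit time and rotation angle are computed in closed form. Without identifying this tool, your argument cannot actually establish the $o(1)$ control of the phase that your own intermediate-value step needs. (There is also a small slip: with $\lambda=\sqrt{\mu}\,\rho_{\mathrm{eig}}$ and $\log(1/\delta)\sim A/\sqrt{\mu}$ one gets $T\sim A/(\mu\,\rho_{\mathrm{eig}})$, not $A/(\mu\,\rho_{\mathrm{eig}}^2)$; your $\phi(\mu)$ formula and the resulting $\mu_n$ are nevertheless consistent with the paper.)
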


This theorem is proven in Section~\ref{section:proofReconnections}.
Using the same tools, one can obtain an analogous result for the homoclinic connections between $W^{\unstable,-}(L_3)$ and $W^{\stable,+}(L_3)$  (for a possibly different sequence of mass ratios).

%

\subsection{Coorbital chaotic motions}\label{sec:chaoticcoorbital}

Next we study the existence of chaotic phenomena associated to $L_3$ and its invariant manifolds.
%
%
The Lyapunov Center Theorem (see for instance \cite{MeyerHallOffin}) ensures the existence of a family of periodic orbits emanating from the saddle-center $L_3$ which, close to the equilibrium point, are hyperbolic.
%
%
This family can be parametrized by the energy level given by the Hamiltonian $h$ in \eqref{def:hamiltonianInitialNotSplit}.

\begin{proposition}[Lyapunov periodic orbits to $L_3$]
\label{PropositionB}
	There exist $\mu_0, \varrho_0>0$ small enough such that, for $\mu \in (0,\mu_0)$, the Hamiltonian system with Hamiltonian~\eqref{def:hamiltonianInitialNotSplit} has a family of hyperbolic periodic orbits 
\begin{align*}
	\Pi_3 = \claus{P_{3,\varrho} \text{ periodic orbit} \st h(P_{3,\varrho})= \varrho^2 + h(L_3), \,
		{\varrho} \in (0,\varrho_0)
	},
\end{align*}
which depend regularly on $\varrho \in (0,\varrho_0)$ and satisfy that $\mathrm{dist}(P_{3,\varrho},L_3) \to 0$ as $\varrho \to 0$ in the sense of Hausdorff distance.
\end{proposition}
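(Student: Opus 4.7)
The plan is to invoke the classical Lyapunov Center Theorem (see for instance \cite{MeyerHallOffin}) at the saddle--center equilibrium $L_3$, and then reparametrize the resulting one-parameter family in terms of the energy offset. The non-resonance hypothesis of the theorem is immediate from \eqref{eq:specRot}: the ratio $i\omega_{\mathrm{eig}}(\mu)/(\pm\sqrt{\mu}\,\rho_{\mathrm{eig}}(\mu))$ between the imaginary eigenvalue and either of the real eigenvalues is purely imaginary, hence never a nonzero integer. The theorem then produces, for each $\mu\in(0,\mu_0)$, a real-analytic family $\{P_{3,s}\}_{s\in(0,s_0)}$ of periodic orbits tangent to the center eigenspace at $L_3$, with periods $T(s,\mu)=2\pi/\omega_{\mathrm{eig}}(\mu)+\OO(s)$, accumulating on $L_3$ as $s\to 0$.

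To reparametrize by energy, I would first bring $h$ to its saddle--center normal form near $L_3$, so that $h=h(L_3)+\sqrt{\mu}\,\rho_{\mathrm{eig}}(\mu)\,x_1x_2+\tfrac{1}{2}\omega_{\mathrm{eig}}(\mu)(x_3^2+x_4^2)+\OO(\|x\|^3)$. The Lyapunov orbits then satisfy $x_1=x_2=0$ and $x_3^2+x_4^2=s^2+\OO(s^3)$, which gives the expansion $h(P_{3,s})-h(L_3)=\tfrac{1}{2}\omega_{\mathrm{eig}}(\mu)\,s^2+\OO(s^3)$. Since $\omega_{\mathrm{eig}}(\mu)=1+\OO(\mu)$ is bounded away from zero uniformly in $\mu\in(0,\mu_0)$, the implicit function theorem yields a uniform $\varrho_0>0$ and an analytic change of parameter, of the form $\varrho=\sqrt{\omega_{\mathrm{eig}}(\mu)/2}\,s\,(1+\OO(s))$, such that $h(P_{3,\varrho})=\varrho^2+h(L_3)$ for $\varrho\in(0,\varrho_0)$.

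Hyperbolicity of each $P_{3,\varrho}$ follows from a continuity argument on Floquet multipliers: the monodromy has two trivial multipliers equal to $1$ (from the flow direction and from energy conservation) and two nontrivial ones which, as $\varrho\to 0$, converge to $\exp(\pm 2\pi\sqrt{\mu}\,\rho_{\mathrm{eig}}(\mu)/\omega_{\mathrm{eig}}(\mu))$. These lie off the unit circle for every $\mu\in(0,\mu_0)$, so after possibly shrinking $\varrho_0$ the orbits $P_{3,\varrho}$ are hyperbolic throughout $(0,\varrho_0)$. The only mildly delicate point is uniformity in $\mu$: as $\mu\to 0$ the hyperbolic rate $\sqrt{\mu}\,\rho_{\mathrm{eig}}(\mu)$ vanishes, but the nontrivial multipliers remain of the form $1\pm 2\pi\sqrt{\mu}\,\rho_{\mathrm{eig}}(\mu)/\omega_{\mathrm{eig}}(\mu)+\OO(\mu)\neq 1$, so no further (beyond-all-orders) analysis is required. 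The proposition is thus a straightforward consequence of Lyapunov's theorem combined with the explicit spectrum in \eqref{eq:specRot}.
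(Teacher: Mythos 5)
Your approach is essentially the one the paper itself endorses as the source of the bare existence statement: the authors explicitly remark, just before Proposition~\ref{PropositionB}, that ``the existence of the periodic orbits given by Proposition~\ref{PropositionB} is just a consequence of this classical theorem'' (the Lyapunov Center Theorem), and your verification of the non-resonance hypothesis via the purely imaginary ratio of eigenvalues, the energy reparametrization $\varrho^2 = \tfrac12\omega_{\mathrm{eig}}(\mu)s^2 + \OO(s^3)$, and the Floquet-multiplier argument for hyperbolicity are all correct for each fixed $\mu$. However, this is not what the paper actually proves: the authors state that they ``need to `reprove' it to have estimates for the periodic orbits,'' and the real content is Proposition~\ref{proposition:periodicOrbit} (proved in Appendix~\ref{appendix:periodicOrbits}). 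There they work in the singular scaled variables $(\la,\La,x,y)$ of~\eqref{def:changeScaling}, where the hyperbolic rate is $\OO(1)$ and the elliptic rate is $\OO(\de^{-2})$, pass to symplectic polar coordinates $\phi_{\Lyap}$ centered at $\Ltres(\de)$, solve for the action on the level set, and construct the orbit as a graph over the angle $\phiA$ by a fixed-point argument. The payoff is the explicit decomposition~\eqref{def:poSplitting} of $\po_\rho$ with uniform-in-$\de$ bounds on the correction functions $\lapo,\Lapo,\xpo,\ypo$, which is used repeatedly in Sections~\ref{subsection:existencePerturbed}--\ref{section:proofComparison}; the classical theorem gives none of this.

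The one place where your argument is actually shaky, and not merely less quantitative, is the claimed uniformity of $\varrho_0$ in $\mu$. You argue that since $\omega_{\mathrm{eig}}(\mu)$ is bounded away from zero the implicit function theorem gives a $\mu$-uniform $\varrho_0$, but this does not follow: the cubic and higher remainders in the normal form near $L_3$, and more importantly the size $s_0(\mu)$ of the interval of validity of the Lyapunov family itself, depend on the full germ of $h$ at $L_3$, which degenerates as $\mu\to 0$ (the hyperbolic eigenvalue $\pm\sqrt\mu\,\rho_{\mathrm{eig}}(\mu)\to 0$ and $L_3$ collapses into the degenerate critical circle of the Kepler problem). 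Indeed, tracing the paper's result back through the scalings, $\varrho_0$ corresponds to $\mu^{1/4}\rho_0$ with $\rho_0$ the $\de$-uniform constant of Proposition~\ref{proposition:periodicOrbit}, so it does depend on $\mu$. The reason this is harmless for the paper is that Theorems~\ref{TheoremB} and~\ref{TheoremC} only use $\varrho$ in the exponentially small range $\OO(\mu^{1/3}e^{-A/\sqrt\mu})$, which is deep inside the admissible interval for every small $\mu$. If you want to keep your more elementary route, you should either drop the claim that $\varrho_0$ is $\mu$-independent, or work in the scaled variables as the paper does to justify the uniformity explicitly.
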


In Proposition~\ref{proposition:periodicOrbit} we state this result in a different set of coordinates and provide estimates for the periodic orbits. 
Its proof  can be found in Appendix~\ref{appendix:periodicOrbits}.

We denote by $W^{\unstable}(P_{3,\varrho})$ and $W^{\stable}(P_{3,\varrho})$ the $2$-dimensional unstable and stable invariant manifolds of the Lyapunov periodic orbit $P_{3,\varrho}$.
Analogously to the $L_3$ case, the invariant manifolds have two branches each. We denote by $W^{\unstable,+}(P_{3,\varrho})$ and $W^{\stable,+}(P_{3,\varrho})$ the ones that circumvent $L_5$ and, by $W^{\unstable,-}(P_{3,\varrho})$ and $W^{\stable,-}(P_{3,\varrho})$, the ones that surround $L_4$ (see Figure \ref{fig:perturbedInvariantManifolds2d}).
By the Smale-Birkhoff homoclinic Theorem (see \cite{Smale67, KatokHasselblatt}), proving the existence of transverse intersections between $W^{\unstable,+}(P_{3,\varrho})$ and $W^{\stable,+}(P_{3,\varrho})$ implies the existence of chaotic motions on a neighborhood of $L_3$ and its invariant manifolds.
More specifically, we prove the following result,  whose proof is deferred to Section~\ref{section:proofAB}.

\begin{figure}
	\centering
	\begin{overpic}[scale=0.26]{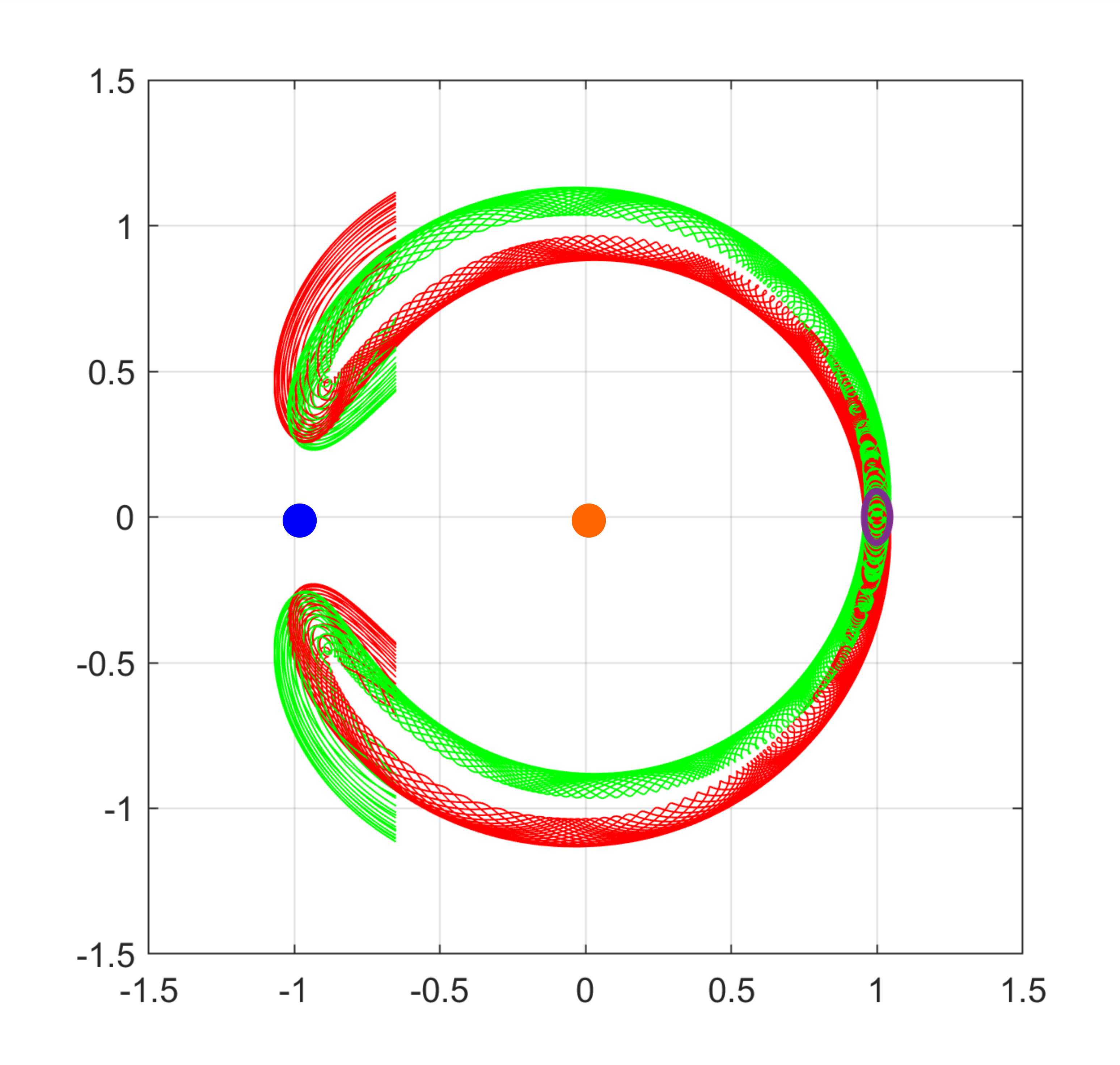}
		\put(55,45){\color{orange} $S$ }
		\put(28,45){\color{blue} $P$ }
		\put(80,46){{\color{purple} $P_{3,\varrho}$ }}
		\put(64,78){\color{myGreen} $W^{\stable,+}(P_{3,\varrho})$}
		\put(38,60){\color{red} $W^{\unstable,+}(P_{3,\varrho})$}
	\end{overpic}
	\caption{Projection onto the $q$-plane of the unstable (red) and stable (green) manifolds of the Lyapunov periodic orbit $P_{3,\varrho}$ (blue), for ${\mu=0.003}$. }
	\label{fig:perturbedInvariantManifolds2d}
\end{figure}

\begin{namedTheorem}[Chaotic motions]
\label{TheoremB}
Let $A>0$ and $\CInn\neq 0$ be the constants  given in Theorem~\ref{TheoremA} and $\varrho_0$ be as in Proposition~\ref{PropositionB}.
%
%
Then, there exist $\mu_0>0$ and two functions
$\varrho_{\min}, \varrho_{\max}:(0,\mu_0) \to [0,\varrho_0]$ of the form
\begin{align*}
	\varrho_{\min}(\mu) &= \frac{\sqrt[6]{2}}2 \vabss{\CInn} \mu^{\frac13} e^{-\frac{A}{\sqrt{\mu}}}
	\boxClaus{1+\OO\paren{\frac1{\vabs{\log \mu}}}},
	\\
	\varrho_{\max}(\mu) &= \frac{\sqrt[6]{2}}2  \vabss{\CInn} \mu^{\frac13} e^{-\frac{A}{\sqrt{\mu}}}
	\boxClaus{2+\OO\paren{\frac1{\vabs{\log \mu}}}},
\end{align*}
such that, for $\mu \in (0,\mu_0)$ and $\varrho\in (\varrho_{\min}(\mu),\varrho_{\max}(\mu)]$, the following statement hold.

\begin{enumerate}
\item The invariant manifolds $W^{\unstable,+}(P_{3,\varrho})$ and $W^{\stable,+}(P_{3,\varrho})$ intersect transversally.

\item Consider the section $\wh{\Si}_{\varrho}=\Si \cap \claus{h=\varrho^2+h(L_3)}$ with $\Si$ as given in \eqref{definition:sectionPolars} and the induced Poincar\'e map $\PP:\wh{\Si}_{\varrho} \to \wh{\Si}_{\varrho}$.
Then, there exists $M>0$ such that $\PP^M$ has an invariant set $\XX$, homeomorphic to $\integers^{\naturals}$, such that $\PP^M|_{\XX}$ is topologically conjugated to the shift. 
\end{enumerate}
\end{namedTheorem}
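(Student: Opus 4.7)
The plan is to deduce Theorem \ref{TheoremB} from Theorem \ref{TheoremA} by constructing explicit parametrizations of the invariant manifolds $W^{\unstable,+}(P_{3,\varrho})$ and $W^{\stable,+}(P_{3,\varrho})$ up to the section $\widehat{\Si}_\varrho$, comparing their traces on $\widehat{\Si}_\varrho$ with those of $W^{\unstable,\pm}(L_3)$, and invoking the Smale--Birkhoff homoclinic theorem on the induced Poincar\'e map.

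First I would introduce a partial normal form near $L_3$ putting the linear part in standard saddle-center form, with symplectic coordinates $(\la,\La,x,y)$ where $(\la,\La)$ are hyperbolic (eigenvalues $\pm\sqrt{\mu}\,\rho_{\mathrm{eig}}$) and $(x,y)$ are elliptic (frequency $\omega_{\mathrm{eig}}$). In action--angle coordinates $(I_e,\theta_e)$ on the center direction, the periodic orbit $P_{3,\varrho}$ is characterized by $\la=\La=0$ and $I_e=I_e(\varrho,\mu)\sim \varrho^2/\omega_{\mathrm{eig}}$; its local unstable manifold is the $2$-dim cylinder $\{\La=0,\,I_e=I_e(\varrho,\mu)\}$ parametrized by $(\la,\theta_e)$. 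Transporting this local parametrization along the Hamiltonian flow up to $\Si$, the image is a closed curve $\g^{\unstable}_\varrho=W^{\unstable,+}(P_{3,\varrho})\cap\widehat{\Si}_\varrho$, topologically a circle parametrized by $\theta_e\in\torus$; analogously one constructs $\g^{\stable}_\varrho$ by flowing backwards from the local stable manifold.

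The heart of the argument is the asymptotic shape of these curves. As $\varrho\to 0$, $\g^{\unstable}_\varrho$ collapses to the point $(r^{\unstable}_*,R^{\unstable}_*,G^{\unstable}_*)$ of Theorem \ref{TheoremA}; for $\varrho>0$ small, $\g^{\unstable}_\varrho$ is an ellipse-like loop around that point whose leading-order shape I would extract by integrating the variational equation of the flow along the separatrix $W^{\unstable,+}(L_3)$, whose precise parametrization is already available from the Stokes / complex-matching analysis of \cite{articleInner,articleOuter,BCGG23} underlying Theorem \ref{TheoremA}. The same applies to $\g^{\stable}_\varrho$. By Theorem \ref{TheoremA}, the centers of the two loops on $\widehat{\Si}_\varrho$ are separated by $d(\mu)=\sqrt[3]{4}\,|\CInn|\mu^{1/3}e^{-A/\sqrt{\mu}}[1+\OO(1/|\log\mu|)]$, while the loops' semi-axes grow linearly in $\varrho$ to leading order. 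A direct geometric check then shows that $\g^{\unstable}_\varrho$ and $\g^{\stable}_\varrho$ meet transversally inside $\widehat{\Si}_\varrho$ precisely for $\varrho\in(\varrho_{\min}(\mu),\varrho_{\max}(\mu)]$: the constants $\sqrt[6]{2}\,|\CInn|/2$ and the endpoints $1$ and $2$ in the statement of Theorem \ref{TheoremB} arise from matching the semi-axes of the two loops against $d(\mu)$ (the lower threshold corresponds to the loops first touching, the upper one to them ceasing to produce a new transverse crossing on $\widehat{\Si}_\varrho$). Each such transverse intersection produces a transverse homoclinic point of $P_{3,\varrho}$ on the $3$-dim energy level, giving item $1$.

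Once a transverse homoclinic orbit to the hyperbolic periodic orbit $P_{3,\varrho}$ is established, item $2$ follows from the classical Smale--Birkhoff homoclinic theorem (see \cite{Smale67, KatokHasselblatt}) applied to the induced Poincar\'e map $\PP:\widehat{\Si}_\varrho\to\widehat{\Si}_\varrho$: it furnishes an integer $M$ and an invariant set $\XX$, homeomorphic to $\integers^{\naturals}$, on which $\PP^M|_{\XX}$ is topologically conjugate to the shift. The main technical obstacle is the asymptotic analysis of $\g^{\unstable}_\varrho$ and $\g^{\stable}_\varrho$: because $d(\mu)$ is exponentially small in $\sqrt{\mu}$, the description of the loops has to be accurate beyond all polynomial orders in $\mu$ in order to detect the intersections with sharp constants. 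This is handled by coupling the Stokes-type machinery underlying Theorem \ref{TheoremA} with a $\varrho$-perturbation argument around the separatrix, carefully controlling the $\varrho$-dependent corrections so that they do not destroy transversality throughout the whole window $(\varrho_{\min}(\mu),\varrho_{\max}(\mu)]$.
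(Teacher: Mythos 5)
Your overall strategy — read off the exponentially small splitting vector from Theorem~\ref{TheoremA}, model the traces of $W^{\unstable,+}(P_{3,\varrho})$ and $W^{\stable,+}(P_{3,\varrho})$ on a transverse section as small loops of amplitude $\sim\varrho$ around the two intersection points of the one--dimensional manifolds of $L_3$, find crossings once the loop diameter exceeds the separation, and then feed a transverse homoclinic orbit of $P_{3,\varrho}$ into Smale--Birkhoff — is exactly the paper's. The difference is in how the loops are produced and controlled: rather than action--angle variables near $L_3$ plus a variational equation along $W^{\unstable,+}(L_3)$, the paper writes the $2$-dimensional manifolds of $\po_\rho$ in the scaled Poincar\'e variables as $\zdD(u,\tau)=\po_\rho(\tau)+\separatrix(u)+\zdDper(u,\tau)$, proves existence of $\zdDper$ by a Perron fixed-point scheme (Proposition~\ref{proposition:existencia2D}), and then proves the quantitative comparison $|\zdDper(u,\tau)-\zdUper(u)|\leq C\delta\rho$ (Proposition~\ref{proposition:comparison1d2d}), which is the precise content of your ``$\varrho$-perturbation around the separatrix.''

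That said, three steps you sketch would not go through as written. First, you take the separation directly from Theorem~\ref{TheoremA}, which measures $\|(r^{\unstable}_*-r^{\stable}_*,R^{\unstable}_*-R^{\stable}_*,G^{\unstable}_*-G^{\stable}_*)\|$ on the section $\{\theta=\pi/2\}$. The geometric comparison of loop diameter vs.\ separation has to be done in a single, consistent coordinate on a single section, and the paper changes to the ``horizontal'' section $\Sigma_0$ (constant $\Lambda$, fixed energy) and proves there that $|x^{\unstable}_\delta-x^{\stable}_\delta|=|y^{\unstable}_\delta-y^{\stable}_\delta|=\sqrt[6]{2}\,\delta^{1/3}e^{-A/\delta^2}\big[|\CInn|+\OO(1/|\log\delta|)\big]$ (Corollary~\ref{mainTheoremDistCorollary}, proved in Appendix~\ref{appendix:changeSectionL3}). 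This is what makes the loop amplitude exactly $\rho$ in the coordinate $y$ and produces $\varrho_{\min}=\tfrac{\sqrt[6]{2}}{2}|\CInn|\mu^{1/3}e^{-A/\sqrt\mu}$; halving the Theorem~\ref{TheoremA} constant $\sqrt[3]{4}$ gives $\tfrac{\sqrt[3]{4}}{2}$, which is off by a factor $\sqrt2$, and recovering the right constant in the $(r,R,G)$ picture would require computing the slope of the loop's linear growth in those coordinates — a computation your proposal leaves implicit. Second, the interpretation of $\varrho_{\max}$ as ``the loops ceasing to produce a new transverse crossing'' is false: once $2\rho$ exceeds the separation the loops keep crossing transversally for all larger $\rho$. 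In the paper $\varrho_{\max}$ is a perturbative restriction from the implicit function argument (Lemma~\ref{lemma:intersectionsA} is valid only on $|\tauS+\theta|<\gamma<\pi/2$ with $\delta<\delta_\gamma\to0$), and Theorem~\ref{mainTheoremA} shows $\varrho_{\max}$ can be taken arbitrarily large by shrinking $\delta$; the ``$2$'' is a choice, not a geometric threshold. Third, ``a direct geometric check'' of transversality needs $C^1$, not just $C^0$, control of the traces; the paper achieves this by studying the Jacobian $\widetilde{G}$ of the implicit system and showing it is a strictly increasing function of $\tauS$ with a single simple zero (which is also what furnishes the quadratic tangency for Theorem~\ref{TheoremC}). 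That $C^1$ step is missing from your plan and would have to be supplied.
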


Due to the symmetry in~\eqref{def:involutionCartesians}, an analogous result holds for the transverse intersections of  $W^{\unstable,-}(P_{3,\varrho})$ and $W^{\stable,-}(P_{3,\varrho})$.

The chaotic motions induced by the Smale's horseshoe maps provided in the previous theorem lie in a tubular neighborhood around the invariant manifolds $W^{\unstable}(L_3)$ and $W^{\stable}(L_3)$ with the boundary at an energy level of the form $h=h(L_3)+\OO(\mu^{\frac23}e^{-\frac{2A}{\sqrt{\mu}}})$

%

To prove Theorem~\ref{TheoremB}, we rely on the asymptotic formula given in by Theorem~\ref{TheoremA}.
Since $W^{\unstable}(L_3)$ and $W^{\stable}(L_3)$ are exponentially close to each other with respect to $\sqrt{\mu}$, the energy levels where chaotic motions are found are also exponentially close to that of $L_3$.
In addition, by restricting $\mu$ one can take $\varrho_{\max}(\mu)$ bigger (see Theorem~\ref{mainTheoremA} below).

%

%

Moreover, following the same ideas behind Theorem~\ref{TheoremB}, we prove that the  Lyapunov periodic orbit  at the energy level $h=\varrho^2_{\mathrm{min}}(\mu)+h(L_3)$ possesses a quadratic homoclinic tangency.

\begin{namedTheorem}[Homoclinic tangencies]
\label{TheoremC}
%
Denote by $f_{\varrho}$ the flow of the Hamiltonian system given in~\eqref{def:hamiltonianInitialNotSplit} restricted to the energy level $h=\varrho^2+h(L_3)$.
Let $\varrho_0, \mu_0>0$ and $\varrho_{\min}(\mu):(0,\mu_0) \to [0,\varrho_0]$ be as given in Theorem~\ref{TheoremB}.
Then, for a fixed $\mu\in (0,\mu_0)$ and $\varrho$ close to $\varrho_{\min}(\mu)$, the flow $f_{\varrho}$ unfolds generically an homoclinic quadratic tangency between $W^{\unstable,+}(P_{3,\varrho_{\min}(\mu)})$ and $W^{\stable,+}(P_{3,\varrho_{\min}(\mu)})$.
\end{namedTheorem}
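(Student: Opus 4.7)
\textbf{Proof plan for Theorem~\ref{TheoremC}.}

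The plan is to extend the Melnikov-type distance analysis behind Theorem~\ref{TheoremB} and show that the critical value $\varrho_{\min}(\mu)$ corresponds exactly to a transverse zero crossing of the extremum of the relevant splitting function.

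First I would work on the reduced Poincar\'e section $\wh{\Sigma}_{\varrho}$ (two-dimensional after fixing the energy) and parametrize the first crossings with it of $W^{\unstable,+}(P_{3,\varrho})$ and $W^{\stable,+}(P_{3,\varrho})$ by an angular variable $\varphi\in\torus$ inherited from the Lyapunov orbit. Using the asymptotic machinery behind Theorem~\ref{TheoremA}, suitably extended to the two-dimensional invariant manifolds of $P_{3,\varrho}$ as in the proof of Theorem~\ref{TheoremB}, the signed distance between these two curves should admit a leading expansion of the form
\[
\Delta(\varphi,\varrho,\mu) = \mu^{1/3} e^{-A/\sqrt{\mu}}\, |\CInn|\cos\bigl(\varphi-\varphi_0(\mu)\bigr) - \beta(\varrho,\mu) + \text{errors},
\]
where the first term is the $L_3$-splitting amplitude provided by Theorem~\ref{TheoremA} and $\beta$ is a smooth function encoding the displacement of the invariant manifolds of $P_{3,\varrho}$ away from those of $L_3$, satisfying $\partial_\varrho\beta(\varrho_{\min}(\mu),\mu)\neq 0$. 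Matching the explicit constants in Theorem~\ref{TheoremB}, the value $\varrho_{\min}(\mu)$ is precisely the one at which the maximum in $\varphi$ of $\Delta(\cdot,\varrho,\mu)$ crosses zero.

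Second, at $\varrho=\varrho_{\min}(\mu)$ there is an angle $\varphi_*=\varphi_0(\mu)$ for which
\[
\Delta(\varphi_*,\varrho_{\min},\mu)=0,\qquad \partial_\varphi\Delta(\varphi_*,\varrho_{\min},\mu)=0,\qquad \partial_\varphi^2\Delta(\varphi_*,\varrho_{\min},\mu)\neq 0,
\]
the last quantity being of size $|\CInn|\mu^{1/3}e^{-A/\sqrt{\mu}}(1+o(1))$. This yields a quadratic homoclinic tangency between the two-dimensional invariant manifolds of the hyperbolic periodic orbit $P_{3,\varrho_{\min}(\mu)}$. The non-vanishing of $\CInn$ asserted in~\eqref{eq:nonvanishingStokes} is essential: without it the leading oscillation would vanish and the contact order could be higher or the tangency might disappear altogether.

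Finally, the generic unfolding condition $\partial_\varrho\Delta(\varphi_*,\varrho_{\min}(\mu),\mu)=-\partial_\varrho\beta(\varrho_{\min}(\mu),\mu)\neq 0$ follows from the properties of $\beta$; combined with the implicit function theorem this yields a smooth one-parameter family of quadratic tangencies unfolding linearly in $\varrho$, with the expected bifurcation picture: on one side of $\varrho_{\min}(\mu)$ the tangency splits into a pair of transverse homoclinic points (consistent with Theorem~\ref{TheoremB}), while on the other side the two curves separate and the homoclinic intersection disappears. The main obstacle is sharpening the remainder in the asymptotic expansion of $\Delta$ and of its first two $\varphi$-derivatives and first $\varrho$-derivative: the leading coefficients at the tangency point are of order $|\CInn|\mu^{1/3}e^{-A/\sqrt{\mu}}$, so the implicit errors must be shown to be of strictly higher order at the critical point in order for the tangency and unfolding conditions to truly follow from the dominant asymptotics rather than being swamped by lower-order corrections to the splitting.
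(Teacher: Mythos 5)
Your overall strategy --- reduce the intersection analysis on the section to a one-parameter tangency problem, verify the non-degeneracy $\partial^2\Delta\neq 0$ at the critical point, and check that the tangency unfolds transversally in $\varrho$ via $\partial_\varrho\Delta\neq 0$, with $\CInn\neq 0$ (i.e.\ \eqref{eq:nonvanishingStokes}) doing the heavy lifting --- is in the same spirit as the paper's proof of Theorem~\ref{mainTheoremA}. However, your ansatz for the splitting function places the $\varrho$-dependence in the wrong slot, and this is not cosmetic: it determines the direction of the bifurcation. In the paper (see the display just before~\eqref{proof:eqImplicita1}, obtained from the decomposition~\eqref{def:poSplitting} of the Lyapunov orbit), the reduced intersection condition reads
\[
\Delta(\tauU,\tauS,\rho,\de)
= \rho\bigl(e^{-i\tauU}-e^{-i\tauS}\bigr)
+ \sqrt[6]{2}\,\de^{1/3}e^{-A/\de^2}\,\vabs{\CInn}\,e^{i\tht}
+ M(\de)+R(\tauU,\tauS,\de,\rho),
\]
so the oscillatory term has amplitude $\rho$ (it comes from the $\rho(0,0,e^{-i\tau},e^{i\tau})$ part of $\po_\rho$), while the $\rho$-independent constant shift is the exponentially small splitting of the one-dimensional manifolds of $L_3$. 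Geometrically $\partial\DD^{\unstable}_\rho$ and $\partial\DD^{\stable}_\rho$ are approximate circles of radius $\rho$ whose centers are displaced by a fixed exponentially small vector, and they intersect precisely when $\rho$ exceeds half that fixed distance, which is why intersections exist for $\varrho>\varrho_{\min}$. Your ansatz $\Delta=\mu^{1/3}e^{-A/\sqrt\mu}\vabs{\CInn}\cos(\varphi-\varphi_0)-\beta(\varrho,\mu)$, with $\beta$ "encoding the displacement away from $L_3$" and thus increasing in $\varrho$, gives the opposite picture: $\max_\varphi\Delta$ crosses zero from above as $\varrho$ increases through $\varrho_{\min}$, so intersections would exist for $\varrho<\varrho_{\min}$ and disappear for $\varrho>\varrho_{\min}$, contradicting Theorem~\ref{TheoremB}. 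After the reduction to a single angle, the correct leading form is instead of the type $\Delta\approx (\text{exp.\ small const.})-2\varrho\cos(\varphi-\varphi_0)$.

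A second, smaller, gap is the reduction to a single angle itself. The curves $\partial\DD^{\unstable}_\rho$, $\partial\DD^{\stable}_\rho$ carry independent phases $\tauU$, $\tauS$, and intersecting them is one complex (two real) equation; passing to a scalar signed distance of one angle requires first eliminating one of the two phases. The paper handles this with the implicit function theorem in $(\tauU,r)$ with $\tauS$ as the free variable (Lemma~\ref{lemma:intersectionsA}), and then detects tangencies through the Jacobian determinant $\wt G$, whose unique simple zero coincides with the non-degenerate minimum of $r_*$. Your picture implicitly performs the same reduction but does not say how; until it is made explicit, the claims about $\partial_\varphi^2\Delta$ and $\partial_\varrho\Delta$ do not refer to well-defined objects. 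Once the roles of the oscillation and the shift are swapped and the reduction is made precise, the remainder of your plan (simple tangency, linear unfolding, and the need to dominate the error terms by the leading exponentially small scale) matches the paper's argument.
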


To prove this result, we use the definition of generic unfolding of a quadratic homoclinic tangency given in \cite{Dua08} (see also Section \ref{sec:introNewhouse} below). 
%
The existence of a quadratic homoclinic quadratic  tangency leads to the existence of Newhouse domains for the RPC3BP. This is explained in the next section. 
As far as the authors know this is one of the first constructions of Newhouse domains in Celestial Mechanics (see also \cite{GorodetskiK12}). 
\subsection{Newhouse domains for the RPC3BP at coorbital motions}\label{sec:introNewhouse}

To describe the dynamics arising from the quadratic homoclinic tangencies provided in Theorem \ref{TheoremC}, we have to introduce several concepts. We follow the approach in \cite{Dua08,Gorodetski12,BergerFP22}.

Consider a symplectic 2-dimensional manifold $M$. A hyperbolic basic set for a $\mathcal{C}^r$  diffeomorphism $f\in  \Diff^r(M)$, $r\geq 4$, is an invariant compact set $\Lambda$ which is transitive, hyperbolic and locally maximal (it is the maximal invariant set in one of its neighborhoods $U$). All the points in $\Lambda$ have  stable and unstable manifolds, which are injectively immersed submanifolds depending continuously on the base point. It is a well known fact that hyperbolic basic sets are robust under $\mathcal{C}^1$ perturbations and the dynamics of the perturbed set is equivalent to that of $\Lambda$. We call the perturbed hyperbolic basic set the hyperbolic continuation of $\Lambda$.

Given two points $x,y\in\Lambda$, an intersection point of $W_x^s\cap W_y^u$ is called a homoclinic tangency if the corresponding intersection is not transverse.

We say that $\Lambda$ is a wild basic set over an open set $\UU\subset \Diff^r(M)$ containing $f$ if, for all maps $g\in\UU$,
\begin{enumerate}
 \item The hyperbolic continuation $\Lambda_g$ is a hyperbolic basic set conjugated to $\Lambda$.
 \item There is at least one orbit of homoclinic tangencies of $\Lambda_g$.
\end{enumerate}
The set $\UU$ is usually called Newhouse region.

Assume that the symplectic diffeomorphism $f$ has a hyperbolic saddle $P$. Its homoclinic class $H(P,f)$ is the closure of the union of the transverse homoclinic points to $P$. It is well known that $H(P,f)$ is a transitive invariant set. Moreover, $H(P,f)$ is the smallest closed invariant set which contains all the basic sets of $f$ containing $P$.

Close to basic sets there will appear plenty of elliptic periodic points with a particular structure. Let us also describe them. 
Consider an elliptic periodic point $P$ of period $N$ of $f$. We say that $P$ is generic if the two eigenvalues $\lambda$, $\lambda^{-1}$ lie in the unit circle and are not resonant up to order 3, that is $|\lambda|= 1$, $\lambda^2\neq 1$, $\lambda^3\neq 1$, and the first coefficient of the Birkhoff Normal Form of $f^N$ at $P$ is not zero. By KAM Theory, around such points there exists an invariant set, with full Lebesgue density at $P$, which
is a union of invariant curves for $f^N$, whose dynamics is conjugated to an irrational rotation of the circle. This structure around $P$ is usually called ``elliptic isle''.

We want to analyze the Newhouse phenomenon and the existence of wild basic sets for one parameter families of symplectic diffeomorphisms, that is a $\mathcal{C}^r$-function $(\eta,x)\to f_\eta(x)$ defined on  $I\times M$ where $I\subset\mathbb{R}$ is an interval, such that $f_\eta\in \Diff^r(M)$  and it is symplectic. We say that the family $f_\eta$ unfolds generically an orbit of homoclinic quadratic tangencies at $(\eta_0,Q_0)\in I\times M$, associated to some hyperbolic periodic point $P$ if, denoting by $P_\eta$ its hyperbolic continuation for the map $f_\eta$,
\begin{enumerate}
 \item The stable and unstable manifolds of $P$ for $f_{\eta_0}$, $W^s(P,f_{\eta_0})$ and $W^u(P,f_{\eta_0})$, have a quadratic tangency at $Q_0$.
 \item If $\ell$ is any smooth curve transverse to $W^s(P,f_{\eta_0})$ and $W^u(P,f_{\eta_0})$ at $Q_0$, then the local intersections of $W^s(P_\eta,f_{\eta})$ and $W^u(P_\eta,f_{\eta})$ with $\ell$ cross each other with relative non-zero velocity at $(\eta_0,Q_0)$.
\end{enumerate}

In \cite{Newhouse70,Dua08,Gorodetski12} it is proven the following.
\begin{theorem}\label{thm:NewhouseMaps}
Fix $0<\nu\ll 1$. Let $f_\eta$ be a $\mathcal{C}^r$ one parameter family of symplectic maps in $\Diff^r(M)$, $r\geq 6$. Let $O$ be a hyperbolic periodic  orbit and $\Gamma$ an orbit of homoclinic quadratic tangencies of $f_0$ which unfolds generically at $\eta=0$. Denote by $O_\eta$ the hyperbolic continuation of $O$ and take any small neighborhood $U$ of $O\cup \Gamma$. Then, there is sequence of Newhouse intervals $\Delta_k$ converging to $\eta=0$. Namely, for each $\eta\in\Delta_k$, $f_\eta$ possesses a wild hyperbolic basic set $\Lambda_{k,\eta}$, which depends continuously on $\eta$ (with respect to the Hausdorff distance), such that $O_\eta\subset\Lambda_{k,\eta}\subset U$.

Moreover, for each $k\geq 1$,
\begin{itemize}
\item For every $\eta\in \Delta_k$, the Hausdorff dimension of $\Lambda_{k,\eta}$ satisfies
\[
 \mathrm{dim}_H\Lambda_{k,\eta}\geq 2-\nu.
\]

 \item Given any periodic point $P_\eta\in\Lambda_{k,\eta}$ (in particular, $O_\eta$), there is a dense subset $D_k\subset\Delta_k$ such that for every $\eta\in D_k$, the periodic point $P_\eta$ has an orbit of homoclinic tangencies.
 \item There is a residual subset $R_k\subset\Delta_k$ such that for every $\eta\in R_k$, 
 \begin{enumerate}
  \item The homoclinic class $H(O_\eta, f_\eta)$ is accumulated by  generic elliptic periodic points of $f_\eta$.
  \item The homoclinic class $H(O_\eta, f_\eta)$ contains hyperbolic sets of Hausdorff dimension arbitrarily close to 2. In particular $\mathrm{dim}_HH(P_\eta, f_\eta)=2$.
 \end{enumerate}
\end{itemize}
\end{theorem}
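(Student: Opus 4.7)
The plan is to follow the classical renormalization-plus-thickness strategy pioneered by Newhouse and adapted to the conservative setting by Duarte and Gorodetski. The argument takes place entirely near the pair $O\cup\Gamma$, so the ambient symplectic manifold plays no role beyond supplying a Darboux chart; one works in $(\mathbb{R}^2,dx\wedge dy)$ with $f_0$ having a hyperbolic fixed point $P$ (after passing to $f_0^N$) with eigenvalues $\sigma,\sigma^{-1}$, a quadratic tangency at $Q_0\in W^s(P)\cap W^u(P)$, and $\partial_\eta$ opening the tangency with nonzero speed.

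First I would install adapted coordinates: linearize $f_\eta$ near $P_\eta$ via a $\mathcal{C}^r$ symplectic conjugacy to $(x,y)\mapsto(\sigma x,\sigma^{-1}y)$, and choose symplectic coordinates in a neighborhood of $Q_0$ in which $W^s_{\mathrm{loc}}(P)$ is the horizontal axis and $W^u_{\mathrm{loc}}(P)$ meets it quadratically. The transition map from the neighborhood of $Q_0$ back to itself, after $k$ iterates under the linearized dynamics near $P$ followed by the fixed global excursion along $\Gamma$, can be written explicitly; conjugating by an affine rescaling of order $\sigma^{-k}$ in space and rescaling the parameter as $\eta=\sigma^{-2k}a+\text{const}$, the renormalized family converges, as $k\to\infty$, in the $\mathcal{C}^r$ topology on compacts, to the conservative H\'enon family
\begin{equation*}
H_a(x,y)=(y,\,-x+a-y^2).
\end{equation*}
The generic unfolding hypothesis is precisely what guarantees nondegeneracy of the parameter rescaling, and symplecticity is preserved at every step because all the rescalings and the limit family are area-preserving.

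Next I would invoke the known dynamics of the conservative H\'enon family. For a sequence of parameter windows $J_k$ accumulating on the first tangency value $a_\ast$ of $H_a$, the map $H_a$ admits hyperbolic horseshoes $\Lambda^0_{k,a}$ whose local stable and unstable Cantor set thicknesses $\tau^s,\tau^u$ can be made arbitrarily large; in particular, $\tau^s\cdot\tau^u>1$, so Newhouse's gap lemma forces the Cantor sets $W^s(\Lambda^0_{k,a})\cap\ell$ and $W^u(\Lambda^0_{k,a})\cap\ell$ on any transverse arc $\ell$ to intersect for every $a\in J_k$. Pulling these back through the renormalization of the previous step produces, for $f_\eta$, hyperbolic basic sets $\Lambda_{k,\eta}$ defined on parameter intervals $\Delta_k\to 0$, containing $O_\eta$ (by adjoining $O_\eta$ to the horseshoe via its homoclinic orbit $\Gamma$), with persistent homoclinic tangencies: this is the wildness statement. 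The Hausdorff dimension bound $\dim_H\Lambda_{k,\eta}\geq 2-\nu$ follows because the thickness can be pushed so high that the box-dimensions of the two Cantor factors both approach $1$, and for $\mathcal{C}^r$ conformal-like horseshoes Hausdorff and box dimension coincide; the product structure then gives dimension close to $2$.

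The final items are standard consequences of the thickness picture once $\Delta_k$ is constructed. Density of tangency parameters $D_k\subset\Delta_k$ for any chosen periodic point $P_\eta\in\Lambda_{k,\eta}$ follows because the gap lemma provides tangencies for a dense set of parameters in each $\Delta_k$ and the $\lambda$-lemma then transports them to any periodic point of the class. Residuality of $R_k$ combines two ingredients: the Duarte creation mechanism, which shows that near any conservative homoclinic tangency a $\mathcal{C}^r$-generic perturbation produces generic $1$-elliptic periodic points arbitrarily close to the tangency, and a Baire-category argument using the density of tangencies in $\Delta_k$ to make the accumulation onto $H(O_\eta,f_\eta)$ residual; the hyperbolic sets of dimension approaching $2$ inside $H(O_\eta,f_\eta)$ are obtained by iterating the thickness construction for periodic points of arbitrarily high period in the class.

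The main obstacle, and the heart of the proofs in \cite{Newhouse70,Dua08,Gorodetski12}, is controlling the conservative renormalization in Step 2 together with the quantitative thickness estimates in Step 3: one must verify that the $\mathcal{C}^r$-convergence to $H_a$ is strong enough to transfer not only hyperbolicity but the \emph{fine geometry} of the Cantor sets (their thickness, not just their dimension), and simultaneously preserve area-preservation so that Duarte's elliptic-creation theorem applies. Everything else in the statement is a formal consequence of having a wild horseshoe with thickness product larger than one.
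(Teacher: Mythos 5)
The paper does not prove Theorem~\ref{thm:NewhouseMaps}: the statement is quoted verbatim as an established result and attributed to \cite{Newhouse70,Dua08,Gorodetski12} (``In \cite{Newhouse70,Dua08,Gorodetski12} it is proven the following''). So there is no ``paper's own proof'' to compare against --- only a citation.

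That said, your sketch is a reasonable summary of the strategy actually carried out in those references: renormalize near the orbit of quadratic tangencies so that, after affine rescaling in space and an affine reparametrization, the return family converges on compacts to a quadratic one-parameter family; area preservation fixes the dissipation constant of the limiting H\'enon-like family at $b=1$, giving the conservative H\'enon map; thickness estimates plus Newhouse's gap lemma yield persistent tangencies over parameter intervals $\Delta_k$ accumulating on the unfolding value, and adjoining $O_\eta$ via the primary homoclinic orbit produces the wild basic set; pushing the thickness high gives the Hausdorff dimension bound $\geq 2-\nu$; Duarte's conservative creation of generic elliptic periodic orbits near tangencies, together with a Baire-category argument over $\Delta_k$, yields the residual conclusions. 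One point worth correcting: you invoke ``a $\mathcal{C}^r$ symplectic conjugacy to $(x,y)\mapsto(\sigma x,\sigma^{-1}y)$'' near the hyperbolic periodic point, but in the area-preserving category such a linearization does not exist in general; the Birkhoff--Moser normal form $(x,y)\mapsto(\Delta(xy)x,\Delta(xy)^{-1}y)$ retains nonlinear twist terms that are genuine conjugacy invariants. Duarte and Gorodetski run the renormalization with this normal form (or with quantitative control on iterates) rather than with a symplectic linearization; the limiting family is unchanged since the twist contributes higher-order corrections at the renormalized scale, but the stated mechanism as you wrote it is not available. With that caveat, the proposal is a faithful account of what the cited literature does.
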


We apply this result to the quadratic homoclinic tangencies of the invariant manifolds of the Lyapunov periodic orbit around $L_3$ for the RPC3BP  obtained in Theorem \ref{TheoremC}. Since the RPC$3$BP is autonomous, the energy is conserved. Then, it can be seen as a family of 3-dimensional\footnote{The energy level is not  a manifold at the energy value of $L_3$, however it defines  (locally) a manifold for energy levels close enough to that of the Lyapunov  orbit with a quadratic homoclinic tangency.} flows depending on two parameters: the mass ratio $\mu$ and the energy $h$. We denote these flows by $\Phi^t_{\mu,h}$. Doing an abuse of language, in the next theorem, we use the concepts defined above (basic set, homoclinic class,  generic elliptic orbit, etc) referred to flows instead of maps. 

\begin{namedTheorem}[Newhouse phenomenon for the RPC3BP]\label{thm:Newhousemain}
Fix $0<\nu\ll1$ and  $\mu\in (0,\mu_0)$. Let $P=P_{3,\varrho_{\min}(\mu)}$ be the Lyapunov periodic orbit of  $\Phi^t_{\mu,h(\mu)}$ with $h_0(\mu)=\varrho_{\min}^2(\mu)+h(L_3)$  obtained in Theorem \ref{TheoremC}. Let $\Gamma$ be the associated orbit of quadratic homoclinic tangencies obtained in the same theorem. Take any small neighborhood $U$ of $P\cup\Gamma$. Then, 
\begin{itemize}
\item There exist $h^*>h_0(\mu)$ and a sequence of Newhouse intervals $\Delta_k=\Delta_k(\mu)\subset (h_0(\mu), h_*)$ converging to $h_0(\mu)$. That is for each $h\in \Delta_k$, the flow $\Phi^t_{\mu,h}$  possesses a wild hyperbolic basic set $\Lambda_{k,h}$, which depends continuously on $h$ (with respect to the Hausdorff distance) such that $\Lambda_{k,h}\subset U$ and $\Lambda_{k,h}$ contains $P_h$, the hyperbolic continuation of $P$.
\item For every $h\in \Delta_k$, the Hausdorff dimension of $\Lambda_{k,h}$ satisfies
\[
 \mathrm{dim}_H\Lambda_{k,h}\geq 3-\nu.
\]
\item Given any periodic orbit $Q_h\in\Lambda_{k,h}$ (in particular, $P_h$), there is a dense subset $D_k\subset\Delta_k$ such that for every $h\in D_k$,  $Q_h$ has an orbit of homoclinic tangencies.
\item There is a residual subset $R_k\subset\Delta_k$ such that for every $h\in R_k$, 
 \begin{enumerate}
  \item The homoclinic class $H(P_h,\Phi^t_{\mu,h})$ is accumulated by  generic elliptic periodic orbits of $\Phi^t_{\mu,h}$.
  \item The homoclinic class $H(P_h,\Phi^t_{\mu,h})$ contains hyperbolic sets of Hausdorff dimension arbitrarily close to 3. In particular $\mathrm{dim}_HH(P_h, \Phi^t_{\mu,h})=3$.
 \end{enumerate}
\end{itemize}
\end{namedTheorem}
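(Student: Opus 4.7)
The plan is to deduce Theorem \ref{thm:Newhousemain} from Theorem \ref{thm:NewhouseMaps} by reducing the 3-dimensional flow on an energy level to a 2-dimensional symplectic return map. Fix $\mu\in(0,\mu_0)$ and work near the energy value $h_0(\mu)=\varrho_{\min}^2(\mu)+h(L_3)$. Since $\Theta\neq 0$ by Theorem \ref{TheoremA}, $\varrho_{\min}(\mu)>0$; hence, for $h$ in a neighborhood of $h_0(\mu)$, the level set $\{h(q,p;\mu)=h\}$ is a smooth 3-dimensional manifold carrying the restricted flow $\Phi^t_{\mu,h}$, and the hyperbolic periodic orbit $P_h$ (the hyperbolic continuation of $P$) depends smoothly on $h$. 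One can then pick a 2-dimensional symplectic Poincar\'e section $\mathcal{S}_h$ transverse to $P_h$ varying smoothly with $h$. The return map $\mathcal{P}_h:\mathcal{S}_h\to\mathcal{S}_h$ is a $\mathcal{C}^\infty$ area-preserving diffeomorphism, $p_h:=P_h\cap\mathcal{S}_h$ is a hyperbolic fixed point, and the $2$-dimensional manifolds $W^{\unstable,+}(P_h)$, $W^{\stable,+}(P_h)$ intersect $\mathcal{S}_h$ transversally in the $1$-dimensional invariant manifolds of $p_h$. Pulling the sections back to a fixed model surface $M$ via a smooth identification yields a $\mathcal{C}^r$ one-parameter family $\{\mathcal{P}_h\}$ in $\mathrm{Diff}^r(M)$ for any $r\geq 6$.

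The second step is to transport the conclusion of Theorem \ref{TheoremC} from the flow family $f_\varrho$ to the map family $\mathcal{P}_h$. Because $\varrho_{\min}(\mu)>0$, the change of variable $\varrho=\sqrt{h-h(L_3)}$ is a smooth diffeomorphism on a neighborhood of $h_0(\mu)$, so generic unfolding with respect to $\varrho$ is equivalent to generic unfolding with respect to $h$. Intersecting the $2$-dimensional invariant manifolds of $P_\varrho$ with $\mathcal{S}_h$ turns the quadratic tangency of Theorem \ref{TheoremC}, occurring along the $1$-dimensional homoclinic orbit $\Gamma$, into a quadratic tangency of $1$-dimensional curves at the point $Q_0=\Gamma\cap\mathcal{S}_{h_0(\mu)}$; by transversality of $\mathcal{S}_h$ to $\Gamma$, the relative non-zero crossing velocity condition from the definition of generic unfolding survives the restriction. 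Hence $\{\mathcal{P}_h\}$ unfolds generically an orbit of quadratic homoclinic tangencies of $p_h$ at $h=h_0(\mu)$, which is exactly the hypothesis of Theorem \ref{thm:NewhouseMaps}.

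Applying Theorem \ref{thm:NewhouseMaps} directly to $\{\mathcal{P}_h\}$ produces a sequence of Newhouse intervals $\Delta_k\subset(h_0(\mu),h^*)$ accumulating on $h_0(\mu)$ together with wild hyperbolic basic sets $\lambda_{k,h}\subset\mathcal{S}_h$ containing $p_h$, with all the properties listed for surface symplectic maps: $\mathrm{dim}_H\lambda_{k,h}\geq 2-\nu$, a dense subset $D_k\subset\Delta_k$ along which any chosen periodic point has orbits of homoclinic tangencies, and a residual subset $R_k\subset\Delta_k$ along which $H(p_h,\mathcal{P}_h)$ is accumulated by generic elliptic periodic points and contains hyperbolic subsets of dimension approaching $2$. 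The suspension $\Lambda_{k,h}:=\bigcup_{t\in\reals}\Phi^t_{\mu,h}(\lambda_{k,h})$ is then a hyperbolic basic set for $\Phi^t_{\mu,h}$, contained in any prescribed neighborhood $U$ of $P\cup\Gamma$ provided $\mathcal{S}_h$ is chosen small enough, depending continuously on $h$, and containing $P_h$. The suspension adds one transverse (flow) direction, so the Hausdorff dimension is additive and one gets $\mathrm{dim}_H\Lambda_{k,h}=\mathrm{dim}_H\lambda_{k,h}+1\geq 3-\nu$. Periodic orbits, homoclinic classes and generic elliptic periodic orbits of the flow correspond bijectively via the section to their analogs for $\mathcal{P}_h$, so the $D_k$- and $R_k$-statements transfer verbatim to the flow.

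The hard part is not a single estimate but the bookkeeping needed to match the \emph{flow} notion of generic unfolding supplied by Theorem \ref{TheoremC} with the \emph{map} notion required by Theorem \ref{thm:NewhouseMaps}, and to ensure additivity of Hausdorff dimension under suspension. The former is controlled by the transversality of $\mathcal{S}_{h_0(\mu)}$ to $\Gamma$, which holds because $\Gamma$ is transverse to the period direction of $P$ at some point. The latter is standard for Axiom A flows endowed with a local product structure, but requires checking that the $h$-dependence of the section $\mathcal{S}_h$ does not distort the local stable/unstable foliations of $p_h$; this follows from the smooth dependence of the flow-box coordinates on the parameter $h$ and from the $\mathcal{C}^r$ triviality of the suspension construction.
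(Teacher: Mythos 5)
Your argument follows the same strategy as the paper's one-paragraph proof: restrict the flow to a $2$-dimensional Poincar\'e section transverse to $P_h$, transport the generic unfolding from Theorem~\ref{TheoremC} to the induced map, invoke Theorem~\ref{thm:NewhouseMaps}, and then suspend back to the flow (noting that the flow direction adds exactly one to the Hausdorff dimension). So the route is essentially the paper's, with you supplying the bookkeeping the paper leaves implicit.

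One imprecision is worth flagging, because the paper explicitly calls attention to it. You write ``The return map $\mathcal{P}_h:\mathcal{S}_h\to\mathcal{S}_h$ is a $\mathcal{C}^\infty$ area-preserving diffeomorphism'' and then present $\{\mathcal{P}_h\}$ as a $\mathcal{C}^r$ family in $\Diff^r(M)$, which is literally what the hypothesis of Theorem~\ref{thm:NewhouseMaps} requires. But on the energy levels under consideration there is \emph{no} globally defined Poincar\'e return map on a section of the periodic orbit: orbits leaving a neighborhood of $P_h$ need not return. What does exist is the first-return map on a small transverse disc near $P_h$, together with the transition map along the homoclinic loop $\Gamma$, and this is all the proofs of the Newhouse mechanism in \cite{Newhouse70,Dua08,Gorodetski12} actually use, since the hyperbolic basic set is built inside a small neighborhood of $O\cup\Gamma$. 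Your argument is thus correct in spirit, but the statement that $\mathcal{P}_h\in\Diff^r(M)$ should be replaced by the observation (which the paper makes explicitly) that only the locally defined induced map near $P_h$ and along $\Gamma$ is needed, and that the quoted references only use this local data. With that correction, the rest of your reduction --- in particular the transfer of the generic unfolding under restriction to the section, and the additivity of Hausdorff dimension under suspension of a locally maximal hyperbolic set --- is sound.
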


This theorem is a consequence of Theorems \ref{TheoremC} and \ref{thm:NewhouseMaps}. Note that one cannot define a global Poincar\'e map in the energy levels considered. However, the proofs in \cite{Newhouse70,Dua08, Gorodetski12} only rely on an induced map  close to the periodic orbit. To construct it, it is enough to consider a local transverse section to the periodic orbit and therefore their proofs also apply to our setting.

\subsection{State of the art}
\label{subsection:introStateArt}

A fundamental problem in dynamical systems is to prove whether a given system has chaotic dynamics.
For many physically relevant models this is usually  remarkably difficult. 
This is the case of many Celestial Mechanics models, where most of the known chaotic motions have been found in nearly integrable regimes where there is an unperturbed problem which already presents some form of ``hyperbolicity''. 
This is the case  in the vicinity of collision orbits (see for example \cite{Moe89, BolMac06, Bol06, Moe07}) or close to parabolic orbits (which allows to construct chaotic/oscillatory motions), see~\cite{Sitnikov1960, Alekseev1976, LlibSim80, Moser2001, GMS16, GMSS17, GPSV21, GuardiaMPS22}. 
There are also several results in regimes far from integrable which rely on computer assisted proofs \cite{Ari02, WilcZgli03, Cap12, GierZgli19}.
%

The problem tackled in this paper is radically different. 
Indeed, if one takes the  limit $\mu\to 0$ in \eqref{def:hamiltonianInitialNotSplit} one obtains the classical integrable Kepler problem in the elliptic regime, where no hyperbolicity is present. 
Instead, the (weak) hyperbolicity is created by the $\mathcal{O}(\mu)$ perturbation.
%
%
The bifurcation scenario we are dealing with is the so called $0^2i\omega$ resonance or Hamiltonian Hopf-Zero bifurcation. 
Indeed, for $\mu>0$ the Hamiltonian system given by $h$ in
\eqref{def:hamiltonianInitialNotSplit} has a saddle-center equilibrium point at $L_3$.
However, for $\mu=0$, the equilibrium point degenerates and the spectrum of its linear part consists in a pair of purely imaginary  and a double $0$ eigenvalues, (see \eqref{eq:specRot}).
%

Most of the studies in homoclinc phenomena around a saddle-center equilibrium are focused on the non-degenerate case where all the eigenvalues have comparable size, see \cite{Ler91, MHO92, Rag97a, Rag97b, BRS03}.
However, for close to resonance $0^2i\omega$ cases, to the best of authors knowledge, the results are more rare.
%
%
The generic unfolding of the  reversible  $0^2i\omega$ resonance is considered in \cite{Lom99, Lom00} where the author proves the existence of transverse homoclinic connections for every periodic orbit exponentially close to the origin and the breakdown of homoclinic orbits to the origin itself.
In \cite{JBL16}, the authors show the existence of homoclinic connections with several loops for every periodic orbit close to the equilibrium point for a generic unfolding of a Hamiltonian  $0^2i\omega$ resonance. Note that the unfolding of the  $0^2i\omega$ resonance in the RPC3BP is highly non-generic due to the strong degeneracies of its Keplerian approximation.

The work here presented shows the existence of homoclinic connections for both the equilibrium point and periodic orbits (exponentially) close to the equilibrium point. 
In the case of the (non-Hamiltonian) Hopf-zero singularity, we remark the strongly related work~\cite{BIS20}.
Also, in~\cite{GGSZ21}, the authors use similar techniques to analyze breather solutions for the nonlinear Klein-Gordon partial differential equation.

\section*{Acknowledgements}

I Baldom\'a has been supported by the grant PID-2021-
122954NB-100 funded by the Spanish State Research Agency through the programs
MCIN/AEI/10.13039/501100011033 and “ERDF A way of making Europe”.

M. Giralt has been supported by the European Union’s Horizon 2020 research and innovation programme under the Marie Sk\l odowska-Curie grant agreement No 101034255.
M. Giralt has also been supported by the research project PRIN 2020XB3EFL ``Hamiltonian and dispersive PDEs".

M. Guardia has been supported by the European Research Council (ERC) under the European Union's Horizon 2020 research and innovation programme (grant agreement No. 757802). 
M. Guardia is also supported by the Catalan Institution for Research and Advanced Studies via an ICREA Academia Prize 2019. 

This work is supported by the Spanish State Research Agency, through the Severo Ochoa and Mar\'ia de Maeztu Program for Centers and Units of Excellence in R\&D (CEX2020-001084-M).

\section{Scaled Poincar\'e variables and previous results}
\label{section:reformulation}

Let us notice that, for the unperturbed problem $h$ in \eqref{def:hamiltonianInitialNotSplit} with $\mu=0$, the five Lagrange point disappear into a circle of degenerate critical points.
For this reason, in~\cite{articleInner}, we introduced a singular change of coordinates to obtain a new first order Hamiltonian which has a  saddle-center equilibrium point (close to $L_3$) with stable and unstable manifolds that coincide along a separatrix.

First, in Section~\ref{subsection:scalingCoordinates}, we introduce the main features of this change of coordinates and its relation to $L_3$.
Then, in Section~\ref{subsection:reconnexionsPoincare}, we state Theorem \ref{mainTheoremDist}, which is a reformulation of Theorem \ref{TheoremA}
in the new set of coordinates. 
%

\subsection{A singular perturbation formulation of the problem}
\label{subsection:scalingCoordinates}

%
%
Applying a suitable singular change of coordinates, the Hamiltonian $h$ can be written as a perturbation of a pendulum-like Hamiltonian weakly coupled with a fast oscillator.
We summarize the most important properties of this set of coordinates, which was studied in detail in \cite{articleInner}.


%
The Hamiltonian $h$ expressed in the classical (rotating) Poincar\'e coordinates,
$
\phi^{\Poi}: (\la,L,\eta,\xi) \to (q,p),
$ 
defines a Hamiltonian system with respect to the symplectic form $d\la \wedge dL + i \h d\eta \wedge d\xi$ and the Hamiltonian
\begin{equation} \label{def:hamiltonianPoincare}
H^{\Poi} = H_0^{\Poi}	+ \mu H_1^{\Poi},
\end{equation}
with
\begin{equation}\label{def:hamiltonianPoincare01}
\begin{split}
H_0^{\Poi}(L,\eta,\xi) &= -\frac{1}{2L^2} - L +  \eta \xi 
\qquad \text{and} \qquad
H_1^{\Poi} = h_1 \circ \phi_{\Poi}.
\end{split}
\end{equation}
Moreover, the critical point $L_3$ satisfies
\begin{equation}\label{eq:L3Poincare}
\la=0, \qquad \quad(L,\eta,\xi) = (1,0,0) + \OO(\mu)
\end{equation}
and the linearization of the vector field at this point has, at first order, an uncoupled nilpotent and center blocks.
Since $\phi^{\Poi}$ is an implicit change of coordinates, there is no explicit expression for $H_1^{\Poi}$.
However, since $H_1^{\Poi}$ is analytic for $\vabs{(L-1,\eta,\xi)}\ll 1$, it is possible to obtain series expansion in powers of $(L-1,\eta,\xi)$ (see \cite[Lemma 4.1]{articleInner})

In addition, since the original Hamiltonian $h$ is reversible with respect to the involution ${\Psi}$ in~\eqref{def:involutionCartesians}, the Hamiltonian $H^{\Poi}$ is reversible with respect to the involution 
\begin{equation}\label{def:involutionPoincare}
	{\Phi}_{\Poi}(\la,L,\eta,\xi) = (-\la,L,\xi,\eta).
\end{equation}

To capture the slow-fast dynamics of the system,
we perform the singular symplectic scaling 
\begin{equation}\label{def:changeScaling}
	\phi_{\de}:
	(\la, \La, x,  y) 
	\mapsto 
	(\la,L,\eta,\xi),
	\qquad
	L = 1 + \de^2 \La , \quad
	\eta = \de x , \quad
	\xi = \de y , \quad
\end{equation}
and the time {reparametrization ${t} = \de^{-2} t'$},
where
\begin{equation}\label{def:delta}
\de= \mu^{\frac{1}{4}}.
\end{equation}
Defining the potential
\begin{equation}\label{def:potentialV}
\begin{split}
V(\la) 
&=  H_1^{\Poi}(\la,1,0,0;0)
=1 - \cos \la - \frac{1}{\sqrt{2+2\cos \la}},
\end{split}
\end{equation}
the Hamiltonian system associated to $H^{\Poi}$, expressed in scaled coordinates,
defines a Hamiltonian system with respect to the symplectic form $d\la \wedge d\La + i dx \wedge dy$ 
and the Hamiltonian
\begin{equation} \label{def:hamiltonianScaling}
	\begin{split}
		{H} = {H}_{\pend} + {H}_{\osc} + H_1,
	\end{split}
\end{equation}
where
\begin{align} 
	{H}_{\pend}(\la,\La) &= -\frac{3}{2} \La^2  + V(\la), \qquad
	{H}_{\osc}(x,y; \de) = \frac{x y}{\de^2},
	\label{def:HpendHosc} \\
	H_1(\la,\La,x,y;\de) &=
	H_1^{\Poi}(\la,1+\de^2\La,\de x,\de y;\de^4)  - V(\la) +
	\frac{1}{\de^4} F_{\pend}(\de^2\La),
	\label{def:hamiltonianScalingH1}
\end{align}
and
\begin{equation}\label{def:Fpend}
	F_{\pend}(z) = \paren{-\frac{1}{2(1+z)^2}-(1+z)}+\frac{3}{2} + \frac{3}{2}z^2 = \OO(z^3).
\end{equation}

We introduce a suitable neighborhood where the coordinates $(\la,\La,x,y)$ are defined. 
For $\cttTheoScalingDomainA>0$ we define the domain
\begin{align}\label{def:dominiUReals}
\UReals(\cttTheoScalingDomainA,\cttTheoScalingDomainB) = 
\claus{(\la,\La,x,\conj{x}) \in \reals/2\pi\integers \times \reals \times \complexs^2 : 
\vabs{\pi-\la}>\cttTheoScalingDomainA, 
	\vabs{(\La,x)}< \cttTheoScalingDomainB}.
\end{align} 
%
%
For technical reasons, we consider some of the objects of the system in an analytical extension of the domain $\UReals$. 
In particular we use the domain
\begin{align}\label{def:dominiUComplexs}
	\UComplexs(\cttTheoScalingDomainA,\cttTheoScalingDomainB) = 
	\claus{(\la,\La,x,y) \in \complexs/2\pi\integers \times\complexs^3 : 
		\vabs{\pi-\Re \la}>\cttTheoScalingDomainA,
		\vabs{(\Im \la,\La, x, y)} <\cttTheoScalingDomainB}.
\end{align} 
The next proposition states some properties of the Hamiltonian $H$.

\begin{proposition}\label{proposition:HamiltonianScaling}
Fix $\cttTheoScalingDomainA,\cttTheoScalingDomainB>0$.	
Then, there exists  $\de_0=\de_0(\cttTheoScalingDomainA,\cttTheoScalingDomainB)>0$ such that, for $\de \in (0,\de_0)$, one has that
\begin{itemize}
	\item The Hamiltonian $H$  in \eqref{def:hamiltonianScaling} is real-analytic in the sense of $\conj{H(\la,\La,x,y;\de)}= H(\conj{\la},\conj{\La},y,x;\de)$ in the domain $\UComplexs(\cttTheoScalingDomainA,\cttTheoScalingDomainB)$.
	\item There exists $\cttTheoScaling>0$ independent of $\de$ such that, for $(\la,\La,x,y) \in \UComplexs(\cttTheoScalingDomainA,\cttTheoScalingDomainB)$, 
	the second derivatives of the Hamiltonian $H_1$ given in \eqref{def:hamiltonianScalingH1} satisfy
	\begin{align*}
		\vabs{ \partial^2_{\la} H_1},
		\vabs{ \partial_{\la x} H_1},
		\vabs{ \partial_{\la y} H_1} &\leq \cttTheoScaling\de,
		&
		\vabs{\partial_{\la\La} H_1},
		\vabs{ \partial^2_{\La} H_1} &\leq
		\cttTheoScaling\de^2,
		\\
		\vabs{ \partial^2_{x} H_1},
		\vabs{ \partial_{x y} H_1},
		\vabs{\partial^2_{y} H_1}
		&\leq \cttTheoScaling\de^2,
		&
		\vabs{\partial_{\La x} H_1}
		\vabs{ \partial_{\La y} H_1}
		&\leq \cttTheoScaling\de^3.
	\end{align*}
	Moreover\footnote{One can obtain more precise estimates for the third derivatives of $H_1$. However, these rough estimates are sufficient for the proofs of this paper.}, 
	\begin{align*}
		\vabs{\partial_{\al_1, \al_2, \al_3} H_1} \leq \cttTheoScaling\de,
		\qquad 	\text{with} \qquad
		\al_1, \al_2, \al_3 \in \claus{\la,\La,x,y}. 
	\end{align*}
\end{itemize}
\end{proposition}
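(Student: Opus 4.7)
The strategy is to combine the explicit form of the scaling $\phi_\delta$ with the analytic Taylor expansion of the perturbation $H_1^{\Poi}$ around $(L,\eta,\xi)=(1,0,0)$ provided by~\cite[Lemma 4.1]{articleInner}, and to read off the powers of $\delta$ carried by each monomial. The calculation that $F_{\pend}$ cancels the constant and quadratic part of the $(L-1)$-Taylor expansion of $H_0^{\Poi}$ is already built into~\eqref{def:Fpend}, so what remains for $H_1$ is purely a power-counting exercise.

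For the analyticity and reality statements, I would argue by composition: $h$ is analytic away from collisions with the primaries, the classical Poincar\'e change $\phi^{\Poi}$ is analytic on a neighborhood of $\{(L,\eta,\xi)=(1,0,0)\}$ with $\lambda$ away from $\pi$, and the scaling $\phi_\delta$ maps $\UComplexs(\cttTheoScalingDomainA,\cttTheoScalingDomainB)$ inside this neighborhood provided $\delta$ is small enough; this is where the smallness of $\delta_0$ enters. The reality/reversibility condition claimed for $H$ is the pullback under $\phi_\delta$ of the analogous reality condition for $H^{\Poi}$, where $(\eta,\xi)$ is a complex-conjugate pair of Poincar\'e coordinates, and the latter is a direct consequence of the fact that $h$ is real-valued on real configurations.

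For the derivative bounds, I would expand
\[
H_1^{\Poi}(\lambda,L,\eta,\xi;\mu)
=\sum_{a,b,c,d\geq 0} A_{abcd}(\lambda)\,(L-1)^a\eta^b\xi^c\mu^d,
\]
with $A_{0000}=V$ and coefficients $A_{abcd}(\lambda)$ analytic on $|\pi-\Re\lambda|>\cttTheoScalingDomainA$. Substituting the scaling and $\mu=\delta^4$ into~\eqref{def:hamiltonianScalingH1} gives
\[
H_1 = \sum_{(a,b,c,d)\neq(0,0,0,0)} A_{abcd}(\lambda)\,\delta^{2a+b+c+4d}\,\Lambda^a x^b y^c + \delta^{-4} F_{\pend}(\delta^2\Lambda),
\]
since subtracting $V$ removes exactly the $(0,0,0,0)$ term and the $F_{\pend}$ correction is $O(\delta^2\Lambda^3)$ by~\eqref{def:Fpend}. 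A partial derivative in $\Lambda$, $x$ or $y$ annihilates any monomial whose corresponding exponent is not strictly positive, while a partial in $\lambda$ acts only on $A_{abcd}(\lambda)$ and leaves the $\delta$-prefactor unchanged. Minimising $2a+b+c$ over the monomials that survive a given pair of derivatives (and noting that the $d\geq 1$ and $F_{\pend}$ contributions are of higher order in $\delta$) immediately yields the tabulated bounds. For instance, the smallest surviving monomials for $\partial_\lambda^2$, $\partial_{\lambda x}$, $\partial_{\lambda y}$ are at $(a,b,c)=(0,1,0)$ or $(0,0,1)$, giving $O(\delta)$; for $\partial_x^2$, $\partial_{xy}$, $\partial_y^2$, $\partial_{\lambda\Lambda}$, $\partial_\Lambda^2$ the smallest is $(0,2,0)$, $(0,1,1)$, $(0,0,2)$, $(1,0,0)$, and (from $F_{\pend}''$) $(2,0,0)$ together with the $O(\delta^2)$ remainder of $F_{\pend}''(\delta^2\Lambda)$, all giving $O(\delta^2)$; and for $\partial_{\Lambda x}$, $\partial_{\Lambda y}$ one needs $a\geq 1$ together with $b\geq 1$ or $c\geq 1$, so the minimum is $(1,1,0)$ or $(1,0,1)$, giving $O(\delta^3)$. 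The third-derivative bound $O(\delta)$ is then trivial since every surviving monomial carries a prefactor of at least $\delta$.

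The main point to watch is uniformity of the constant $\cttTheoScaling$ as $\delta\to 0$: the estimates on $A_{abcd}$ must come from Cauchy estimates on a fixed polydisc of analyticity of $H_1^{\Poi}$ independent of $\delta$, and $\phi_\delta(\UComplexs(\cttTheoScalingDomainA,\cttTheoScalingDomainB))$ must lie well inside that polydisc for all small $\delta$. Both are guaranteed by the quantitative analyticity statement of~\cite[Lemma 4.1]{articleInner}, after which the proof is reduced to the bookkeeping of powers of $\delta$ sketched above.
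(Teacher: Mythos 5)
Your argument is essentially correct, and it supplies what the paper's one-line proof does not: an explicit derivation. The paper proves this proposition only by citation — the first bullet to \cite[Theorem 2.1]{articleInner} and the second to \cite[Lemma A.3]{articleOuter} — so a direct comparison with the paper's text is not possible. But the power-counting you carry out is the natural (and very likely the actual) content behind that cited lemma. Two details in your write-up are worth highlighting as genuinely load-bearing and correctly handled. First, for $\partial_\Lambda^2 H_1$ the contribution from the Taylor series of $H_1^{\Poi}$ alone would be $\OO(\de^4)$ (since $a\geq 2$ forces $2a+b+c+4d\geq 4$); the claimed $\OO(\de^2)$ bound is realized by the $\de^{-4}F_{\pend}(\de^2\Lambda)$ term, since $\partial^2_\Lambda\bigl(\de^{-4}F_{\pend}(\de^2\Lambda)\bigr)=F_{\pend}''(\de^2\Lambda)=\OO(\de^2)$, and you correctly isolate this piece rather than lumping it into the generic estimate. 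Second, you correctly flag that the uniformity of the constant in $\de$ is not automatic from the series manipulation: it needs the Cauchy estimates on $A_{abcd}$ to come from a $\de$-independent polydisc of analyticity of $H_1^{\Poi}$, together with the observation that $\phi_\de$ compresses $\UComplexs(\cttTheoScalingDomainA,\cttTheoScalingDomainB)$ into an $\OO(\de)$-neighborhood of $\{(L,\eta,\xi)=(1,0,0),\,\mu=0\}$ — precisely where the quantitative analyticity of \cite[Lemma~4.1]{articleInner} is invoked, and this is where $\de_0=\de_0(\cttTheoScalingDomainA,\cttTheoScalingDomainB)$ comes from. The treatment of the first bullet (analyticity plus the real-analyticity symmetry inherited from the reality of $h$ through $\phi^{\Poi}$ and $\phi_\de$) is terse but sound. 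Overall this is a correct and somewhat more informative account than the paper's own proof, at the cost of still leaning on the same external analyticity input.
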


\begin{proof}
The first statement follows from \cite[Theorem 2.1]{articleInner}.
The second statement is a consequence of \cite[Lemma A.3]{articleOuter}.
\end{proof}

%

\begin{remark}\label{remark:realanalytic}
Consider  $M \subseteq \complexs^4$ a symmetric subset with respect to $\reals^4$.
We say that a function $\zeta=(\zeta_{\la},\zeta_{\La},\zeta_x,\zeta_y): M \to \UComplexs(\cttTheoScalingDomainA,\cttTheoScalingDomainB)$ is real-analytic if, for $m \in M$,
$\zeta_{\la}\paren{\conj{m}} = \conj{\zeta_{\la}(m)}$,
$\zeta_{\La}\paren{\conj{m}} = \conj{\zeta_{\La}(m)}$,
$\zeta_x\paren{\conj{m}} = {\zeta_y(m)}$
and
$\zeta_y\paren{\conj{m}} = {\zeta_x(m)}$.
%
Notice that, as a consequence,
$\zeta(m) \in \UReals(\cttTheoScalingDomainA)$, for $m \in M \cap \reals^4$.
\end{remark}


Notice that, by \eqref{def:involutionPoincare}, the Hamiltonian $H$ is reversible with respect to the involution 
\begin{equation}\label{def:involutionScaling}
	{\Phi}(\la,\La,x,y) = (-\la,\La,y,x),
\end{equation}
which has  symmetry axis
\begin{equation}\label{def:symmetryAxisScaling}
	\SSS = \claus{\la=0, \, x=y}.
\end{equation}
In the next proposition, proven in \cite[Theorem 2.1]{articleInner}, we obtain an expression and suitable estimates for the equilibrium point $L_3$.

\begin{proposition}\label{proposition:existenceFixedPoint}
There exist $\de_0> 0$ and $\cttTheoLtres>0$ such that, for $\de \in (0,\de_0)$, the critical point $L_3$ expressed in  coordinates $(\la,\La,x,y)$ is of the form
\begin{equation}\label{def:pointL3sca}
	\Ltres(\de)=\paren{0,
		\de^2 \LtresLa(\de),
		\de^3 \Ltresx(\de),
		\de^3 \Ltresy(\de)}^T \in \SSS,
\end{equation}
with
$
	\vabs{\LtresLa(\de)}, 
	\vabs{\Ltresx(\de)}, \vabs{\Ltresy(\de)} \leq \cttTheoLtres$
and $\SSS$ as given in~\eqref{def:symmetryAxisScaling}.	
\end{proposition}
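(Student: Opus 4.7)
The plan is to combine the known location of $L_3$ in Poincar\'e coordinates, given in \eqref{eq:L3Poincare}, with the singular scaling \eqref{def:changeScaling} and the reversibility of the Hamiltonian, in order to place the fixed point on the symmetry axis $\SSS$ with the claimed sizes. First, I would apply the implicit function theorem to pin down $L_3$ in Poincar\'e coordinates sharply. At $\mu=0$, the Hessian of $H_0^{\Poi}$ in \eqref{def:hamiltonianPoincare01} at $(\la, L, \eta, \xi) = (0, 1, 0, 0)$ is block-diagonal with an invertible $(L,\eta,\xi)$-block
\[
\begin{pmatrix} -3 & 0 & 0 \\ 0 & 0 & 1 \\ 0 & 1 & 0 \end{pmatrix},
\]
since $\pa_L^2(-\tfrac{1}{2L^2}-L)\vert_{L=1}=-3$ and $\pa_{\eta}\pa_{\xi}(\eta\xi)=1$. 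The quantitative IFT applied to the system $\nabla_{(L,\eta,\xi)} H^{\Poi}(0,L,\eta,\xi;\mu)=0$, together with the analyticity of $\mu H_1^{\Poi}$ near $(0,1,0,0)$, then yields a unique analytic branch
\[
L-1=\mu\,\alpha(\mu),\qquad \eta=\mu\,\beta(\mu),\qquad \xi=\mu\,\g(\mu),
\]
with $\alpha,\beta,\g$ bounded on $[0,\mu_0]$ for $\mu_0>0$ small.

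Second, I would use reversibility to get $\la=0$ and $\eta=\xi$. The Hamiltonian $H^{\Poi}$ is reversible with respect to $\Phi_{\Poi}(\la,L,\eta,\xi) = (-\la,L,\xi,\eta)$, so $\Phi_{\Poi}$ sends equilibria to equilibria. By uniqueness of the critical point in a small neighborhood of $(0,1,0,0)$, $L_3$ itself must be fixed by $\Phi_{\Poi}$, forcing $\la=0$ and $\eta=\xi$, equivalently $\beta(\mu)=\g(\mu)$. Pulling back through the scaling $L=1+\de^2\La$, $\eta=\de x$, $\xi=\de y$ with $\de=\mu^{1/4}$ then gives
\[
\La=\de^2\,\alpha(\de^4),\qquad x=\de^3\,\beta(\de^4),\qquad y=\de^3\,\beta(\de^4),
\]
so setting $\LtresLa(\de):=\alpha(\de^4)$ and $\Ltresx(\de)=\Ltresy(\de):=\beta(\de^4)$, one obtains the representation \eqref{def:pointL3sca} with $\Ltres(\de)\in\SSS$ and a uniform bound $\vabs{\LtresLa(\de)},\vabs{\Ltresx(\de)},\vabs{\Ltresy(\de)}\leq \cttTheoLtres$ independent of $\de$.

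The main (mild) obstacle is the IFT step: one must check that the full perturbed Hessian (which differs from the one above by $\mu\,D^2 H_1^{\Poi}$) remains invertible uniformly in $\mu$, so that $\alpha,\beta,\g$ admit bounds that do not blow up as $\mu\to 0$. This is immediate once one notes that the Keplerian Hessian is a fixed non-degenerate matrix while the $\mu H_1^{\Poi}$ correction is an $O(\mu)$-perturbation analytic at $(0,1,0,0)$, so invertibility and the uniform estimates persist for $\mu$ small. All other steps amount to direct pullbacks through the coordinate changes $\phi^{\Poi}$ and $\phi_\de$, plus the observation that the fixed-point set of $\Phi_{\Poi}$ corresponds exactly to $\SSS$ under $\phi_\de$.
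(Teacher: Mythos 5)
The paper does not actually prove Proposition~\ref{proposition:existenceFixedPoint} in-line; it defers to \cite[Theorem 2.1]{articleInner}. So the comparison is with a presumed standard argument. Your route — quantitative IFT in $(L,\eta,\xi)$ to get the $O(\mu)$ size, then reversibility to place $L_3$ on the symmetry axis, then pull back through $\phi_\de$ — is the right kind of argument and the scaling arithmetic ($L-1=O(\mu)=O(\de^4)\Rightarrow\La=O(\de^2)$; $\eta=O(\mu)\Rightarrow x=O(\de^3)$) is correct.

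There is, however, a logical gap in the order of your two steps. You first apply the IFT to $\nabla_{(L,\eta,\xi)}H^{\Poi}(0,L,\eta,\xi;\mu)=0$ at fixed $\la=0$; this produces a point where the \emph{partial} gradient vanishes, not yet a critical point of the full Hamiltonian, and says nothing about whether $L_3$ sits on $\{\la=0\}$. Then in the reversibility step you invoke "uniqueness of the critical point in a small neighborhood of $(0,1,0,0)$" to deduce $\Phi_{\Poi}(L_3)=L_3$, but that uniqueness is exactly what has not been established: your IFT lives on the codimension-one slice $\{\la=0\}$, while the degeneracy of the unperturbed problem is precisely in the $\la$-direction (the circle of critical points at $\mu=0$). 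To close this you can either (i) cite the classical fact that the only Lagrange point near $\la=0$ is $L_3$ and that the other four have $\la$ bounded away from $0$, after which $\Phi_{\Poi}(L_3)=L_3$ follows; or, cleaner, (ii) use the anti-invariance $H^{\Poi}\circ\Phi_{\Poi}=H^{\Poi}$ to observe that $\partial_\la H^{\Poi}\equiv 0$ on the fixed-point set $\{\la=0,\,\eta=\xi\}$ (differentiate the identity in $\la$ at $\la=0$, $\eta=\xi$), restrict to that two-dimensional axis, and apply the IFT there in two real variables — this yields the critical point together with $\la=0$, $\eta=\xi$ exactly, without any separate uniqueness argument. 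A minor further point: since the Poincar\'e symplectic form carries the factor $i\,d\eta\wedge d\xi$ and $\xi=\overline{\eta}$ on the real locus, the IFT should be set up in the real and imaginary parts of $\eta$ rather than treating $\eta,\xi$ as independent real unknowns; this does not change the conclusion, but it is the correct reality structure that your Remark~\ref{remark:realanalytic} encodes.
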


The linearization of $\Ltres(\de)$ is given by
\begin{equation*}
	\begin{pmatrix}
		0 & -3 & 0 & 0 \\
		-\frac{7}{8} & 0 & 0 & 0 \\
		0 & 0 & \frac{i}{\de^2} & 0 \\
		0 & 0 & 0 & -\frac{i}{\de^2}
	\end{pmatrix} + \OO(\de).
\end{equation*}
This analysis leads us to define a ``new'' first order for the Hamiltonian $H$ in \eqref{def:hamiltonianScaling} as
\begin{equation}\label{def:hamiltonianScalingH0}
	H_0(\la,\La,x,y;\de) = H_{\pend}(\la,\La) + H_{\osc}(x,y;\de),
\end{equation}
and we refer to $H_0$ as the unperturbed Hamiltonian and to $H_1$ (see \eqref{def:hamiltonianScalingH1}) as the perturbation.

Notice that the unperturbed Hamiltonian is uncoupled.
In the $(x,y)$-plane, it displays a fast oscillator of velocity $\frac1{\de^2}$
whereas, in the $(\la,\La)$-plane, it has a saddle at $(0,0)$ with two homoclinic connections or separatrices at the energy level $H_{\pend}(\la,\La)=-\frac12$, (see Figure~\ref{fig:separatrix}).
We define 
\begin{equation}\label{def:la0}
	\la_0 = \arccos\paren{\frac12-\sqrt2},
\end{equation}
which satisfies $H_{\pend}(\la_0,0)=H_{\pend}(0,0) =-\frac12$ and corresponds with the crossing point of the right separatrix with the axis $\claus{\La=0}$.
%

\begin{figure}
	\centering
	\begin{overpic}[scale=0.6]{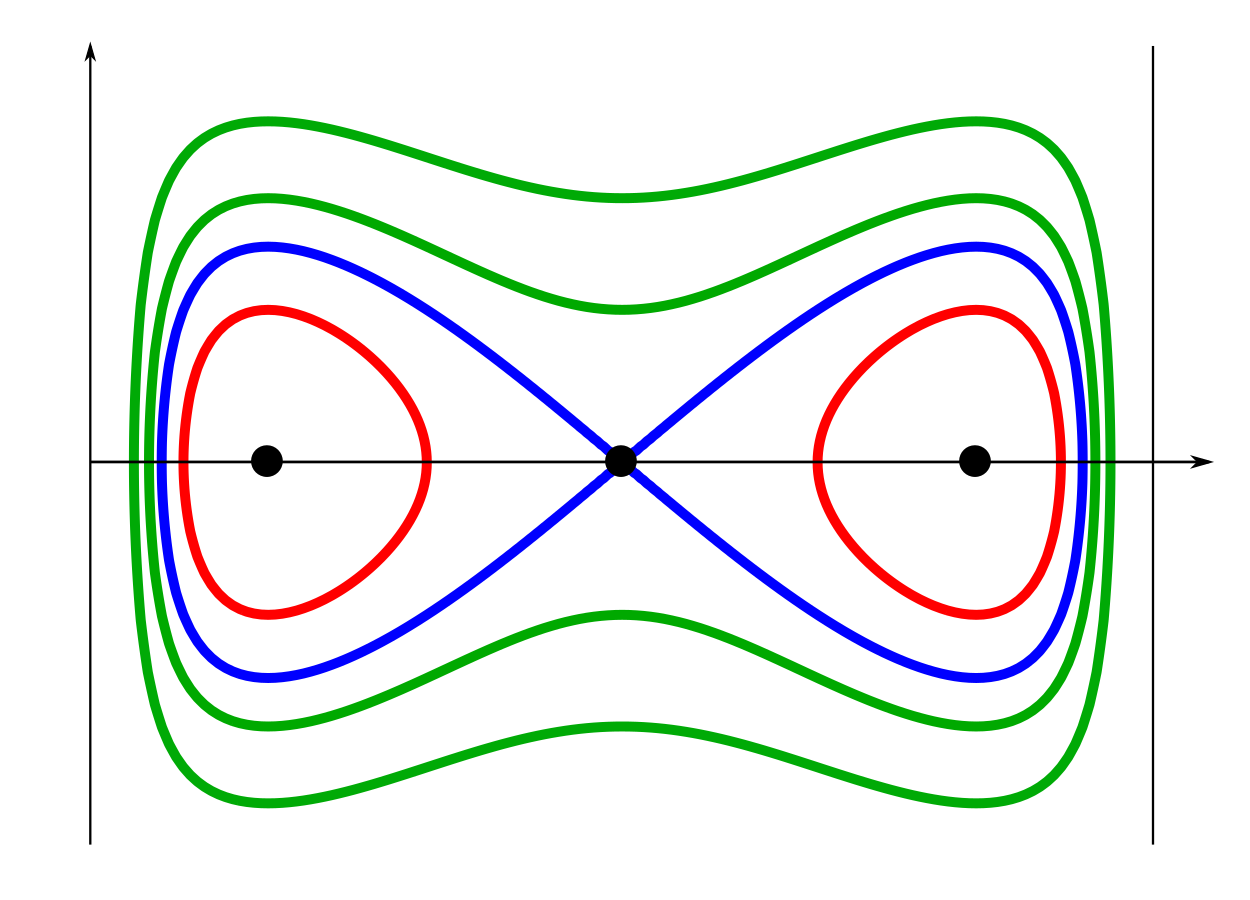}
		\put(93,31){{$\pi$}}
		\put(77,28.5){{$\frac{2}{3}\pi$}}
		\put(49,29){{$0$}}
		\put(17,28.5){{$-\frac{2}{3}\pi$}}
		\put(-4,31){{ $-\pi$}}
		\put(98,34){{$\la$}}
		\put(5,70){{$\La$}}
	\end{overpic}
	\caption{Phase portrait of the system given by Hamiltonian ${H}_{\pend}(\la,\La)$ on~\eqref{def:HpendHosc}. On blue the two separatrices.}
	\label{fig:separatrix}
\end{figure}

\subsection{The invariant manifolds of \texorpdfstring{$L_3$}{L3}}
\label{subsection:reconnexionsPoincare}

The unstable and stable manifolds of  the critical point $\Ltres(\de)$  for small values of $\de$, 
have  two  branches, which are symmetric with respect to the involution  \eqref{def:involutionScaling} (see Figure~\ref{fig:perturbedInvariantManifolds1d}).

For $\de>0$, we denote by $\WW^{\unstable}(\Ltres)$ and $\WW^{\stable}(\Ltres)$ the $1$-dimensional unstable and stable manifolds of $\Ltres(\de)$.
In addition, as done in Section~\ref{section:introduction}, we consider each branch independently.
Let $\psi_t$ be the flow given by the Hamiltonian $H$ and $\mathbf{e_1}=(1,0,0,0)^T$. 
We denote
\begin{align*}
	\WW^{\unstable,+}(\Ltres) &= \claus{z \in \WW^{\unstable}(\Ltres) \st 
	\lim_{t \to -\infty}\sprod{\psi_t(z)}{\mathbf{e_1}} = 0^+},
	&
	\WW^{\stable,-}(\Ltres) &= {\Phi}\paren{\WW^{\unstable, +}},
	\\
	\WW^{\stable,+}(\Ltres) &= \claus{z \in \WW^{\stable}(\Ltres) \st 
	\lim_{t \to +\infty}\sprod{\psi_t(z)}{\mathbf{e_1}} = 0^+},
	&
	\qquad
	\WW^{\unstable,-}(\Ltres) &= {\Phi}\paren{\WW^{\stable, +}},
\end{align*}
the branches of $\WW^{\diamond}(\Ltres)$, for $\diamond=\unstable,\stable$.
%

Next result,  proven in \cite[Theorem 2.2]{articleOuter}, gives an asymptotic formula for the distance between the first intersection of the one dimensional manifolds $\WW^{\unstable,+}(\Ltres)$ and $\WW^{\stable,+}(\Ltres)$ on a suitable section.
In particular, 
Theorem~\ref{TheoremA} is a consequence of this result.

\begin{theorem} \label{mainTheoremDist}
Fix an interval $[\la_1,\la_2]\subset (0,\la_0)$ with $\la_0$ as given in \eqref{def:la0}.
There exists $\de_0>0$ such that, for $\de\in(0,\de_0)$ and $\la_*\in[\la_1,\la_2]$, the invariant manifolds $\WW^{\unstable,+}(\Ltres)$ and $\WW^{\stable,+}(\Ltres)$ intersect the section $\claus{\la=\la_*, \La >0}$.
Denote by $(\la_*,\La^{\unstable}_{\de},x^{\unstable}_{\de},y^{\unstable}_{\de})$
and
$(\la_*,\La^{\stable}_{\de},x^{\stable}_{\de},y^{\stable}_{\de})$ the first intersection points of the unstable and stable manifolds with this section, respectively. 
They satisfy
\begin{align*}
	y^{\unstable}_{\de}-y^{\stable}_{\de}
	&= 
	\sqrt[6]{2} \,
	\de^{\frac{1}{3}}  e^{-\frac{A}{\de^2}} 
	\boxClaus{{\CInn}
		+ \OO\paren{\frac{1}{\vabs{\log \de}}}},
	\qquad
	x^{\unstable}_{\de}-x^{\stable}_{\de} = \conj{y^{\unstable}_{\de}-y^{\stable}_{\de}},
	\\
	\La^{\unstable}_{\de}-\La^{\stable}_{\de} &=
	\OO \paren{\de^{\frac43} e^{-\frac{A}{\de^2}}},
\end{align*}
where $A>0$ and
$\CInn \in \complexs \setminus\{0\}$ are the constants described in Theorem \ref{TheoremA}.
%
\end{theorem}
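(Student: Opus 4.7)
The plan is to treat this as an exponentially small splitting of separatrices problem for the two time-scale Hamiltonian $H = H_{\pend} + H_{\osc} + H_1$. The slow subsystem $H_{\pend}$ carries a pair of homoclinic separatrices whose complexification has singularities at $t = \pm i A$ on the imaginary time axis, where $A$ is exactly the quadrature constant \eqref{def:integralA} (this already explains where $A$ comes from: it is the imaginary period of integration from the pendulum saddle to the complex singularity). The fast oscillator of frequency $1/\de^2$ forces the splitting to lie beyond all orders in $\de$, so I would follow the parametrization method combined with inner equation analysis used for Hopf-zero type singularities by the authors in \cite{articleInner,articleOuter}.

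The first step is to construct analytic extensions of $\WW^{\unstable,+}(\Ltres)$ and $\WW^{\stable,+}(\Ltres)$ as small perturbations $z^{\unstable,\stable}(t) = \separatrix(t) + \de\, \zeta^{\unstable,\stable}(t)$ of the unperturbed separatrix parametrization $\separatrix(t)$ of $H_{\pend}$ coupled to the fast oscillator dynamics. Using the invariance equation, I would set up a contraction argument in weighted Banach spaces of analytic functions on \emph{outer} complex domains of the form $\{|\Im t| < A - c\de^2\} \cap \{\Re t < T\}$ for the unstable branch, and the symmetric domain for the stable branch. The quantitative bounds of Proposition~\ref{proposition:HamiltonianScaling} on $H_1$ provide a small Lipschitz constant in these spaces and yield existence with estimates that degenerate in a controlled power law as $t \to \pm iA$.

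The second step is the inner analysis near the singularity $t=iA$. Blowing up time and unknowns through $t = iA + \de^2 \tau$ with the suitable anisotropic rescaling in $(\la,\La,x,y)$, the limit $\de \to 0$ yields an autonomous \emph{inner equation}, independent of $\de$, whose unstable and stable solutions coincide to all algebraic orders but differ by a term with leading coefficient the Stokes constant $\CInn \in \complexs$. Matching the outer parametrizations $z^{\unstable,\stable}$ with these inner solutions on overlap annuli $\{|t - iA| \sim \de^{2-\kappa}\}$ identifies the leading discrepancy between $z^{\unstable}$ and $z^{\stable}$ in the complex domain as $\CInn$ times an explicit inner asymptotic profile; the non-vanishing $\CInn\neq 0$ is the input taken from \cite{BCGG23}.

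The third step is to transport this discrepancy down to the real section $\{\la=\la_*\}$, $\Lambda>0$. Moving from the complex matching region to the real axis costs precisely the factor $e^{-A/\de^2}$; the fractional prefactors $\de^{1/3}$ in the $y$-component and $\de^{4/3}$ in the $\La$-component arise from the different orders of the singularity of $\separatrix(t)$ in the hyperbolic vs.\ center directions, amplified by the anisotropic scaling \eqref{def:changeScaling}. Reversibility with respect to the involution \eqref{def:involutionScaling} automatically gives $x^{\unstable}_{\de}-x^{\stable}_{\de} = \overline{y^{\unstable}_{\de}-y^{\stable}_{\de}}$. I expect the main technical obstacle to be the matching: controlling the transition from the fixed-point outer estimates (which blow up near the singularity) to the inner solutions (which are only known asymptotically) uniformly in $\de$ requires sharp analytic bounds on both sides and on the variational equation along $\separatrix$. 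The non-vanishing of $\CInn$ is a separate, equally serious obstacle, which here is resolved only by the computer-assisted proof of \cite{BCGG23} rather than by soft arguments.
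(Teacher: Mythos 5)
Your sketch is a faithful high-level reconstruction of the exponentially-small-splitting strategy (outer analytic extensions approaching the complex singularities at $u=\pm iA$, inner equation after blowing up time by $\de^2$, matching on intermediate annuli, and transport to the real section producing the factor $e^{-A/\de^2}$), and the reversibility and non-vanishing observations are also correct. However, the paper you are working in does not actually contain a proof of this theorem: Theorem~\ref{mainTheoremDist} is imported verbatim from the companion work, quoted as ``proven in \cite[Theorem 2.2]{articleOuter}'', with the asymptotic formula established in \cite{articleInner,articleOuter} and the non-vanishing of $\CInn$ coming from the computer-assisted proof \cite{BCGG23}. So there is no in-paper argument to compare against; what you have written is a plausible summary of the proof in those references, not a reproduction of a proof contained here.

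Two minor points worth flagging if you were to actually carry this out. First, you attribute the prefactors $\de^{1/3}$ (in $y$) and $\de^{4/3}$ (in $\La$) to ``the different orders of the singularity'' without pinning them down; the $1/3$ comes specifically from the branch-point structure of the separatrix at $u=iA$ forced by the $(2+2\cos\la)^{-1/2}$ term in the potential $V$ (near $\la=\pi$ one has $\Lap\sim(u-iA)^{-1/3}$ and $\lap-\pi\sim(u-iA)^{2/3}$), and tracing this through the inner rescaling is where these exponents are actually produced — a gap in the sketch, though not an error. Second, your outer domains ``$\{|\Im t|<A-c\de^2\}\cap\{\Re t<T\}$'' are narrower than what the cited construction uses: the paper works with the boomerang-shaped $\PiExtA$ that reaches the singularities along straight lines of slope $\tan\beta$, which is needed to get uniform weighted estimates up to a $\de$-independent distance of the singularity before switching to the inner variables. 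None of this invalidates the plan, but these are the places where the heavy technical work in \cite{articleInner,articleOuter} is concentrated and where a genuine write-up would need to be precise.
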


To prove Theorem \ref{TheoremB} and \ref{TheoremC}, it will be  convenient to analyze the distance between the invariant manifolds in the ``horizontal section''
\begin{align}\label{def:section0}
	\Si_{0} = \claus{\paren{\la,\La,x,y}\in \UReals(\cttTheoScalingDomainA, \cttTheoScalingDomainB) : 
	\La = \de^2\LtresLa(\de), \,
	 H(\la,\La,x,y)=H(\Ltres(\de))},
\end{align}
within the energy level of $\Ltres(\de)$, where $(x,y)$ define a system of coordinates.
%
%
%
The following corollary is a consequence of Theorem~\ref{mainTheoremDist}. It is proven in  Appendix \ref{appendix:changeSectionL3}.

\begin{corollary} \label{mainTheoremDistCorollary}
There exists $\de_0>0$ such that, for every $\de\in(0,\de_0)$, the invariant manifolds $\WW^{\unstable,+}(\Ltres)$ and $\WW^{\stable,+}(\Ltres)$ intersect the section $\Si_{0}$.
Denote by $(\la^{\unstable}_{\de},\de^2\LtresLa,x^{\unstable}_{\de},y^{\unstable}_{\de})$
and
$(\la^{\stable}_{\de},\de^2\LtresLa,x^{\stable}_{\de},y^{\stable}_{\de})$ the first intersection points of the unstable and stable manifolds, respectively, with the section.
Then, they satisfy
\begin{align*}
	\vabs{x^{\unstable}_{\de}-x^{\stable}_{\de}}
	=
	\vabs{y^{\unstable}_{\de}-y^{\stable}_{\de}}
	= 
	\sqrt[6]{2} \,
	\de^{\frac{1}{3}}  e^{-\frac{A}{\de^2}} 
	\boxClaus{\vabs{\CInn}
		+ \OO\paren{\frac{1}{\vabs{\log \de}}}}.
\end{align*}
\end{corollary}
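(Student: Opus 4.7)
The strategy is to transport the exponentially small estimate from the vertical section $\Sigma_* = \claus{\lambda=\lambda_*, \Lambda > 0} \cap \claus{H = H(\Ltres)}$ (for a fixed $\lambda_* \in (0, \lambda_0)$) to the horizontal section $\Sigma_0$ via the Poincar\'e-type map of the Hamiltonian flow of $H$ restricted to the energy level $\claus{H = H(\Ltres)}$. Concretely, let $\PP \colon \Sigma_* \to \Sigma_0$ be the map sending each point $z \in \Sigma_*$ in a neighborhood of the unperturbed separatrix to the first subsequent intersection of its forward trajectory with $\Sigma_0$. By the pendulum phase portrait of Figure~\ref{fig:separatrix}, the unperturbed separatrix emanating from a point on $\claus{\lambda = \lambda_*, \Lambda > 0}$ reaches the horizontal line $\claus{\Lambda = \delta^2 \LtresLa(\delta)}$ near the saddle after a transit time $T_* = O(|\log\delta|)$, reflecting the logarithmic slowing-down as the flow approaches $\Ltres$. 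Consequently $\PP$ is well-defined near the intersection points $z^{\unstable}_*, z^{\stable}_*$ of $\WW^{\unstable,+}(\Ltres)$, $\WW^{\stable,+}(\Ltres)$ with $\Sigma_*$ provided by Theorem~\ref{mainTheoremDist}, and $\PP(z^{\unstable,\stable}_*)$ produces the intersection points with $\Sigma_0$ claimed by the corollary.

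Next I would exploit the splitting $H = H_0 + H_1$ of \eqref{def:hamiltonianScalingH0}--\eqref{def:hamiltonianScalingH1}, in which the unperturbed Hamiltonian decouples into the pendulum in $(\lambda, \Lambda)$ plus the linear oscillator $\dot{x} = i \delta^{-2} x$, $\dot{y} = -i \delta^{-2} y$. The unperturbed Poincar\'e map $\PP_0$ then acts on the $(x, y)$ coordinates purely by the rotation $(x, y) \mapsto (e^{i\theta} x, e^{-i\theta} y)$, whose real angle $\theta$ depends only on the oscillator action $K = xy$, through the pendulum transit time at pendulum energy $H(\Ltres) - K$. In particular, at fixed $K$ the map $\PP_0$ preserves $|x|$ and $|y|$ exactly. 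The discrepancy $\PP - \PP_0$ is then controlled by the variational equation along the unperturbed separatrix trajectory: since Proposition~\ref{proposition:HamiltonianScaling} bounds the second derivatives of $H_1$ in the relevant directions by $O(\delta)$, a Gr\"onwall-type estimate over the interval $[0, T_*]$ yields $\lVert D\PP - D\PP_0 \rVert = O(\delta |\log \delta|)$ in a neighborhood of the unperturbed separatrix.

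Applying $\PP$ to $z^{\unstable}_*, z^{\stable}_*$ and Taylor-expanding around $z^{\stable}_*$,
\[
z^{\unstable}_0 - z^{\stable}_0 = D\PP_0(z^{\stable}_*)\bigl[z^{\unstable}_* - z^{\stable}_*\bigr] + \bigl[D\PP - D\PP_0\bigr](z^{\stable}_*)\bigl[z^{\unstable}_* - z^{\stable}_*\bigr] + O\bigl(|z^{\unstable}_* - z^{\stable}_*|^2\bigr),
\]
where the first term preserves $|x|$ up to a correction proportional to $\partial_K \theta \cdot \Delta K \cdot |x^{\stable}_*|$ arising from the $K$-dependence of $\theta$; the second term is $O(\delta |\log \delta|) \cdot |z^{\unstable}_* - z^{\stable}_*|$; and the third is doubly exponentially small. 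Since $\delta |\log\delta| = o(1/|\log\delta|)$, all these corrections are absorbed into the $O(1/|\log \delta|)$ error of Theorem~\ref{mainTheoremDist}, yielding the claimed asymptotic formula for $|x^{\unstable}_\delta - x^{\stable}_\delta|$. The identity $|y^{\unstable}_\delta - y^{\stable}_\delta| = |x^{\unstable}_\delta - x^{\stable}_\delta|$ follows from the real-analyticity property (Remark~\ref{remark:realanalytic}), which gives $y^{\unstable,\stable}_\delta = \overline{x^{\unstable,\stable}_\delta}$.

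The main obstacle is controlling the $K$-dependence of the rotation angle $\theta$ uniformly near the saddle. Because the derivative $\partial_K \theta = \delta^{-2}\, dT_*/dE_{\pend}$ is potentially large (since the pendulum transit time diverges logarithmically as the energy approaches the separatrix value $-\tfrac{1}{2}$), the associated correction $\partial_K \theta \cdot \Delta K \cdot |x^{\stable}_*|$ must be tamed by combining the smallness of $(x^{\stable}_*, y^{\stable}_*)$ on the invariant manifolds (derived in \cite{articleInner, articleOuter}), the smallness of $\Delta K = x^{\unstable}_* y^{\unstable}_* - x^{\stable}_* y^{\stable}_*$ (of higher order in $\delta$ than $|z^{\unstable}_*-z^{\stable}_*|$), and the fact that the pendulum energy on the invariant manifolds differs from $-\tfrac12$ only by a controlled amount, so that the net correction remains strictly subleading with respect to $\delta^{1/3} e^{-A/\delta^2}$.
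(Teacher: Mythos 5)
Your route via a Poincar\'e map $\PP\colon\Sigma_*\to\Sigma_0$ is genuinely different from the paper's, and it hides a divergence that your sketch does not resolve. The paper instead uses the \emph{synchronized} parametrizations $\zuU(u)$ and $\zsU(u)$ of Corollary~\ref{corollary:existencia1D}, normalized so that $\langle\zdUper(0),\mathbf e_2\rangle=0$; hence both manifolds cross $\Sigma_0$ at the \emph{same} parameter value $u=0$. The difference $\D(u)=\zuU(u)-\zsU(u)$ then solves an exactly linear equation whose unperturbed fundamental matrix $\Phi(u)$ (Lemma~\ref{lemma:fundamentalMatrixFlow}) acts on the oscillator block as a pure rotation $e^{\pm iu/\de^2}$ and whose perturbation $M_1$ is $O(\de)$ by Proposition~\ref{proposition:HamiltonianScaling}, over a \emph{bounded} interval $[0,T^\unstable]$ with $T^\unstable=u_*+O(\de)$. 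This yields a relative error $O(\de)$, which is absorbed into $O(1/|\log\de|)$. In particular the paper never linearizes a return map at a single point, so the rotation-angle sensitivity never enters.

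The obstacle you flag in your last paragraph is the real one and, as written, it is not tamed. The transit time $T$ from $\Sigma_*$ to $\Sigma_0$ depends on the initial pendulum energy, which on the energy level $H=H(\Ltres)$ depends on the oscillator action $K=xy$ through $\partial_K E_{\pend}\sim-\de^{-2}$; hence $\partial_K\theta=\de^{-2}\partial_KT=O(\de^{-4})$ even though $\partial_{E_{\pend}}T=O(1)$. With the only bounds the paper provides on the manifolds at $\Sigma_*$ (namely $|x^{\stable}_*|,|y^{\stable}_*|=O(\de)$ from Corollary~\ref{corollary:existencia1D}, so $|\Delta K|\lesssim\de\cdot\de^{1/3}e^{-A/\de^2}$), the correction you identify is $|\partial_K\theta|\cdot|\Delta K|\cdot|x^{\stable}_*|=O(\de^{-5/3}e^{-A/\de^2})$, which is \emph{larger}, not smaller, than the leading term $\de^{1/3}e^{-A/\de^2}$. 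Closing the gap would require sharper component-wise decay of the $(x,y)$-part of $\zdUper$ (e.g.\ $O(\de^3)$ from the oscillatory kernel in $\GG_\rho$) together with the energy-conservation improvement on $\Delta K$; none of this is in your sketch, and these are precisely the estimates that the paper's synchronized-parametrization argument makes unnecessary. Two further remarks: the ``first'' intersection with $\Sigma_0$ is at the apex $\la\approx\la_0$ of the separatrix, reached by flowing \emph{backward} from $\Sigma_*$ in bounded time, not near the saddle after $O(|\log\de|)$; flowing forward from $\Sigma_*$ as you propose would land you at a later crossing near the saddle, which is not the point the corollary is about. Finally, $|y^\diamond_\de-x^\diamond_\de|$ being conjugates follows from $y=\conj x$ on the real locus (Remark~\ref{remark:realanalytic}), which you use correctly.
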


\section{2-round homoclinic orbits to \texorpdfstring{$L_3$}{L3}: Proof of Theorem~\ref{TheoremD}}
\label{section:proofReconnections}

In this section we study  the existence of $2$-round homoclinic connections to the $\Ltres(\de)$ (see \eqref{def:pointL3sca})  for certain values of the parameter $\de$ and we prove  Theorem~\ref{TheoremD}. We first restate it referred to the Hamiltonian \eqref{def:hamiltonianScaling} (recall that $\de= \mu^{\frac{1}{4}}$, see \eqref{def:delta}).

%

\begin{theorem} \label{mainTheoremB}
%
There exist $N_0>0$ and a sequence $\{{\de}_n\}_{n \geq N_0}$ satisfying
\begin{align*}
	\de_n 
	=
	\sqrt[8]{\frac8{21}}\sqrt[4]{\frac{A}{n\pi}}
	\paren{1+ 
		\OO \paren{\frac1{\log n}}},
	\qquad \text{for } n \geq N_0,
\end{align*}
such that, for each $n\geq N_0$, there exist a $2$-round homoclinic connection to the equilibrium point $\Ltres(\de_n)$ between $\WW^{\unstable,+}(\Ltres)$ and $\WW^{\stable,-}(\Ltres)$.
\end{theorem}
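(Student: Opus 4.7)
The proof exploits the reversibility encoded by the involution $\Phi$ defined in~\eqref{def:involutionScaling}. Since $\Phi(\WW^{\unstable,+}(\Ltres))=\WW^{\stable,-}(\Ltres)$ and $\Ltres$ itself lies on the $\Phi$-fixed symmetry axis $\SSS=\claus{\la=0,\,x=y}$, any trajectory of $\WW^{\unstable,+}(\Ltres)$ that meets $\SSS$ automatically closes up with $\WW^{\stable,-}(\Ltres)$ into a symmetric homoclinic orbit. Corollary~\ref{corollaryA} already rules out such a meeting during the first outgoing loop of $\WW^{\unstable,+}(\Ltres)$ (equivalently, it reflects $\CInn\neq 0$); the aim is thus to detect one during the \emph{second} excursion of the orbit and to identify the corresponding sequence $\de_n$.

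The orbit of $\WW^{\unstable,+}(\Ltres)$ is analysed in three consecutive segments: (a) from $\Ltres$ along the right separatrix until its first crossing of $\Si_{0}$; (b) the transit through a fixed small neighbourhood of $\Ltres$; and (c) along the left separatrix out of that neighbourhood until the candidate crossing of $\SSS$. Corollary~\ref{mainTheoremDistCorollary} provides segment (a): in suitable local hyperbolic--center coordinates $(u,s,X,Y)$ around $\Ltres$, $\WW^{\unstable,+}(\Ltres)$ arrives at an incoming local section $\claus{s=\s}$ (with $\s>0$ a small fixed constant) at a point whose stable component $u_{0}(\de)$ has modulus $\asymp \de^{1/3}e^{-A/\de^{2}}$ and argument determined by $\CInn$, while its $(X,Y)$-components depend smoothly on $\de$ and are $O(\de^{3})$-close to those of $\Ltres$.

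For segment (b), I would construct a symplectic partial normal form near $\Ltres$ bringing the Hamiltonian to
\[
N(u,s,X,Y;\de)=\nu(\de)\,us+\tfrac{i}{\de^{2}}\,XY+g(us,XY;\de),
\]
with $\nu=\sqrt{21/8}+O(\de)$ the hyperbolic eigenvalue in the scaled time and $g$ containing only resonant higher-order terms in the first integrals $us$ and $XY$. The local flow can then be integrated explicitly; the transit map from $\claus{s=\s}$ to $\claus{u=\s}$ produces a transit time $T(\de)=\nu^{-1}\log(\s/\vabs{u_{0}(\de)})+O(1)$ and an oscillator-phase increment
\[
\phi(\de)\,=\,\frac{T(\de)}{\de^{2}}\,=\,\frac{A}{\nu\,\de^{4}}+O\!\paren{\frac{\log(1/\de)}{\de^{2}}}.
\]
Segment (c) is obtained by reversibility: in the local coordinates, $\Phi$ takes the form $(u,s,X,Y)\mapsto(s,u,Y,X)$ (up to signs fixed by the normal form), so the orbit reaches $\SSS$ precisely when the exit point of segment (b) coincides with $\Phi$ applied to its entry point. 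Combining this with the explicit transit map, the existence of the desired intersection reduces to the scalar equation
\[
\FF(\de)\,:=\,c\,\de^{1/3}e^{-A/\de^{2}}\sin\!\bigl(\phi(\de)+\vartheta_{0}\bigr)\bigl(1+O(1/\vabs{\log\de})\bigr)\,=\,0,
\]
where $c\neq 0$ (by $\CInn\neq 0$, see~\eqref{eq:nonvanishingStokes}) and $\vartheta_0$ is a $\de$-independent phase determined by $\arg\CInn$ and the transit map.

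Since $\phi$ is strictly monotone and $\phi(\de)\to+\infty$ as $\de\to 0^{+}$, an Intermediate Value argument on the intervals where $\sin(\phi(\de)+\vartheta_{0})$ changes sign produces a unique zero $\de_{n}$ of $\FF$ per interval, characterised at leading order by $\phi(\de_{n})+\vartheta_{0}=n\pi$. Substituting $\nu=\rho_{\mathrm{eig}}(0)(1+O(\de))=\sqrt{21/8}(1+O(\de))$ and solving yields
\[
\de_n \,=\, \sqrt[8]{\tfrac{8}{21}}\,\sqrt[4]{\tfrac{A}{n\pi}}\,\bigl(1+O(1/\log n)\bigr),
\]
as claimed. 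The main obstacle is segment (b): the transit time is $\asymp\de^{-2}$ in scaled time, so any multiplicative error of size $1+\eta(\de)$ in the local normal form gets amplified to an additive error $\eta(\de)/\de^{2}$ in the oscillator phase $\phi$. Keeping this amplification below the amplitude of the leading oscillating part of $\FF$ is the key technical step and requires matching the normal form around $\Ltres$ with the splitting asymptotics of Theorem~\ref{TheoremA} at their common precision throughout the exponentially long transit.
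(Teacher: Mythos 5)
Your strategy---reduce via reversibility to finding $\WW^{\unstable,+}(\Ltres)\cap\SSS$, pass to a local normal form near $\Ltres$, integrate the local flow explicitly using the conserved quantities, and solve the resulting phase-matching equation---is exactly the paper's route, implemented via the quantitative J\'ez\'equel--Bernard--Lombardi normal form (Propositions~\ref{proposition:formaNormalTot} and~\ref{proposition:formaNormalConsequence}). You also correctly flag the key technical obstacle: $\de$-uniform control of the normal-form remainder over the $\asymp\de^{-2}$-long local transit, which is why the classical Moser normal form is not sufficient.

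There is, however, a bookkeeping slip in segment~(a) that, taken at face value, would introduce a spurious factor $2^{1/4}$ into $\de_n$. You claim the hyperbolic coordinate $u_0$ at re-entry has modulus $\asymp\de^{1/3}e^{-A/\de^2}$ with argument controlled by $\CInn$; that magnitude and phase in fact belong to the \emph{oscillator} pair $(\qDu,\pDu)$. The hyperbolic coordinate transverse to the local stable manifold, the paper's $\qUu$, is real and has modulus $\asymp\de^{-4/3}e^{-2A/\de^2}$, fixed by energy conservation on the level $\HH=0$ (so that $\qU\pU\approx -\tfrac{\al(\de)}{2\de^2}(\qD^2+\pD^2)$); see Proposition~\ref{proposition:formaNormalConsequence}. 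With the correct $|u_0|$, the full transit $\{s=\s\}\to\{u=\s\}$ takes time $T\approx 2A/(\nu\de^2)$, not $A/(\nu\de^2)$, and the reversibility condition $X_0e^{i\omega T}=\overline{X_0}$ reads $\omega T+2\arg X_0\in 2\pi\integers$, i.e.\ $\omega T/2+\arg X_0\in\pi\integers$: the phase must be matched at the \emph{half}-transit point (this is the paper's condition $\sin(\tht+\nu_2 T_\de)=\wh{g_0}(\de)$, with $T_\de$ the time to reach the symmetry curve). Your scalar equation $\sin(\omega T+\vartheta_0)=0$ is missing that factor $1/2$, and only because your $T$ is simultaneously off by a factor $1/2$ do the two errors cancel and produce the correct $\de_n$. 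The paper avoids the ambiguity by defining $T^0_\de$ directly from $\qU(T)+\pU(T)=0$ and imposing $\pD(T_\de)=\Psi_2$, so both conditions are evaluated at the half-transit point from the outset.
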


The rest of this section is devoted to prove this theorem. 

%

%
%
%
To  prove Theorem~\ref{mainTheoremB}, we take advantage of the fact that the Hamiltonian $H$ is reversible with respect to the axis
$
\SSS = \claus{\la=0, \, x=y}
$
(see~\eqref{def:symmetryAxisScaling}).
Therefore, by symmetry, it is only necessary to see that there exists a sequence of $\de$ such that $\WW^{\unstable,+}(\Ltres)$ intersects the symmetry axis $\SSS$,
see Figure~\ref{fig:reconnectionsMain}.
\begin{figure}
	\centering
	\begin{overpic}[scale=1]{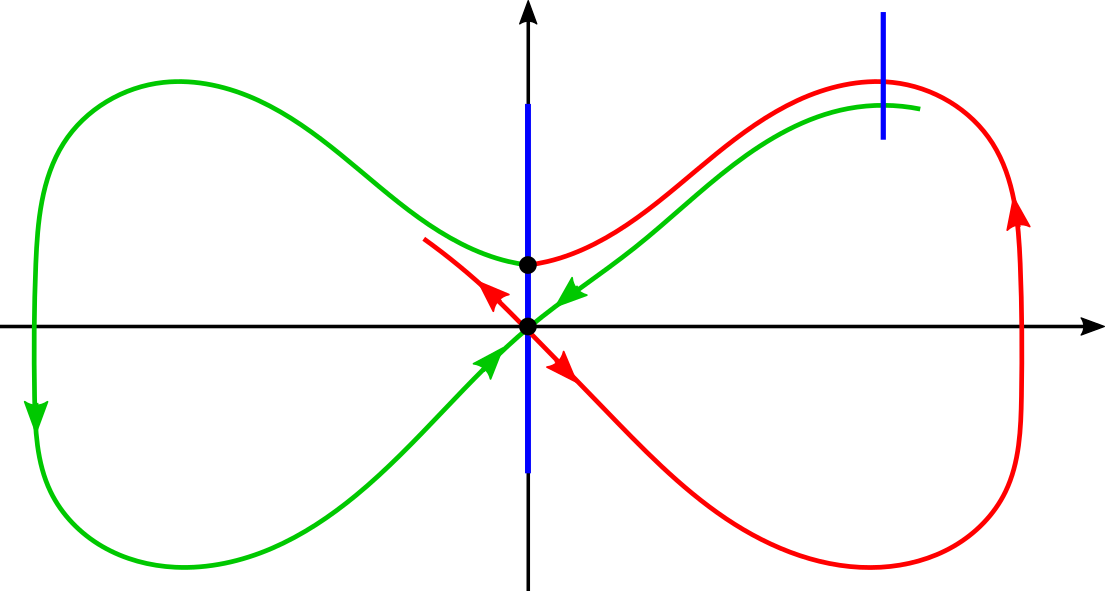}
		\put(101,22.5){$\la$}
		\put(46.5,54){$\La$}
		\put(13,6){\color{myGreen} $\WW^{\stable,-}(\Ltres)$}
		\put(28,25.5){\color{red}$\WW^{\unstable,-}(\Ltres)$}
		\put(56,25.5){\color{myGreen}$\WW^{\stable,+}(\Ltres)$}
		\put(70,6){\color{red} $\WW^{\unstable,+}(\Ltres)$}
		\put(81,49){\color{blue}$\claus{\la=\la_*, \La>0}$}
		\put(44,40){\color{blue}$\SSS$}
	\end{overpic}
	\caption{Projection into the $(\la,\La)$-plane of the unstable and stable manifolds and its intersections with the symmetry axis and section $\claus{\la=\la_*, \La>0}$.}
	\label{fig:reconnectionsMain}
\end{figure}
To this end, we extend the manifold $\WW^{\unstable,+}(\Ltres)$ from the section $\claus{\la=\la_*, \La>0}$, studied in Theorem~\ref{mainTheoremDist}, to a neighborhood of the critical point $\Ltres(\de)$ and look for intersections with $\SSS$. 
To study the invariant manifolds near $\Ltres(\de)$, we use a normal form result for Hamiltonian systems in a neighborhood of a saddle-center critical point.
Note that, the classical normal form result by Moser in~\cite{Moser58} is not enough for our purposes.
Indeed, we need to control that the radius of convergence of the normal form does not goes to zero when $\de\to 0$.
For that reason, we apply a more quantitative normal form obtained by T.~J\'ez\'equel, P.~Bernard and E.~Lombardi in~\cite{JBL16}.
%

%
%

\subsection{Proof of Theorem~\ref{mainTheoremB}}

To prove Theorem~\ref{mainTheoremB}, we first perform a detailed local analysis of the Hamiltonian $H$ in \eqref{def:hamiltonianScaling} close to the equilibrium point~$\Ltres(\de)$.
%
%
In the next proposition we introduce the normal form result given by T.~J\'ez\'equel, P.~Bernard and E.~Lombardi
in~\cite{JBL16} adapted to the Hamiltonian $H$.
Then, in Proposition~\ref{proposition:formaNormalConsequence}, we translate the results in Theorem~\ref{mainTheoremDist} and the symmetry axis $\SSS$ in \eqref{def:symmetryAxisScaling} into the new set of coordinates provided by the normal form.
%

\begin{proposition}\label{proposition:formaNormalTot}
There exist $\de_0, \rhoNormalForm,\cttTheoScalingDomainA,\cttTheoScalingDomainB>0$ and a family of analytic  changes of coordinates
\begin{align*}
	\FF_{\de}: B(\rhoNormalForm)=\claus{\mathbf{z} \in \reals^4 : \vabs{\mathbf{z}}<\varrho_0} &\to \UReals(\cttTheoScalingDomainA,\cttTheoScalingDomainB) 
	\\
	(\qU,\pU,\qD,\pD) &\mapsto 
	(\la,\La,x,y),
\end{align*}
defined for $\de \in (0,\de_0)$, with the following properties:
\begin{enumerate}
\item It is canonical with respect to the symplectic form $d\qU \wedge d\pU + d\qD \wedge d\pD$.
\item $\FF_{\de}(0)=\Ltres(\de)$.
\item The Hamiltonian $H$ (see \eqref{def:hamiltonianScaling}) in the new coordinates reads
\begin{align*}
	\HH(\qU,\pU,\qD,\pD;\de)
	&=
	H(\FF_{\de}(\qU,\pU,\qD,\pD);\de) - H(\Ltres(\de);\de)
	\\
	&= 
	\qU \pU 
	+ \frac{\al(\de)}{2 \de^2}\paren{\qD^2+\pD^2}
	+ \RRR (\qU \pU, \qD^2+\pD^2;\de),
\end{align*}
where $\al(\de)$ is a $\CC^1$-function satisfying that
$
\al(\de) = \sqrt{\frac8{21}} + \OO(\de^4)
$
and $\RRR$ satisfies 
\begin{align*}
	\vabs{\RRR(\qU \pU, \qD^2+\pD^2;\de)} \leq C \vabss{(\qU \pU, \qD^2+\pD^2)}^2,
\end{align*}
for $(\qU,\pU,\qD,\pD) \in B(\rhoNormalForm)$ and $C>0$ a constant independent of $\de$.
\end{enumerate}
\end{proposition}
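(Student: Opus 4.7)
The plan is to first perform a symplectic linear change of variables that translates $\Ltres(\de)$ to the origin and diagonalizes the linear part of the Hamiltonian vector field of $H$, and then apply the quantitative analytic Birkhoff normal form of J\'ez\'equel, Bernard and Lombardi~\cite{JBL16} to deal with the blow-up of the elliptic frequency as $\de\to 0$.

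First, I would use Proposition~\ref{proposition:existenceFixedPoint} and the derivative estimates of Proposition~\ref{proposition:HamiltonianScaling} to show that, after translating the origin to $\Ltres(\de)$, the quadratic part of $H$ is a small analytic perturbation of the linear part of $H_0$ displayed after Proposition~\ref{proposition:existenceFixedPoint}. The corrections come from $H_1$, and because $\mu=\de^4$ is the natural physical parameter they only affect the spectrum at order $\de^4$. The resulting eigenvalues are a hyperbolic pair $\pm\sqrt{21/8}+O(\de^4)$ and an elliptic pair $\pm i/\de^2+O(1)$. A $\de$-analytic symplectic linear change of variables (combined with an overall rescaling absorbing the hyperbolic eigenvalue into $1$) then brings the quadratic part to
\[
H_2 = \qU\pU + \frac{\al(\de)}{2\de^2}\bigl(\qD^2+\pD^2\bigr),\qquad \al(\de)=\sqrt{\tfrac{8}{21}}+O(\de^4),
\]
the factor $\sqrt{8/21}$ being the reciprocal of the hyperbolic exponent used in the rescaling. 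Because the coefficients of this linear change are uniformly bounded in $\de$, it is defined on a $\de$-independent neighbourhood of the origin.

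The second and main step is to absorb the cubic and higher order terms into a convergent Birkhoff normal form with a radius of convergence that does not shrink as $\de\to 0$. The formal Birkhoff scheme requires inverting small divisors of the form $k_1 + i k_2 \al(\de)/\de^2$ with $(k_1,k_2)\in\integers^2\setminus\{0\}$; although these divisors are bounded below by $\max(|k_1|,|k_2|\al(\de)/\de^2)\ge 1$, a naive application of Moser's analytic normal form yields a convergence radius that degenerates as $\de\to 0$. This is precisely the scenario covered by the quantitative analytic normal form of~\cite{JBL16} for saddle-center equilibria with one large frequency, which produces an analytic symplectic change of coordinates, convergent on a ball of $\de$-independent radius $\rhoNormalForm$, after which the Hamiltonian depends only on the two elementary invariants $I_1=\qU\pU$ and $I_2=(\qD^2+\pD^2)/2$. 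This yields the announced form of $\HH$ with $\RRR=O(|(I_1,I_2)|^2)$. Real-analyticity of the change $\FF_\de$ follows because each step of the Birkhoff procedure can be carried out equivariantly with respect to the complex conjugation that makes $H$ real-analytic in the sense of Proposition~\ref{proposition:HamiltonianScaling}, while canonicity is automatic because the transformation is a composition of Hamiltonian time-one flows.

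The main obstacle is ensuring that $\rhoNormalForm$ can be chosen independently of $\de$. The elliptic frequency diverges as $\de\to 0$, so the standard Moser estimates for the normal form deteriorate, and it is exactly this point that forces the use of the delicate quantitative result of~\cite{JBL16} instead of a classical analytic normal form. Once this uniform-in-$\de$ convergence is in hand, the remaining verifications (sharpness of the remainder estimate and compatibility with the reversibility~\eqref{def:involutionScaling}) are routine.
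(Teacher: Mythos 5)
Your plan is in the right direction — it does use the same main ingredient as the paper, the quantitative normal form of J\'ez\'equel–Bernard–Lombardi~\cite{JBL16} — but it glosses over the one genuinely non-trivial preparatory step, and as stated it does not justify why that normal form applies with a $\de$-independent radius.

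After translating $\Ltres(\de)$ to the origin and diagonalizing the quadratic part, the cubic-and-higher-order terms still live in all four coordinates $(\wqU,\wpU,\wqD,\wpD)$, and the statement you invoke from~\cite{JBL16} is not a general result for ``saddle-center equilibria with one large frequency''. It is tailored to Hamiltonians of the very specific form
\[
\wh{H}=\wqU\wpU+\frac{\al(\de)}{2\de^2}\bigl(\wqD^2+\wpD^2\bigr)+\wh{K}(\wqU,\wpU)+\nu\,\wh{H}_1(\wqU,\wpU,\wqD,\wpD;\de),
\]
i.e.\ where the part of the nonlinearity that does not carry a small prefactor depends only on the hyperbolic pair, while all coupling to the elliptic pair is $\OO(\nu)$. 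This is exactly the structure that allows~\cite{JBL16} to run the Birkhoff scheme with a convergence domain uniform in both $\de$ and $\nu$. Your plan never establishes that $H$ has this structure after the linear normalization, and it is not automatic: it is a consequence of two features of the problem that your sketch treats only at the quadratic level, namely that (i) the only $\OO(1)$ nonlinearity is the pendulum potential $V(\la)$, which is a function of $\la$ alone and hence becomes a function of $\wqU+\wpU$ only after diagonalization, and (ii) the scaling $\eta=\de x$, $\xi=\de y$, together with the derivative bounds of Proposition~\ref{proposition:HamiltonianScaling}, forces every cubic-or-higher monomial involving $\wqD,\wpD$ to carry an extra factor $\de$. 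The paper devotes an entire lemma (the chain of translation, reduction of quadratic terms, symplectic scaling and diagonalization culminating in~\eqref{def:hamiltonianGlobalChange}) to making this decomposition precise, including the factorization of the $\de$-independent remainder as $\wh{K}(\wqU,\wpU)=\OO(|\wqU+\wpU|^3)$ and the explicit introduction of the artificial parameter $\nu$ so that~\cite{JBL16} applies with $\nu=\de$. Without carrying out this step, the claim that the convergence radius can be chosen independently of $\de$ is unsupported: the small divisors $k_1+ik_2\al(\de)/\de^2$ are indeed bounded below by $1$, but, as you yourself observe, that alone does not yield a uniform radius, and the structural decomposition is precisely what rescues it.
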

The proof of this proposition, which is a consequence of the results  in \cite{JBL16}, is explained in Section \ref{app:proofNF}. Observe that the equations associated to the Hamiltonian $\HH$ are of the form
\begin{equation}\label{proof:equationTime}
\begin{aligned}
	\dot{\qU} &= \qU \paren{1 + \partial_1 \RRR(\qU\pU,\qD^2+\pD^2;\de)}
	\\
	\dot{\pU} &= -\pU \paren{1 + \partial_1 \RRR(\qU\pU,\qD^2+\pD^2;\de)}
	\\
	\dot{\qD} &= \pD \paren{\frac{\al(\de)}{\de^2} + 2\partial_2 \RRR(\qU\pU,\qD^2+\pD^2;\de)}
	\\
	\dot{\pD} &= -\qD \paren{\frac{\al(\de)}{\de^2} + 2\partial_2 \RRR(\qU\pU,\qD^2+\pD^2;\de)}.
\end{aligned}
\end{equation}
Since this system has two conserved quantities, $\qU \pU$ and $\qD^2+\pD^2$, its solutions are
\begin{align}\label{proof:localCoordinatesSolution}
	\begin{aligned}
		\qU(t) &= \qU(0) e^{\nu_1t}, 
		\\
		\pU(t) &= \pU(0) e^{-\nu_1t}, 
		\\
		\begin{pmatrix}
			\qD(t) \\
			\pD(t)
		\end{pmatrix}
		&=
		\begin{pmatrix}
			\cos{\nu_2t}
			&
			\sin{\nu_2t}
			\\
			-\sin{\nu_2t}
			&
			\cos{\nu_2t}
		\end{pmatrix}
		\begin{pmatrix}
			\qD(0) \\
			\pD(0)
		\end{pmatrix},
	\end{aligned}
\end{align}
where, for $(\qU(0),\pU(0),\qD(0),\pD(0)) \in B(\rhoNormalForm)$,
\begin{equation}\label{proof:localCoordinatesSolutionK}
	\begin{aligned}
		\nu_1=\nu_1(\de) &= 1+
		\partial_1 \RRR\paren{\qU(0)\pU(0), \qD^2(0)+\pD^2(0);\de}>0,
		\\
		\nu_2=\nu_2(\de) &= \frac{\al(\de)}{\de^2}
		+ 2
		\partial_2 \RRR\paren{\qU(0)\pU(0), \qD^2(0)+\pD^2(0);\de}>0.
	\end{aligned}
\end{equation}
%
Notice that the local unstable and stable manifolds are given by 
$\claus{\pU=\qD=\pD=0}$ and 
$\claus{\qU=\qD=\pD=0}$, respectively.
%

\begin{proposition}\label{proposition:formaNormalConsequence}
Consider the constants $\varrho_0,\delta_0$ given by Proposition~\ref{proposition:formaNormalTot}.
Then,
\begin{enumerate}
	\item There exists $\la_* \in(0,\la_0)$ and $\delta_1\in (0,\delta_0)$ such that, for any $\delta\in (0,\delta_1)$, the first intersections $\mathbf{z}^{\unstable}_{\de}(\la_*)$ and $\mathbf{z}^{\stable}_{\de}(\la_*)$ of the invariant manifolds $\WW^{\unstable,+}(\Ltres)$ and $\WW^{\stable,+}(\Ltres)$ with the section $\claus{\la=\la_*, \La>0}$  respectively (see Theorem~\ref{mainTheoremDist}), satisfy that
	\begin{align}\label{eq:changeGlobalToLocal}
		(\qUu,\pUu,\qDu,\pDu) 
		=
		\FF_{\de}\paren{
			\mathbf{z}^{\unstable}_{\de}(\la_*(\varrho))}, 
		\qquad
		(\qUs,\pUs,\qDs,\pDs) 
		=
		\FF_{\de}\paren{
			\mathbf{z}^{\stable}_{\de}(\la_*(\varrho))}
	\end{align}
belong to the ball $B(\rhoNormalForm)$.
	
%
Moreover, there exists $\varrho\in (0,\varrho_0)$ such that, for $\de\in (0,\de_1)$, these points can be written as
	\begin{align*}
		\qUu &= -
		\frac{\sqrt[3]{2}}{\varrho} 
		\de^{-\frac{4}{3}}
		e^{-\frac{2A}{\de^2}} 
		\boxClaus{\vabs{\CInn}^2
			+ \OO\paren{\frac{1}{\vabs{\log \de}}}},
		& 
		\qUs &= 0,
		\\
		\pUu &= \varrho + \OO\paren{\de^{\frac43} e^{-\frac{A}{\de^2}}},
		&
		\pUs & = \varrho,
		\\
		\qDu &= \sqrt[3]{4}\sqrt[4]{\frac{21}8} \,
		\de^{\frac{1}{3}}  e^{-\frac{A}{\de^2}} 
		\boxClaus{\Re{\CInn}
			+ \OO\paren{\frac{1}{\vabs{\log \de}}}},
		&
		\qDs &= 0,
		\\
		\pDu &= \sqrt[3]{4}\sqrt[4]{\frac{21}8} \,
		\de^{\frac{1}{3}}  e^{-\frac{A}{\de^2}} 
		\boxClaus{-\Im{\CInn}
			+ \OO\paren{\frac{1}{\vabs{\log \de}}}},
		&
		\pDs &= 0.
		\end{align*}
	\item Let $\SSS=\claus{\la=0, x=y}$ be the symmetry axis \eqref{def:symmetryAxisScaling} of the Hamiltonian $H$.
	There exist real-analytic functions $\Psi_1,\Psi_2: B(\rhoNormalForm) \times (0,\de_0) \to \reals$ and a constant $C>0$ such that the curve
	\begin{align}\label{def:symmetryAxisLocal}
		\SSS_{\local} = \big\{
		\qU+\pU = \Psi_1(\qU,\pU,\qD,\pD;\de), \,
		\pD = \Psi_2(\qU,\pU,\qD,\pD;\de)
		\big\}
	\end{align}
	satisfies that $\FF_{\de}(\SSS_{\local}) \subset \SSS$ and, for $(\qU,\pU,\qD,\pD;\de)\in B(\rhoNormalForm)\times (0,\de_0)$,
	\begin{enumerate}
		\item $\vabs{\Psi_1(\qU,\pU,\qD,\pD;\de)} \leq C\de\vabs{(\qU,\pU,\qD,\pD)} + 
		C \vabs{(\qU,\pU)}^2$,
		\item $\vabs{\Psi_2(\qU,\pU,\qD,\pD;\de)} \leq C\de\vabs{(\qU,\pU,\qD,\pD)}$.
	\end{enumerate}
\end{enumerate}
\end{proposition}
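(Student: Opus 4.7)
The strategy for Part~1 is to parametrize the local stable manifold in normal form coordinates and pull back the distance estimate of Theorem~\ref{mainTheoremDist} through $\FF_\de^{-1}$. In normal form coordinates the local stable manifold of $\Ltres(\de)$ is $\{\qU=\qD=\pD=0\}$, so for $\varrho\in(0,\rhoNormalForm)$ the point $(0,\varrho,0,0)$ lies on $\WW^{\stable}(\Ltres)$. I would define $\la_*(\varrho)$ to be the $\la$-component of $\FF_\de(0,\varrho,0,0)$. Since $\FF_\de(0)=\Ltres(\de)$ and $\Ltres(\de)$ has vanishing $\la$-component (Proposition~\ref{proposition:existenceFixedPoint}), $\la_*(\varrho)\to 0$ as $\varrho\to 0$. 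A continuity argument, using that the tangent $D\FF_\de\,\partial_{\pU}|_{0}$ is proportional to the stable eigenvector of the saddle block $\bigl(3,\sqrt{21/8}\bigr)^T$ and therefore has positive $\La$-component, yields $\la_*(\varrho)\in(0,\la_0)$ and $\La>0$ along this curve for a suitable range of $\varrho$. By construction $\FF_\de^{-1}\bigl(\mathbf{z}^{\stable}_\de(\la_*(\varrho))\bigr)=(0,\varrho,0,0)$.

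To locate the unstable intersection in normal form coordinates, I would invoke Theorem~\ref{mainTheoremDist} to obtain $\norm{\mathbf{z}^{\unstable}_\de(\la_*)-\mathbf{z}^{\stable}_\de(\la_*)}=\OO(\de^{1/3}e^{-A/\de^2})$. Since $\FF_\de$ has derivatives bounded uniformly in $\de$ on $B(\rhoNormalForm)$, this gives $\FF_\de^{-1}(\mathbf{z}^{\unstable}_\de(\la_*))\in B(\rhoNormalForm)$, exponentially close to $(0,\varrho,0,0)$. The explicit coefficients come from applying $D\FF_\de^{-1}|_{(0,\varrho,0,0)}$ to $\mathbf{z}^{\unstable}_\de-\mathbf{z}^{\stable}_\de$. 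At leading order $\FF_\de$ diagonalizes the linearization at $\Ltres(\de)$; for the oscillator block this reduces to the canonical complexification $x\sim(\qD+i\pD)/\sqrt{2}$, $y\sim(\qD-i\pD)/\sqrt{2}$ multiplied by a scaling factor $\sqrt[4]{21/8}$ dictated by the time rescaling implicit in bringing the hyperbolic eigenvalue $\sqrt{21/8}$ to unity in the normal form (which also turns the oscillator frequency $1/\de^2$ into $\al(\de)/\de^2=\sqrt{8/21}/\de^2$). Combined with the asymptotics $y^{\unstable}_\de-y^{\stable}_\de=\sqrt[6]{2}\,\de^{1/3}e^{-A/\de^2}[\CInn+\cdots]$ and $x^{\unstable}_\de-x^{\stable}_\de=\conj{y^{\unstable}_\de-y^{\stable}_\de}$ of Theorem~\ref{mainTheoremDist}, this produces the stated expressions for $\qDu$ and $\pDu$. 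The component $\qUu$ is then extracted from energy conservation $\HH(\qUu,\pUu,\qDu,\pDu;\de)=0$, which at leading order gives
\[
\qUu\approx -\frac{\al(\de)}{2\de^2\,\pUu}\,\bigl((\qDu)^2+(\pDu)^2\bigr),
\]
producing the factor $\sqrt[3]{2}\,\de^{-4/3}e^{-2A/\de^2}$ upon substitution (using $\al(\de)\sqrt{21/8}=1+\OO(\de^4)$ and $\sqrt[3]{16}/2=\sqrt[3]{2}$), while $\pUu=\varrho+\OO(\de^{4/3}e^{-A/\de^2})$ follows from matching the $\La$-discrepancy of Theorem~\ref{mainTheoremDist}.

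For Part~2, observe that $\HH$ depends only on the invariants $\qU\pU$ and $\qD^2+\pD^2$, so it is reversible with respect to the canonical involution $\widetilde{\Phi}(\qU,\pU,\qD,\pD)=(-\pU,-\qU,\qD,-\pD)$, whose fixed set is $\{\qU+\pU=0,\,\pD=0\}$. Since $H$ is reversible with respect to $\Phi$ in~\eqref{def:involutionScaling} and $\FF_\de$ is canonical, the conjugate involution $\FF_\de^{-1}\circ\Phi\circ\FF_\de$ also preserves $\HH$, and agrees with $\widetilde{\Phi}$ at leading order up to corrections of size $\OO(\de)$ produced by the perturbation $H_1$ (bounded via Proposition~\ref{proposition:HamiltonianScaling}). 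An application of the implicit function theorem to the fixed-point equation for this involution yields the graph representation~\eqref{def:symmetryAxisLocal}. The $\OO(\de)$ bound on $\Psi_2$ reflects that the oscillator block of the involution already has the correct form at order zero, while the additional $\vabs{(\qU,\pU)}^2$ contribution in the bound on $\Psi_1$ originates in the nonlinear dependence of $\FF_\de$ on the hyperbolic block, which couples $\qU+\pU$ to the quadratic invariant $\qU\pU$.

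The main obstacle is tracking the precise leading-order structure of $\FF_\de$ in order to extract the numerical constants $\sqrt[3]{2}$ and $\sqrt[3]{4}\sqrt[4]{21/8}$ in Part~1. This requires making the construction of~\cite{JBL16} sufficiently explicit---at least at the linear level---to identify both the diagonalizing change of coordinates and the time rescaling implicit in the normal form. Part~2 is more routine but needs a careful isolation of the $\vabs{(\qU,\pU)}^2$ contribution in $\Psi_1$ from the $\OO(\de)$ perturbative term.
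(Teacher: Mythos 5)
Your proposal is correct, and Part 1 follows essentially the paper's route: parametrize the local stable manifold by $(0,\varrho,0,0)$, choose $\la_*$ as the $\la$-component of $\FF_\de(0,\varrho,0,0)$, push the asymptotics of Theorem~\ref{mainTheoremDist} through $D\FF_\de^{-1}$, and read off $\qUu$ from energy conservation. The technical obstacle you flag---making the leading-order linear part of $\FF_\de$ explicit---is exactly what the paper resolves by factoring $\FF_\de=\wh\phi_\de\circ\wh\FF_{\de,\de}$ with the explicit affine $\wh\phi_\de$ of Lemma~\ref{lemma:globalChanges} (so that $D\wh\phi_0^{-1}$ is written out) and the close-to-identity $\wh\FF_{\de,\de}$ of Proposition~\ref{proposition:normalForm}; your numerical checks $2^{1/2+1/6}\sqrt[4]{21/8}=\sqrt[3]{4}\sqrt[4]{21/8}$ and $\sqrt[3]{16}/2=\sqrt[3]{2}$ are right and match. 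In Part~2 you depart from the paper: rather than directly transporting $\SSS$ through $\wh\phi_\de^{-1}$ and $\wh\FF_{\de,\de}^{-1}$ and reading off $\Psi_1,\Psi_2$, you observe that $\HH$ is reversible under $\widetilde\Phi(\qU,\pU,\qD,\pD)=(-\pU,-\qU,\qD,-\pD)$ and identify $\SSS_{\local}$ as the fixed set of the conjugated involution $\FF_\de^{-1}\circ\Phi\circ\FF_\de$, which is a perturbation of $\widetilde\Phi$; IFT then yields the graph form. This is an equivalent but more structural argument; the paper's version is a shorter direct calculation. One small imprecision to correct: you first ascribe the entire discrepancy between $\FF_\de^{-1}\circ\Phi\circ\FF_\de$ and $\widetilde\Phi$ to the $\OO(\de)$ effect of $H_1$, but the $\vabs{(\qU,\pU)}^2$ term in the bound on $\Psi_1$ is $\de$-independent---it comes from the quadratic part of $\wh\FF_{\de,\de}$ needed to normalize $\wh K(\wqU,\wpU)=\OO(\vabs{\wqU+\wpU}^3)$, not from the perturbation---as you do acknowledge in your last sentence.
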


This proposition is proven in Section \ref{app:translationrecon}.

\begin{figure}
	\centering
	\begin{overpic}[height=6cm]{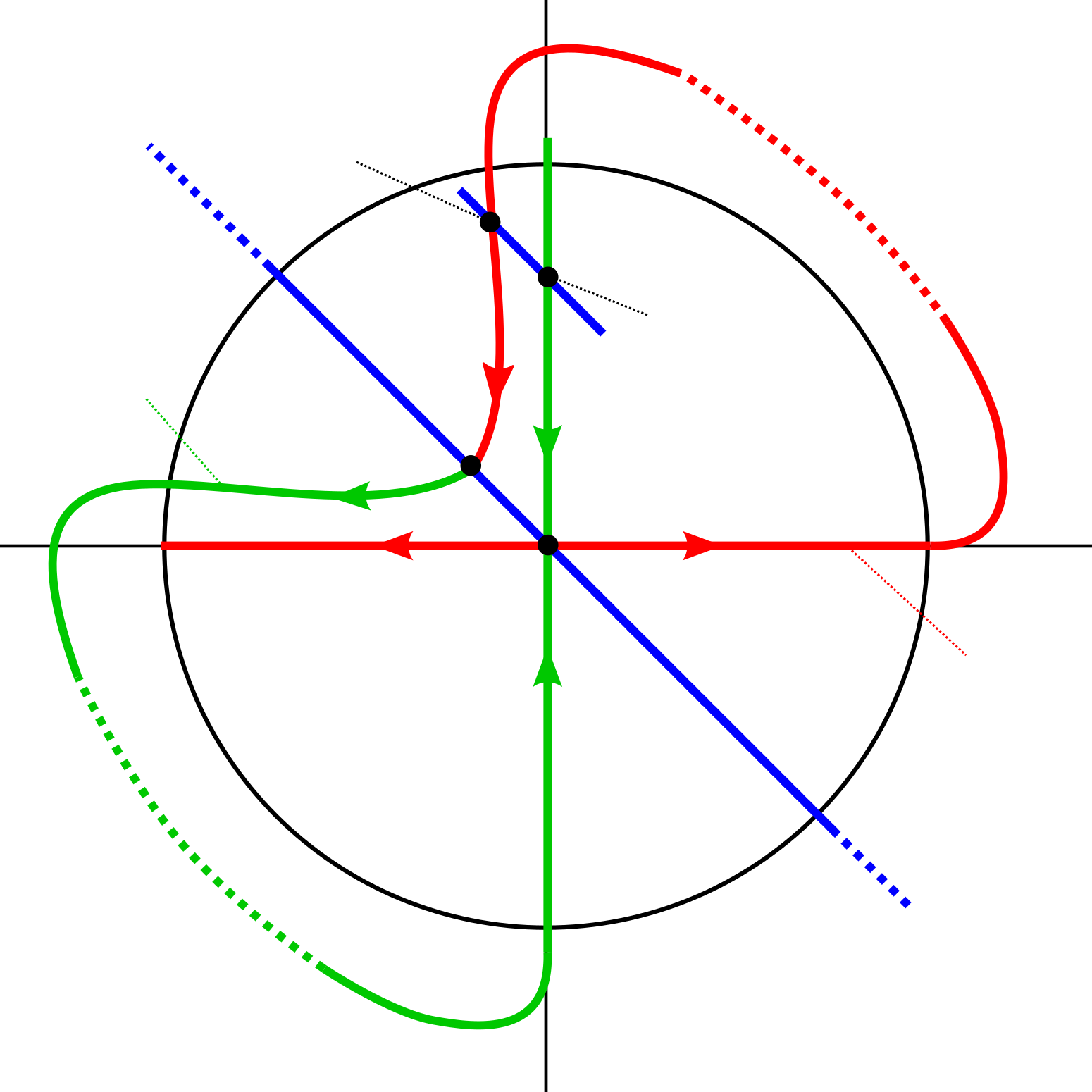}
		\put(87,34){\textcolor{red}{
				$\FF_{\de}^{-1}\paren{ \WW^{\unstable,+}(\Ltres)}$}}
		\put(-22,65){\textcolor{myGreen}{
				$\FF_{\de}^{-1}\paren{ \WW^{\stable,-}(\Ltres)}$}}
		\put(86,11){\textcolor{blue}{$\SSS_{\local}$}}
		\put(55,9){$B(\rhoNormalForm)$}
		\put(59.5,70){$(0,\varrho)$}
		\put(20,87){ $(\qUu,\pUu)$}
		\put(102,49.5){ $\qU$}
		\put(48,102){ $\pU$}
	\end{overpic}
	\caption{Representation of the unstable and stable manifolds in local coordinates $(\qU,\pU,\qD,\pD)$ given in Propositions~\ref{proposition:formaNormalTot} and~\ref{proposition:formaNormalConsequence}. }
	\label{fig:reconnections}
\end{figure}

From now on, we work in the set of local coordinates $(\qU,\pU,\qD,\pD) \in B(\rhoNormalForm)$ given in Proposition~\ref{proposition:formaNormalTot}.
Then, to prove Theorem~\ref{mainTheoremB}, it remains to extend the unstable manifold from the point $(\qUu,\pUu,\qDu,\pDu)$ given in~\eqref{eq:changeGlobalToLocal} and to analyze for which values of  $\de>0$ it intersects with the symmetry curve $\SSS_{\local}$ given in~\eqref{def:symmetryAxisLocal}, (see Figure \ref{fig:reconnections}).
%

To give an intuition of the proof of Theorem \ref{mainTheoremB}, in the next lemma, we consider the intersection of the unstable manifold with a convenient ``first order'' of the symmetry axis $\SSS_{\local}$.
From now on, we denote by $C$ any positive constant independent of $\de$.

\begin{lemma}\label{lemma:reconnexionsPartial}
%
Let $\Phi^{\unstable}(t;\de)$ be the trajectory of the Hamiltonian system given by  $\HH$ in Proposition~\ref{proposition:formaNormalTot} with initial condition $(\qUu,\pUu,\qDu,\pDu)$ as given in Proposition~\ref{proposition:formaNormalConsequence}.
Then, there exist $N_0>0$ and sequences
$\{\wh{T}_n\}_{n \geq N_0}$ and $\{\wh{\de}_n\}_{n \geq N_0}$ such that, for $n\geq N_0$,
\[
\Phi^{\unstable}(\wh{T}_n;\wh{\de}_n) \in \claus{\qU+\pU=0,\, \pD=0}.
\]
Moreover,
\begin{align*}
\wh{\de}_n 
=
\sqrt[8]{\frac8{21}}\sqrt[4]{\frac{A}{n\pi}}
\paren{1+ 
\OO \paren{\frac1{\log n}}},
\qquad \text{for } n \geq N_0,
\end{align*}
where $A>0$ is the constant introduced in Theorem~\ref{TheoremA}.
\end{lemma}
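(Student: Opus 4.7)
The strategy is to exploit the complete integrability of $\HH$ inside the ball $B(\rhoNormalForm)$ provided by Proposition \ref{proposition:formaNormalTot}. The flow with initial condition $(\qUu,\pUu,\qDu,\pDu)$ given in Proposition \ref{proposition:formaNormalConsequence} is given explicitly by \eqref{proof:localCoordinatesSolution}, with frequencies $\nu_1,\nu_2$ as in \eqref{proof:localCoordinatesSolutionK} evaluated on the (conserved) quantities $\qUu\pUu$ and $\qDu^2+\pDu^2$. Using the estimates of Proposition \ref{proposition:formaNormalConsequence}, both conserved quantities are exponentially small in $\de$; hence I will show
\[
\nu_1(\de)=1+\OO\bigl(\de^{\frac{2}{3}}e^{-\frac{2A}{\de^2}}\bigr),\qquad \nu_2(\de)=\frac{\al(\de)}{\de^2}+\OO\bigl(\de^{\frac{2}{3}}e^{-\frac{2A}{\de^2}}\bigr),
\]
which makes these corrections negligible at every subsequent stage of the argument.

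The condition $\qU(T)+\pU(T)=0$ becomes $\qUu e^{\nu_1 T}+\pUu e^{-\nu_1 T}=0$. Since $\pUu>0$ and $\qUu<0$ (this is where $\Theta\neq 0$ enters: the leading coefficient of $\qUu$ is $-\sqrt[3]{2}\vabs{\Theta}^2/\varrho$), we can take logarithms to get the unique solution
\[
T(\de)=\frac{1}{2\nu_1(\de)}\log\!\paren{-\frac{\pUu}{\qUu}}=\frac{A}{\de^2}\boxClaus{1+\OO\paren{\de^2\log(1/\de)}},
\]
where the remainder is obtained by substituting the asymptotic expressions for $\qUu,\pUu$ and expanding the logarithm. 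The condition $\pD(T)=0$ in \eqref{proof:localCoordinatesSolution} reduces to $\tan(\nu_2 T)=\pDu/\qDu$; using the formulas for $\qDu,\pDu$ in Proposition \ref{proposition:formaNormalConsequence}, the right-hand side is $-\Im\Theta/\Re\Theta+\OO(1/\vabs{\log\de})$, so the equation is equivalent to
\[
\nu_2(\de)\,T(\de)=n\pi+\arctan\!\paren{-\frac{\Im\Theta}{\Re\Theta}}+\OO\paren{\frac{1}{\vabs{\log\de}}},\qquad n\in\integers.
\]

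Combining the two relations yields, after absorbing all subleading terms,
\[
F_n(\de):=\frac{A\,\al(\de)}{\de^4}-n\pi+\OO\paren{\frac{1}{\vabs{\log\de}}}=0.
\]
Since $\al(\de)=\sqrt{8/21}+\OO(\de^4)$, the leading equation $A\sqrt{8/21}/\de^4=n\pi$ has the solution $\de=\sqrt[8]{8/21}\sqrt[4]{A/(n\pi)}$. I will conclude by a straightforward application of the implicit function theorem (or a direct monotonicity argument on $\de\mapsto A\al(\de)/\de^4$, which is strictly decreasing for small $\de$) to produce the sequences $\{\wh{\de}_n\}_{n\geq N_0}$ and $\{\wh{T}_n\}_{n\geq N_0}$ with $\wh{T}_n=T(\wh{\de}_n)$; the relative error $\OO(1/\log n)$ in the statement then follows from the bound $1/\vabs{\log\de}=\OO(1/\log n)$ when $\de\asymp n^{-1/4}$.

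The main technical point is to verify that the several error sources—the exponentially small corrections to $\nu_1$ and $\nu_2$, the $\OO(\de^2\log(1/\de))$ error inside $T(\de)$, and the $\OO(1/\vabs{\log\de})$ error coming from the angular condition—combine into a single remainder that remains dominated by $\OO(1/\vabs{\log\de})$ after multiplication by $\nu_2\sim\de^{-2}$. This is delicate because $\nu_2 T\sim A/\de^4$ is huge, so any relative error in $T$ gets amplified by a factor of order $\de^{-4}$; however, the exponentially small corrections in $\nu_1$ beat any polynomial factor, while the $\OO(\de^2\log(1/\de))$ factor inside $T$ produces only an $\OO(\log(1/\de)/\de^2)\cdot\de^2=\OO(\log(1/\de))$ shift in $\nu_2 T$ that, after dividing by $n\pi$ in the inversion step, is reabsorbed into the asserted $\OO(1/\log n)$ bound.
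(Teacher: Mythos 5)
Your approach follows the paper's route exactly: integrate the Moser normal form explicitly via \eqref{proof:localCoordinatesSolution}, solve the hyperbolic condition $\qU(T)+\pU(T)=0$ for $T(\de)\approx A/\de^2$, substitute into the elliptic condition $\pD(T)=0$, and invert the quantization $\nu_2 T\approx n\pi$ to extract $\wh{\de}_n$. The argument is sound, but three bookkeeping slips are worth fixing. (i) Since $\qUu\pUu = \OO(\de^{-4/3}e^{-2A/\de^2})$ dominates $\qDu^2+\pDu^2=\OO(\de^{2/3}e^{-2A/\de^2})$, the estimate for $\partial_1\RRR$ gives $\nu_1 = 1 + \OO(\de^{-4/3}e^{-2A/\de^2})$, not $\OO(\de^{2/3}e^{-2A/\de^2})$ as you wrote; harmless, since the exponential wins anyway. (ii) The displayed remainder in $F_n(\de)$ is wrong: the additive $\OO(\vabs{\log\de})$ correction in $T(\de)=A/\de^2+\OO(\vabs{\log\de})$, multiplied by $\nu_2\sim\de^{-2}$, contributes a term of size $\OO(\vabs{\log\de}/\de^2)$, not $\OO(1/\vabs{\log\de})$, and your last-paragraph computation ``$\OO(\log(1/\de)/\de^2)\cdot\de^2=\OO(\log(1/\de))$'' carries a spurious factor of $\de^2$. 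The correct remainder still yields a relative error $\OO(\de^2\vabs{\log\de})=\OO(n^{-1/2}\log n)$ in $\wh{\de}_n$, which is stronger than the asserted $\OO(1/\log n)$ and matches the paper's $g_1(\de)=\OO(\de^2\vabs{\log\de})$, so the conclusion stands. (iii) Writing the elliptic condition as $\tan(\nu_2 T)=\pDu/\qDu=-\Im\Theta/\Re\Theta+\dotsb$ silently assumes $\Re\Theta\neq 0$, which is not guaranteed by $\Theta\neq 0$; the paper's formulation $\sin(\arg\Theta+\nu_2 T)=g_0(\de)$ is uniformly valid for all $\Theta\neq 0$ and should be preferred.
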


\begin{proof}
Let $(\qU(t),\pU(t),\qD(t),\pD(t))$	be a trajectory of the Hamiltonian system given by $\HH$.
We want to find $\de>0$ such that there exists $T_{\de}^0 >0$ satisfying
\begin{align*}
\paren{\qU(0),\pU(0),\qD(0),\pD(0)}
&=
(\qUu,\pUu,\qDu,\pDu), 
\\
\paren{\qU(T^0_{\de}),\pU(T^0_{\de}),
	\qD(T^0_{\de}),\pD(T^0_{\de})}
&\in \claus{\qU+\pU=0,\, \pD=0}.
\end{align*}
In other words, using~\eqref{proof:localCoordinatesSolution},
\begin{align}
	\label{proof:eqReconnexionsA}
	\qUu e^{\nu_1 T^0_{\de}} 
	+ \pUu 	e^{-\nu_1 T^0_{\de}} = 0, \\
	\label{proof:eqReconnexionsB}
	\cos(\nu_2T^0_{\de}) \pDu - \sin(\nu_2T^0_{\de}) \qDu = 0,
\end{align}
where, by its definition in \eqref{proof:localCoordinatesSolutionK} and Proposition~\ref{proposition:formaNormalConsequence}, one has that
\begin{equation}\label{proof:functionsKasymptotics}
	\nu_1=\nu_1(\de) = 1 + \OO\paren{\de^{-\frac83} e^{-\frac{4A}{\de^2}}}, 
	\qquad
	\nu_2=\nu_2(\de) = \frac1{\de^2}\sqrt{\frac8{21}}+ \OO\paren{\de^2}.
\end{equation}
For any $\delta$ small enough, equation~\eqref{proof:eqReconnexionsA} has the solution
\begin{equation}\label{proof:Tdelta}
\begin{aligned}
	T^0_{\de}
	= -\frac1{2 \nu_1}\ln\paren{-\frac{\qUu}{\pUu}}
&= 
\frac{A}{\de^2} + \frac23\log\de	
- \log \paren{\sqrt[6]{2}\vabs{\CInn}\varrho^{-1}}
+ \OO\paren{\frac1{\vabs{\log\de}}}
\\
&= 
\frac{A}{\de^2}\paren{1+ \OO(\de^2\vabs{\log \de})}.
\end{aligned}
\end{equation}
Next, we study equation \eqref{proof:eqReconnexionsB}.
Let us denote $\tht=\arg \CInn$.
From Proposition~\ref{proposition:formaNormalConsequence},
\begin{align*}
\qDu &= \sqrt[3]{4}\sqrt[4]{\frac{21}8} \,
\de^{\frac{1}{3}}  e^{-\frac{A}{\de^2}} 
\boxClaus{\vabs{\CInn}\cos\tht
	+ \OO\paren{\frac{1}{\vabs{\log \de}}}}
\\
\pDu &= -\sqrt[3]{4}\sqrt[4]{\frac{21}8} \,
\de^{\frac{1}{3}}  e^{-\frac{A}{\de^2}} 
\boxClaus{\vabs{\CInn}\sin \tht
	+ \OO\paren{\frac{1}{\vabs{\log \de}}}}.
\end{align*}
By Theorem \ref{TheoremA}, one has that ${\CInn} \neq 0$.
Then, \eqref{proof:eqReconnexionsB} is equivalent to
\begin{align*}
\cos(\nu_2T^0_{\de})\sin\tht + 
\sin(\nu_2T^0_{\de})\cos\tht = 
\sin\paren{\tht+\nu_2T^0_{\de}} 
=g_0(\de),
\end{align*}
where $g_0(\de)$ contains the higher order terms,
\begin{equation*}
g_0(\de) =
-\cos(\nu_2 T^0_{\de})\paren{
\sqrt[4]{\frac8{21}}
\frac{e^{\frac{A}{\de^2}}\de^{-\frac13}}{
	\sqrt[3]{4}\vabs{\CInn}}
\pDu + \sin \tht}
- \sin(\nu_2T^0_{\de}) 
\paren{
\sqrt[4]{\frac8{21}}
\frac{e^{\frac{A}{\de^2}}\de^{-\frac13}}{
	\sqrt[3]{4}\vabs{\CInn}}
	\qDu - \cos\tht}.
\end{equation*} 
and satisfies  $g_0(\de)=\OO(\vabs{\log \de}^{-1})$.
We deduce then that, for $n \in \integers$,
\begin{align*}
\nu_2 T_{\de}^0 + \tht = n\pi - \arcsin g_0(\de).
\end{align*}
Using the asymptotic expressions of $\nu_2=\nu_2(\de)$ and $T_{\de}^0$ in \eqref{proof:functionsKasymptotics} and \eqref{proof:Tdelta}, we have that $\de$ has to satisfy 
\begin{align*}
\frac{A}{\de^4}\sqrt{\frac8{21}}\paren{1+ g_1(\de)}
 = \pi n ,
\end{align*}
where $g_1(\de)=\OO(\de^2 \vabs{\log \de}).$
%
Therefore, there exists $N_0>0$ and a sequence $\{\wh{\de}_n\}_{n\geq N_0} \subset (0,\de_1)$
satisfying the previous equation and the asymptotic expression of the lemma.
Finally, one has that $\wh{T}_n = T^0_{\de}$ for $\de=\wh{\de}_n$.
\end{proof}


\begin{proof}[End of the proof of Theorem~\ref{mainTheoremB}]
We proceed analogously to the proof of Lemma \ref{lemma:reconnexionsPartial}.
Let us consider the expressions of $(\qU(t),\pU(t),\qD(t),\pD(t))$ given in~\eqref{proof:localCoordinatesSolution} and $T_{\de}>0$, such that
\begin{align*}
\paren{\qU(0),\pU(0),\qD(0),\pD(0)}
&=
(\qUu,\pUu,\qDu,\pDu), 
\\
\paren{\qU(T_{\de}),\pU(T_{\de}),
	\qD(T_{\de}),\pD(T_{\de})}
&\in \SSS_{\local},
\end{align*}
with $\SSS_{\local}= \claus{\qU+\pU=\Psi_1, \, \pD=\Psi_2}$ as given in Proposition~\ref{proposition:formaNormalConsequence}.

First, we deal with the equation $\qU+\pU = \Psi_1$.
Then, $T_{\de}$ must satisfy
\begin{equation}\label{proof:equation1complete}
\qU(T_{\de}) + \pU(T_{\de})
=	
\Psi_1 \paren{\qU(T_{\de}),\pU(T_{\de}),\qD(T_{\de}),\pD(T_{\de})}.
\end{equation}
Let us denote $\tau=\tau(\delta)=T_{\de}-T_{\de}^0$, with $T_{\de}^0$ satisfying $\qU(T_{\de}^0)+\pU(T_{\de}^0)=0$ (see equations~\eqref{proof:eqReconnexionsA} and~\eqref{proof:Tdelta}).
Then, by \eqref{proof:localCoordinatesSolution}, $\tau$ has to satisfy 
\begin{align*}
\qU(T_{\de}^0)e^{\nu_1 \tau} + 
\pU(T_{\de}^0)e^{-\nu_1 \tau}
&=
\pU(T_{\de}^0)(e^{-\nu_1 \tau}-e^{\nu_1 \tau}) \\
&=
{\Psi}_1(\qU(T_{\de}^0+\tau),\pU(T_{\de}^0+\tau),
\qD(T_{\de}^0+\tau),\pD(T_{\de}^0+\tau)).
\end{align*}
Namely, $\tau(\de)=F[\tau](\de)$ with
\begin{align*}
F[\tau](\de)
= 
\frac{e^{-\nu_1\tau}-e^{\nu_1\tau}+
	2 \tau \nu_1}{2 \nu_1}
-
\frac{{\Psi}_1(\qU(t),\pU(t),\qD(t),\pD(t))|_{t=T_{\de}^0+\tau}}{2\nu_1\pU(T_{\de}^0)}.
%
\end{align*}
First we obtain estimates for $F[0](\de)$. By Proposition~\ref{proposition:formaNormalConsequence} and \eqref{proof:functionsKasymptotics},
\begin{align*}
\vabs{F[0](\de)} 
&\leq \frac{\vabs{{\Psi}_1(\qU(T_{\de}^0),\pU(T_{\de}^0),\qD(T_{\de}^0),\pD(T_{\de}^0))}}{2\vabs{\nu_1 \pU(T_{\de}^0)}} 
\\
&\leq C
\frac{\de\vabs{(\qG(T_{\de}^0),\pG(T_{\de}^0))}
+ \vabs{(\qU(T_{\de}^0),\pU(T_{\de}^0))}^2
}{\vabs{\pU(T_{\de}^0)}}.
\end{align*}
Let us recall that, by \eqref{proof:Tdelta}, we have an asymptotic expression for $T_{\de}^0$.
Then, by~\eqref{proof:localCoordinatesSolution}, \eqref{proof:functionsKasymptotics} and Proposition~\ref{proposition:formaNormalConsequence},
\begin{equation}\label{proof:assymptotisInitial}
\begin{aligned}
\pU(T_{\de}^0) 
&= \pUu e^{-\nu_1 T_{\de}^0}
= \sqrt[6]{2} \vabs{\CInn} \de^{-\frac23} e^{-\frac{A}{\de^2}} 
\boxClaus{1 + \OO\paren{\frac1{\vabs{\log\de}}}},
\\
\qD(T_{\de}^0) 
&=\cos(\nu_2 T_{\de}^0) \qDu + 
\sin(\nu_2 T_{\de}^0) \pDu 
= \OO\paren{\de^{\frac13}e^{-\frac{A}{\de^2}}},
\\
\pD(T_{\de}^0) 
&=-\sin(\nu_2 T_{\de}^0) \qDu 
+\cos(\nu_2 T_{\de}^0) \pDu 
= \OO\paren{\de^{\frac13}e^{-\frac{A}{\de^2}}}.
\end{aligned}
\end{equation}
Since $\qU(T_{\de}^0)=-\pU(T_{\de}^0)$, one has that
$
\vabs{F[0](\de)} \leq C\de.
$
Next, we study the Lipschitz constant of the operator $F$. 
Let us consider continuous functions $\tau_0,\tau_1: (0,\de_0) \to \reals$ such that $\vabss{\tau_0(\de)}, \vabss{\tau_1(\de)} \leq C\de$
and the function $\tau_{\sigma}=\sigma \tau_1 + (1-\sigma)\tau_0$.
Then, by the mean value theorem, 
\begin{align*}
\vabs{F[\tau_1](\de)-F[\tau_0](\de)} 
&\leq \,
C \vabs{\tau_1(\de)-\tau_0(\de)} \cdot \\
&\sup_{\sigma \in [0,1]}
\Big\{
 \vabs{\tau_{\sigma}(\de)}^2
+
\de^{\frac23} e^{\frac{A}{\de^2}} 
\vabs{
D{\Psi}_1(\qU,\pU,\qD,\pD)\cdot
(\dot{\qU},\dot{\pU},\dot{\qD},\dot{\pD})^T}_{t=T_{\de}^0+\tau_{\sigma}(\de)}
\Big\}.
\end{align*}
Since $\Psi_1$ is a real-analytic function, by Proposition~\ref{proposition:formaNormalConsequence}, one has $\vabs{D\Psi_1} \leq C\de + C\vabs{(\qU,\pU)}$.
Moreover, using \eqref{proof:equationTime}, one can obtain estimates for the derivatives $(\dot{\qU},\dot{\pU},\dot{\qD},\dot{\pD})$. 
Then,
\begin{align*}
\vabs{F[\tau_1](\de)-F[\tau_0](\de)} \leq C \de \vabs{\tau_1(\de)-\tau_0(\de)}.	
\end{align*} 
This implies that, taking $\de>0$ small enough,
$F$ is well defined and contractive.
Hence, $F$ has a fixed point $\tau(\de)$ such that $\vabs{\tau(\de)}\leq C\de$.
Therefore, there exists $T_{\de}$ satisfying equation~\eqref{proof:equation1complete} such that
\begin{align}\label{proof:TdeltaSencer}
	T_{\de} = T_{\de}^0 + \tau(\de) = 
	\frac{A}{\de^2}(1 + \OO(\de^2 \vabs{\log \de})).
\end{align}
Next, we study the equation $\pD=\Psi_2$.
One has that $\de>0$ must satisfy
\begin{equation}\label{proof:equation2complete}
	\pD(T_{\de})
	=	
	{\Psi}_2\paren{\qG(T_{\de}),\pG(T_{\de})}.
\end{equation}
Theorem~\ref{TheoremA} implies that ${\CInn} \neq 0$.
Then, by \eqref{proof:localCoordinatesSolution}, $\de$ has to satisfy
\begin{align*}
\sin\paren{\tht+\nu_2T_{\de}} 
=\wh{g_0}(\de),
\end{align*}
where
\begin{equation*}
	\begin{aligned}	
		\wh{g_0}(\de) =& \,
		\Psi_2\paren{\qG(T_{\de}),\pG(T_{\de})}
		-\cos(\nu_2 T_{\de})\paren{
			\sqrt[4]{\frac8{21}}
			\frac{e^{\frac{A}{\de^2}}\de^{-\frac13}}{
				\sqrt[3]{4}\vabs{\CInn}}
			\pDu + \sin \tht}
		\\
		&- \sin(\nu_2T_{\de}) 
		\paren{
			\sqrt[4]{\frac8{21}}
			\frac{e^{\frac{A}{\de^2}}\de^{-\frac13}}{
				\sqrt[3]{4}\vabs{\CInn}}
			\qDu - \cos\tht}.
	\end{aligned}
\end{equation*}
Then, we deduce that, for $n \in \integers$,
\begin{align*}
	\nu_2 T_{\de} + \tht = n\pi - \arcsin \left(\wh{g_0}(\de)\right).
\end{align*}
By Proposition~\ref{proposition:formaNormalConsequence} and using the asymptotic expressions in  \eqref{proof:assymptotisInitial} and \eqref{proof:TdeltaSencer}, 
\begin{align*}
	\vabs{\wh{g_0}(\de)} &\leq C \de \vabs{(\qG(T_{\de}),\pG(T_{\de}))}
	+ \frac{C}{\vabs{\log \de}} 
	\leq C \de \vabs{(\qG(T^0_{\de}),\pG(T^0_{\de}))}
	+ \frac{C}{\vabs{\log \de}}
	\leq \frac{C}{\vabs{\log \de}}.
\end{align*}
Therefore, $\de$ has to satisfy 
\begin{align*}
	\frac{A}{\de^4}\sqrt{\frac8{21}}\paren{1+ \wh{g_1}(\de)}
	= \pi n ,
\end{align*}
where $\wh{g_1}(\de)=\OO(\de^2 \vabs{\log \de}).$
Then, there exists $N_0>0$ and a sequence $\{{\de}_n\}_{n\geq N_0} \subset (0,\de_0)$
satisfying the statement of the Theorem and that
\begin{align*}
	{\de}_n 
	=
	\sqrt[8]{\frac8{21}}\sqrt[4]{\frac{A}{n\pi}}
	\paren{1+ 
		\OO \paren{\frac1{\log n}}},
	\qquad \text{for } n \geq N_0.
\end{align*}

%
\end{proof}


\subsection{A quantitative Moser normal form}\label{app:proofNF}
To prove Proposition~\ref{proposition:formaNormalTot}, we first  introduce a series of affine changes of coordinates in order to put the Hamiltonian $H(\la,\La,x,y;\de)$ in~\eqref{def:hamiltonianScaling} in the form considered in~\cite{JBL16} (see \eqref{def:hamiltonianGlobalChange} below).

\begin{lemma}\label{lemma:globalChanges}
Fix $\cttTheoScalingDomainA, \cttTheoScalingDomainB>0$.
There exists $\de_0, \rhoGlobal>0$ and a family of  affine transformations
\begin{align*}
	\wh{\phi}_{\de}: B(\rhoGlobal)=\claus{\mathbf{z} \in \reals^4 : \vabs{\mathbf{z} }<\rhoGlobal} &\to \UReals(\cttTheoScalingDomainA,\cttTheoScalingDomainB) 
	\\
	(\wqU,\wpU,\wqD,\wpD) &\mapsto 
	(\la,\La,x,y),
\end{align*}
defined for $\de \in (0,\de_0)$, with $\CC^1$-functions of $\de$ as coefficients such that  the Hamiltonian system given by $H$ (see \eqref{def:hamiltonianScaling}) in the new coordinates and after a scaling in time is Hamiltonian
with respect to the canonical form and 
\begin{equation} \label{def:hamiltonianGlobalChange}
\begin{split}
	\wh{H}(\wqU,\wpU,\wqD,\wpD;\de) =& \,
	H\paren{\wh{\phi}_{\de}(\wqU,\wpU,\wqD,\wpD);
		\de}
	- H(\Ltres(\de);\de) \\
	=& \,
	\wqU \wpU 
	+ \frac{\al(\de)}{2\de^2}\paren{\wqD^2 + \wpD^2}
	+ \wh{K}(\wqU,\wpU) \\
	&+\de \wh{H}_1(\wqU,\wpU,\wqD,\wpD;\de),
\end{split}
\end{equation}
where $\al(\de)$ is a $\CC^1$-function in $\de$ satisfying
$
\al(\de) = \sqrt{\frac8{21}} + \OO(\de^4)
$
and, for $(\wqU,\wpU,\wqD,\wpD) \in B(\rhoGlobal)$, there exists a constant $C>0$ independent of $\delta$ such that
%
\begin{align*}
\vabss{\wh{K}(\wqU,\wpU)} \leq C \vabs{\wqU+\wpU}^3,
\qquad
\vabss{\wh{H}_1(\wqU,\wpU,\wqD,\wpD;\de)}
\leq C \vabs{(\wqU,\wpU,\wqD,\wpD)}^3.
\end{align*}
Moreover, the change of coordinates satisfies that $\wh{\phi}_{\de}(0)=\Ltres(\de)$ and
\begin{align*}
D \wh{\phi}_0 = \begin{pmatrix}
	\frac2{\sqrt{7}} & \frac2{\sqrt{7}} & 0 & 0 \\
	-\frac1{\sqrt6} & \frac1{\sqrt6} & 0 & 0 \\
	0 & 0 & \sqrt[4]{\frac2{21}} & i\sqrt[4]{\frac2{21}} \\
	0 & 0 & \sqrt[4]{\frac2{21}} & -i\sqrt[4]{\frac2{21}},
\end{pmatrix},
\quad
D \wh{\phi}_0^{-1} = \begin{pmatrix}
	\frac{\sqrt7}4 & -\sqrt3 & 0 & 0 \\
	\frac{\sqrt7}4 & \sqrt3 & 0 & 0 \\
	0 & 0 & \sqrt[4]{\frac{21}{32}} & \sqrt[4]{\frac{21}{32}} \\
	0 & 0 & -{i}\sqrt[4]{\frac{21}{32}} & {i}\sqrt[4]{\frac{21}{32}}
\end{pmatrix}.
\end{align*}
%
%
\end{lemma}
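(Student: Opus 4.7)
The plan is to construct $\wh\phi_\de$ as a composition of three standard operations: an $\de$-dependent translation moving $\Ltres(\de)$ to the origin, a block-diagonal linear change that diagonalizes the Hessian of $H$ at $\Ltres(\de)$ up to a constant symplectic factor, and a global time rescaling that simultaneously normalizes the symplectic form to the canonical one and the hyperbolic eigenvalue to $\pm 1$. For the translation, Proposition~\ref{proposition:existenceFixedPoint} gives $\Ltres(\de)$ as a $\CC^1$ function of $\de$; subtracting the equilibrium and the constant $H(\Ltres(\de);\de)$ yields a Hamiltonian with no linear part. For the linear step, the second-derivative estimates of Proposition~\ref{proposition:HamiltonianScaling} show that $D^2H(\Ltres(\de))$ is a $\CC^1$ perturbation of the block-diagonal Hessian of $H_{\pend}+H_{\osc}$; the linearized pendulum has eigenvalues $\pm\sqrt{21/8}$ (using $V''(0)=7/8$) and the oscillator block has eigenvalues $\pm i/\de^2$. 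An application of the implicit function theorem then produces a $\CC^1$ family of block-diagonal linear changes $M(\de)$, with $M(0)=D\wh\phi_0$ exactly as in the statement, that block-diagonalizes this Hessian and thereby absorbs every quadratic correction into an exact $\alpha(\de)$.

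A direct computation verifies $M(0)^T J M(0)=\sqrt{8/21}\,J$, so the pull-back of the symplectic form $d\la\wedge d\La+i\,dx\wedge dy$ acquires the global factor $\alpha(0):=\sqrt{8/21}$. Rescaling time by $\alpha(0)$ restores the canonical symplectic form and simultaneously rescales the hyperbolic part to $\wqU\wpU$ and the oscillator part to $\frac{\alpha(\de)}{2\de^2}(\wqD^2+\wpD^2)$. To isolate $\wh K$, observe that the unperturbed pendulum $H_{\pend}$ depends only on $(\la,\La)$, which under $M(0)$ becomes a function of $(\wqU,\wpU)$ alone; moreover, since $V$ is even in $\la$ and the linear change gives $\la=(2/\sqrt{7})(\wqU+\wpU)$, the terms of $V$ beyond second order produce a function of $\wqU+\wpU$ alone that vanishes to order four, yielding the bound $|\wh K|\leq C|\wqU+\wpU|^3$ on the bounded ball $B(\rhoGlobal)$. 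Everything else---the perturbation $H_1$, the correction $F_{\pend}(\de^2\La)/\de^4$, and any coupling generated by the $\de$-dependent part of $M(\de)$---collects into $\de\,\wh H_1$, whose cubic bound $|\wh H_1|\leq C|(\wqU,\wpU,\wqD,\wpD)|^3$ follows from Taylor expansion combined with Proposition~\ref{proposition:HamiltonianScaling}.

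The main technical obstacle is verifying the precise error $\alpha(\de)=\sqrt{8/21}+O(\de^4)$. Because $H_1$ depends on the mass ratio through $\mu=\de^4$ by~\eqref{def:hamiltonianScalingH1}, and the purely oscillator second derivatives satisfy $\partial_x^2 H_1,\partial_y^2 H_1,\partial_{xy}^2 H_1=O(\de^2)$, the correction to the oscillator eigenvalue of the Hessian has relative size $O(\de^4)$, which propagates through the implicit function theorem into the same asymptotics for $\alpha(\de)-\alpha(0)$. The reversibility $\Phi$ of~\eqref{def:involutionScaling} further excludes odd-order corrections that could otherwise spoil this estimate. The remainder of the proof is elementary linear algebra and Taylor expansion, considerably easier than the nonlinear Moser-type normal form of~\cite{JBL16} to which this lemma is the preparatory reduction.
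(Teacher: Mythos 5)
Your skeleton --- $\de$-dependent translation, linear reduction of the quadratic part, rescaling --- is the same as the paper's, and the verification that $M(0)^T J M(0)=\sqrt{8/21}\,J$ is correct. The gap is the insistence that the linear change $M(\de)$ be \emph{block-diagonal}. By Proposition~\ref{proposition:HamiltonianScaling} the Hessian $D^2H(\Ltres(\de))$ carries hyperbolic--elliptic cross-terms of size $O(\de)$: for instance $\partial_{\la x}H_1=\de\,\partial_{\la\eta}H_1^{\Poi}+\dots$, and the reversibility $\Phi_{\Poi}$ only forces $\partial_{\la\eta}H_1^{\Poi}(L_3^{\Poi};\mu)=-\partial_{\la\xi}H_1^{\Poi}(L_3^{\Poi};\mu)$, not that either vanishes. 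A block-diagonal $M(\de)$ cannot kill such terms, since $M^T D^2H\,M$ stays non-block-diagonal whenever $D^2H$ is; so the statement that such an $M(\de)$ ``block-diagonalizes this Hessian'' is self-contradictory. And you cannot simply dump the untreated cross-terms into $\de\,\wh H_1$: after dividing by $\de$ they give an $O(1)$ \emph{quadratic} contribution to $\wh H_1$ of the form $\wqU\wqD$, $\wqU\wpD$, etc., which is incompatible with the required cubic bound $|\wh H_1|\leq C|(\wqU,\wpU,\wqD,\wpD)|^3$. This is precisely what the paper's step~2 (the ``reduction of the terms of order 2'' following~\cite{JBL16}) handles: the linear change $\phi^{\red}_{\mu}$ there is \emph{not} block-diagonal; it pushes the quadratic perturbation into the kernel of the adjoint of the unperturbed linearization, and since the hyperbolic eigenvalues $\pm\sqrt{21/8}$ do not resonate with the elliptic ones $\pm i/\de^2$, that kernel contains no cross-terms. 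Only the limit $D\wh\phi_0$ needs to be block-diagonal, which is consistent because the cross-terms vanish at $\de=0$.

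Once this is repaired --- that is, once $M(\de)$ is allowed to mix the two blocks by $O(\de)$, or equivalently once the order-2 reduction is inserted before the scaling --- the remaining ingredients of your argument are sound, and working directly with the scaled Hamiltonian $H$ rather than going back to Poincar\'e variables as the paper does is a reasonable shortcut. Your heuristic for $\alpha(\de)-\sqrt{8/21}=O(\de^4)$ (relative size $\de^2\big/(1/\de^2)=\de^4$ coming from $\partial^2_xH_1,\partial_{xy}H_1=O(\de^2)$) gives the right order, but it must be re-examined after the order-2 reduction is inserted, since that step also feeds $\de$-dependence into the oscillator block.
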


\begin{proof}
The proof of this lemma relies on the approach and techniques of \cite{JBL16}. 
For technical reasons and to be consistent with  \cite{JBL16}, we consider  the Poincar\'e Hamiltonian $H^{\Poi}(\la,L,\eta,\xi;\mu)$ introduced in \eqref{def:hamiltonianPoincare} instead of the scaled version $H$ defined in \eqref{def:hamiltonianScaling}.
Let us denote the point $L_3$ in Poincar\'e coordinates $(\la,L,\eta,\xi)$ as 
$
L_3^{\Poi} = (\phi^{\Poi})^{-1}(L_3).
$
Therefore, $L_3^{\Poi}$ is a saddle-center equilibrium point of the system given by $H^{\Poi}$ and, by~\eqref{eq:L3Poincare}, it satisfies that
\[
\la=0, \qquad (L,\eta,\xi) = (1,0,0) + \OO(\mu)= (1,0,0) + \OO(\de^4).
\]
%
We perform several changes of coordinates.
\begin{enumerate}
\item {Translation of the equilibrium point}. Let   $\phi^{\equi}:(\la,\wt{L},\wt{\eta},\wt{\xi}) \to (\la,L,\eta,\xi)$ be the translation such that $\phi^{\equi}(0)=L_3^{\Poi}$.
Then, the Hamiltonian system associated to $H^{\Poi}$ in the new coordinates defines a Hamiltonian system with respect to the symplectic form ${d\la\wedge d\wt{L}} + i d\wt{\eta}\wedge d\wt{\xi}$ and the Hamiltonian
\begin{align*} 
	H^{\equi}
	=
	H^{\Poi} \circ \phi^{\equi} - H^{\Poi}(L_3^{\Poi};\mu).
\end{align*}
Denoting 
 $\wt{\mathbf{z}}=({\la},\wt{L},\wt{\eta},\wt{\xi})$, $H^{\equi}(\wt{\mathbf{z}};\mu)$ can be written as
 \[
 H^{\equi}(\wt{\mathbf{z}};\mu) = H_0^{\equi}(\wt{\mathbf{z}})
 + R_2^{\equi}(\wt{\mathbf{z}};\mu)
 + R_3^{\equi}(\wt{\mathbf{z}};\mu),
 \\
 \]
 with
\begin{equation}\label{def:hamiltonianEquiSplit}
\begin{aligned}
	H_0^{\equi}(\wt{\mathbf{z}}) &= 
	\frac12
	D^2 H^{\Poi}(L_3^{\Poi};0) [\wt{\mathbf{z}},\wt{\mathbf{z}}]
	=
	-\frac32 \wt{L}^2 + \wt{\eta}\wt{\xi},
	\\ 
	R_2^{\equi}(\wt{\mathbf{z}};\mu) &=  
	\frac12
	D^2 H^{\Poi}(L_3^{\Poi};\mu) [\wt{\mathbf{z}},\wt{\mathbf{z}}] 
	-H_0^{\equi}(\wt{\mathbf{z}}) 
	= \OO(\mu \vabs{\wt{\mathbf{z}}}^2), \\
	R_3^{\equi}(\wt{\mathbf{z}};\mu) &= 
	(H^{\Poi}\circ \phi^{\equi})(\wt{\mathbf{z}};\mu)
	- H_0^{\equi}(\wt{\mathbf{z}})  - R_2^{\equi}(\wt{\mathbf{z}};\mu)
	- H^{\Poi}(L_3^{\Poi};\mu)
	\\
	&= \OO(\wt{L}^3) + \OO(\mu \vabs{\wt{\mathbf{z}}}^3),
\end{aligned}		
\end{equation}
where we have used that $\wt{L}=L-1+\OO(\mu)$, 
$\wt{\eta}=\eta+\OO(\mu)$ and $\wt{\xi}=\xi+\OO(\mu)$.
Notice that as a result, for $\mu > 0$, $\wt{\mathbf{z}}=0$ is a saddle-center point of the system given by the Hamiltonian  $H^{\equi}(\wt{\mathbf{z}};\mu)$.
%


\item {Reduction of the terms of order 2}. Following the strategy of the proof of \cite[Theorem 1.3]{JBL16} in our setting, %
for $\mu \geq 0$, there exists a family $\phi^{\red}_{\mu}:\mathbf{x}=(x_{\la},x_L,x_{\eta},x_{\xi})\mapsto\wt{\mathbf{z}}=({\la},\wt{L},\wt{\eta},\wt{\xi})$ of real-analytic linear diffeomorphisms satisfying that 
	$D\phi^{\red}_{0}(0) = \mathbf{Id}$ and that
	\begin{align*}
		H^{\red}(\mathbf{x};\mu) =
		(H^{\equi} \circ \phi^{\red}_{\mu})(\mathbf{x};\mu) = H^{\equi}_{0}(\mathbf{x})
		+ R^{\red}_{2}(\mathbf{x};\mu) 
		+ R^{\red}_{3}(\mathbf{x};\mu),
	\end{align*}
	where $R^{\red}_{2}(\mathbf{x};\mu)$ is a real polynomial of degree $2$ in $\mathbf{x}$ with $\CC^1$-functions of $\mu$ as coefficients and 
	\begin{align*}
		R^{\red}_{2}(\mathbf{x};\mu) &= \OO(\mu\vabs{\mathbf{x}}^2),
		&
		\claus{H_0^{\equi} \circ \mathbf{J},R_2^{\red}}&=0,
		&
		R_3^{\red}(\mathbf{x};\mu)&= \OO(\vabs{\mathbf{x}}^3),
	\end{align*}
	where $\mathbf{J}$ is the matrix associated to the symplectic form $dx_{\la} \wedge d x_L + i dx_{\eta}\wedge dx_{\xi}$.

The fact that $\claus{H_0^{\equi} \circ \mathbf{J},R_2^{\red}}=0$
and that $R^{\red}_2$ is a homogeneous polynomial of degree $2$ and $\OO(\mu|\mathbf{x}|^2)$ imply that there exist $\CC^1$-functions $\sigma_1(\mu), \sigma_2(\mu) = \OO(1)$ such that
\begin{align*}
	R^{\red}_2(x_{\la},x_L,x_{\eta},x_{\xi};\mu)
	=
	\mu \sigma_1 (\mu) \frac{x_{\la}^2}2
	+ 
	\mu \sigma_2(\mu) x_{\eta} x_{\xi}.
\end{align*}
Since $\phi^{\red}_{\mu}$ is linear and taking into account that  $D\phi^{\red}_{0}(0) = \mathbf{Id}$ and the definition of the potential $V(\la)$ in \eqref{def:potentialV}, one has that
\begin{align}\label{proof:defc1}
	\sigma_1(0) = \frac1{\mu} \partial^2_{\la} H^{\Poi}(L_3^{\Poi};\mu) \Big|_{\mu=0}
	=
	\partial^2_{\la} H_1^{\Poi}(0,1,0,0;0)
	= V''(0)  
	=\frac78.
\end{align}
Therefore, by \eqref{def:hamiltonianEquiSplit}, one has that
\begin{align*}
	H^{\red}(\mathbf{x};\mu) = 
	-\frac32 x_L^2 +  \mu \sigma_1 (\mu) \frac{x_{\la}^2}2 + (1+\mu\sigma_2(\mu))x_{\eta}x_{\xi} 
	+ R_3^{\red}(\mathbf{x};\mu).
\end{align*}
In addition, since the terms of order $3$ and higher of $H^{\equi}$ are of the form $\OO(\wt{L}^3)+ \OO(\mu \vabs{\wt{\mathbf{x}}}^3)$ (see \eqref{def:hamiltonianEquiSplit}), one has that
\begin{align*}
	R_3^{\red}(\mathbf{x};\mu) = \OO(x_L^3) + \OO(\mu \vabs{\mathbf{x}}^3).
\end{align*}

\item {Symplectic scaling}. We rename the  parameter $\de=\mu^{\frac14}$ (see \eqref{def:delta})
and, similarly to \eqref{def:changeScaling}, we consider ${\phi}^{\scaB}:\mathbf{y}=(y_{\la},y_{L},y_{\eta},y_{\xi}) \mapsto \mathbf{x}=(x_{\la},x_L,x_{\eta},x_{\xi})$ such that
\begin{align*}
	x_{\la} = \frac1{\sqrt{\sigma_1(\de^4)}} \, y_{\la},
	\quad
	x_L = \frac{\de^2}{\sqrt3} y_L,
	\quad
	x_{\eta} = \frac{\de}{\sqrt[4]{3 \sigma_1(\de^4)}}  y_{\eta} ,
	\quad
	x_{\xi} = \frac{\de}{\sqrt[4]{3 \sigma_1(\de^4)}} y_{\xi},
\end{align*}
and a scaling in time by a factor of $\de^2\sqrt{3 \sigma_1(\mu)}$.
The Hamiltonian system of $H^{\red}$ expressed in these coordinates
defines a system  associated with the form ${d y_{\la}\wedge d y_L} + {i d y_{\eta}\wedge d y_{\xi}}$ and the Hamiltonian
\begin{equation} \label{def:hamiltonianScaProof}
	\begin{split}
		{H}^{\scaB}(y;\de) = \frac12 \paren{y_{\la}^2 - y_L^2} + \al(\de)\frac{y_{\eta} y_{\xi} }{\de^2} + K^{\scaB}(y_{\la}) +
		\de {H}^{\scaB}_1(\mathbf{y};\de),
	\end{split}
\end{equation}
where
\begin{align*}
	\al(\de) 
	=& \,
	\frac{1+\de^4 \sigma_2(\de^4)}{\sqrt{3 \sigma_1(\de^4)}}
	= \sqrt{\frac8{21}}+\OO(\de^4),
	\\
	K^{\scaB}(y_{\la})
	=& \,
	\frac1{\de^4}
	R_3^{\red}\paren{\frac{y_{\la}}{\sqrt{\sigma_1(0)}},0,0,0;0} = \OO(y_{\la}^3),
	\\
	\de {H}^{\scaB}_1(\mathbf{y};\de) 
	=& \,
	\frac1{\de^4} R_3^{\red}\paren{\phi^{\scaB}(\mathbf{y});\de^4}
	- K^{\scaB}(y_{\la})
	= \OO(\de \vabs{\mathbf{y}}^3),
\end{align*}
where we have used Cauchy estimates to bound $D R_3^{\red}$.
%

\item {Diagonalization}. Consider the symplectic change of coordinates
$\phi^{\mathrm{diag}}:(\wqU,\wpU,\wqD,\wpD)\mapsto  \mathbf{y}=(y_{\la},y_{L},y_{\eta},y_{\xi})$ defined by
\begin{align*}
	\begin{pmatrix}
		y_{\la} \\
		y_{L}
	\end{pmatrix}
	=
	\frac1{\sqrt2} \begin{pmatrix}
		1 & 1 \\
		-1 & 1 \\
	\end{pmatrix}
	\begin{pmatrix}
		\wqU \\
		\wpU
	\end{pmatrix},
	\qquad
	\begin{pmatrix}
		y_{\eta} \\
		y_{\xi}
	\end{pmatrix}
	=
	\frac1{\sqrt2} \begin{pmatrix}
		1 & i \\
		1 & -i \\
	\end{pmatrix}
	\begin{pmatrix}
		\wqD \\
		\wpD
	\end{pmatrix}.
\end{align*}
Then, the Hamiltonian system associated to \eqref{def:hamiltonianScaProof} expressed in these coordinates
defines a Hamiltonian system with respect to the form $d \wqU\wedge d \wpU + d \wqD\wedge d\wpD$
and the Hamiltonian
\begin{equation} \label{def:hamiltonianLinear}
	\begin{split}
		\wh{H}(\wqU,\wpU,\wqD,\wpD;\de) =& 
		\wqU \wpU
		+ \frac{\al(\de)}{2\de^2}\paren{\wqD^2 + \wpD^2} 
		+ \wh{K}(\wqU,\wpU) \\
		&+\de \wh{H}_1(\wqU,\wpU,\wqD,\wpD;\de),
	\end{split}
\end{equation}
where
\begin{align*}
	\wh{K}(\wqU,\wpU) &= 
	K^{\scaB}\paren{\frac{\wqU+\wpU}{\sqrt2}} 
	= \OO\paren{\vabs{\wqU+\wpU}^3},
	\qquad
	\wh{H}_1 = H^{\scaB}_1 \circ \phi^{\mathrm{diag}}.
\end{align*} 
%
\end{enumerate}
\end{proof}

Next proposition provides a normal form  in a neighborhood of the saddle-center equilibrium point. 
It is a direct consequence of \cite[Proposition C.1]{JBL16}.
In order to use this result, we introduce the artificial parameter $\nu>0$ and rewrite $\wh{H}$ in \eqref{def:hamiltonianGlobalChange} as
\begin{equation} \label{def:hamiltonianGlobalChangeParameter}
	\begin{split}
		\wh{H}(\wqU,\wpU,\wqD,\wpD;\de,\nu) =&
		\wqU \wpU 
		+ \frac{\al(\de)}{2\de^2}\paren{\wqD^2 + \wpD^2} 
		+ K(\wqU,\wpU) \\
		&+\nu \wh{H}_1(\wqU,\wpU,\wqD,\wpD;\de).
	\end{split}
\end{equation}
Note that we are interested in the case $\nu=\de$.

\begin{proposition}\label{proposition:normalForm}
	There exist $\de_0, \rhoLocal>0$ and a family of analytic canonical changes of coordinates,  defined for $\nu\in[0,\de_0)$ and $\de\in(0,\de_0)$,
	\begin{align*}
		\wh{\FF}_{\de,\nu} = \paren{\phiA_{1,\nu}, \psi_{1,\nu}, \phiA_{2,\nu}, \psi_{2,\nu}}: 
		B(\rhoLocal)  &\to 
		B(\rhoGlobal) \subset \reals^4
		\\
		(\qU,\pU,\qD,\pD) &\mapsto 
		(\wqU,\wpU,\wqD,\wpD),
	\end{align*}
 such that the Hamiltonian $\wh{H}$ in \eqref{def:hamiltonianGlobalChangeParameter} in the new coordinates reads
	\begin{align*}
		\HH(\qU,\pU,\qD,\pD;\de,\nu)
		&=
		\wh{H}\paren{\wh{\FF}_{\de,\nu}(\qU,\pU,\qD,\pD);\de,\nu}
		\\
		&= 
		\qU\pU 
		+ \frac{\al(\de)}{2 \de^2}\paren{\qD^2+\pD^2}
		+ \RRR (\qD \pD, \qD^2+\pD^2)
	\end{align*}
where $\RRR$ satisfies that, for $(\qU,\pU,\qD,\pD) \in B(\rhoNormalForm)$,
\begin{align*}
	\vabs{\RRR(\qU \pU, \qD^2+\pD^2;\de)} \leq C \vabss{(\qU \pU, \qD^2+\pD^2)}^2,
\end{align*}
for some $C>0$ independent of $\de$ and $\nu$.

In addition, for all $(\qU,\pU,\qD,\pD) \in B(\rhoLocal)$ 
and all
$(\wqU,\wpU,\wqD,\wpD)\in B(\rhoGlobal)$, the individual components of the change of coordinates satisfy
\begin{enumerate}[label=(\arabic*)]
	\item $\vabs{\phiA_{1,\nu}(\qU,\pU,\qD,\pD)-\qU}
	\leq C \claus{\vabs{(\qU,\pU)}^2+
		\nu \vabs{(\qU,\pU,\qD,\pD)}^2}$,
	\\
	$\vabss{\phiA^{-1}_{1,\nu}(\wqU,\wpU,\wqD,\wpD)-\wqU}
	\leq C \claus{\vabss{(\wqU,\wpU)}^2 + \nu\vabs{(\wqU,\wpU,\wqD,\wpD)}^2}$,
	\item $\vabs{\psi_{1,\nu}(\qU,\pU,\qD,\pD)-\pU}
	\leq C \claus{\vabs{(\qU,\pU)}^2+
		\nu \vabs{(\qU,\pU,\qD,\pD)}^2}$,
	\\
	$\vabss{\psi^{-1}_{1,\nu}(\wqU,\wpU,\wqD,\wpD)-\wpU}
	\leq C \claus{\vabss{(\wqU,\wpU)}^2 + \nu\vabs{(\wqU,\wpU,\wqD,\wpD)}^2}$,
	\item $\vabss{\phiA_{2,\nu}(\qU,\pU,\qD,\pD)-\qD} \leq 
	C \nu \vabs{(\qU,\pU,\qD,\pD)}^2$,
	\\
	$\vabss{\phiA^{-1}_{2,\nu}(\wqU,\wpU,\wqD,\wpD)-\wqD} \leq C\nu\vabs{(\wqU,\wpU,\wqD,\wpD)}^2$,
	\item $\vabss{\psi_{2,\nu}(\qU,\pU,\qD,\pD)-\pD} \leq 
	C \nu \vabs{(\qU,\pU,\qD,\pD)}^2$,
	\\
	$\vabss{\psi^{-1}_{2,\nu}(\wqU,\wpU,\wqD,\wpD)-\wpD} \leq C\nu\vabs{(\wqU,\wpU,\wqD,\wpD)}^2$.
\end{enumerate}	

\end{proposition}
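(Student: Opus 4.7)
This proposition is a direct application of the quantitative Moser-type normal form theorem established by J\'ez\'equel, Bernard and Lombardi in \cite[Proposition C.1]{JBL16}, specialized to the Hamiltonian $\wh{H}$ of \eqref{def:hamiltonianGlobalChangeParameter}. The plan is to first verify that the hypotheses of that theorem are satisfied uniformly in the parameters $\de\in(0,\de_0)$ and $\nu\in[0,\de_0)$, then to invoke it to produce the canonical transformation $\wh{\FF}_{\de,\nu}$ and the normal form $\HH$, and finally to extract the component-wise estimates (1)--(4) from the explicit Lie-series construction underlying the theorem.

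The quadratic part of $\wh{H}$ is a saddle-center with eigenvalues $\{\pm 1,\pm i\al(\de)/\de^2\}$, and admits the two Poisson-commuting quadratic first integrals $I_1=\wqU\wpU$ and $I_2=(\wqD^2+\wpD^2)/2$. The Moser procedure of \cite{JBL16} eliminates order by order every Taylor monomial of $\wh{H}$ that is not a function of $(I_1,I_2)$; at each step, the cohomological equation to be solved involves only the adjoint action of the hyperbolic block, whose eigenvalue equals $1$, so no small divisor appears and the constants obtained are independent of the large center frequency $\al(\de)/\de^2$. Combined with the $\de$-uniform bounds on $\wh{K}$ and $\wh{H}_1$ provided by Lemma~\ref{lemma:globalChanges}, this yields a radius $\rhoLocal$ and a universal constant $C$ that can be chosen independently of $\de$ and $\nu$.

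The splitting of the estimates into $\nu$-independent and $\nu$-dependent contributions is obtained by examining the construction at $\nu=0$. When $\nu=0$, the Hamiltonian reduces to $\wqU\wpU+\frac{\al(\de)}{2\de^2}(\wqD^2+\wpD^2)+\wh{K}(\wqU,\wpU)$, in which the pair $(\wqD,\wpD)$ appears only through $I_2$; hence the normalizing transformation can be chosen as the identity on these variables, and the corrections to the components $\phiA_{2,\nu}$ and $\psi_{2,\nu}$ carry an overall factor of $\nu$, as stated in (3) and (4). On the other hand, $\wh{K}(\wqU,\wpU)=\OO(|\wqU+\wpU|^3)$ is not yet in normal form (it involves $\wqU+\wpU$ rather than $\wqU\wpU$), so a nontrivial normalization in $(\wqU,\wpU)$ is required already at $\nu=0$, producing the $|(\qU,\pU)|^2$ contribution in (1) and (2). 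The bounds on the inverse map follow by applying the same construction to $\wh{\FF}_{\de,\nu}^{-1}$ on a slightly smaller ball and invoking Cauchy estimates.

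The main obstacle is ensuring uniformity of all constants in $\de$ despite the divergence of the center frequency. As explained above, this uniformity is intrinsic to the saddle-center Moser construction, since the cohomological equations at each order depend only on the hyperbolic eigenvalue and not on $\al(\de)/\de^2$; nevertheless, verifying that the hypotheses of \cite[Proposition C.1]{JBL16} are met with $\de$-independent constants requires a careful translation of the bounds on $\wh{K}$ and $\wh{H}_1$ furnished by Lemma~\ref{lemma:globalChanges} into the norm used in \cite{JBL16}, and a complexification of the coordinates to a fixed polydisk in order to apply Cauchy estimates to the generating functions produced at each iterative step.
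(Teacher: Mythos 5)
Your proposal takes the same route as the paper: the paper's proof of Proposition~\ref{proposition:normalForm} consists essentially of the single sentence that it is a direct consequence of \cite[Proposition~C.1]{JBL16}, and your argument is likewise a direct application of that result, supplemented with a correct explanation of why the constants are $\de$-uniform (the center frequency $\al(\de)/\de^2$ is large, so divisors are bounded away from zero) and why the $(\qD,\pD)$-components carry an overall factor of $\nu$ while the $(\qU,\pU)$-components do not (because $\wh K$ is already $\nu$-independent but not yet a function of $\wqU\wpU$).
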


Proposition~\ref{proposition:formaNormalTot} is a direct consequence of Lemma~\ref{lemma:globalChanges} and Proposition~\ref{proposition:normalForm}.

\subsection{The invariant manifolds in normal form variables}\label{app:translationrecon}

To prove Proposition~\ref{proposition:formaNormalConsequence}, we translate the results in Theorem~\ref{mainTheoremDist} (Statement 1) and the axis of symmetry $\SSS$ (Statement 2) into the set of coordinates $(\qU,\pU,\qD,\pD)$ given in Proposition~\ref{proposition:formaNormalTot}.
Recall that in the proof of Proposition~\ref{proposition:formaNormalTot}, we have used the ``intermediate'' system of coordinates $(\wqU,\wpU,\wqD,\wpD)$.
We translate first the results via the change of coordinates $\wh{\phi}_{\de}:(\wqU,\wpU,\wqD,\wpD) \to (\la,\La,x,y)$, given by Lemma~\ref{lemma:globalChanges}.
Then, we apply the second change of coordinates $\wh{\FF}_{\de,\nu}:(\qU,\pU,\qD,\pD)\to(\wqU,\wpU,\wqD,\wpD)$ with $\nu=\de$, given by Proposition~\ref{proposition:normalForm}.

\paragraph{Statement 1:} 

Let $\la_* \in [\la_1,\la_2]\subset(0,\la_0)$ to be chosen later and consider the section
$
{\Sigma}(\la_*) = \claus{\la=\la_*, \La>0}.
$
Let $\mathbf{z}^{\unstable}_{\de}(\la_*)$ and $\mathbf{z}^{\stable}_{\de}(\la_*)$ be the first intersections of the invariant manifolds $\WW^{\unstable,+}(\Ltres)$ and $\WW^{\stable,+}(\Ltres)$ with the section $\Sigma(\la_*)$, respectively.

Let us recall that, by Proposition~\ref{proposition:existenceFixedPoint}, the critical point $\Ltres(\de)$ in $(\la,\La,x,y)$ coordinates is of the form $\Ltres(\de)=\paren{0,
	\de^2 \LtresLa(\de),
	\de^3 \Ltresx(\de),
	\de^3 \Ltresy(\de)}^T$,
with $\LtresLa,\Ltresx,\Ltresy = \OO(1)$.
Then, applying the change of coordinates $\wh{\phi}_{\de}$ given in Lemma~\ref{lemma:globalChanges}, there exist $\CC^1$ functions $\gamma_1,\gamma_2:(0,\de_0) \to \reals^4$ satisfying $\gamma_1,\gamma_2= \OO(1)$ such that
\begin{equation}\label{proof:sectionGlobalCoordinates}
\begin{aligned}
\wh{\Sigma}(\la_*,\de) = 
\wh{\phi}_{\de} \paren{{\Sigma}(\la_*)} =
\Big\{ &\wqU+\wpU + \de \sprod{\gamma_1(\de)}{(\wqU,\wpU,\wqD,\wpD)}=  {\textstyle\frac{\sqrt7}2}\la_*, 
\\
 &\wpU-\wqU + \de \sprod{\gamma_2(\de)}{(\wqU,\wpU,\wqD,\wpD)}
 + \de^2 \sqrt6 \LtresLa(\de)>0
\Big\}.
\end{aligned}
\end{equation}
Notice that
$
\wh{\Sigma}(\la_*,0) = \{\wqU+\wpU = {\textstyle\frac{\sqrt7}2}\la_*, \, \wpU>\wqU\}.
$
Moreover, we denote
\begin{align*}
(\wqUu,\wpUu,\wqDu,\wpDu) 
&=
\wh{\phi}_{\de}^{-1}\paren{\mathbf{z}^{\unstable}_{\de}(\la_*)
	} 
\in \wh{\Sigma}(\la_*,\de), 
\\
(\wqUs,\wpUs,\wqDs,\wpDs) 
&=
\wh{\phi}_{\de}^{-1}\paren{
	\mathbf{z}^{\stable}_{\de}(\la_*)}
\in \wh{\Sigma}(\la_*,\de).
\end{align*}	
Since $\wh{\phi}_{\de}$ is an affine transformation, by Theorem~\ref{mainTheoremDist} and Lemma~\ref{lemma:globalChanges}, one has that
\begin{equation}\label{proof:diferenciaGlobalCoordinates}
\begin{aligned}
\wqUu-\wqUs
&=
\boxClaus{\paren{\frac{\sqrt7}4,-\sqrt3,0,0} + \OO(\de)} \cdot \paren{\mathbf{z}^{\unstable}_{\de}(\la_*)-\mathbf{z}^{\stable}_{\de}(\la_*)}
=
\OO \paren{\de^{\frac43} e^{-\frac{A}{\de^2}}},
\\
\wpUu-\wpUs
&=
\boxClaus{\paren{\frac{\sqrt7}4,\sqrt3,0,0} + \OO(\de)} \cdot \paren{\mathbf{z}^{\unstable}_{\de}(\la_*)-\mathbf{z}^{\stable}_{\de}(\la_*)}
=
\OO \paren{\de^{\frac43} e^{-\frac{A}{\de^2}}},
\\
\wqDu-\wqDs 
&= 
\boxClaus{\paren{0,0,\sqrt[4]{\frac{21}{32}},\sqrt[4]{\frac{21}{32}}} + \OO(\de)} \cdot \paren{\mathbf{z}^{\unstable}_{\de}(\la_*)-\mathbf{z}^{\stable}_{\de}(\la_*)}
\\
&=
\sqrt[3]{4}\sqrt[4]{\frac{21}8}
\de^{\frac{1}{3}}  e^{-\frac{A}{\de^2}} 
\boxClaus{\Re{\CInn}
	+ \OO\paren{\frac{1}{\vabs{\log \de}}}},
\\
\wpDu-\wpDs 
&= 
\boxClaus{\paren{0,0,-i\sqrt[4]{\frac{21}{32}},i \sqrt[4]{\frac{21}{32}}} + \OO(\de)} \cdot \paren{\mathbf{z}^{\unstable}_{\de}(\la_*)-\mathbf{z}^{\stable}_{\de}(\la_*)}
\\
&= -
\sqrt[3]{4}\sqrt[4]{\frac{21}8}
\de^{\frac{1}{3}}  e^{-\frac{A}{\de^2}} 
\boxClaus{\Im{\CInn}
	+ \OO\paren{\frac{1}{\vabs{\log \de}}}}.
\end{aligned}
\end{equation}

Next, we consider the change of coordinates $\wh{\FF}_{\de,\nu}$ with $\nu=\de$ given in Proposition~\ref{proposition:normalForm}.
Let us denote
\begin{align}\label{proof:changeGlobalToLocal}
	(\qUu,\pUu,\qDu,\pDu) 
	=
	\wh{\FF}_{\de,\de}^{-1}
	(\wqUu,\wpUu,\wqDu,\wpDu),
	\quad
	(\qUs,\pUs,\qDs,\pDs) 
	=
	\wh{\FF}^{-1}_{\de,\de}
	(\wqUs,\wpUs,\wqDs,\wpDs) .
\end{align}
Since the local stable manifold is given by $\claus{\qU=\qD=\pD=0}$ (see \eqref{proof:localCoordinatesSolution}), one has that $\qUs=\qDs=\pDs=0$ and we call $\varrho=\pUs$, (see Figure~\ref{fig:intersectionsTechnical}).
Taking into account that $\wh{\FF}_{\de,\de}(0,\varrho,0,0)=(\wqUs,\wpUs,\wqDs,\wpDs)\in \wh{\Sigma}(\la_*,\de)$ for $\la \in [\la_1,\la_2]$, by \eqref{proof:sectionGlobalCoordinates}, the value $\la_*$ must satisfy
\[
 \la^*=\sqrt{\frac74}\boxClaus{\wqUs+\wpUs + \de^4 \sprod{\g_1(\de)}{\wh{\FF}_{\de,\de}(0,\varrho,0,0)}}.
\]
Then, using the notation $\wh{\FF}_{\de,\de} = \paren{\phiA_{1,\de}, \psi_{1,\de}, \phiA_{2,\de}, \psi_{2,\de}}$,
 by Proposition \ref{proposition:normalForm}, one has that for $\varrho\in (0,\varrho_0)$ and $\de>0$ small enough, 
\[
\la_* = \sqrt{\frac74}\varrho \paren{1+ \OO(\varrho,\de)}.
\]
Then, it is clear that taking, for instance, $\varrho=\varrho_0/2$, the corresponding $\la_*$ belongs to a closed interval in $(0,\la_0)$ independent of $\delta$.

Next, we consider the difference between $(\qUu,\pUu,\qDu,\pDu)$
and  $(\qUs,\pUs,\qDs,\pDs)=(0,\varrho,0,0)$. By  \eqref{proof:changeGlobalToLocal}, one has that
\begin{align*}
	\pUu - \varrho &= 
	\psi^{-1}_{1,\de}(\wqUu,\wpUu,\wqDu,\wpDu) 
	-\psi^{-1}_{1,\de}(\wqUs,\wpUs,\wqDs,\wpDs),
	\\
	\qDu  &= 
	\phiA^{-1}_{2,\de}(\wqUu,\wpUu,\wqDu,\wpDu) 
	-\phiA^{-1}_{2,\de}(\wqUs,\wpUs,\wqDs,\wpDs),
	\\
	\pDu &= 
	\psi^{-1}_{2,\de}(\wqUu,\wpUu,\wqDu,\wpDu) 
	-\psi^{-1}_{2,\de}(\wqUs,\wpUs,\wqDs,\wpDs).
\end{align*}
For $\pUu-\varrho$, by the mean value theorem, Proposition \ref{proposition:normalForm} and \eqref{proof:diferenciaGlobalCoordinates}, one obtains
\begin{align*}
\vabs{\pUu - \varrho} 
&\leq 
C \vabs{\wqUu-\wqUs} + 
C \vabs{\wpUu-\wpUs} + 
C \de \vabs{\wqDu-\wqDs} + 
C \de \vabs{\wpDu-\wpDs} \leq
C \de^{\frac43}e^{-\frac{A}{\de^2}}.
\end{align*}
Analogously, for $\qDu$, one has that
\begin{align*}
\vabs{\qDu-(\wqDu-\wqDs)} \leq C\de \vabs{
	\paren{\wqUu-\wqUs,\wpUu-\wpUs,\wqDu-\wqDs,\wpDu-\wpDs}}
\leq
C \de^{\frac43}e^{-\frac{A}{\de^2}}
\end{align*}
and, by \eqref{proof:diferenciaGlobalCoordinates}, one obtains the expression of the statement for $\qDu$.
An analogous estimate holds for $\pDu$.
Lastly, by the expression of Hamiltonian $\HH$ in Proposition~\ref{proposition:normalForm}, one sees that $\HH(\qUu,\pUu,\qDu,\pDu)=\HH(0,0,0,0)=0$ and obtains the expression for $\qUu$.

\paragraph{Statement 2:}
Let us consider the symmetry axis $\SSS=\claus{\la=0,x=y}$ given in \eqref{def:symmetryAxisScaling}.
Notice that, by Proposition~\ref{proposition:existenceFixedPoint}, one has that $\wh{\phi}_{\de}(0)=\Ltres(\de) \in \SSS$.
Then, applying the affine transformation $\wh{\phi}_{\de}$ given in Lemma~\ref{lemma:globalChanges}, there exist functions $\g_3,\g_4:(0,\de_0) \to \reals$ satisfying $\g_3, \g_4 =\OO(1)$ such that
\[
\wh{\phi}_{\de} (\SSS) = \big\{
\wqU+\wpU + \de \sprod{\gamma_3(\de)}{(\wqU,\wpU,\wqD,\wpD)}=0,\,\,
\wpD + \de \sprod{\gamma_4(\de)}{(\wqU,\wpU,\wqD,\wpD)}=0\big\}.\]
%
Then, applying the change of coordinates $\wh{\FF}_{\de,\de}$, one has that
\begin{align*}
	\SSS_{\local} = \claus{\qU+\pU=\Psi_1(\qU,\pU,\qD,\pD;\de), \,
		\pD = \Psi_2(\qU,\pU,\qD,\pD;\de)},
\end{align*}
where
\begin{align*}
\Psi_1 =& 
\paren{\phiA_{1,\de}-\qU} +
\paren{\psi_{1,\de}-\pU} +
\de \sprod{\g_3(\de)}{\FF_{\de,\de}},
\\
\Psi_2 =& 
\paren{\psi_{2,\de}-\pD} +
\de \sprod{\g_4(\de)}{\FF_{\de,\de}}.
\end{align*}
Then, Proposition~\ref{proposition:normalForm} implies that for $(\qU,\pU,\qD,\pD)\in B(\rhoLocal)$ and $\de>0$ small enough, 
\begin{align*}
	\vabs{\Psi_1(\qU,\pU,\qD,\pD;\de)}
	&\leq
	C \de \vabs{(\qU,\pU,\qD,\pD)}
	+ C \vabs{(\qU,\pU)}^2,
	\\
	\vabs{\Psi_2(\qU,\pU,\qD,\pD;\de)}
	&\leq
	C \de \vabs{(\qU,\pU,\qD,\pD)}.
\end{align*}

\section{The invariant manifolds of the Lyapunov orbits: Proof of Theorems \ref{TheoremB} and \ref{TheoremC}}
\label{section:proofAB}

The goal of this section is to prove Item 1 of Theorem~\ref{TheoremB} and Theorem~\ref{TheoremC}. First we rephrase these results  referred to the Hamiltonian \eqref{def:hamiltonianScaling} (recall that $\de= \mu^{\frac{1}{4}}$, see \eqref{def:delta}).
We begin with the existence of the Lyapunov periodic orbits given by  Proposition \ref{PropositionB}. Note that the Lyapunov Center Theorem (see for instance \cite{MeyerHallOffin}) ensures the existence of a family of periodic orbits emanating from a saddle-center equilibrium point. 
In our setting, this family corresponds to perturbed orbits of the fast oscillator, centered at $\Ltres(\de)$, and therefore the existence of the periodic orbits given by  Proposition \ref{PropositionB} is just a consequence of this classical theorem. However, we need to ``reprove'' it to have estimates for the periodic orbits.

First, we introduce the following notation. For $d>0$, we denote 
\begin{align}\label{def:torusC}
	\torus = \reals/2\pi\integers, 
	\qquad
	\torusC = \claus{\tau \in \complexs/2\pi\integers \st 
		\vabs{\Im \tau}<d}.
\end{align}

\begin{proposition}\label{proposition:periodicOrbit}
	Let $\banda, \cttTheoScalingDomainA, \cttTheoScalingDomainB>0$.
	There exist $\rhoPeriodicOrbit, \de_0>0$ such that, for $\de\in(0,\de_0)$, there exists a family of periodic orbits 
	$
	\claus{\po_{\rho}(\tau;\de)  \st  \tau \in \torusC}_{\rho \in [0,\rhoPeriodicOrbit]}, 
	$
	where $\po_{\rho}:\torusC \to \UComplexs(\cttTheoScalingDomainA,\cttTheoScalingDomainB)$ are real-analytic functions
	satisfying that 
	\[
	H({\po}_{\rho}(\tau; \de))=\frac{\rho^2}{\de^2} + H(\Ltres(\de)).
	\]
	
	Furthermore, there exist $\omegar>0$ and a constant $\cttPeriodicOrbit>0$, independent of $\rho$ and $\de$, such that the parametrization of the periodic orbit satisfies
	\[
	\dot{\tau} = \frac{\omegar}{\de^2}
	\qquad \text{with} \qquad
	\vabs{\omegar-1} \leq \cttPeriodicOrbit \de^4.
	\]
	In addition, the parametrization can be written as
	\begin{align}\label{def:poSplitting}
		{\po}_{\rho}(\tau;\de) = \Ltres(\de) + \rho\cdot\big(0,0, e^{-i\tau}, e^{i\tau}\big)^T + 
		\de\rho\cdot\big(\lapo,\Lapo,\xpo,\ypo\big)^T(\tau),
	\end{align}
	where
	$
	\vabs{\lapo(\tau)}, \vabs{\Lapo(\tau)}\leq \cttPeriodicOrbit,
	$
	and
	$
	\vabs{\xpo(\tau)},\vabs{\ypo(\tau)} \leq \cttPeriodicOrbit \de^3.
	\quad
	$
\end{proposition}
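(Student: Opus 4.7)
The plan is to read the Lyapunov family off the integrable normal form of Proposition \ref{proposition:formaNormalTot} and then transport it to scaled coordinates via $\FF_\de$. In the normal-form variables $(v_1,w_1,v_2,w_2)$ the Hamiltonian $\HH$ depends only on the invariants $I_1=v_1w_1$ and $I_2=v_2^2+w_2^2$, so the plane $\{v_1=w_1=0\}$ is invariant and is foliated by circles $\{v_2^2+w_2^2=r^2\}$, each of which is a periodic orbit of angular frequency, measured in the time of $\HH$,
\[
\Omega(r,\de) = \frac{\alpha(\de)}{\de^2} + 2\,\partial_2\RRR(0,r^2;\de).
\]
The first step is to choose the radius $r=r(\rho,\de)$ so that the orbit lies on the prescribed energy level: solving $\alpha(\de)r^2/(2\de^2)+\RRR(0,r^2;\de)=\rho^2/\de^2$ by the implicit function theorem, using $\alpha(\de)=\sqrt{8/21}+\OO(\de^4)$ and the quadratic bound on $\RRR$, yields $r(\rho,\de)=\rho\,\sqrt[4]{21/2}\,(1+\OO(\de^2\rho^2))$ for $\rho\in[0,\rhoPeriodicOrbit]$, provided $\rhoPeriodicOrbit$ is small enough.

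Next I would set $\po_\rho(\tau;\de)=\FF_\de\bigl(0,0,r\sin(\tau+\pi/2),\,r\cos(\tau+\pi/2)\bigr)$ (the shift $\pi/2$ is an elementary reparametrization chosen to match the complex-exponential form of the ansatz) and extract \eqref{def:poSplitting} by splitting $\FF_\de=\wh\phi_\de\circ\wh\FF_{\de,\de}$. The linear action $D\wh\phi_0$ from Lemma \ref{lemma:globalChanges} applied to $(0,0,r\sin\tau',r\cos\tau')$ produces $(0,0,\sqrt[4]{2/21}\,ire^{-i\tau'},-\sqrt[4]{2/21}\,ire^{i\tau'})$, and after substituting $r=\rho\sqrt[4]{21/2}$ and $\tau'=\tau+\pi/2$ this is exactly the leading term $\rho(0,0,e^{-i\tau},e^{i\tau})^T$ of the ansatz. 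The remainder is produced by three higher-order contributions: the $\OO(\de|v|^2)$ closeness of $\wh\FF_{\de,\de}$ to the identity from Proposition \ref{proposition:normalForm}, the $\OO(\de)$ difference $D\wh\phi_\de-D\wh\phi_0$, and the sizes $\OO(\de^2),\OO(\de^3)$ of the $(\Lambda,x,y)$ components of $\Ltres(\de)$ given by Proposition \ref{proposition:existenceFixedPoint}; together they yield $|\lapo|,|\Lapo|\leq\cttPeriodicOrbit$ and $|\xpo|,|\ypo|\leq\cttPeriodicOrbit\de^3$. The extension of $\tau$ to the complex strip $\torusC$ is then immediate from the real-analyticity of $\FF_\de$ on $B(\rhoNormalForm)$, provided $\rhoPeriodicOrbit$ is chosen small enough that the complexified circle stays inside $B(\rhoNormalForm)$.

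The delicate point is the frequency bound $|\omegar-1|\leq\cttPeriodicOrbit\de^4$. I would obtain it by transporting $\Omega(r,\de)$ from normal-form time back to the scaled time of $H$ using the time rescaling by $\de^2\sqrt{3\sigma_1(\mu)}$ that is implicit in Lemma \ref{lemma:globalChanges}; combining $\alpha(\de)=\sqrt{8/21}+\OO(\de^4)$ with the spectral expansion $\omega_{\mathrm{eig}}(\mu)=1+7\mu/8+\OO(\mu^2)$ of \eqref{eq:specRot} identifies the linear frequency in scaled time as $(1+\OO(\de^4))/\de^2$, and the nonlinear shift proportional to $\partial_2\RRR(0,r^2;\de)$ stays controllable uniformly in $\rho\in[0,\rhoPeriodicOrbit]$ after $\rhoPeriodicOrbit$ is taken small. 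I expect this reconciliation between normal-form time and scaled time, together with the careful bookkeeping of the rescaling constants, to be the main technical obstacle in executing the plan.
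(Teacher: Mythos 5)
You propose a genuinely different route from the paper. Appendix~\ref{appendix:periodicOrbits} proves the proposition directly in the scaled coordinates: a polar symplectic change $\phi_{\Lyap}$ centred at $\Ltres(\de)$, elimination of the action on the prescribed energy level (Lemma~\ref{lemma:hamiltonianLevelPO}), a graph-over-angle fixed point (Proposition~\ref{proposition:periodicOrbitTechnical}), and a final time reparametrization; the normal form of Proposition~\ref{proposition:formaNormalTot} is never invoked there. Your plan instead reads the Lyapunov circle off that normal form and pushes it back through $\FF_{\de}$, and the leading-order bookkeeping (the radius $r=\rho\sqrt[4]{21/2}$, the action of $D\wh\phi_0$, the time rescaling by $\sqrt{3\sigma_1(\de^4)}$) is all correct.

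The route has a genuine gap in recovering the proposition's quantitative claims, because the normal form \emph{as stated in the paper} has been stripped of exactly the fine $\de$-structure those claims rely on. Proposition~\ref{proposition:formaNormalTot} only records $\vabs{\RRR(I_1,I_2;\de)}\le C\vabs{(I_1,I_2)}^2$ with $C$ independent of $\de$, and Proposition~\ref{proposition:normalForm} only records that $\wh\FF_{\de,\de}$ is $\OO(\de\vabs{v}^2)$-close to the identity. By Cauchy estimates these give at best $\vabs{\partial_2\RRR(0,r^2;\de)}\le C r^2$ with no $\de$-gain, so after undoing the time rescaling
\[
\omegar=\sqrt{3\sigma_1(\de^4)}\,\al(\de)+2\de^2\sqrt{3\sigma_1(\de^4)}\,\partial_2\RRR(0,r^2;\de)=1+\OO(\de^4)+\OO(\de^2\rho^2),
\]
which for $\rho$ of order $\rhoPeriodicOrbit$ (a $\de$-independent constant) is only $\OO(\de^2)$, not the required $\OO(\de^4)$; your suggestion to ``take $\rhoPeriodicOrbit$ small'' cannot repair this since $\rhoPeriodicOrbit$ is not allowed to shrink with $\de$. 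The same loss ruins the bound on $\xpo,\ypo$: the $\OO(\de r^2)=\OO(\de\rho^2)$ deviation of $\wh\FF_{\de,\de}$ from the identity, together with the $\OO(\de^2\rho^2)$ correction to $r(\rho,\de)$, feed $\OO(\de\rho^2)$ into the $(x,y)$-components of $\po_{\rho}-\Ltres-\rho(0,0,e^{-i\tau},e^{i\tau})^T$, so after dividing by $\de\rho$ one only gets $\vabs{\xpo},\vabs{\ypo}=\OO(\rho)$ rather than the claimed $\OO(\de^3)$. To make your route work you would first have to sharpen Propositions~\ref{proposition:formaNormalTot} and~\ref{proposition:normalForm} to keep the $\OO(\de^4)$-smallness of the $I_2^2$-coefficient of $\RRR$ and the corresponding improvement of the $(\qD,\pD)$-part of $\wh\FF_{\de,\de}$ --- this is in fact true, because the relevant terms of $H_1$ carry extra powers of $\de$ by Proposition~\ref{proposition:HamiltonianScaling}, but it is not recorded in the paper. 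The paper's construction sidesteps this entirely by estimating $D^2H_1$ directly (see \eqref{proof:estimatef2gPO} and the definition of $\omegar$ at the end of the appendix). A secondary point: $\FF_{\de}$ is stated only on a real ball $B(\rhoNormalForm)\subset\reals^4$ with image in $\UReals$, whereas the proposition requires $\po_{\rho}:\torusC\to\UComplexs$ with uniform-in-$\de$ bounds on a fixed complex strip; your appeal to ``real-analyticity'' needs a controlled complex extension of $\FF_{\de}$, which the paper's fixed-point scheme, being set on $\UComplexs$ from the start, gets for free.
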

The proof of this proposition can be found in Appendix \ref{appendix:periodicOrbits}. 

%
Let us denote by $\WW^{\unstable}(\po_{\rho})$ and $\WW^{\stable}(\po_{\rho})$ the $2$-dimensional unstable and stable manifolds of the periodic orbit $\po_{\rho}(\cdot,\de)$. Analogously to the invariant manifolds of $\Ltres(\de)$, we denote each branch as $\WW^{\diamond,+}(\po_{\rho})$ and $\WW^{\diamond,-}(\po_{\rho})$  for $\diamond\in\claus{\unstable,\stable}$ (see Figure \ref{fig:perturbedInvariantManifolds2d}).
To prove Theorem \ref{TheoremB} and \ref{TheoremC}, we focus on the study of the ``$+$'' invariant manifolds.
By symmetry, there exist analogous results for the 
``$-$'' invariant manifolds.

We look for intersections between $\WW^{\unstable,+}(\po_{\rho})$ and $\WW^{\stable,+}(\po_{\rho})$ in the  $2$-dimensional section 
\begin{align}\label{def:section}
	\Si_{\rho} = \claus{\paren{\la,\La,x,y}\in \UReals(\cttTheoScalingDomainA, \cttTheoScalingDomainB) : 
	\La = \de^2\LtresLa(\de), \,
	 H(\la,\La,x,y)=\frac{\rho^2}{\de^2} + H(\Ltres(\de))},
\end{align}
where $\Ltres=(0,\de^2\LtresLa,\de^3\Ltresx,\de^3\Ltresy)^T$ as given in Proposition~\ref{proposition:existenceFixedPoint} and  $\UReals(\cttTheoScalingDomainA, \cttTheoScalingDomainB)$ is the domain introduced in  \eqref{def:dominiUReals}. Note that this definition is consistent with that of $\Sigma_0$ in  \eqref{def:section0} and  that, by Proposition~\ref{proposition:periodicOrbit}, the periodic orbit $\po_{\rho}$ belongs to the energy level $H=\frac{\rho^2}{\de^2}+H(\Ltres(\de))$ where $\Sigma_{\rho}$ is included.

In the next result, we see that the $2$-dimensional invariant manifolds $\WW^{\unstable,+}(\po_{\rho})$ and $\WW^{\stable,+}(\po_{\rho})$ intersect in the section $\Si_{\rho}$ for certain values of $\rho$
(see Figure \ref{fig:intersections}). Note that the intersection of the invariant manifolds for $\rho=0$ has been analyzed in Corollary \ref{mainTheoremDistCorollary}.
Both Item 1 of  Theorem \ref{TheoremB} and Theorem  \ref{TheoremC} are a  consequence of the following result.
%

\begin{figure}
	\centering
	\begin{overpic}[scale=0.7]{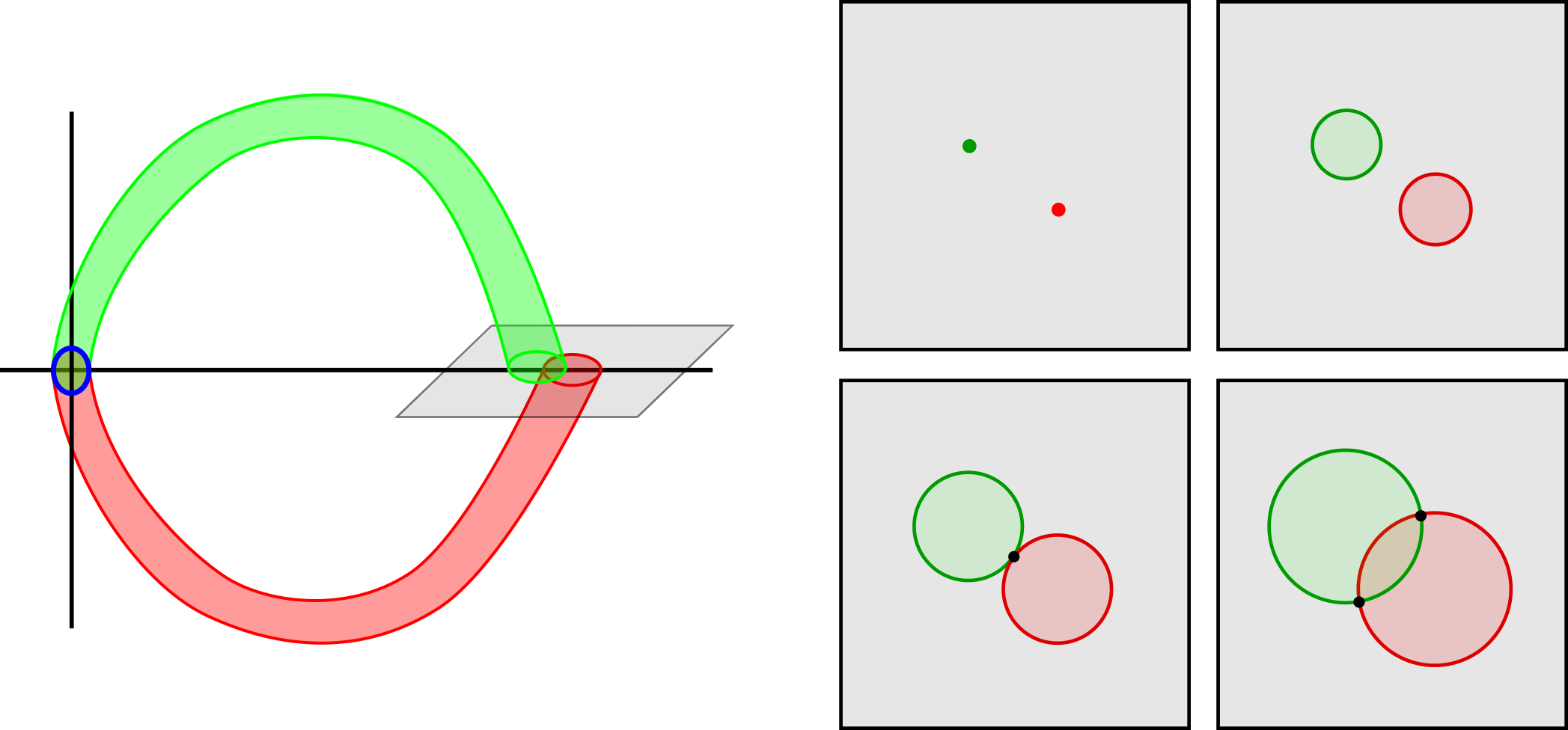}
		\put(0,20){\textcolor{blue}{$\po_{\rho}$}}
		\put(17,2){\textcolor{red}{$\WW^{\unstable,+}(\po_{\rho})$}}
		\put(17,41){\textcolor{myGreen}{$\WW^{\stable,+}(\po_{\rho})$}}
		\put(40,17){$\Sigma_{\rho}$}
		\put(38,23.8){\footnotesize \textcolor{red}{$\partial \DD^{\unstable}_{\rho}$}}
		\put(28,20.8){\footnotesize \textcolor{myGreen}{$\partial \DD^{\stable}_{\rho}$}}
		\put(46,22.5){\footnotesize $\lambda$}
		\put(4,40){\footnotesize $\Lambda$}
		\put(61,48){$\rho=0$}
		\put(67,30){\footnotesize\textcolor{red}{$(x^{\unstable},y^{\unstable})$}}
		\put(55,39){\footnotesize\textcolor{myGreen}{$(x^{\stable},y^{\stable})$}}
		\put(80,48){$\rho\in(0,\rho_{\min}(\de))$}
		\put(90,28){\footnotesize\textcolor{red}{$\partial \DD^{\unstable}_{\rho}$}}
		\put(83,41){\footnotesize\textcolor{myGreen}{$\partial \DD^{\stable}_{\rho}$}}
		\put(58,-4){$\rho=\rho_{\min}(\de)$}
		\put(67,2){\footnotesize\textcolor{red}{$\partial \DD^{\unstable}_{\rho}$}}
		\put(59,18.3){\footnotesize\textcolor{myGreen}{$\partial \DD^{\stable}_{\rho}$}}
		\put(76,-4){$\rho\in[\rho_{\min}(\de),\rho_{\max}(\de)]$}
		\put(93,1.5){\footnotesize\textcolor{red}{$\partial \DD^{\unstable}_{\rho}$}}
		\put(80,18.5){\footnotesize\textcolor{myGreen}{$\partial \DD^{\stable}_{\rho}$}}
	\end{overpic}
	\vspace{1cm}
	\caption{Intersection of the  manifolds $\WW^{\unstable,+}(\po_{\rho})$ and $\WW^{\stable,+}(\po_{\rho})$ with section $\Si_{\rho}$.
	The pictures in the right show the different possibilities given in Corollary \ref{mainTheoremDistCorollary} and Theorem \ref{mainTheoremA}.}
	\label{fig:intersections}
\end{figure}

\begin{theorem} \label{mainTheoremA}
%
Let $\rhoPeriodicOrbit$ and $\po_{\rho}$, for $\rho \in [0,\rhoPeriodicOrbit]$, be as given in Proposition \ref{proposition:periodicOrbit}.
Then, the following is satisfied.
\begin{itemize}
\item There exists $\de_0>0$ such that, for every
$\rho \in [0,\rhoPeriodicOrbit]$ and $\de\in(0,\de_0)$, the invariant manifolds $\WW^{\unstable,+}(\po_{\rho})$ and $\WW^{\stable,+}(\po_{\rho})$ intersect the section $\Si_{\rho}$.
The first intersection is given by closed curves, which we denote by $\partial \DD^{\unstable}_{\rho}$ and $\partial \DD^{\stable}_{\rho}$.

\item Let $R>1$. There exists $\de_R>0$, satisfying $\lim_{R \to \infty} \de_R = 0$, and functions $\rho_{\min}, \rho_{\max}:(0,\de_R)\to [0,\rho_0]$ such that, for $\de\in(0,\de_R)$ and $\rho \in [\rho_{\min}(\de),\rho_{\max}(\de)]$, the curves $\partial \DD^{\unstable}_{\rho}$ and $\partial \DD^{\stable}_{\rho}$ intersect.
Moreover,
\begin{align*}
	\rho_{\min}(\de) &= \frac{\sqrt[6]{2}}2  \vabss{\CInn}\de^{\frac13} e^{-\frac{A}{\de^2}}\boxClaus{1+\OO\paren{\frac1{\vabs{\log \de}}}},
	\\
	\rho_{\max}(\de) &= \frac{\sqrt[6]{2}}2 \vabss{\CInn}\de^{\frac13} e^{-\frac{A}{\de^2}}\boxClaus{R+\OO\paren{\frac1{\vabs{\log \de}}}}.
\end{align*}

\item For $\rho \in (\rho_{\min}(\de),\rho_{\max}(\de)]$, the curves $\partial \DD^{\unstable}_{\rho}$ and $\partial \DD^{\stable}_{\rho}$ intersect transversally at least twice.
\item For $\rho = \rho_{\min}(\de)$, the curves $\partial \DD^{\unstable}_{\rho}$ and $\partial \DD^{\stable}_{\rho}$ have at least one quadratic tangency at a point $Q_0 \in \partial \DD^{\unstable}_{\rho} \cap \partial \DD^{\stable}_{\rho}$.
\item Fix $\de\in(0,\de_0)$ and let $\zeta$ be any smooth curve transverse to $\partial \DD^{\unstable}_{\rho_{\min}}$ and $\partial \DD^{\stable}_{\rho_{\min}}$ within $\Si_{\rho_{\min}}$ at $Q_0$.
Then, for $\rho$ close to $\rho_{\min}$, the local intersections of $\partial \DD^{\unstable}_{\rho}$ and $\partial \DD^{\stable}_{\rho}$ with the curve $\zeta$ cross each
other with relative non-zero velocity at $(Q_0,\rho_{\min})$.
\end{itemize}
\end{theorem}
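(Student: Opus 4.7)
The plan is to reduce the problem to an elementary intersection analysis between two approximately circular curves in the two-dimensional section $\Si_{\rho}$. More precisely, I would show that, in the $(x,y)$-coordinates on $\Si_{\rho}$, the curve $\partial\DD^{\unstable}_{\rho}$ is a small $\rho$-dependent perturbation of a geometric circle of radius $\rho$ centered at $(x^{\unstable}_*,y^{\unstable}_*)$, the first crossing of the one-dimensional $\WW^{\unstable,+}(\Ltres)$ with $\Si_{0}$ from Corollary \ref{mainTheoremDistCorollary}; analogously for $\partial\DD^{\stable}_{\rho}$. Since the distance between the two centers is $\sqrt[6]{2}\,|\CInn|\,\de^{1/3} e^{-A/\de^2}(1+\OO(|\log\de|^{-1}))$, two equal circles of radius $\rho$ centered at those points are disjoint when $2\rho$ is below this distance, externally tangent when $2\rho$ equals it, and cross transversally at two points when $2\rho$ exceeds it, yielding all the stated properties of $\rho_{\min}(\de)$ and $\rho_{\max}(\de)$.

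The analytic content is the construction and control of a parametrization of $\WW^{\unstable,+}(\po_\rho)$ from a neighborhood of $\po_\rho$ all the way to $\Si_\rho$. Near $\Ltres(\de)$ I would use the normal form of Proposition \ref{proposition:formaNormalTot}, in which $\po_\rho$ corresponds to $\{\qU=\pU=0\}\cap\{\qD^2+\pD^2=\rho_*^2\}$ for some $\rho_*=\rho_*(\rho,\de)$ determined by the energy constraint, and its local unstable manifold is the product $\{\pU=0\}\cap\{\qD^2+\pD^2=\rho_*^2\}$. Combining Proposition \ref{proposition:periodicOrbit} with Lemma \ref{lemma:globalChanges}, these unstable fibres, transported back to the $(x,y)$-plane by the composition of changes of coordinates, enter the global dynamics as an $\OO(\de\rho)$-perturbation of the family $\{(x,y)=(\de^3\Ltresx,\de^3\Ltresy)+\rho(e^{-i\phi},e^{i\phi})\}$, where $\phi$ parametrizes the phase along $\po_\rho$. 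Propagating this family by the flow up to $\Si_\rho$ is an $\OO(\rho)$-perturbation of the extension of the one-dimensional unstable manifold performed in the proof of Theorem \ref{mainTheoremDist}. A Gronwall-type estimate on the variational equation along the separatrix then gives
\begin{equation*}
\partial\DD^{\unstable}_{\rho}=\left\{(x^{\unstable}_*,y^{\unstable}_*)+\rho(e^{-i\phi},e^{i\phi})+\EE^{\unstable}(\rho,\phi;\de)\,:\,\phi\in\torus\right\},
\end{equation*}
with $\EE^{\unstable}=o(\rho)$ uniformly for $\rho\in[0,\rho_{\max}(\de)]$, and an analogous description of $\partial\DD^{\stable}_{\rho}$. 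The constraint $\rho\leq\rho_{\max}(\de)$, with $\de_R\to 0$ as $R\to\infty$, is precisely what is needed to keep the errors $\EE^{\unstable,\stable}$ much smaller than the exponentially small splitting.

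Once this parametrization is available, the intersection analysis is elementary. Writing $\Delta_x=x^{\unstable}_*-x^{\stable}_*$ and using $y^{\diamond}_*=\overline{x^{\diamond}_*}$ (from the reality condition of Remark \ref{remark:realanalytic}), an intersection $\partial\DD^{\unstable}_{\rho}\cap\partial\DD^{\stable}_{\rho}$ at phases $(\phi_u,\phi_s)$ reduces to
\begin{equation*}
\rho\left(e^{-i\phi_u}-e^{-i\phi_s}\right)=-\Delta_x+o(\rho),\qquad\rho\left(e^{i\phi_u}-e^{i\phi_s}\right)=-\overline{\Delta_x}+o(\rho).
\end{equation*}
This system has no solution for $2\rho<|\Delta_x|(1+o(1))$, a unique double solution $\phi_u=\phi_s$ for $2\rho=|\Delta_x|(1+o(1))$ (which defines $\rho_{\min}(\de)$ and yields the stated asymptotics), and two transverse solutions for $2\rho>|\Delta_x|(1+o(1))$. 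That the tangency at $\rho=\rho_{\min}$ is quadratic follows from the fact that two circles of equal radius externally tangent at a point $Q_0$ look to second order like parabolas with opposite curvatures, so the difference of the local graphs in $\Si_{\rho_{\min}}$ is non-degenerately quadratic.

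For the generic unfolding, note that the centers $C_{\unstable},C_{\stable}$ of the approximating circles are essentially independent of $\rho$ while the radii grow linearly. If $\zeta\subset\Si_{\rho_{\min}}$ is any smooth curve transverse to both $\partial\DD^{\unstable,\stable}_{\rho_{\min}}$ at $Q_0$, the implicit function theorem furnishes unique intersection points $P^{\unstable}(\rho),P^{\stable}(\rho)\in\zeta$ near $Q_0$; differentiating $|P^{\diamond}(\rho)-C_{\diamond}|^2=\rho^2$ at $\rho_{\min}$ yields $\partial_\rho P^{\diamond}\cdot(Q_0-C_{\diamond})=\rho_{\min}$ for $\diamond\in\{\unstable,\stable\}$, and since external tangency forces $Q_0-C_{\unstable}$ and $Q_0-C_{\stable}$ to be anti-parallel, the two velocities along $\zeta$ have opposite signs, giving the required non-zero relative velocity. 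The main difficulty will be the uniform control of $\EE^{\unstable,\stable}(\rho,\phi;\de)$ for $\rho$ as large as $\rho_{\max}(\de)$: because the splitting is exponentially small in $\de$, the perturbative expansion in $\rho$ must be carried out with the same beyond-all-orders precision as in the proof of Theorem \ref{mainTheoremDist}, carefully tracking how the fast oscillator phase couples to the slow hyperbolic dynamics through the higher-order terms of the normal form.
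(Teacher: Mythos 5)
Your geometric picture is exactly the one underlying the paper's proof: in the $(x,y)$-plane on $\Sigma_\rho$, each curve $\partial\DD^{\diamond}_\rho$ is, up to a small error, a circle of radius $\rho$ centered at the first crossing of the one-dimensional $\WW^{\diamond,+}(\Ltres)$ with $\Sigma_0$, so the whole problem reduces to elementary intersection geometry of two nearly equal circles whose centers are separated by the exponentially small $L_3$-splitting from Corollary~\ref{mainTheoremDistCorollary}. The tangency-at-$2\rho=|\Delta_x|$ picture, the double-crossing above it, and the anti-parallel normal vectors giving opposite relative velocities of the intersection points with a transverse curve $\zeta$, are all the ``correct'' intuition; the paper encodes precisely this picture in the function $G(\tau^{\unstable},\tau^{\stable},r,\de)$ of~\eqref{def:functionG}, in the implicit function argument of Lemma~\ref{lemma:intersectionsA}, and in the determinant $\wt{G}$ for the transversality/tangency dichotomy.

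Where you diverge technically is in the construction of the parametrizations $\partial\DD^{\diamond}_\rho$. You propose to build the local fibres of $\WW^{\unstable,+}(\po_\rho)$ from the Moser normal form of Proposition~\ref{proposition:formaNormalTot} (which the paper only invokes for the $2$-round homoclinics of Theorem~\ref{TheoremD}) and then push them to $\Sigma_\rho$ with a variational/Gronwall estimate. The paper instead writes $\zdD=\po_\rho+\s_{\pend}+\zdDper$ and solves for $\zdDper$ directly by a Perron-type fixed point on an exponentially weighted Banach space (Propositions~\ref{proposition:existencia2D} and~\ref{proposition:comparison1d2d}), then intersects with $\Sigma_\rho$ via a second small fixed point (Lemma~\ref{lemma:intersectionSectionManifolds}). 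The paper's route has the virtue that the key bound $|\zdDper(u,\tau)-\zdUper(u)|\leq C\de\rho$ comes out in one step, uniformly up to $\Sigma_\rho$, without having to match a local normal-form chart with a global one. Two caveats on your route: (i) a naive Gronwall along the separatrix only gives an error $\OO(\rho)$, which is the same size as the circle itself and would destroy the argument; to get the required $o(\rho)$, and in fact $\OO(\de\rho)$, you must exploit the block-diagonal structure of the linearization and the fact (Proposition~\ref{proposition:HamiltonianScaling}) that the coupling derivatives $\partial_{\la x}H_1$, $\partial_{\La x}H_1$, etc.\ are $\OO(\de)$. (ii) Your closing worry about needing ``beyond-all-orders precision'' in the expansion in $\rho$ is an overestimate: the genuinely beyond-all-orders input is the $1$-dimensional splitting formula of Theorem~\ref{mainTheoremDist}/Corollary~\ref{mainTheoremDistCorollary}, which is taken as given; the comparison between the $2$-dimensional and $1$-dimensional manifolds is a plain $\OO(\de\rho)$ relative estimate obtainable by elementary fixed-point (or variational) techniques, and this is enough because the splitting and $\rho$ are commensurate on the scales where the tangency and transversal intersections occur.
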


Theorem \ref{mainTheoremA} implies in particular that, for small values of $\de$, there exist transverse intersections between some unstable and stable manifolds of Lyapunov periodic orbits of $\Ltres(\de)$.
%
%
By symmetry, an analogous result holds for $\WW^{\unstable,-}(\Ltres)$ and $\WW^{\stable,-}(\Ltres)$.
This proves Item 1 of Theorem~\ref{TheoremB}.

Moreover, the last two statements of Theorem~\ref{mainTheoremA} imply the existence of a generic unfolding of a quadratic tangency between $\WW^{\unstable,+}(\po_{\rho})$ and $\WW^{\stable,+}(\po_{\rho})$ (we follow the definition of generic unfolding given in \cite{Dua08}).
Indeed, denoting by $f_{\varrho}$ to the flow of $h$ in~\eqref{def:hamiltonianInitialNotSplit} restricted to the energy level $h=\varrho+h(L_3)$, for $\de\in(0,\de_0)$, one has that $f_{\varrho}$ unfolds generically an homoclinic quadratic tangency.
Finally, noticing that the energy level
$H(\la,\La,x,y;\de)=\frac{\rho^2}{\de^2} + H(\Ltres)$ corresponds to $h(q,p;\mu)=\sqrt{\mu}\rho^2 + h(L_3)$ (see \eqref{def:delta} and \eqref{def:changeScaling}), one proves Theorem~\ref{TheoremC}.
%


The rest of this section is devoted to prove Theorem \ref{mainTheoremA}.
First, in Section \ref{subsection:unperturbedSeparatrix}, we sum up the results concerning the unperturbed separatrix of the Hamiltonian $H_{\pend}$ in \eqref{def:HpendHosc} presented in~\cite{articleInner}.
Next, in Section \ref{subsection:existencePerturbed}, we obtain and analyze parametrizations of the unstable and stable manifolds of the Lyapunov periodic orbits given in Proposition \ref{proposition:periodicOrbit}. 
Last, in Section \ref{subsection:intersections}, we analyze the intersections between these manifolds to complete the proof of Theorem \ref{mainTheoremA}.

Throughout this section and the following ones, we denote the components of all the functions and operators by a numerical sub-index $f=(f_1,f_2,f_3,f_4)^T$, unless stated
otherwise.
In addition, we denote the canonical basis of $\complexs^4$ by $\claus{\mathbf{e_j}}_{j=1..4}$.

\subsection{The unperturbed separatrix}
\label{subsection:unperturbedSeparatrix}

Let us consider the unperturbed Hamiltonian $H_0$ as given in \eqref{def:hamiltonianScalingH0}.
Notice that the plane $\claus{x=y=0}$ is invariant for $H_0$ and the dynamics on it is described by 
\begin{align*} 
	{H}_{\pend}(\la,\La) &= -\frac{3}{2} \La^2  +
	V(\la),
	\qquad
	V(\la) =  1 - \cos \la - \frac{1}{\sqrt{2+2\cos \la}},
	%
\end{align*}
(see \eqref{def:HpendHosc}).
%
%
The origin $(\la,\La)=(0,0)$ is a saddle  with two separatrices associated to it (see Figure~\ref{fig:separatrix}).
In \cite{articleInner}, we studied their real-analytic time-parametrizations.
The following result summarizes Theorem 2.2 and Corollary~2.4 in \cite{articleInner} and it establishes a suitable domain for these parametrizations, which we denote as
\begin{equation}\label{eq:separatrixParametrizationB}
	\s_{\pend}(u)=(\lap(u),\Lap(u),0,0)^T.
\end{equation}

\begin{figure} 
	\centering
	\begin{overpic}[scale=0.5]{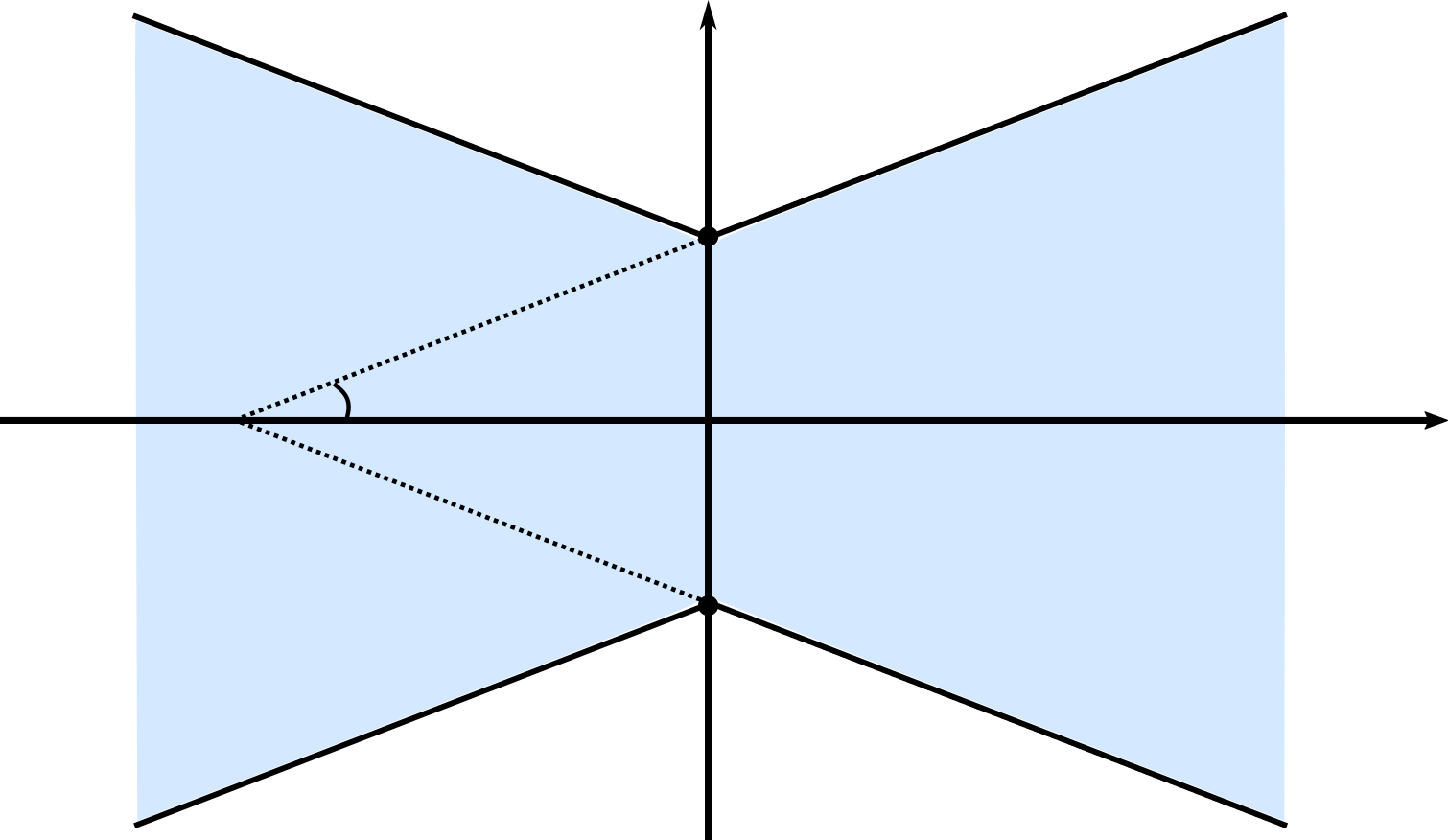}
		\put(101,27.5){\footnotesize$\Re u$}
		\put(45.5,59){\footnotesize$\Im u$}
		\put(28,30){\footnotesize $\beta$}
		\put(42,45){\footnotesize $iA$}
		\put(70,32){$\PiExtA$}
	\end{overpic}
	\bigskip
	\caption{Representation of the domain $\PiExtA$ in~\eqref{def:dominiBow}.}
	\label{fig:dominiBow}
\end{figure}

\begin{proposition}\label{proposition:singularities}
Let $\la_0>0$ be as given in \eqref{def:la0}.
There exists $0<\betaBow<\frac{\pi}{2}$ such that
the time-parametrization $(\lap(u),\Lap(u))$ of the right separatrix (i.e, $\lap(u) \in (0,\pi)$) of $H_{\pend}$ with $(\lap(0),\Lap(0))=(\la_0,0)$ extends analytically to
\begin{equation}\label{def:dominiBow}
	\begin{split}	
		\PiExtA =& \claus{
			u \in \complexs \st 
			\vabs{\Im u} < \tan \beta \, \Re u + A }
		\cup \\
		&\claus{
			u \in \complexs \st 
			\vabs{\Im u} < -\tan \beta \, \Re u + A},
	\end{split}
\end{equation} 
with $A>0$ as given in \eqref{def:integralA}, (see Figure \ref{fig:dominiBow}).
Moreover,
\begin{itemize}
	\item There exists $C>0$ such that, for $\vabss{\Re u}\gg 1$, $\vabs{\lap(u)}, \vabs{\Lap(u)} \leq C e^{-\sqrt{\frac{21}8}\vabs{\Re u}}$.
	\item For $u\in \ol{\PiExtA}$, $\lap(u) = \pi$  if and only if $u=\pm iA$. 
	\item For $u\in \ol{\PiExtA}$, $\Lap(u)=0$ if and only if $u=0$.
\end{itemize}
%
\end{proposition}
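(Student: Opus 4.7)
My approach is to integrate the equations on the separatrix by quadrature, use the resulting formula to identify the complex singularities, and then describe the maximal domain of analyticity and the decay at infinity.

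Energy conservation $H_{\pend}(\lap,\Lap) = -\tfrac{1}{2}$ gives $\Lap^2 = \tfrac{2}{3}(V(\lap)+\tfrac{1}{2})$, which combined with $\dot{\lap} = -3\Lap$ yields the quadrature
\begin{equation*}
u \;=\; -\int_{\la_0}^{\lap(u)} \frac{d\xi}{\sqrt{6\bigl(V(\xi)+\tfrac{1}{2}\bigr)}},
\end{equation*}
normalized so that $\lap(0)=\la_0$, $\Lap(0)=0$. The substitution $x=\cos(\xi/2)$ factorizes $V(\xi)+\tfrac{1}{2} = \tfrac{(1-x)(4x^2+4x-1)}{2x}$, whose nonnegative roots are $x=1$ (i.e.\ $\xi=0$) and $x=(\sqrt{2}-1)/2$ (i.e.\ $\xi=\la_0$), while $x=0$ corresponds to the collision $\xi=\pi$. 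A direct computation turns the purely imaginary integral from $\xi=\la_0$ to $\xi=\pi$ into exactly \eqref{def:integralA}, so $A$ equals the imaginary time needed to travel along the separatrix from the turning point to the collision, yielding the candidate singularities at $u=\pm iA$ (the second sign coming from the symmetry $u\to -u$).

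I would then promote this to a genuine analytic continuation. Away from $\{V(\xi)+\tfrac{1}{2}=0\}\cup\{\xi=\pi\}$ the integrand is holomorphic, so locally $\lap,\Lap$ are analytic in $u$. At the turning point $\la_0$ the branch is removed by $\lap=\la_0+\zeta^2$, giving analyticity around $u=0$; at the collision, $V(\la)+\tfrac{1}{2}\sim -|\pi-\la|^{-1}$ implies $u\mp iA\sim c(\pi-\lap)^{3/2}$, so $\pm iA$ are algebraic branch points. Along any path in $\PiExtA$ the integrand remains regular, and a monodromy/compactness argument ensures that one stays on the principal sheet; the opening angle $\beta<\pi/2$ is chosen small enough so that the oblique boundaries of $\PiExtA$ lie to the left/right of all other singular preimages and no loop in $\PiExtA$ encircles $\pm iA$.

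The exponential decay as $|\Re u|\to\infty$ comes from the linearization of the flow at the saddle $(0,0)$: using $V''(0)=7/8$ and $\dot\la=-3\Lap$ the eigenvalues are $\pm\sqrt{21/8}$, so the stable/unstable manifolds are real-analytic graphs, and the parametrization satisfies $\lap(u),\Lap(u)=O(e^{-\sqrt{21/8}\,|\Re u|})$ along the real axis; by the analyticity of the invariant-manifold parametrization this bound persists in a complex strip containing the wide part of $\PiExtA$. Finally, the uniqueness statements follow from energy conservation: $\Lap(u)=0$ is equivalent to $V(\lap(u))=-\tfrac{1}{2}$, whose complex roots with $\Re\xi\in(-\pi,\pi)$ are $\xi=0$ and $\xi=\pm\la_0$; inside $\PiExtA$ the value $\xi=0$ is attained only in the limit $\Re u\to\pm\infty$ and $\xi=-\la_0$ would require crossing a branch line, leaving $u=0$ as the unique zero; analogously, $\lap(u)=\pi$ forces the quadrature to reach $\pm iA$, and no further lift exists on the principal sheet. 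The step I expect to be most delicate is the precise description of $\PiExtA$: one must verify that within the two slanted wedges the analytic continuation from the real axis never encircles $\pm iA$ onto a different Riemann sheet of $(V+\tfrac{1}{2})^{-1/2}$, which requires a quantitative topological argument together with estimates on the integrand off the real line; the rest reduces to standard invariant-manifold theory and elementary computations with the explicit quadrature.
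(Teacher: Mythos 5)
The paper does not actually prove Proposition~\ref{proposition:singularities}: the surrounding text states that it ``summarizes Theorem 2.2 and Corollary 2.4 in~\cite{articleInner}'', so there is no in-text proof to compare against. Judged on its own, your sketch follows the route one would expect that reference to take: quadrature on the energy level $H_{\pend}=-\tfrac12$, the substitution $x=\cos(\xi/2)$ giving the factorization $V(\xi)+\tfrac12=\tfrac{(1-x)(4x^2+4x-1)}{2x}$, and the imaginary-time distance from the turning point $\la_0$ to the collision $\pi$ as the nearest singularity. Your computation of $A$ is correct: carrying out the change of variable reproduces $\int_0^{(\sqrt2-1)/2}\tfrac{2}{1-x}\sqrt{\tfrac{x}{3(1+x)(1-4x-4x^2)}}\,dx$ exactly as in \eqref{def:integralA}. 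The local branch-point expansion $u\mp iA\sim c(\pi-\lap)^{3/2}$, the removal of the turning-point singularity via $\lap=\la_0+\zeta^2$, and the decay rate $e^{-\sqrt{21/8}|\Re u|}$ deduced from $V''(0)=7/8$ and the eigenvalues $\pm\sqrt{3V''(0)}$ are all correct.

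Where the proposal has a genuine gap is precisely where you flag it, but I would say it is not merely ``delicate'': it is the substantive content of the proposition. You must show that the analytic continuation of $u\mapsto\lap(u)$, starting from the real axis, is single-valued throughout both oblique wedges of $\PiExtA$ and encounters no singularity there other than the cusps at $u=\pm iA$. Two concrete issues are left open. First, $V(\xi)+\tfrac12=0$ has complex roots beyond $\xi=0$ and $\xi=\pm\la_0$ (the factor $4x^2+4x-1$ also has the root $x=-(\sqrt2+1)/2$, whose preimages under $x=\cos(\xi/2)$ are complex), and these produce further square-root branch points of the quadrature; the uniqueness claims ``$\Lap(u)=0$ only at $u=0$'' and ``$\lap(u)=\pi$ only at $u=\pm iA$'' in $\ol{\PiExtA}$ require ruling these out, whereas ``$\xi=-\la_0$ would require crossing a branch line'' merely restates the claim. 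Second, the assertion that $\beta$ can be taken ``small enough that the boundaries lie to the left/right of all other singular preimages'' presupposes a quantitative localization of those preimages (and of the branch points of the inverse map $u\mapsto\lap(u)$) that has not been established; without it one does not know that \emph{any} $\beta>0$ works, i.e.\ that the lens-shaped $\PiExtA$ with corners exactly at $\pm iA$ is attainable. Closing these two points is exactly the work that the cited Theorem 2.2 and Corollary 2.4 of \cite{articleInner} do; the rest of your argument is sound and standard.
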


\subsection{Existence of the perturbed invariant manifolds}
\label{subsection:existencePerturbed}

We devote this section to obtain and analyze parametrizations of the $2$-dimensional branches of the manifolds $\WW^{\unstable,+}(\po_{\rho})$ and $\WW^{\stable,+}(\po_{\rho})$, where  $\claus{\po_{\rho}}_{\rho\in(0,\rhoPeriodicOrbit]}$ is the family of periodic orbits given in Proposition~\ref{proposition:periodicOrbit}.
We find these parametrizations through a Perron-like method.
In particular, following the ideas in \cite{BFGS12}, we write the perturbed manifolds as functions of  $(u,\tau)$, where $u$  parametrizes the unperturbed homoclinic orbit $\s_{\pend}(u)$ (see \eqref{eq:separatrixParametrizationB}) and $\tau$ parametrizes the Lyapunov periodic orbit $\po_{\rho}(\tau;\de)$.

Let us define the following complex domains (see Figure \ref{fig:dominiMig}),
\begin{align}\label{def:dominiMig}
	\DmigU = \claus{u \in \complexs \st
		\vabs{\Im u} < \textstyle \frac{A}2 - \tan \betaBow \, \Re u
	},
	\qquad
	\DmigS = \claus{u \in \complexs \st -u \in \DmigU}.
\end{align}
\begin{figure} 
	\centering
	\begin{overpic}[scale=0.7]{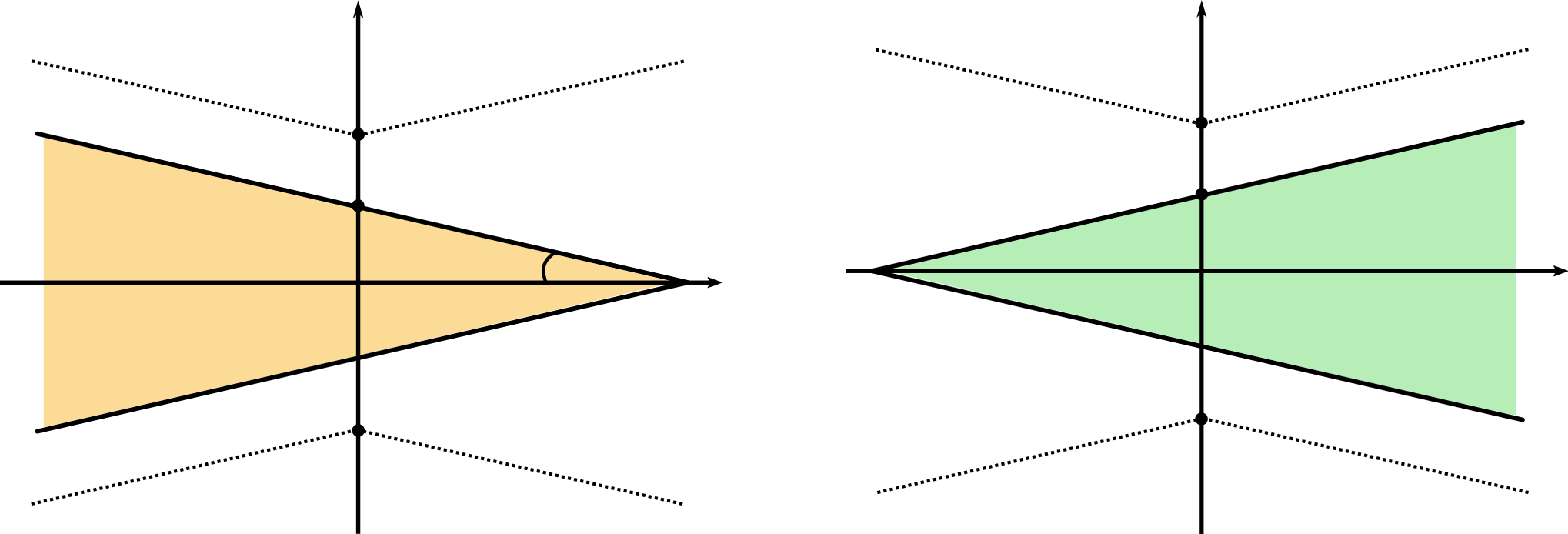}
		\put(100.5,16){\footnotesize$\Re u$}
		\put(20.5,35){\footnotesize$\Im u$}
		\put(74.5,35){\footnotesize$\Im u$}
		\put(30,16.5){\footnotesize $\betaBow$}
		\put(77.5,27.5){\footnotesize $iA$}
		\put(23.5,21.5){\footnotesize $i\frac{A}2$}
		\put(71,4){\footnotesize $-iA$}
		\put(35,23){$\PiExtA$}
		\put(10,18.5){$\DmigU$}
		\put(85,18.5){$\DmigS$}
	\end{overpic}
	\bigskip
	\caption{Representation of the domains $\DmigU$ and $\DmigS$ in~\eqref{def:dominiMig}.}
	\label{fig:dominiMig}
\end{figure}
Then, for $\diamond\in\claus{\unstable,\stable}$, we consider the parametrizations $\zdD(u,\tau)$
satisfying that
\begin{align*}
	\claus{\zdD(u,\tau) \st (u,\tau) \in \DmigD \times \torusC} \subseteq \WW^{\diamond,+}(\po_{\rho}).
\end{align*}
Notice that, for the unperturbed problem, since $\s_{\pend}(u)$ is a time-parametrization it satisfies $\dot{u}=1$. 
In addition, by Proposition \ref{proposition:periodicOrbit}, the dynamics in $\po_{\rho}(\tau;\de)$ satisfy $\dot{\tau} = \frac{\omegar}{\de^2}$.
Therefore, we impose that the dynamics on the perturbed parametrizations $\zdD$ are given by
\[
\dot{u}=1, \qquad \dot{\tau} = \frac{\omegar}{\de^2}.
\]
Hence, the parametrizations satisfy
\begin{equation}\label{eq:existence2D}
	\partial_u \zdD(u,\tau) +
	\frac{\omegar}{\de^2} \partial_{\tau} \zdD(u,\tau)
	= \begin{pmatrix}
		\mathbf{J} & 0 \\
		0 & i\mathbf{J}
	\end{pmatrix} DH(\zdD(u,\tau);\de)
	\quad
	\text{with }
	\mathbf{J} = \begin{pmatrix}
		0 & 1 \\
		-1 & 0
	\end{pmatrix}
\end{equation}
and the asymptotic conditions
\begin{align}\label{eq:assymptoticConditionExistence2d}
	\lim_{\Re u \to -\infty} \zuD(u,\tau) =
	\lim_{\Re u \to +\infty} \zsD(u,\tau) = \, \po_{\rho}(\tau;\de),
	\quad \text{for all} \quad
	\tau \in \torusC.
\end{align}

To prove their existence and behavior,
%
we consider the decomposition
\begin{align}\label{def:parametrizationZusD}
	\zdD(u,\tau) =  \po_{\rho}(\tau;\de) + \s_{\pend}(u) + \zdDper(u,\tau),
\end{align}
with $\s_{\pend}$ as given in \eqref{eq:separatrixParametrizationB}.
%
%
The proof of the following result is deferred to Section \ref{section:proofExistence}.

\begin{proposition}\label{proposition:existencia2D}
	Fix $\banda>0$ and $\diamond\in\claus{\unstable,\stable}$. 
	Let $\rhoPeriodicOrbit>0$ be the constant given in Proposition~\ref{proposition:periodicOrbit}.  
	There exist $\cttTheoScalingDomainA, \cttTheoScalingDomainB, \de_0, \cttExistenciaD>0$	
	such that, for $\rho \in [0,\rhoPeriodicOrbit]$ and $\de \in (0,\de_0)$, equation \eqref{eq:existence2D} together with the condition \eqref{eq:assymptoticConditionExistence2d}  has a unique real-analytic solution 
	$\zdD:\DmigD \times \torusC \to \UComplexs(\cttTheoScalingDomainA,\cttTheoScalingDomainB)$ that can be decomposed as in \eqref{def:parametrizationZusD} and satisfies 
	\[
	\langle\zdDper(0,\tau),\mathbf{e_2}\rangle= 0,
	\qquad 	\text{for all }
	\tau \in \torusC.
	\]
	In addition, for $\nu=\frac12\sqrt{\frac{21}8}$, 
	\begin{align*}
		\vabs{\zdDper(u,\tau)}
		\leq \cttExistenciaD \de e^{-\nu \vabs{\Re u}},
		\qquad
		\text{for }
		(u,\tau) \in \DmigD \times \torusC.
	\end{align*}
\end{proposition}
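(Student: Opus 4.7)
The plan is to set up a Perron-like fixed point argument in a weighted space of real-analytic functions, treating both $\diamond\in\{\unstable,\stable\}$ in parallel. First I would substitute the decomposition $\zdD(u,\tau) = \po_\rho(\tau;\de) + \separatrix(u) + \zdDper(u,\tau)$ into the invariance equation~\eqref{eq:existence2D}. Since $\po_\rho$ solves the full system with $\dot\tau=\omegar/\de^2$ (Proposition~\ref{proposition:periodicOrbit}) and $\separatrix$ solves the unperturbed pendulum flow with $\dot u=1$ on $\{x=y=0\}$, the unknown $\zdDper$ must satisfy a PDE of the form
\[
\paren{\partial_u + \tfrac{\omegar}{\de^2}\partial_\tau}\zdDper - A_\rho(u,\tau)\,\zdDper = \NNN[\zdDper] + \EE_0(u,\tau),
\]
where $A_\rho$ is the Jacobian of the Hamiltonian vector field $F$ of $H$ at $\separatrix(u)+\po_\rho(\tau;\de)$, $\NNN[\zdDper]$ collects all terms at least quadratic in $\zdDper$, and the inhomogeneous term equals $\EE_0 = F(\separatrix+\po_\rho)-F(\po_\rho)-X_{H_0}(\separatrix)$. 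Combining the derivative bounds on $H_1$ in Proposition~\ref{proposition:HamiltonianScaling}, the decay $\vabs{\separatrix(u)}\leq C e^{-\sqrt{21/8}\vabs{\Re u}}$ from Proposition~\ref{proposition:singularities}, and $\vabs{\po_\rho(\tau;\de)-\Ltres(\de)} = \OO(\rho)$ from Proposition~\ref{proposition:periodicOrbit}, a Taylor expansion yields $\vabs{\EE_0(u,\tau)}\leq C\de\, e^{-\nu\vabs{\Re u}}$ on $\DmigD \times \torusC$ for $\nu=\tfrac12\sqrt{21/8}$.

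Next I would recast the PDE as a fixed-point equation in the Banach space $\Xcal{\nu}$ of real-analytic functions $\zeta:\DmigD \times \torusC \to \complexs^4$ with norm $\normX{\zeta}{\nu} = \sup_{(u,\tau)} e^{\nu\vabs{\Re u}}\vabs{\zeta(u,\tau)}$, imposing the normalization $\sprod{\zeta(0,\tau)}{\mathbf{e_2}}=0$ to eliminate the one-dimensional kernel generated by the characteristic reparametrization $u\mapsto u+c(\tau)$. The linear transport operator is inverted by integration along the characteristics $s\mapsto(u+s,\,\tau+\tfrac{\omegar}{\de^2}s)$:
\[
\GG^{\diamond}[g](u,\tau) = \int_{\mp\infty}^{0} \Psi_\rho(u,u+s)\, g\paren{u+s,\,\tau+\tfrac{\omegar}{\de^2}s}\,ds,
\]
with $-\infty$ chosen when $\diamond=\unstable$ and $+\infty$ when $\diamond=\stable$, and $\Psi_\rho(u,u')$ the fundamental matrix of $\zeta' = A_\rho(u,\tau_0+\tfrac{\omegar}{\de^2}(u-u_0))\zeta$. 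The problem is then $\zdDper = \GG^{\diamond}[\NNN[\zdDper]+\EE_0]$.

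The concluding step is a standard contraction argument: one checks that $\GG^{\diamond}$ is bounded on $\Xcal{\nu}$ uniformly in $(\rho,\de)$, that $\NNN$ is quadratic/Lipschitz on a $\de$-ball of $\Xcal{\nu}$, and that $\normX{\EE_0}{\nu}\leq C\de$. For $\de$ small, the map $\zeta\mapsto\GG^{\diamond}[\NNN[\zeta]+\EE_0]$ is then a contraction on the ball of radius $\cttExistenciaD\de$ in $\Xcal{\nu}$, producing a unique fixed point $\zdDper$ satisfying the stated estimate. The asymptotic condition~\eqref{eq:assymptoticConditionExistence2d} follows from the exponential decay, and the reality of $H$ propagates through the iteration, yielding real-analyticity in the sense of Remark~\ref{remark:realanalytic}.

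The main obstacle is establishing the uniform (in $\de$ and $\rho$) boundedness of $\GG^{\diamond}$ on $\Xcal{\nu}$. The coefficient $A_\rho$ has a hyperbolic $(\la,\La)$-block whose fundamental matrix grows/decays like $e^{\pm\sqrt{21/8}\,\Re u}$ along the separatrix, a rapidly rotating $(x,y)$-block of frequency $\omegar/\de^2$, and off-diagonal couplings of size $O(\de)$ by Proposition~\ref{proposition:HamiltonianScaling}. Controlling the resulting fundamental matrix inside the complex strip $\DmigD$, which reaches distance $A/2$ from the pendulum singularities $u=\pm iA$ of Proposition~\ref{proposition:singularities}, and choosing the weight $\nu=\tfrac12\sqrt{21/8}$ to retain a uniform spectral gap between the hyperbolic rate and the decay imposed on $\zdDper$, is the technically delicate part of the argument.
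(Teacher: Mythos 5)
Your overall strategy — decompose as in \eqref{def:parametrizationZusD}, recast the invariance equation as a fixed-point problem via integration along characteristics, contract in a weighted space of real-analytic functions, and propagate realness through the iteration — is the same as the paper's. The gap lies in your choice of reference linearization, and you identify it yourself. You linearize around the full reference curve $\separatrix(u)+\po_\rho(\tau;\de)$, so your coefficient matrix $A_\rho(u,\tau)$ depends on $\tau$ and $\rho$, and carries the $\OO(\de)$ off-diagonal couplings between the hyperbolic $(\la,\La)$-block and the fast $(x,y)$-oscillator coming from $D^2H_1$. You then state that uniform boundedness of the resulting transfer operator $\GG^{\diamond}$, i.e., control of the fundamental matrix $\Psi_\rho$ on the strip $\DmigD$ uniformly in $(\rho,\de)$, is ``the technically delicate part of the argument'' — and you leave it there. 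That step is not a loose end: it is the load-bearing lemma, and without it the contraction does not close.

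The paper sidesteps the difficulty with a sharper choice: the linear operator $\LL_\rho$ in \eqref{def:operatorLL} uses only the unperturbed, block-diagonal matrix $\AAA(u)$ built from $V''(\lap(u))$ and the uncoupled oscillator $\pm i/\de^2$. Its fundamental matrix is written explicitly in Lemma~\ref{lemma:fundamentalMatrixFlow} in terms of $\lap,\Lap$ and a single primitive $\xi$, and the right inverse $\GG_\rho$ of \eqref{def:operatorGG} is then bounded on $\Ycal{\nu}^4$ by a direct computation (Lemma~\ref{lemma:boundsGG}) with a constant independent of $\de$ and $\rho$. All of the $\de$- and $\rho$-dependent linear pieces you keep inside $A_\rho$ are instead pushed into the nonlinear operator $\RRR_\rho$ of \eqref{def:operatorRRR}, whose smallness and $\OO(\de)$ Lipschitz constant (estimates \eqref{proof:lemmaBoundsRRR} and \eqref{proof:FFFLipschitz}) absorb them in the contraction of Proposition~\ref{proposition:existencia2Dtechnical}. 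The two splittings are algebraically equivalent, but the paper's makes the key lemma elementary. To repair your proposal you should either adopt the same splitting, or supply the missing bound on $\Psi_\rho$ — for instance by a Duhamel/Gronwall comparison of $\Psi_\rho$ with the explicit flow of $\AAA(u)$ over the effective integration length $\sim 1/\nu$ dictated by the weight.
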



Notice that, by Proposition \ref{proposition:periodicOrbit}, when $\rho=0$, $\po_{0}(\tau;\de)\equiv\Ltres(\de)$ is a fixed point and that, $\WW^{\unstable,+}(\Ltres)$ and $\WW^{\stable,+}(\Ltres)$ are $1$-dimensional invariant manifolds.
Then, for $\diamond \in \claus{\unstable,\stable}$, Proposition \ref{proposition:existencia2D} provides parametrizations $\zdUper$ independent of $\tau$ satisfying
\[
\claus{\zdU(u) \st u \in \DmigD} \subseteq \WW^{\diamond,+}(\Ltres),
\]
that can be decomposed as
\begin{align}\label{def:parametrizationZusU}
	\zdU(u) =  \Ltres + \s_{\pend}(u) + \zdUper(u).
\end{align}

\begin{corollary}\label{corollary:existencia1D}
Let $\diamond\in\claus{\unstable,\stable}$.  
There exist $\cttTheoScalingDomainA, \cttTheoScalingDomainB, \de_0,\cttExistenciaU>0$ such that, for $\de \in (0,\de_0)$ and $\rho=0$, equation \eqref{eq:existence2D}  together with the conditions \eqref{eq:assymptoticConditionExistence2d} has a unique real-analytic solution 
$\zdU:\DmigD \to \UComplexs(\cttTheoScalingDomainA,\cttTheoScalingDomainB)$ that can be decomposed as in \eqref{def:parametrizationZusU} and satisfies  
$
\langle\zdUper(0),\mathbf{e_2}\rangle=0.
$
%
In addition, for $\nu=\frac12\sqrt{\frac{21}8}$,
\begin{align*}
	\vabs{\zdUper(u)}
	\leq \cttExistenciaD \de e^{-\nu \vabs{\Re u}},
	\qquad
	\text{for }
	u \in \DmigD.
\end{align*}
\end{corollary}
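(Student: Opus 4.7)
The plan is to deduce Corollary~\ref{corollary:existencia1D} as a direct consequence of Proposition~\ref{proposition:existencia2D} evaluated at $\rho=0$, by exploiting the $\tau$-translation symmetry that arises at this parameter value. First I would observe that, by Proposition~\ref{proposition:periodicOrbit}, when $\rho=0$ the ``periodic orbit'' degenerates to the equilibrium point: $\po_{0}(\tau;\de)\equiv \Ltres(\de)$ is independent of $\tau$. Consequently, both the evolution equation \eqref{eq:existence2D} (whose right-hand side depends only on the point in phase space through $H$) and the asymptotic condition \eqref{eq:assymptoticConditionExistence2d} become invariant under translations $\tau\mapsto\tau+\tau_0$.

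Next I would apply Proposition~\ref{proposition:existencia2D} at $\rho=0$ to obtain a unique real-analytic solution $\zdD:\DmigD \times \torusC \to \UComplexs(\cttTheoScalingDomainA,\cttTheoScalingDomainB)$ admitting the decomposition $\zdD(u,\tau) = \Ltres(\de) + \s_{\pend}(u) + \zdDper(u,\tau)$, together with the normalization $\langle \zdDper(0,\tau),\mathbf{e_2}\rangle=0$ and the exponential bound $\vabs{\zdDper(u,\tau)}\leq \cttExistenciaD\,\de\, e^{-\nu\vabs{\Re u}}$. The main claim is that this $\zdD$ is in fact independent of $\tau$. To see this, fix any $\tau_0\in\torusC$ and consider the translate $\widetilde{z}(u,\tau):=\zdD(u,\tau+\tau_0)$. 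By the $\tau$-invariance noted above, $\widetilde{z}$ solves the same equation with the same asymptotic condition; moreover, since the normalization $\langle \zdDper(0,\tau),\mathbf{e_2}\rangle=0$ holds for \emph{every} $\tau$, the translated perturbative part $\widetilde{z}_{\mathrm{per}}$ inherits this property at $\tau$ replaced by $\tau+\tau_0$, hence at every value. The uniqueness statement of Proposition~\ref{proposition:existencia2D} then forces $\widetilde{z}=\zdD$, so $\zdD(u,\tau)=\zdD(u,\tau+\tau_0)$ for all $\tau_0$, proving $\tau$-independence.

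With this established, I would define $\zdU(u):=\zdD(u,0)$ and $\zdUper(u):=\zdDper(u,0)$. The decomposition \eqref{def:parametrizationZusU}, the normalization $\langle\zdUper(0),\mathbf{e_2}\rangle=0$, the inclusion $\{\zdU(u):u\in\DmigD\}\subseteq \WW^{\diamond,+}(\Ltres)$, and the exponential estimate $\vabs{\zdUper(u)}\leq \cttExistenciaD\,\de\, e^{-\nu\vabs{\Re u}}$ transfer immediately from the corresponding statements of Proposition~\ref{proposition:existencia2D}. Uniqueness of $\zdU$ as a one-dimensional parametrization satisfying the stated properties follows by regarding any such $\zdU$ as a $\tau$-independent solution of the two-dimensional problem and invoking the uniqueness in Proposition~\ref{proposition:existencia2D}.

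I do not expect a genuine obstacle here: the corollary is essentially a reduction, and the only subtle point is the translation-uniqueness argument. That argument relies crucially on the fact that the normalization $\langle \zdDper(0,\tau),\mathbf{e_2}\rangle=0$ is imposed for all $\tau$ simultaneously in Proposition~\ref{proposition:existencia2D}—otherwise the translate $\widetilde{z}$ might not lie in the same uniqueness class. Alternatively, one could bypass the 2D result entirely and re-run the fixed-point scheme used to prove Proposition~\ref{proposition:existencia2D} on the smaller space of $\tau$-independent functions on $\DmigD$; this would yield the same conclusion and the same constants, but the symmetry reduction above is substantially shorter.
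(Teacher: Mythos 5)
Your proposal is correct. The paper itself gives essentially no proof of this corollary — it simply asserts, in the sentence preceding the statement, that Proposition~\ref{proposition:existencia2D} at $\rho=0$ ``provides parametrizations $\zdUper$ independent of $\tau$,'' leaving the $\tau$-independence implicit. You fill that gap via a translation-symmetry argument: the equation and the asymptotic condition become invariant under $\tau\mapsto\tau+\tau_0$ once $\po_0\equiv\Ltres(\de)$ is constant, the normalization $\langle\zdDper(0,\tau),\mathbf{e_2}\rangle=0$ is imposed for every $\tau$, so the translate lies in the same uniqueness class and uniqueness forces $\tau$-independence. This is logically equivalent to, but phrased slightly differently from, the argument the paper's structure suggests the authors had in mind — namely that the fixed-point operator $\FF_0=\GG_0\circ\RRR_0$ preserves the closed subspace of $\tau$-independent functions (this is exactly the content of the final clause of Lemma~\ref{lemma:boundsGG}, and it is used again at the start of Section~\ref{section:proofComparison}), so the contraction fixed point must live there. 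You correctly identify this alternative route yourself. Both are two faces of the same symmetry argument and give the same constants; yours is slightly cleaner as a reduction, while the subspace argument sits closer to the fixed-point machinery already assembled in Section~\ref{section:proofExistence}. One small point worth making explicit if you write this up: the translation symmetry a priori gives $\tau$-independence only for real shifts $\tau_0$, and one then deduces constancy on all of $\torusC$ from analyticity in $\tau$.
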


Finally, for $\diamond \in \claus{\unstable,\stable}$, we can measure how accurately the $1$-dimensional manifolds $\WW^{\diamond,+}(\Ltres)$ approximate the $2$-dimensional manifolds $\WW^{\diamond,+}(\po_{\rho})$.

\begin{proposition}\label{proposition:comparison1d2d}
	Fix $\banda>0$ and $\diamond\in \claus{\unstable,\stable}$.
	Let $\rhoPeriodicOrbit$ be the constant in Proposition~\ref{proposition:periodicOrbit} and $\zdDper$ and $\zdUper$ be the parametrizations given in Proposition~\ref{proposition:existencia2D} and Corollary~\ref{corollary:existencia1D}, respectively.
	Then, there exists $\de_0>0$ and a constant $\cttExistenciaUD>0$ such that, for $\rho \in [0,\rhoPeriodicOrbit]$ and $\de \in (0,\de_0)$,
	\[
	\vabs{\zdDper(u,\tau)-\zdUper(u)} \leq \cttExistenciaUD \de \rho,
	\qquad \text{for} \quad
	(u,\tau) \in \DmigD \times \torusC.
	\]
\end{proposition}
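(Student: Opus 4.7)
The plan is to study the difference $W(u,\tau) := \zdDper(u,\tau) - \zdUper(u)$ through the Perron-type fixed point framework already developed for Proposition~\ref{proposition:existencia2D}, driven by a source term of size $O(\de\rho)$ coming from the discrepancy $\po_\rho - \Ltres$. First, subtracting the invariance equations obtained by inserting the decompositions \eqref{def:parametrizationZusD} and \eqref{def:parametrizationZusU} into \eqref{eq:existence2D}, and using that $\s_{\pend}$ is an unperturbed trajectory of the pendulum-like system, that $\po_\rho$ satisfies $(\omegar/\de^2)\partial_\tau \po_\rho = X_H(\po_\rho)$ by Proposition~\ref{proposition:periodicOrbit}, and that $X_H(\Ltres) = 0$, one obtains a transport equation
\[
\partial_u W + \frac{\omegar}{\de^2}\,\partial_\tau W \,=\, \mathcal{R}_1 + \mathcal{R}_2,
\]
where $\mathcal{R}_2 := X_H(\Ltres + \s_{\pend} + \zdDper) - X_H(\Ltres + \s_{\pend} + \zdUper)$ is a Lipschitz-type linear term in $W$, and
\[
\mathcal{R}_1 := \bigl[X_H(\po_\rho + \s_{\pend} + \zdDper) - X_H(\po_\rho)\bigr] - \bigl[X_H(\Ltres + \s_{\pend} + \zdDper) - X_H(\Ltres)\bigr]
\]
is the genuine source driven by $\po_\rho - \Ltres$.

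The heart of the estimate is the bound $|\mathcal{R}_1| \leq C\de\rho$, which exploits the specific structure of $H = H_0 + H_1$. Writing
\[
\mathcal{R}_1 = \int_0^1\!\!\int_0^1 D^2 X_H\bigl(\Ltres + t(\po_\rho - \Ltres) + s(\s_{\pend} + \zdDper)\bigr)\bigl(\po_\rho - \Ltres,\, \s_{\pend} + \zdDper\bigr)\, ds\, dt,
\]
and noting that $D^2 X_H$ is built from third derivatives of $H$, we split the contribution into $H_0$ and $H_1$ parts. Since $H_0(\la,\La,x,y) = -\tfrac{3}{2}\La^2 + V(\la) + xy/\de^2$, the only nonzero third derivative of $H_0$ is $\partial^3_\la H_0 = V'''(\la)$, so $D^2 X_{H_0}(v,w)$ picks out only the product $v_\la w_\la$. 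By \eqref{def:poSplitting}, the $\la$-component of $\po_\rho - \Ltres$ equals $\de\rho\,\lapo(\tau) = O(\de\rho)$, yielding an $O(\de\rho)$ bound for the $H_0$-contribution. The $H_1$-contribution is directly $O(\de\rho)$ using $|D^3 H_1|\leq C\de$ (Proposition~\ref{proposition:HamiltonianScaling}) together with $|\po_\rho - \Ltres| = O(\rho)$. The remaining factor $|\s_{\pend} + \zdDper|$ is uniformly bounded on $\DmigD \times \torusC$: the separatrix $\s_{\pend}$ is bounded there since Proposition~\ref{proposition:singularities} confines its singularities to $u = \pm iA$, which lie outside $\DmigD$, while Proposition~\ref{proposition:existencia2D} gives $|\zdDper| \leq \cttExistenciaD\de$.

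To close the argument, invert the linear transport operator $\partial_u + (\omegar/\de^2)\partial_\tau$ with the hyperbolic decay conditions ($W \to 0$ as $\Re u \to -\infty$ for $\diamond=\unstable$, and as $\Re u \to +\infty$ for $\diamond=\stable$) and work in the exponentially weighted Banach space with weight $\nu = \tfrac{1}{2}\sqrt{21/8}$ used in Proposition~\ref{proposition:existencia2D}. The term $\mathcal{R}_2$ gives an $O(\de)$-Lipschitz perturbation in $W$ by the estimates on the derivatives of $H_1$ in Proposition~\ref{proposition:HamiltonianScaling}, which is a contraction for $\de$ small, while the source bound $\|\mathcal{R}_1\|_\nu \leq C\de\rho$ propagates through the Banach fixed point theorem to $\|W\|_\nu \leq C\de\rho$, which is the required estimate. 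The main obstacle is verifying that the inverse $\mathcal{L}^{-1}$ of the linear transport operator is uniformly bounded in $\rho \in [0,\rhoPeriodicOrbit]$ and $\de \in (0,\de_0)$ on the chosen weighted space; however this follows directly from the analytic machinery built for Proposition~\ref{proposition:existencia2D}, noting that the frequency $\omegar/\de^2$ stays in the admissible range for all $\rho$ thanks to $|\omegar - 1| \leq \cttPeriodicOrbit \de^4$ (Proposition~\ref{proposition:periodicOrbit}).
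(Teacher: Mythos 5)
Your approach is essentially the paper's: write the difference $W = \zdDper - \zdUper$, extract a source of size $O(\de\rho)$ coming from $\po_\rho - \Ltres$, and close by a Banach fixed point (equivalently an a priori estimate using the contraction constant). Your double mean value theorem identity for $\mathcal{R}_1$ and the $H_0/H_1$ split are the right ideas, and in particular your observation that the $\la$-component of $\po_\rho - \Ltres$ is $\de\rho\lapo = O(\de\rho)$, so that $V'''$ only enters multiplied by this small quantity, is exactly the mechanism that the paper uses when it estimates the $T_\rho[\zuUper]-T_0[\zuUper]$ term. (The paper carries this out directly at the level of the fixed-point maps $\FF_\rho = \GG_\rho\circ\RRR_\rho$ rather than at the level of the vector field, using that $\zdUper$ is $\tau$-independent so $\GG_0\circ\RRR_0[\zdUper]=\GG_\rho\circ\RRR_0[\zdUper]$; the two organizations are equivalent.)

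There is, however, a genuine gap in the step where you invert the linear operator. You state $\mathcal{L}=\partial_u + (\omegar/\de^2)\partial_\tau$ and then claim $\mathcal{R}_2$ is $O(\de)$-Lipschitz citing only the $H_1$ derivative bounds. With $\mathcal{L}$ the bare transport operator, the term $\mathcal{R}_2 = X_H(\Ltres+\s_{\pend}+\zdDper)-X_H(\Ltres+\s_{\pend}+\zdUper)$ retains the \emph{linear} part of $X_{H_0}$ along the separatrix, namely $\AAA(u)W$ with $\AAA$ as in \eqref{def:operatorLL}; this has entries of order $O(1)$ in the pendulum block and of order $O(\de^{-2})$ in the oscillator block, so $\mathcal{R}_2$ is nowhere near $O(\de)$-Lipschitz and the fixed point does not close. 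Moreover, the bare transport operator does not have a bounded inverse on the $\nu$-weighted space $\Ycal{\nu}^4$; the weight $\nu=\tfrac12\sqrt{21/8}$ is precisely half the hyperbolic rate of the separatrix, and the bounded right inverse $\GG_\rho$ of Lemma \ref{lemma:boundsGG} is constructed for $\LL_\rho = \partial_u + (\omegar/\de^2)\partial_\tau - \AAA(u)$ using the fundamental solution of $\dot\zeta = \AAA(u)\zeta$ (Lemma \ref{lemma:fundamentalMatrixFlow}). The fix is exactly what Proposition \ref{proposition:existencia2D} already implements: absorb $\AAA(u)$ into $\mathcal{L}$, so that what remains in the Lipschitz term is only the cubic remainder $T_\rho$-type piece of $V$ (which is $O(\de)$-Lipschitz because $|\Ltres_\la|+|\zdDper_\la|+|\zdUper_\la| = O(\de)$) together with the $H_1$ derivatives (which are $O(\de)$ by Proposition~\ref{proposition:HamiltonianScaling}). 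With that correction the argument goes through and coincides with the paper's.
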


The proof of this proposition is postponed to Section~\ref{section:proofComparison}.

\subsection{End of the proof of Theorem \ref{mainTheoremA}}
\label{subsection:intersections}

To prove the first statement of Theorem~\ref{mainTheoremA}, in the next lemma we study the intersections between the section $\Sigma_{\rho}$ (see~\eqref{def:section}) and the unstable and stable manifolds of $\po_{\rho}$ parametrized by $\zuD$ and $\zsD$, respectively.

\begin{lemma}\label{lemma:intersectionSectionManifolds}
Fix $\diamond\in\claus{\unstable,\stable}$.
Let $\rhoPeriodicOrbit$ and $\po_{\rho}$ be as given in Proposition~\ref{proposition:periodicOrbit}, $\zdD$ be the parametrization given in
\eqref{def:parametrizationZusD} and Proposition~\ref{proposition:existencia2D}
and $\Sigma_{\rho}$ the section given in \eqref{def:section}.
Then, there exists $\de_0>0$ and a real-analytic function $\UU^{\diamond}_{\rho}:\torusC \to \DmigD$ such that, for $\rho \in [0,\rhoPeriodicOrbit]$ and $\de\in(0,\de_0)$, 
\begin{align*}
\zdD(\UU^{\diamond}_{\rho}(\tau),\tau) \in \Sigma_{\rho},
\qquad
\text{for } \tau \in \torus= \reals/2\pi\integers.
\end{align*}
Moreover, there exists $C>0$ independent of $\rho$ and $\de$ such that, for $\tau \in \torusC$,
\begin{align*}
\UU^{\diamond}_{0}\equiv 0, \qquad	
\vabss{\UU^{\diamond}_{\rho}(\tau)}\leq C\de\rho.
\end{align*}
\end{lemma}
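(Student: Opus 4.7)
The plan is to reduce the condition $\zdD(u,\tau)\in\Si_{\rho}$ to a single scalar equation in $u$ that can be solved by the implicit function theorem. First, since $\zdD(\cdot,\cdot)$ parametrizes a subset of $\WW^{\diamond,+}(\po_{\rho})$ and $\po_{\rho}$ lies on the energy level $H=\rho^2/\de^2+H(\Ltres(\de))$ by Proposition \ref{proposition:periodicOrbit}, the image of $\zdD$ automatically satisfies the energy constraint built into the definition \eqref{def:section} of $\Si_{\rho}$. Hence only the affine condition $\langle\zdD(u,\tau),\mathbf{e_2}\rangle=\de^2\LtresLa(\de)$ has to be imposed. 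Substituting the decomposition \eqref{def:parametrizationZusD} together with \eqref{def:poSplitting} and the identity $\s_{\pend}(u)=(\lap(u),\Lap(u),0,0)^T$, this collapses to the scalar equation
\begin{equation}\label{eq:PlanU}
F(u,\tau;\rho,\de):=\Lap(u)+\de\rho\,\Lapo(\tau)+\langle\zdDper(u,\tau),\mathbf{e_2}\rangle=0.
\end{equation}

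Second, I would apply the implicit function theorem to \eqref{eq:PlanU} near $u=0$. Two facts combine cleanly at this point: Proposition \ref{proposition:singularities} ensures $\Lap(0)=0$ with $\Lap'(0)\neq 0$ (the turning point $(\la_0,0)$ of the homoclinic of $H_{\pend}$ is regular), and Proposition \ref{proposition:existencia2D} guarantees $\langle\zdDper(0,\tau),\mathbf{e_2}\rangle=0$ for every $\tau\in\torusC$. Consequently $F(0,\tau;\rho,\de)=\de\rho\,\Lapo(\tau)$, and Cauchy estimates applied to the bound $|\zdDper(u,\tau)|\leq\cttExistenciaD\de\,e^{-\nu|\Re u|}$ on a fixed-radius disk around $u=0$ contained in $\DmigD$ yield $\partial_u F(0,\tau;\rho,\de)=\Lap'(0)+\OO(\de)$, which is non-zero uniformly in $(\tau,\rho)\in\torusC\times[0,\rhoPeriodicOrbit]$ for $\de$ small. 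A quantitative holomorphic implicit function theorem then produces a unique real-analytic function $\UU^{\diamond}_{\rho}:\torusC\to\DmigD$, close to the zero function, solving \eqref{eq:PlanU}, together with the estimate
\[
|\UU^{\diamond}_{\rho}(\tau)|\leq\frac{2\,|F(0,\tau;\rho,\de)|}{|\Lap'(0)|}\leq C\de\rho,
\]
for a constant $C>0$ independent of $\rho$ and $\de$. Uniqueness combined with $F(0,\tau;0,\de)\equiv 0$ forces $\UU^{\diamond}_0\equiv 0$, and the real-analytic structure of Remark \ref{remark:realanalytic} transfers from $F$ to the implicit solution, giving the claimed symmetry $\UU^{\diamond}_{\rho}(\bar\tau)=\overline{\UU^{\diamond}_{\rho}(\tau)}$.

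The main obstacle I foresee is keeping the quantitative implicit function theorem uniform in the parameters $(\tau,\rho,\de)$, so that the radius of the disk in which $\UU^{\diamond}_{\rho}(\tau)$ is sought is fixed (independent of $\de$ and $\rho$) and stays inside $\DmigD$. This requires the uniform-in-$\tau$ decay of $\zdDper$ supplied by Proposition \ref{proposition:existencia2D}, combined with Cauchy estimates on a fixed-size complex neighborhood of $u=0$ inside $\DmigD$ to control $\partial_u\zdDper$; once this uniform non-degeneracy of $\partial_u F$ is in place, the rest of the argument is a direct application of the implicit function theorem.
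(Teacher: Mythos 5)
Your proposal is correct and takes essentially the same route as the paper: both reduce the membership in $\Si_\rho$ to the single scalar equation $\Lap(u)+\de\rho\,\Lapo(\tau)+\langle\zdDper(u,\tau),\mathbf{e_2}\rangle=0$ (the energy condition being automatic on the manifold of $\po_\rho$), and both exploit $\Lap(0)=0$, $\Ladp(0)=-V'(\la_0)\neq 0$, and the normalization $\langle\zdDper(0,\tau),\mathbf{e_2}\rangle=0$ from Proposition~\ref{proposition:existencia2D} to solve for $u$ with the estimate $\OO(\de\rho)$. The only cosmetic difference is that you invoke a quantitative implicit function theorem, whereas the paper writes out the equivalent contraction mapping $F[\UU]=-\Ladp(0)^{-1}\bigl[\de\rho\Lapo+(\Lap(\UU)-\Ladp(0)\UU)+\langle\zdDper(\UU,\cdot),\mathbf{e_2}\rangle\bigr]$ explicitly; the Cauchy-estimate control of $\partial_u\zdDper$ that you use to verify uniform non-degeneracy of $\partial_uF$ is precisely what the paper uses for the Lipschitz bound on the fixed-point operator.
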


\begin{proof}
	Since the parametrization $\zdD$ is real-analytic (see Remark \ref{remark:realanalytic}), one has that
	\[
	\zdD(u,\tau) \in \UReals(\cttTheoScalingDomainA,\cttTheoScalingDomainB)
	\qquad \text{for  }
	(u,\tau) \in (\DmigD \cap \reals)\times\torus.
	\]
%
	%
In addition, by Propositions~\ref{proposition:periodicOrbit} and~\ref{proposition:existencia2D}, one has that
\begin{align*}
	H(\zdD(u,\tau)) 
	&=
	H(\po_{\rho}(\tau;\de))
	= \frac{\rho^2}{\de^2} + H(\Ltres(\de)),
	\qquad \text{for  }
	(u,\tau) \in (\DmigD \cap \reals)\times\torus.
\end{align*}
Therefore, it is only necessary to find a function $\UU^{\diamond}_{\rho}(\tau)$ satisfying that $\sprod{\zdD(\UU^{\diamond}_{\rho}(\tau),\tau)}{\mathbf{e}_2} = \de^2 \LtresLa(\de)$ for all $\tau \in \torus$.
Then, by the decomposition~\eqref{def:parametrizationZusD} of $\zdD$ and Proposition~\ref{proposition:periodicOrbit},
\begin{align*} 
\de \rho \Lapo(\tau)
+ \Lap(\UU^{\diamond}_{\rho}(\tau)) + \sprod{\zdDper(\UU^{\diamond}_{\rho}(\tau),\tau)}{\mathbf{e}_2}
= 0.
\end{align*}
By Proposition~\ref{proposition:singularities}, one has that $\Lap(u)=\Ladp(0) u+ \OO(u^2)$
with $\Ladp(0)=-V'(\la_0)\neq 0$.
Then, $\UU^{\diamond}_{\rho}$ is a solution of the fixed point equation given by the operator
\begin{align*}
F[\UU^{\diamond}_{\rho}](\tau) 
=
- \frac1{\Ladp(0)}\boxClaus{\de \rho \Lapo(\tau) + 
\paren{\Lap(\UU^{\diamond}_{\rho}(\tau)) - \Ladp(0)\UU^{\diamond}_{\rho}}
+ \sprod{\zdDper(\UU^{\diamond}_{\rho}(\tau),\tau)}{\mathbf{e}_2}
}.
\end{align*}
Notice that, by Propositions \ref{proposition:periodicOrbit} and~\ref{proposition:existencia2D},
\begin{align*}
\vabs{F[0](\tau)} = \de\rho\frac{\vabs{\Lapo(\tau)}}{\vabss{\Ladp(0)}}
\leq C\de\rho.
\end{align*}
Moreover, for real-analytic functions $\UU,\VV:\torusC \to \DmigD$ satisfying that $\vabs{\UU},\vabs{\VV} \leq C\de\rho$ and applying the mean value theorem and Proposition~\ref{proposition:existencia2D}, one can see that the operator $F$ satisfies that, if $\de$ small enough,
\begin{align*}
\vabs{F[\UU]-F[\VV]} 
&\leq
C
\vabss{\UU^2-\VV^2} 
+ 
\vabs{\UU-\VV}
\sup_{s \in [0,1]}\vabs{
\sprod{\partial_u {\zdDper}(s\UU+(1-s)\VV,\tau)}{\mathbf{e}_2}} \\
&\leq
C\de\rho \vabs{\UU-\VV}
\leq \frac12 \vabs{\UU-\VV},
\end{align*}
where we have used that $\sprod{\zdDper(0,\tau)}{\mathbf{e_2}}=0$.
Hence, $F$ has a fixed point $\UU^{\diamond}_{\rho}$ satisfying that $\vabs{\UU^{\diamond}_{\rho}(\tau)}\leq C\de\rho$, for $\tau \in \torusC$. 
\end{proof}

The first statement of Theorem \ref{mainTheoremA} is a direct consequence of Lemma~\ref{lemma:intersectionSectionManifolds}.
We denote by $\partial \DD^{\unstable}_{\rho}$ and $\partial \DD^{\stable}_{\rho}$ the first intersection of the manifolds $\WW^{\unstable,+}(\po_{\rho})$ and $\WW^{\stable,+}(\po_{\rho})$ with the  section $\Sigma_{\rho}$, respectively, that can be parametrized as
\begin{equation}\label{def:curvesIntersections}
\begin{aligned}
\partial \DD^{\diamond}_{\rho} &= \claus{
\zdD(\UU^{\diamond}_{\rho}(\tau^{\diamond}),\tau^{\diamond}) \st \tau^{\diamond} \in \torus } \subset \Sigma_{\rho} \cap \WW^{\diamond,+}(\po_{\rho}),\quad \diamond\in\claus{\unstable,\stable}.
\end{aligned}
\end{equation}
In particular, the first intersection of the manifolds $\WW^{\unstable,+}(\Ltres)$ and $\WW^{\stable,+}(\Ltres)$ with the section $\Sigma_{0}$ corresponds to the points $\partial \DD^{\unstable}_{0} = \claus{\zuU(0)}$ and 
$\partial \DD^{\stable}_{0} = \claus{\zsU(0)}$.

To prove the rest of the statements, we study the difference between the parametrizations  of the curves considered in \eqref{def:curvesIntersections}.
Since $\Sigma_{\rho} \subset \UReals(\cttTheoScalingDomainA, \cttTheoScalingDomainB)$ (see \eqref{def:section}), 
for $\tauU,\tauS \in \torus$ one has that
\begin{align*}
{\langle\zuD(\UU^{\unstable}_{\rho}(\tauU),\tauU)-\zsD(\UU^{\stable}_{\rho}(\tauS),\tauS),\mathbf{e_2}\rangle} &= 0,
\\
{\langle\zuD(\UU^{\unstable}_{\rho}(\tauU),\tauU)-\zsD(\UU^{\stable}_{\rho}(\tauS),\tauS),\mathbf{e_4}\rangle}
&= \conj{{\langle\zuD(\UU^{\unstable}_{\rho}(\tauU),\tauU)-\zsD(\UU^{\stable}_{\rho}(\tau),\tauS),\mathbf{e_3}\rangle}},
\end{align*}
and  ${\langle\zuD(\UU^{\unstable}_{\rho}(\tauU),\tauU)-\zsD(\UU^{\stable}_{\rho}(\tauS),\tauS),\mathbf{e_1}\rangle}$ can be recovered by the conservation of energy $H=\frac{\rho^2}{\de^2}+H(\Ltres)$.
Therefore, to analyze the intersections between $\partial \DD^{\stable}_{\rho}$ and $\partial \DD^{\unstable}_{\rho}$, it suffices to study the zeroes of the complex function
\begin{align*}
	\Delta(\tauU,\tauS,\rho,\delta) := \sprod{\zuD(\UU^{\unstable}_{\rho}(\tauU),\tauU)-\zsD(\UU^{\stable}_{\rho}(\tauS),\tauS)}{\mathbf{e_4}}.
\end{align*}

Let us recall that, by Proposition \ref{proposition:comparison1d2d}, the difference $\Delta(\tauU,\tauS)$ is given at first order, by the difference ${\zuU-\zsU}$.
Therefore, using the decompositions \eqref{def:parametrizationZusD} and \eqref{def:parametrizationZusU}, for $\diamond\in\claus{\unstable,\stable}$,  we write
\begin{align*}
	\zdD(\UU^{\diamond}(\tau),\tau) = \po_{\rho}(\tau) +\separatrix(\UU^{\diamond}(\tau))  + \zdUper(\UU^{\diamond}(\tau)) + \paren{\zdDper(\UU^{\diamond}(\tau),\tau)-\zdUper(\UU^{\diamond}(\tau))}, 
\end{align*}
where $\zdDper$ and $\zdUper$ are given in Proposition~\ref{proposition:existencia2D} and Corollary \ref{corollary:existencia1D}, respectively.
Recall that, $\s_{\pend}=(\lap,\Lap,0,0)$ (see \eqref{eq:separatrixParametrizationB}) and, by Proposition~\ref{proposition:periodicOrbit},
$
{\po}_{\rho} = \Ltres + \rho(0,0,e^{i\tau}, e^{-i\tau}) + 
\de\rho(\lapo,\Lapo,\xpo,\ypo).
$
Therefore, for $\de$ small enough, we look for $(\tauU,\tauS,\rho)$ such that
\begin{equation}\label{proof:eqImplicita1}
\Delta(\tauU,\tauS,\rho,\delta)=0,
\end{equation}
where
\[
 \Delta(\tauU,\tauS,\rho,\delta)= \,
\rho(e^{-i\tauU}-e^{-i\tauS}) +
 \sqrt[6]{2} \,
\de^{\frac{1}{3}}  e^{-\frac{A}{\de^2}}  \vabs{\CInn} e^{i\tht}  + M(\de) + R(\tauU,\tauS,\de,\rho),
\]
with $\theta =\arg\langle\zuUper(0)-\zsUper(0),\mathbf{e_4}\rangle$ and
\begin{align*}
	M(\de) =& \,
	{\langle\zuUper(0)-\zsUper(0),\mathbf{e_4}\rangle}
	-\sqrt[6]{2} \, \de^{\frac13}e^{-\frac{A}{\de^2}} \vabs{\CInn} e^{i\tht}, \\
	R(\tauU,\tauS,\de,\rho) =& \,
	{\langle\zuDper(\UU^{\unstable}_{\rho}(\tauU),\tauU)-\zuUper(\UU^{\unstable}_{\rho}(\tauU)),\mathbf{e_4}\rangle} - {\langle\zsDper(\UU^{\stable}_{\rho}(\tauS),\tauS)-\zsUper(\UU^{\stable}_{\rho}(\tauS)),\mathbf{e_4}\rangle} \\
	&+{\langle\zuUper(\UU^{\unstable}_{\rho}(\tauU))-\zuUper(0),\mathbf{e_4}\rangle}
	-{\langle\zsUper(\UU^{\stable}_{\rho}(\tauS))-\zsUper(0),\mathbf{e_4}\rangle} \\
	&+\de\rho(\ypo(\tauU)-\ypo(\tauS)).
\end{align*}
%
%
Notice that, by Corollary \ref{mainTheoremDistCorollary}, Propositions
\ref{proposition:periodicOrbit} and \ref{proposition:comparison1d2d} and
Lemma~\ref{lemma:intersectionSectionManifolds},
\begin{align*}
 M(\de)=\OO\paren{\frac{\de^{\frac13}e^{-\frac{A}{\de^2}}}{\vabs{\log \de}}},
	\qquad
	R(\tauU,\tauS,\de,\rho) = \OO(\de\rho).
\end{align*}
Since, by Theorem~\ref{TheoremA}, $\CInn\neq 0$, we can consider the auxiliary parameter $r\in (0,r_0]$,
\begin{align}\label{proof:changerhor}
	r = \frac{2 e^{\frac{A}{\de^2}}}{\sqrt[6]{2} \,\de^{\frac13}\vabss{\CInn}}\rho, 
	\qquad \text{and} \qquad
	r_0 = \frac{2 e^{\frac{A}{\de^2}}}{\sqrt[6]{2} \,\de^{\frac13}\vabss{\CInn}}\rhoPeriodicOrbit.
\end{align}
Then, equation \eqref{proof:eqImplicita1} is equivalent to
\begin{equation}\label{proof:eqImplicita2}
\begin{split}
r(e^{-i(\tauU+\tht)}-e^{-i(\tauS+\tht)}) + 2 
+  g(\tauU,\tauS,r,\de)=0,	
\end{split}
\end{equation}
where
\begin{align*}
g(\tauU,\tauS,r,\de) &= \frac{2 e^{\frac{A}{\de^2}} e^{-i\tht} }{ \sqrt[6]{2} \,\de^{\frac13}\vabss{\CInn}}
\paren{
M(\de)
+ 
R\paren{\tauU,\tauS,\de,
\frac{\sqrt[6]{2}}2\de^{\frac13}\vabss{\CInn} e^{-\frac{A}{\de^2}}r}
} \\
&= 
\OO\paren{\frac1{\vabs{\log \de}}}
+ \OO\paren{\de r}.
\end{align*}
By introducing $G=(G_1,G_2):\torus^2\times [0,r_0] \times [0,\de_0) \to \reals^2$, as
\begin{equation}\label{def:functionG}
\begin{aligned}
G_1(\tauU,\tauS,r,\de) &= r \paren{\cos(\tauU+\tht)-\cos(\tauS+\tht)}
	+ 2 
	+ \Re  g(\tauU,\tauS,r,\de)
	, \\
G_2(\tauU,\tauS,r,\de) &= r\paren{\sin(\tauU+\tht)-\sin(\tauS+\tht)}+ 
\Im  g(\tauU,\tauS,r,\de),
\end{aligned}
\end{equation}
equation \eqref{proof:eqImplicita2} is  equivalent to $G(\tauU,\tauS,r,\de)=(0,0)$. 

Next result characterizes the solutions of this equation (see also Figure \ref{fig:implicitaInterseccions}). Note that it would be reasonable to look for the zeros of $G$ for a fixed $r$. This would give the intersections between the invariant manifolds of a given periodic orbit. Instead, we parameterize the zeros writing $(\tau^\unstable,r)$ as functions of $\tau^\stable$. This makes the application of the implicit function theorem easier and allows us to analyze at the same time transverse intersections and quadratic tangencies.

\begin{lemma}\label{lemma:intersectionsA}
Fix $\gamma \in (0,\frac{\pi}2)$ and consider $I_{\gamma}=[-\tht-{\gamma},-\tht+{\gamma}]$.
There exists $\de_{\gamma}$ satisfying
$\lim_{\gamma \to \pi/2} \de_{\gamma} = 0$ and functions	
$(\tauU_{*},r_{*}): I_{\gamma} \times (0,\de_{\gamma}) \to \torus \times \reals$,
such that
	$
	G(\tauU_{*}(\tauS,\de),\tauS,r_{*}(\tauS,\de),\de) = (0,0)
	$ 
	and
	\begin{equation*}\label{proof:functionrStar}
		\begin{aligned}
			\tau_{*}^{\unstable}(\tauS,\de) &= 
			\pi -\tauS -2\tht + \OO\paren{\frac1{\vabs{\log \de}}},
			\\
			r_{*}(\tauS,\de)&=\frac1{\cos(\tauS+\tht)} + \OO\paren{\frac1{\vabs{\log \de}}}.
		\end{aligned}
	\end{equation*}
\end{lemma}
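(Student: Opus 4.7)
The plan is to solve $G(\tau^\unstable,\tau^\stable,r,\delta)=(0,0)$ by a quantitative implicit function theorem, using $\tau^\stable$ as parameter and treating $g$ in \eqref{def:functionG} as a perturbation. First I would analyze the unperturbed equation obtained by dropping $g$. The second component yields $\sin(\tau^\unstable+\tht)=\sin(\tau^\stable+\tht)$; the branch $\tau^\unstable=\tau^\stable$ (mod $2\pi$) is inconsistent with the first component (which would reduce to $2=0$), so the relevant branch is $\tau^\unstable_0(\tau^\stable):=\pi-\tau^\stable-2\tht$. Substituting into $G_1=0$ gives $-2\cos(\tau^\stable+\tht)\,r+2=0$, hence $r_0(\tau^\stable):=1/\cos(\tau^\stable+\tht)$, which is well defined, positive and bounded by $1/\cos\gamma$ for $\tau^\stable\in I_\gamma$ since $|\tau^\stable+\tht|\le \gamma<\pi/2$.

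Then I would verify non-degeneracy of the unperturbed system. Using $\cos(\tau^\unstable_0+\tht)=-\cos(\tau^\stable+\tht)$ and $\sin(\tau^\unstable_0+\tht)=\sin(\tau^\stable+\tht)$, the Jacobian of $(G_1,G_2)$ with respect to $(\tau^\unstable,r)$ at $(\tau^\unstable_0,r_0)$ is
\[
\begin{pmatrix}
-r_0\sin(\tau^\stable+\tht) & -2\cos(\tau^\stable+\tht)\\
-r_0\cos(\tau^\stable+\tht) & 0
\end{pmatrix},
\]
whose determinant equals $-2r_0\cos^2(\tau^\stable+\tht)=-2\cos(\tau^\stable+\tht)$. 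This is bounded away from $0$ by $2\cos\gamma$ uniformly for $\tau^\stable\in I_\gamma$, so the unperturbed system can be locally inverted.

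Next I would set up a contraction argument. Writing the perturbed system as a fixed point equation for the increments $(\Delta\tau^\unstable,\Delta r)=(\tau^\unstable-\tau^\unstable_0,r-r_0)$, the inverse of the Jacobian above has norm $\OO(1/\cos\gamma)$, while $g=\OO(1/|\log\delta|)+\OO(\delta r)$ and the $r$-component remains comparable to $1/\cos\gamma$ along the iterates. Choosing $\delta_\gamma>0$ small enough so that $\delta_\gamma/\cos\gamma\ll 1/|\log\delta_\gamma|$ and the self-map contracts on a ball of radius $\OO(1/|\log\delta|)$ around $(\tau^\unstable_0,r_0)$, one obtains the unique fixed point $(\tau^\unstable_*,r_*)(\tau^\stable,\delta)$ with the asymptotic expressions stated in the lemma. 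Real analyticity of $g$ in its arguments transfers to $(\tau^\unstable_*,r_*)$ by the analytic implicit function theorem.

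The main obstacle, and the reason for the constraint $\lim_{\gamma\to\pi/2}\delta_\gamma=0$, is the loss of uniformity as $\gamma\to\pi/2$: both $r_0$ and the inverse of the Jacobian blow up like $1/\cos\gamma$. Consequently, to keep the perturbation $g$ small compared to the non-degeneracy constant, $\delta$ must be taken smaller and smaller, and controlling this interplay quantitatively is the delicate point of the argument.
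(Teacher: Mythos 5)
Your proof is correct and takes essentially the same route as the paper: identify the unperturbed solution family $\tau^\unstable_0(\tau^\stable)=\pi-\tau^\stable-2\tht$, $r_0(\tau^\stable)=1/\cos(\tau^\stable+\tht)$ (equivalently, the paper's family $S_\alpha$ with $\alpha=\tau^\stable+\tht$) and then perturb by the implicit function theorem. The paper's proof is a two-line sketch that only exhibits the unperturbed family and invokes the implicit function theorem without further detail; you fill in exactly the gaps one would want — the explicit Jacobian
\[
\det\frac{\partial(G_1,G_2)}{\partial(\tau^\unstable,r)}\bigg|_{(\tau^\unstable_0,r_0)}=-2\cos(\tau^\stable+\tht),
\]
its uniform lower bound $2\cos\gamma$ on $I_\gamma$, and the quantitative interplay between the degenerating non-degeneracy constant and the size of $g=\OO(1/|\log\delta|)+\OO(\delta r)$ that forces $\delta_\gamma\to 0$ as $\gamma\to\pi/2$. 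Your computations check out, and the explanation of why $\delta_\gamma$ must shrink is the correct reading of the lemma's statement.
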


\begin{proof}
For $r\geq 1$ and $\delta=0$, the equation $G(\tauU,\tauS,r,0)=(0,0)$ has a family of solutions given by
\begin{align*}
S_{\al} = \claus{(\tauU,\tauS,r,0)=\paren{
		\pi-\al-\tht,
	\al-\tht,\frac1{\cos\al},0}},
\quad
\text{with }
\al \in [-\g,\g] \subset \paren{-\frac{\pi}2,\frac{\pi}2}.
\end{align*}
Therefore, for $\de>0$, it only remains to find zeroes of the function $G$ using the implicit function theorem around every solution of this family.
\end{proof}

\begin{figure}[t]
	\centering
	\begin{overpic}[height=3.8cm]{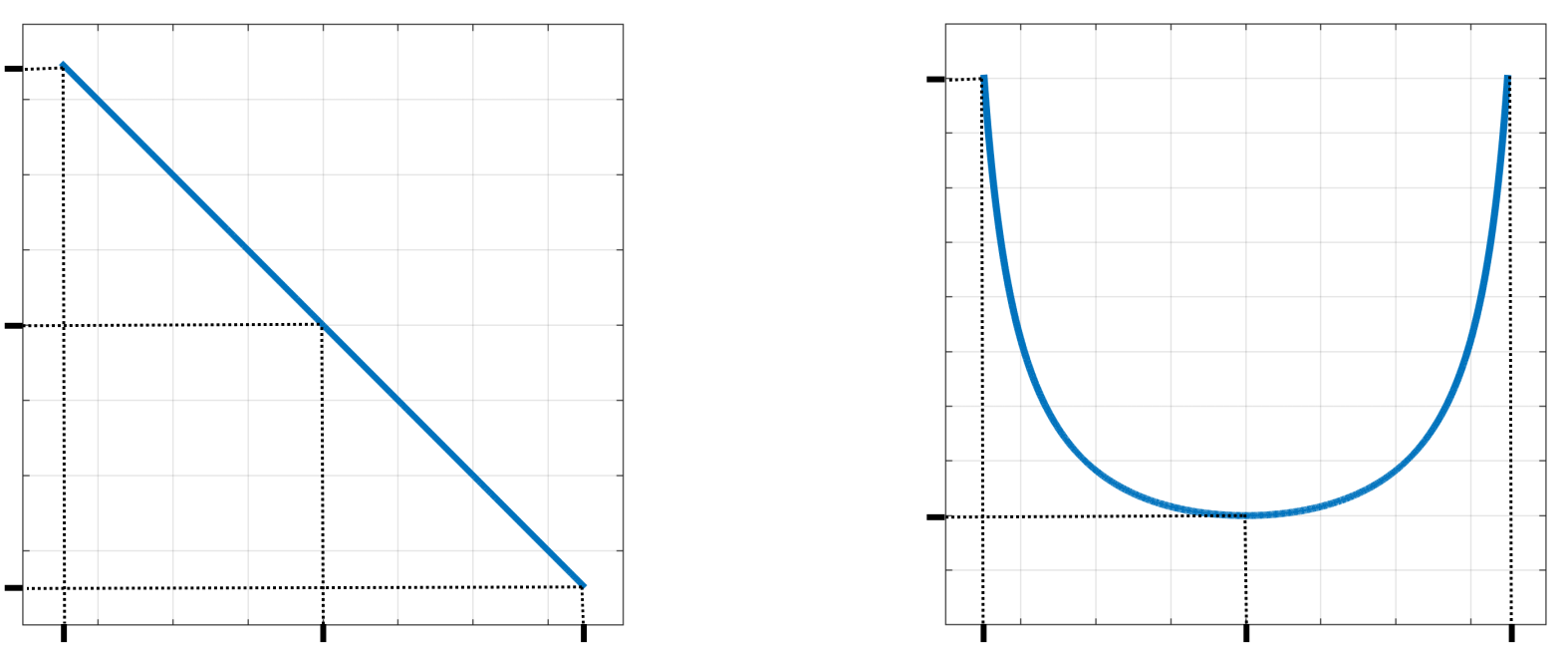}
		\put(21,24){\textcolor{blue}{$\tauU_{*}(\tauS,0)$}}
		\put(19,-2){\tiny $-\tht$}
		\put(0,-2){\tiny $-\g-\tht$}
		\put(34,-2){\tiny $\g-\tht$}
		\put(-12,3){\tiny $\pi-\g-\tht$}
		\put(-7,20){\tiny $\pi-\tht$}
		\put(-12,36){\tiny $\pi+\g-\tht$}
		\put(19,-6){\footnotesize $\tauS$}
		\put(73,12){\textcolor{blue}{$r_{*}(\tauS,0)$}}
		\put(78,-2){\tiny $-\tht$}
		\put(58,-2){\tiny $-\g-\tht$}
		\put(94,-2){\tiny $\g-\tht$}
		\put(57,7){\tiny $1$}
		\put(53,36){\tiny $\frac1{\cos\g}$}
		\put(78,-6){\footnotesize $\tauS$}
	\end{overpic}
	\vspace{0.8cm}
	\caption{Plot in $\tauS$ of functions $\tauU_{*}(\tauS,0)$ and $r_*(\tauS,0)$ as given in Lemma \ref{lemma:intersectionsA}.}
	\label{fig:implicitaInterseccions}
\end{figure}

The second statement of Theorem \ref{mainTheoremA} is a consequence of this lemma.
%
Indeed, take $R>1$ and $\g = \arccos(\frac1{R}) \in (0,\frac{\pi}2)$. Then, Lemma \ref{lemma:intersectionsA} implies that the equation $G(\tauU,\tauS,r,\de)=(0,0)$ has at least one solution  for $r\in[r_{\min}(\de),r_{\max}(\de)]$ and $\de \in (0,\de_{\gamma})$, with
\[
r_{\min}(\de) = 1 + \OO\paren{\frac1{\vabs{\log \de}}},
\qquad
r_{\max}(\de) = R + \OO\paren{\frac1{\vabs{\log \de}}}.
\]
Taking into account \eqref{proof:changerhor}, we define
\[
\rho_{\min}(\de) = \frac{\sqrt[6]{2}}2
\de^{\frac13} e^{-\frac{A}{\de^2}}
\vabss{\CInn} r_{\min}(\de),
\qquad 
\rho_{\max}(\de) = \frac{\sqrt[6]{2}}2
\de^{\frac13} e^{-\frac{A}{\de^2}}
\vabss{\CInn} r_{\max}(\de),
\]
and assume $\de>0$ small enough such that $\rho_{\max}(\de)<\rhoPeriodicOrbit$.
Then, for $\rho \in [\rho_{\min}(\de),\rho_{\max}(\de)]$, the closed curves $\partial \DD^{\unstable}_{\rho}$ and $\partial \DD^{\stable}_{\rho}$ (see \eqref{def:curvesIntersections}) intersect at least once.
See Figure \ref{fig:intersectionsTechnical} for a representation of the case $\de=0$.

\begin{figure}[t]
	\centering
	\begin{overpic}[height=4.2cm]{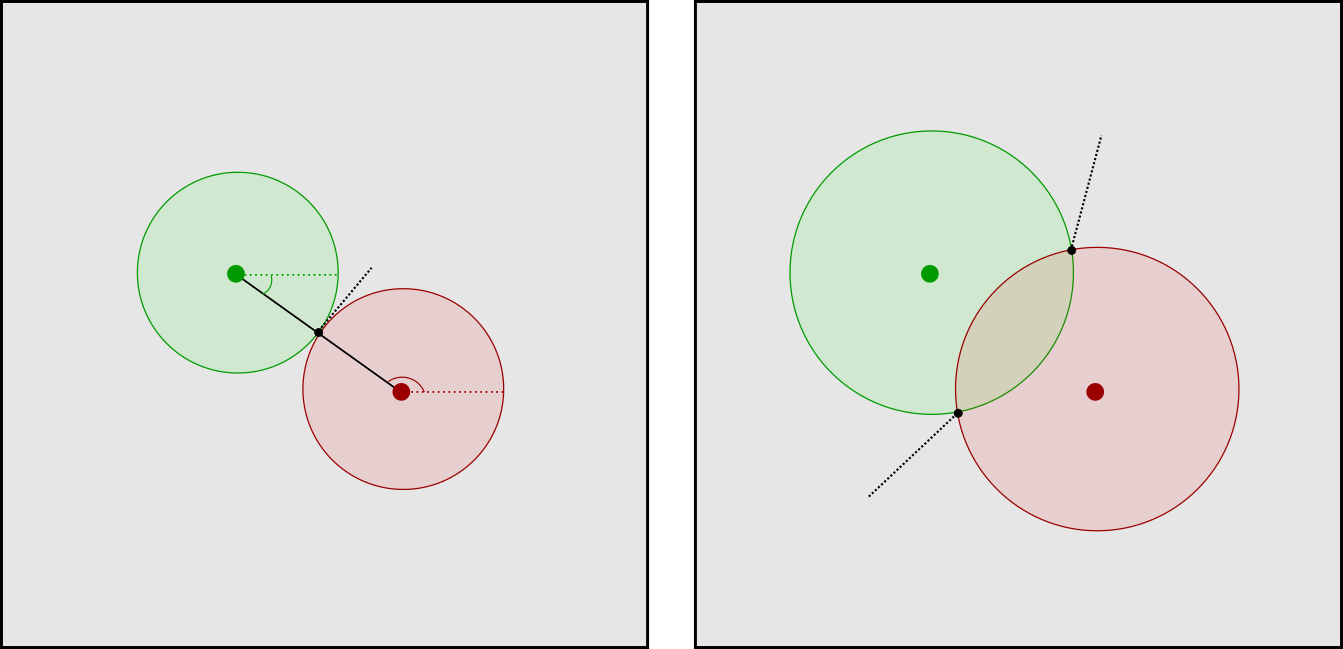}
		\put(16,-6){(a) $r=1$}
		\put(29,21){\tiny\textcolor{red}{$\pi-\tht$}}
		\put(21,25.5){\tiny\textcolor{myGreen}{$-\tht$}}
		\put(9,37){\textcolor{myGreen}{$\partial \DD^{\stable}_{\rho}$}}
		\put(36,11){\textcolor{red}{$\partial \DD^{\unstable}_{\rho}$}}
		\put(26,33){\scriptsize $(\tauU,\tauS)$}
		\put(27,29){\scriptsize $=(\pi-\tht,-\tht)$}
		\put(65,-6){(b) $r\in(1,R]$}
		\put(64,-13){$\wh{\g}(r)=\arccos(\frac1{r})$}
		\put(60,40){\textcolor{myGreen}{$\partial \DD^{\stable}_{\rho}$}}
		\put(87,6){\textcolor{red}{$\partial \DD^{\unstable}_{\rho}$}}
		\put(74,43){\scriptsize $\tauU=\pi-\wh{\g}(r)-\tht$}
		\put(77,39){\scriptsize $\tauS=\wh{\g}(r)-\tht$}
		\put(53,8.5){\scriptsize $\tauS=-\wh{\g}(r)-\tht$}
		\put(53,4.5){\scriptsize $\tauU=\pi+\wh{\g}(r)-\tht$}
	\end{overpic}
	\vspace{1.5cm}
	\caption{Representation of solutions of the equation \eqref{def:functionG} in function of the coordinate $r$.}
	\label{fig:intersectionsTechnical}
\end{figure}

Finally, we prove the third and fourth statement of Theorem~\ref{mainTheoremA}.
Let us denote the solutions of equation $G(\tauU,\tauS,r,\de)=(0,0)$ given in Lemma~\ref{lemma:intersectionsA} as
\[
P(\tauS,\de) = (\tauU_{*}(\tauS,\de),\tauS, r_*(\tauS,\de),\de)
\]
and consider the function
\[
\wt{G}(\tauS,\de) = \det \paren{\frac{\partial G}{\partial (\tauU,\tauS)}(P(\tauS,\de))},
\qquad (\tauS,\de) \in I_{\gamma} \times (0,\de_{\gamma}).
\]
Then, the values such that $\wt{G}\neq 0$ correspond to transverse intersections of the closed curves  $\partial \DD^{\unstable}_{\rho}$ and $\partial \DD^{\stable}_{\rho}$. 
Likewise, the values such that $\wt{G}=0$ and $\partial_{\tauS} \wt{G} \neq 0$ correspond to  quadratic tangencies.

To characterize the transverse intersections and the quadratic tangencies, we define $\tauS_{\min}(\de)$, the value of $\tauS$ where $r_*$ reaches its minimum value $r_{\min}$. Note that this corresponds to a critical point, 
which, by Lemma \ref{lemma:intersectionsA}, satisfies
\begin{align}\label{proof:rmin}
\tauS_{\min}(\de) = -\tht + \OO\paren{\frac1{\vabs{\log \de}}},
\quad
r_{\min}(\de) = r_*(\tauS_{\min}(\de),\de),
\quad
\partial_{\tauS} r_*(\tauS_{\min}(\de),\de) = 0.
\end{align}
Now we prove that 
that 
$(\tauS_{\min}(\de),\de)$ is a simple zero of $\wt{G}$ and otherwise $\wt{G}\neq 0$, for $\tauS\neq \tauS_{\min}(\de)$.
By the definition of function $G$ in \eqref{def:functionG} and Lemma \ref{lemma:intersectionsA},  for $(\tauS,\de) \in I_{\gamma}\times (0,\de_{\g})$, one has that
%
\begin{align*}
	\wt{G}(\tauS,\de) = 2 \tan(\tauS+\tht)
	+ \OO\paren{\frac1{\vabs{\log \de}}},
	\quad
	\partial_{\tauS} \wt{G}(\tauS,\de)
	= \frac2{\cos^2(\tauS+\tht)}
	+ \OO\paren{\frac1{\vabs{\log \de}}},
\end{align*}
(see Figure \ref{fig:interseccionsWtG}).
Notice that, for $\de$ small enough, 
\begin{align*}
	\partial_{\tauS}\wt{G}(\tauS,\de) \geq 
	2 + \OO\paren{\frac1{\vabs{\log \de}}} > 0.
\end{align*}
Therefore, $\wt{G}$ is a strictly increasing function in $\tauS$ and can only have one simple zero.
Moreover, this zero corresponds to $\tauS=\tauS_{\min}(\de)$.
Indeed, since $G(P(\tauS_{\min}(\de),\de))=(0,0)$ and $\partial_{\tauS} r_*(\tauS_{\min}(\de),\de) = 0$ (see \eqref{proof:rmin}), taking the derivatives one has that
\begin{align*}
	\partial_{\tauS} G(P(\tauS_{\min}(\de),\de))
	+
	\partial_{\tauU} G(P(\tauS_{\min}(\de),\de)) 
	\partial_{\tauS} \tauU_*(\tauS_{\min}(\de),\de) = (0,0),
\end{align*} 
and, as a result, the vectors $\partial_{\tauS} G$ and $\partial_{\tauU} G$ at $P(\tauS_{\min}(\de),\de)$ are linearly dependent and, therefore, $\wt{G}(\tauS_{\min},\de)=0$.
Hence, there exists at least one quadratic tangency at $r=r_{\min}(\de)$ and at least two transverse intersection for each $r\in(r_{\min}(\de),r_{\max}(\de)]$.

\begin{figure}[t]
	\centering
	\begin{overpic}[height=4.2cm]{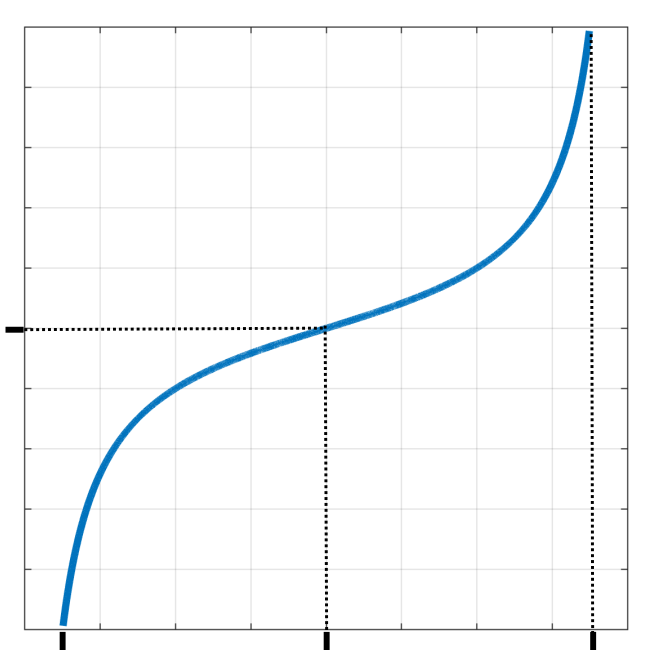}
		\put(40,60){\textcolor{blue}{$\wt{G}(\tauS,0)$}}
		\put(48,-3){\tiny $-\tht$}
		\put(0,-3){\tiny $-\g-\tht$}
		\put(86,-3){\tiny $\g-\tht$}
		\put(-5,48){\tiny $0$}
		\put(49,-10){\footnotesize $\tauS$}
	\end{overpic}
	\vspace{0.5cm}
	\caption{Plot in $\tauS$ of function $\wt{G}(\tauS,0)$ as given in \eqref{def:functionG}.}
	\label{fig:interseccionsWtG}
\end{figure}

\subsection{Proof of Proposition \ref{proposition:existencia2D}}
\label{section:proofExistence}

%

From now on, we consider a fixed $\banda>0$ and the corresponding complex torus $\torusC$ (see \eqref{def:torusC}).
We also set $\rhoPeriodicOrbit$ satisfying the conditions in Proposition \ref{proposition:periodicOrbit} and $\rho \in [0,\rhoPeriodicOrbit]$.
To avoid cumbersome notations, throughout the rest of the section, we omit the dependence on the parameter $\de$ unless necessary and denote by $C$ any positive constant independent of $\de$ and $\rho$ to state estimates.
We only prove the results for the unstable manifold, the proof for the stable manifold is analogous.

We look for parametrizations of the invariant manifold $\WW^{u,+}(\po_{\rho})$ of the form 
\begin{align*}
	\zuD(u,\tau) =  \po_{\rho}(\tau) + \s_{\pend}(u) + \zuDper(u,\tau),
	\qquad
	(u,\tau) \in \DmigU \times \torusC,
\end{align*}
(see \eqref{def:parametrizationZusD}) satisfying the equation~\eqref{eq:existence2D} and the asymptotic condition given in~\eqref{eq:assymptoticConditionExistence2d}.

Let us recall that we split the Hamiltonian $H$ as $H=H_{\pend}+H_{\osc}+H_1$ (see \eqref{def:hamiltonianScaling}). 
Since $\s_{\pend}=(\lap,\Lap,0,0)$ is a solution of the unperturbed system $H_{\pend}+H_{\osc}$, it satisfies the invariance equation~\eqref{eq:existence2D} for the unperturbed Hamiltonian (see Proposition \ref{proposition:singularities}).
By Proposition~\ref{proposition:periodicOrbit},  $\po_{\rho}$ also satisfies~\eqref{eq:existence2D} (for the full Hamiltonian $H$).
Then, the parametrization $\zdDper$ satisfies
\begin{align}\label{eq:existence2Dtechincal}
	\LL_{\rho} \zuDper = \RRR_{\rho}[\zuDper], 
\end{align}
where
\begin{align}\label{def:operatorLL}
	\LL_{\rho} \zeta = \paren{\partial_u + \frac{\omegar}{\de^2}\partial_{\tau}-\AAA(u) }\zeta,
	\quad
	\AAA = \begin{pmatrix}
		0 & -3 & 0 & 0 \\
		-V''(\lap(u)) & 0 & 0 & 0 \\
		0 & 0 & \frac{i}{\de^2} & 0 \\
		0 & 0 & 0 &-\frac{i}{\de^2}
	\end{pmatrix}
\end{align}
and
\begin{align}\label{def:operatorRRR}
	\RRR_{\rho}[\zeta] = \begin{pmatrix}
		\partial_{\La} H_1(\po_{\rho}+\s_{\pend}+\zeta)
		- \partial_{\La} H_1(\po_{\rho})
		\\
		- T_{\rho}[\zeta_1]
		-\partial_{\la} H_1(\po_{\rho}+\s_{\pend}+\zeta) 
		+ \partial_{\la} H_1(\po_{\rho})
		\\
		i\partial_{y} H_1(\po_{\rho}+\s_{\pend}+\zeta)
		-i\partial_{y} H_1(\po_{\rho})
		\\
		-i\partial_{x} H_1(\po_{\rho}+\s_{\pend}+\zeta)
		+i\partial_{x} H_1(\po_{\rho})
	\end{pmatrix},
\end{align}
with
\begin{align}\label{def:operatorT}
	T_{\rho}[\zeta_1] = V'(\lap+\po_{\rho,1}+\zeta_1)
	-V'(\lap) - V'(\po_{\rho,1})
	- V''(\lap)\zeta_1.
\end{align}
%

We solve equation \eqref{eq:existence2Dtechincal} by means of a fixed point scheme on a suitable Banach space. 
For $\al\geq 0$, we consider the Banach space
\begin{align*}
	\Ycal{\al} =& \claus{\zeta:\DmigU \times \torusC \to \complexs \st
		\zeta \text{ real-analytic}, \, 
		\normY{\zeta}{\al}:=\sup_{(u,\tau)\in \DmigU\times\torusC} \vabs{e^{-\al u}\zeta(u,\tau)}<+\infty	},
\end{align*}
where $\DmigU$ is the domain introduced in \eqref{def:dominiMig}.
We also consider the product Banach space ${\Ycal{\al}^4=\Ycal{\al}\times...\times \Ycal{\al}}$ endowed with the norm
\begin{align*}  
	\normYprod{\zeta}{\al}=\sum_{j=1}^4
	\normY{\zeta_j}{\al}.
\end{align*}
In the next lemma, we state some properties of these Banach spaces.
We will use them throughout the section.

\begin{lemma}\label{lemma:sumnorms}
The following statements hold.
	\begin{enumerate}
		\item If $\al\geq\beta \geq 0$, then
		$\Ycal{\al} \subseteq \Ycal{\beta}$.
		Moreover, for $\zeta \in \Ycal{\al}$,
		$\normY{\zeta}{\beta} \leq C
		\normY{\zeta}{\al}$.
		\item If $\zeta \in \Ycal{\al}$ and
		$\eta \in \Ycal{\beta}$, then
		$\zeta\eta \in \Ycal{\al+\beta}$
		and
		$\normY{\zeta\eta}{\al+\beta} \leq \normY{\zeta}{\al}
		\normY{\eta}{\beta}$.
		%
	\end{enumerate}

\end{lemma}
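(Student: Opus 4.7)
Both statements follow directly from the definition of the weighted sup norm, so I expect no real obstacle; the only thing to use is a geometric feature of the domain. The key observation is that the wedge $\DmigU$ from \eqref{def:dominiMig} opens to the left: while $\Re u$ may be arbitrarily negative, the inequality $|\Im u|<A/2-\tan\betaBow\,\Re u$ forces $\Re u<M:=A/(2\tan\betaBow)$ on $\DmigU$. Note also that for real $\alpha$, $|e^{-\alpha u}|=e^{-\alpha\Re u}$.

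For Statement 1, given $\al\geq\beta\geq 0$ and $\zeta\in\Ycal{\al}$, I would write
\[
|e^{-\beta u}\zeta(u,\tau)|=e^{(\al-\beta)\Re u}\,|e^{-\al u}\zeta(u,\tau)|\leq e^{(\al-\beta)M}\,\normY{\zeta}{\al},
\]
using $\Re u\leq M$ together with $\al-\beta\geq 0$. Taking supremum over $\DmigU\times\torusC$ yields $\zeta\in\Ycal{\beta}$ with $\normY{\zeta}{\beta}\leq C\normY{\zeta}{\al}$, where $C=e^{(\al-\beta)M}$ depends only on the fixed geometry of the wedge and the chosen exponents.

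For Statement 2, I would factor $e^{-(\al+\beta)u}=e^{-\al u}\,e^{-\beta u}$ and estimate pointwise
\[
|e^{-(\al+\beta)u}\zeta(u,\tau)\eta(u,\tau)|\leq |e^{-\al u}\zeta(u,\tau)|\cdot|e^{-\beta u}\eta(u,\tau)|\leq \normY{\zeta}{\al}\,\normY{\eta}{\beta},
\]
and take supremum. Real-analyticity of $\zeta\eta$ on $\DmigU\times\torusC$ is immediate since the product of two real-analytic functions is real-analytic, so $\zeta\eta\in\Ycal{\al+\beta}$ with the stated bound.

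The whole argument is two one-line exponential identities plus the boundedness of $\Re u$ on $\DmigU$; no delicate step is involved, and the constant $C$ in Statement 1 is explicit.
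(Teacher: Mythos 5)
Your proof is correct. The paper states Lemma~\ref{lemma:sumnorms} without proof, and your argument is exactly the expected one: for Statement~1, the factor $e^{(\al-\beta)\Re u}$ is bounded above by $e^{(\al-\beta)M}$ because $\DmigU$ opens leftward so $\Re u < M=A/(2\tan\betaBow)$, and $\al-\beta\ge 0$; for Statement~2 the exponential factorization $e^{-(\al+\beta)u}=e^{-\al u}e^{-\beta u}$ gives the algebra property immediately, with real-analyticity of the product preserved since $\overline{\zeta(\bar m)\eta(\bar m)}=\zeta(m)\eta(m)$.
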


Next, we obtain and analyze a suitable right-inverse of the operator $\LL_{\rho}$ introduced in~\eqref{def:operatorLL}. The first step is to construct a fundamental matrix for $\dot{\zeta} = \AAA(u) \zeta$.

\begin{lemma}\label{lemma:fundamentalMatrixFlow}
Fix $u_0 \in \reals\setminus\claus{0}$ and consider the linear differential equation $\dot{\zeta} = \AAA(u) \zeta$, with $\AAA$ as given in \eqref{def:operatorLL}.
Then, a real-analytic fundamental matrix of this equation is
\begin{align*}
	\Phi(u) = \begin{pmatrix}
		3f_{\Phi}(u) & 3g_{\Phi}(u) & 0 & 0  \\
		-\dot{f}_{\Phi}(u) & -\dot{g}_{\Phi}(u) & 0 & 0 \\
		0 & 0 & e^{\frac{i}{\de^2} u} & 0 \\
		0 & 0 & 0 & e^{-\frac{i}{\de^2} u}
	\end{pmatrix},
\end{align*}
with
\begin{align*}
f_{\Phi}(u) =
\frac1{3\xi(0)}
\paren{\xi(u)-
\frac{\dot{\xi}(0)}{\Lap(0)}
		\Ladp(u)},
\quad
g_{\Phi}(u) = -\frac{\Lap(u)}{\Lap(0)},
\quad
\xi(u) = \Lap(u) \int_{u_0}^{u} \frac{dv}{\Lap^2(v)},
\end{align*}
where, in the last integral, we consider an integration path in $\DmigU$ given by the straight line if $u \in \complexs\setminus\reals$ and by a path avoiding $u=0$ when $u \in \reals$.
%

Moreover, $\Phi(u)$ satisfies that $\det\Phi(u) = 1$, $\Phi(0) = \mathbf{Id}$ and that there exists a constant $C>0$ such that, denoting $\nu = \frac12\sqrt{\frac{21}8}$,
\begin{align*}
	\normX{g_{\Phi}}{2\nu} \leq C, 
	\quad
	\normX{\dot{g}_{\Phi}}{2\nu} \leq C,
	\quad
	\normX{f_{\Phi}}{-2\nu} \leq C, 
	\quad
	\normX{\dot{f}_{\Phi}}{-2\nu} \leq C.
\end{align*}
\end{lemma}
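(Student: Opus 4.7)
The matrix $\AAA(u)$ is block diagonal: the upper $2\times 2$ block is the Jacobi equation along the separatrix of $H_{\pend}$, while the lower $2\times 2$ block is the constant diagonal matrix $\mathrm{diag}(i/\de^{2},-i/\de^{2})$. The lower block integrates immediately to $\mathrm{diag}(e^{iu/\de^{2}},e^{-iu/\de^{2}})$, providing the third and fourth columns of $\Phi$ as in the statement. The real work is therefore entirely in the upper $2\times 2$ block, which I will build from two explicit solutions of the scalar Jacobi equation and then normalise so that $\Phi(0)=\mathbf{Id}$.

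\textbf{Two Jacobi solutions.} Differentiating in $u$ the invariance equation for $\s_{\pend}$ under the flow of $H_{\pend}$ shows that $(\ladp,\Ladp)=(-3\Lap,\Ladp)$ solves the upper block; equivalently, $\phi_{1}(u):=\Lap(u)$ satisfies the scalar equation $\ddot{\phi}=3V''(\lap(u))\phi$. A second, linearly independent solution comes from reduction of order:
\[
\xi(u)=\Lap(u)\int_{u_{0}}^{u}\frac{dv}{\Lap(v)^{2}}.
\]
Because $\Lap(0)=0$ by Proposition~\ref{proposition:singularities}, the integrand has a double pole at $v=0$; however the pendulum equations give $\ladp(0)=-3\Lap(0)=0$, so the $v^{2}$ coefficient of the Taylor expansion of $\Lap$ at $0$ (which is a multiple of $-V''(\la_{0})\ladp(0)$) vanishes, and hence the Laurent expansion of $1/\Lap^{2}$ around $v=0$ contains no $v^{-1}$ term. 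Its residue is zero, so the integral is path--independent in $\DmigU\setminus\{0\}$ --- this is the content of the ``straight line vs.\ deformed path'' prescription in the statement. Consequently $\xi$ extends as a single--valued real--analytic function on all of $\DmigU$, and a short Laurent computation yields $\xi(0)=-1/\Ladp(0)\neq 0$.

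\textbf{Normalisation and $\det\Phi=1$.} The general Jacobi solution takes the form $(-3\phi,\dot{\phi})$ with $\phi\in\mathrm{span}\{\Lap,\xi\}$. The two combinations that evaluate to $(1,0)^{T}$ and $(0,1)^{T}$ at $u=0$ reproduce exactly the formulas given for $f_{\Phi}$ and $g_{\Phi}$, so $\Phi(0)=\mathbf{Id}$ by construction. The identity $\det\Phi(u)\equiv 1$ then follows from Liouville's formula $\det\Phi(u)=\det\Phi(0)\exp\int_{0}^{u}\mathrm{tr}\,\AAA(s)\,ds$ together with $\mathrm{tr}\,\AAA\equiv 0$.

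\textbf{Exponential bounds and main obstacle.} Proposition~\ref{proposition:singularities} gives $|\Lap(u)|,|\Ladp(u)|\leq Ce^{-\sqrt{21/8}|\Re u|}=Ce^{-2\nu|\Re u|}$ for $|\Re u|$ large, and both functions remain bounded on any compact subset of $\DmigU$; since $\Re u$ is bounded above on the triangular domain $\DmigU$, this translates directly into $\normX{g_{\Phi}}{2\nu},\normX{\dot{g}_{\Phi}}{2\nu}\leq C$. For $\xi$ one combines the reverse estimate $|\Lap(v)|^{-2}\leq Ce^{2\sqrt{21/8}|\Re v|}$ far from $v=0$ with the prefactor $|\Lap(u)|\leq Ce^{-2\nu|\Re u|}$, integrating along a path that first travels through the asymptotic region where the exponential dominates; this yields $|\xi(u)|\leq Ce^{2\nu|\Re u|}$, and the bound on $\dot{\xi}=\Ladp\int dv/\Lap^{2}+1/\Lap$ follows analogously. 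Since $f_{\Phi}$ and $\dot{f}_{\Phi}$ are $\de$--independent linear combinations of $\xi$, $\dot{\xi}$, $\Lap$, $\Ladp$, the required bounds $\normX{f_{\Phi}}{-2\nu},\normX{\dot{f}_{\Phi}}{-2\nu}\leq C$ drop out. The only real subtlety in the whole argument is the analytic continuation of $\xi$ through $u=0$: without the vanishing of the residue of $1/\Lap^{2}$ at $v=0$ (equivalently, of $\ladp(0)$), distinct path choices would produce distinct branches of $\xi$ and no single--valued real--analytic representative on $\DmigU$ would exist.
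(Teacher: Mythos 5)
Your proof is correct and follows essentially the same route as the paper's: exploit the block-diagonal structure, take $\Lap$ and the reduction-of-order companion $\xi$ as a basis of the upper Jacobi block, invoke the vanishing of $\ddot\Lap(0)$ (equivalently $\ladp(0)=0$) to kill the residue of $1/\Lap^{2}$ at $v=0$ and make $\xi$ single-valued, normalise via $\Phi(0)=\mathbf{Id}$ and Liouville, and transfer the separatrix decay rate $e^{-\sqrt{21/8}|\Re u|}$ to $g_\Phi,\dot g_\Phi$ and (with the integral growing at rate $e^{2\sqrt{21/8}|\Re u|}$) to $f_\Phi,\dot f_\Phi$. The only point worth flagging is that your estimate for $\xi$ uses a matching \emph{lower} bound $|\Lap(v)|\geq c\,e^{-\sqrt{21/8}|\Re v|}$ far from $v=0$, which Proposition~\ref{proposition:singularities} states only as an upper bound; this is a real (if standard) ingredient, but the paper's own proof glosses over the same step, so you are at the same level of rigor.
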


\begin{proof}
Let us recall that, by Proposition \ref{proposition:singularities}, the time-parametrization of the separatrix satisfies that
$
\ladp(u)=-3\Lap(u)
$
and
$
\Ladp(u) = - V'(\lap(u)),
$
for $u \in \PiExtA$.
%
%
Then, a fundamental matrix of the equation $\dot{\zeta}=\AAA(u)\zeta$ is given by
\begin{align*}
	\phi(u) = \begin{pmatrix}
		3\xi(u) & 3 \Lap(u) & 0 & 0\\
		-\dot{\xi}(u) & -\Ladp(u) & 0 & 0\\
		0 & 0 & e^{\frac{i}{\de^2} u} & 0 \\
		0 & 0 & 0 & e^{-\frac{i}{\de^2} u}
	\end{pmatrix}.
\end{align*}
We stress that $\xi$ is real-analytic in $\DmigU \subset \PiExtA$.
Indeed, one has that $u=0$ is the only zero of $\Lap(u)$ (see Proposition~\ref{proposition:singularities}), that $\Ladp(0) =-V'(\lap(0)) \neq 0$ and $\ddot{\La}_{\pend}(0)=0$.
Thus, $\Lap(u)=\Ladp(0) u + \OO(u^3)$.
That implies that the integral appearing on $\xi$ does not depend on the path of integration since its residue is zero.
As a consequence, $\xi(u) \in \reals$ for $u \in \reals$.
In addition, since
$\xi(0) = -{\Ladp}^{-1}(0) \neq 0$,
we can perform a linear transformation to $\phi(u)$ to obtain the fundamental matrix $\Phi(u)$ satisfying $\Phi(0)=\Id$ and $\det \Phi(u) = 1$.
Lastly, recalling that, by Proposition \ref{proposition:singularities}, 
$\normX{\lap}{2\nu} \leq C$	
and
$\normX{\Lap}{2\nu} \leq C$,
we obtain the corresponding estimates for $f_{\Phi}$ and $g_{\Phi}$.
\end{proof}

%
%

%
%

We use this matrix $\Phi$ to construct a right-inverse of the operator $\LL_{\rho}$  in~\eqref{def:operatorLL}.
For $\zeta \in \Ycal{\nu}^4$, we consider the operator 
\begin{equation}\label{def:operatorGG}
	\GG_{\rho}[\zeta](u,\tau)=\sum_{j=1}^4 \GG_{\rho,j}[\zeta](u,\tau) \mathbf{e_j},
\end{equation}
given by
\begin{align*}
	\left(
	\begin{array}{l}
		\displaystyle
		\GG_{\rho, 1}[\zeta](u, \tau) \\[0.4em]
		\GG_{\rho, 2}[\zeta](u, \tau)
	\end{array}  
	\right) 
	&=
	\begin{pmatrix}
		3 f_{\Phi}(u) & 3 g_{\Phi}(u) \\
		- \dot{f}_{\Phi}(u) & - \dot{g}_{\Phi}(u)
	\end{pmatrix}
	\left(
	\begin{array}{c}
		\displaystyle 
		\int_{-\infty}^{0} 
		\II_1[\zeta_1,\zeta_2]
		\paren{u+t,\tau+
			\frac{\omegar}{\de^2}t} dt
		\\[1em]
		\displaystyle
		\int_{-u}^0 
		\II_2[\zeta_1,\zeta_2]
		\paren{u+t,\tau+
			\frac{\omegar}{\de^2}t} dt
	\end{array}	\right)
\end{align*}
and
\begin{align*}
	\GG_{\rho,3}[\zeta](u,\tau) &=
	\int_{-\infty}^0
	e^{-\frac{i}{\de^2}t}
	\zeta_3
	\paren{u+t,\tau+\frac{\omegar}{\de^2}t} dt, 
	\\
	\GG_{\rho,4}[\zeta](u,\tau) &=
	\int_{-\infty}^0
	e^{\frac{i}{\de^2}t}
	\zeta_4
	\paren{u+t,\tau+\frac{\omegar}{\de^2}t} dt,
\end{align*}
where
\begin{align*}
	\II_1[\zeta_1,\zeta_2](u,\tau) &=
	-\dot{g}_{\Phi}(u) \zeta_1(u,\tau)
	- 3 g_{\Phi}(u) \zeta_2(u,\tau), 
	\\
	\II_2[\zeta_1,\zeta_2](u,\tau) &=
	\dot{f}_{\Phi}(u) \zeta_1(u,\tau)
	+ 3 f_{\Phi}(u) \zeta_2(u,\tau).
\end{align*}

\begin{lemma}\label{lemma:boundsGG}
%
For $\rho \in [0,\rhoPeriodicOrbit]$ and $\de \in (0,1)$, the operator
	$
	\GG_{\rho}: \Ycal{\nu}^4 \to \Ycal{\nu}^4
	$
	is well defined and is a right-inverse of the operator $\LL_{\rho}$ given in \eqref{def:operatorLL}.
	Moreover, $\GG_{\rho, 2}[\zeta](0,\cdot)\equiv0$ and there exists a constant $C>0$ independent of $\rho$ and $\de$ such that 
	\[
	\normYprod{\GG_{\rho}[\zeta]}{\nu} \leq C \normYprod{\zeta}{\nu}.
	\]
In addition, if $\partial_{\tau}\zeta \equiv 0$, one has that $\GG_{\rho}[\zeta]=\GG_{\tl{\rho}}[\zeta]$ for $\rho, \tl{\rho} \in [0,\rhoPeriodicOrbit]$.
\end{lemma}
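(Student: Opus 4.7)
The plan is to treat $\GG_\rho$ as the Duhamel right-inverse produced by the fundamental matrix $\Phi$ of Lemma~\ref{lemma:fundamentalMatrixFlow}, and to verify the well-definedness, the inversion property, the boundary value at $u=0$, and the weighted norm bound separately, leaving the $\rho$-independence statement as an immediate observation.

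First I would check that the integrals in~\eqref{def:operatorGG} converge and define holomorphic functions on $\DmigU\times\torusC$. The domain $\DmigU=\{\vabs{\Im u}<A/2-\tan\beta\,\Re u\}$ is invariant under non-positive real translations of $\Re u$, so the semi-infinite paths $t\mapsto(u+t,\tau+\omegar t/\de^2)$ with $t\leq 0$ stay inside $\DmigU\times\torusC$, and the compact paths $t\in[-u,0]$ can be taken as straight segments, lying in $\DmigU$ by convexity. For the oscillator components the estimate is immediate since $\vabs{e^{\pm it/\de^2}}=1$ for real $t$:
\[
\vabs{\GG_{\rho,3}[\zeta](u,\tau)}\leq \normY{\zeta_3}{\nu}\int_{-\infty}^0 e^{\nu(\Re u+t)}\,dt=\nu^{-1}e^{\nu\Re u}\normY{\zeta_3}{\nu},
\]
and analogously for $\GG_{\rho,4}$. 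For the inversion property, $\AAA$ splits as a $2\times 2$ block acting on $(\zeta_1,\zeta_2)$ with fundamental matrix $\Phi_{11}(u)=\begin{pmatrix}3f_\Phi & 3g_\Phi\\-\dot f_\Phi & -\dot g_\Phi\end{pmatrix}$ and a diagonal block with eigenvalues $\pm i/\de^2$ acting on $(\zeta_3,\zeta_4)$. Using $\det \Phi_{11}=1$, the inverse is $\begin{pmatrix}-\dot g_\Phi & -3g_\Phi\\ \dot f_\Phi & 3f_\Phi\end{pmatrix}$, whose rows coincide with $(\II_1,\II_2)$ applied to $(\zeta_1,\zeta_2)$. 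The classical variation of parameters formula along the characteristics $s\mapsto (u+s,\tau+\omegar s/\de^2)$ of $\partial_u+(\omegar/\de^2)\partial_\tau$ is then exactly~\eqref{def:operatorGG}, so $\LL_\rho\GG_\rho=\Id$ follows by differentiating under the integral sign. Since $\Phi_{11}(0)=\Id$ and the second integral is over the empty interval $[0,0]$ at $u=0$, the identity $\GG_{\rho,2}[\zeta](0,\cdot)\equiv 0$ holds.

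The main step will be the norm bound on the first two components, where the exponential growth of $f_\Phi$ as $\Re u\to-\infty$ interacts with the finite-path integral of $\II_2$. Using Lemma~\ref{lemma:fundamentalMatrixFlow} together with Lemma~\ref{lemma:sumnorms}(2), I would observe that $\II_1\in\Ycal{3\nu}$ and $\II_2\in\Ycal{-\nu}$, both with norms controlled by $C\normYprod{\zeta}{\nu}$. For the semi-infinite integral $\int_{-\infty}^0 e^{3\nu(\Re u+t)}\,dt=(3\nu)^{-1}e^{3\nu\Re u}$; after multiplication by $\vabs{f_\Phi}\leq Ce^{-2\nu\Re u}$ or $\vabs{g_\Phi}\leq Ce^{2\nu\Re u}$, this yields precisely the target bound $Ce^{\nu\Re u}\normYprod{\zeta}{\nu}$. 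For the finite-path integral $\int_{-u}^0\II_2(u+t,\cdot)\,dt$ I would parametrize along the complex segment from $-u$ to $0$ and split according to the sign of $\Re u$: on $\Re u\geq 0$, the bound $\vabs{\II_2}\leq Ce^{-\nu\Re s}$ produces an integral of size $\OO(\vabs{u})$, bounded uniformly since $\DmigU$ is bounded to the right by $A/(2\tan\beta)$; on $\Re u<0$, the growing factor $e^{-\nu\Re u}$ coming from the exponential is absorbed by the $e^{2\nu\Re u}$ weight of $g_\Phi$, leaving a net $e^{\nu\Re u}$ up to a quotient $\vabs{u}/\vabs{\Re u}$ controlled by the opening angle of $\DmigU$. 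Summing the contributions of the four components then gives $\normYprod{\GG_\rho[\zeta]}{\nu}\leq C\normYprod{\zeta}{\nu}$.

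Finally, the dependence on $\rho$ in~\eqref{def:operatorGG} is entirely through the translation $t\mapsto\tau+\omegar t/\de^2$, so when $\partial_\tau\zeta\equiv 0$ this translation acts trivially on the integrand and the resulting $\GG_\rho[\zeta]$ no longer depends on $\omegar$, giving $\GG_\rho[\zeta]=\GG_{\tl\rho}[\zeta]$ without any further work.
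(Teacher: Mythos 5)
Your argument follows the same Duhamel/variation-of-parameters strategy as the paper: one inverts $\LL_\rho$ by integrating along the characteristics of $\partial_u+(\omegar/\de^2)\partial_\tau$ with the fundamental matrix from Lemma~\ref{lemma:fundamentalMatrixFlow}, and then controls separately the semi-infinite integral (against the decaying column) and the finite-path integral (against the growing column). The paper's proof gives essentially only the final display, so the extra details you supply — invariance of $\DmigU$ under negative real translations, convexity for the finite paths, the block structure of $\Phi^{-1}_{11}$, the normalization $\Phi_{11}(0)=\Id$ giving $\dot f_\Phi(0)=0$ and hence $\GG_{\rho,2}[\zeta](0,\cdot)\equiv 0$, and the observation that the only $\rho$-dependence enters through the $\tau$-shift — are a correct and welcome expansion of the same argument.

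There is one small but real gap in the finite-path estimate on $\{\Re u<0\}$. Parametrizing $\int_{-u}^0\II_2(u+t,\cdot)\,dt = u\int_0^1\II_2(su,\cdot)\,ds$ and bounding the inner integral by $e^{-\nu\Re u}/(\nu|\Re u|)$, you conclude that after multiplication by $|g_\Phi|\leq Ce^{2\nu\Re u}$ and the weight $|e^{-\nu u}|$ the remainder is $|u|/|\Re u|$, "controlled by the opening angle of $\DmigU$." That ratio is \emph{not} uniformly bounded on $\{\Re u<0\}\cap\DmigU$: for $u$ near $\pm iA/2$ with $\Re u\to 0^-$ it blows up. The estimate still holds, but for a different reason near the imaginary axis. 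Using the closed form $\int_0^1 e^{-s\nu\Re u}\,ds=\frac{e^{-\nu\Re u}-1}{-\nu\Re u}$, the quantity that must be bounded is $|u|\cdot\frac{1-e^{\nu\Re u}}{\nu|\Re u|}$; as $\Re u\to 0^-$ the factor $\frac{1-e^{\nu\Re u}}{\nu|\Re u|}\to 1$, so the expression tends to $|u|\leq \sqrt{(\Re u)^2+(A/2)^2}$, bounded, while for $\Re u\leq -1$, say, one indeed has $|u|/|\Re u|\leq \sqrt{1+\tan^2\beta}+A/2$ and your asymptotic reasoning applies. So either keep the exact $s$-integral, or replace your split at $\Re u=0$ by a split at $\Re u=-1$ (the compact middle range being trivially uniform).
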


\begin{proof}
The fact that $\GG_{\rho}$ is a right inverse of $\LL_{\rho}$ is straightforward.
We show how to obtain estimates for $\GG_{\rho,1}$.
The estimates for $\GG_{\rho,2}$, $\GG_{\rho,3}$ and $\GG_{\rho,4}$ are analogous.

Let $\zeta_1, \zeta_2 \in \Ycal{\nu}$.
By the estimates in Lemma \ref{lemma:fundamentalMatrixFlow}, for $(u,\tau) \in \DmigU \times \torusC$ one has
\begin{align*}
	\vabs{\II_1[\zeta_1,\zeta_2](u,\tau)} 
	&\leq C 
	\vabss{e^{3\nu u}}\paren{\normY{\zeta_1}{\nu} +
		\normY{\zeta_2}{\nu} },
	\\
	\vabs{\II_2[\zeta_1,\zeta_2](u,\tau)} &\leq C
	\vabss{e^{-\nu u}} \paren{\normY{\zeta_1}{\nu} +
		\normY{\zeta_2}{\nu} }.
\end{align*}
Then,
\begin{align*}
	\vabs{\GG_{\rho, 1}(u,\tau)e^{-\nu u}}
	\leq& \,
	C \vabs{e^{-3\nu u}}\vabs{\int_{-\infty}^0 
		\II_1[\zeta_1,\zeta_2]\paren{u+t,\tau+\frac{\omegar}{\de^2}t} dt} \\
	&+ 
	C \vabs{e^{\nu u}}\vabs{\int_{-u}^0 
		\II_2[\zeta_1,\zeta_2]\paren{u+t,\tau+\frac{\omegar}{\de^2}t} dt} \\
	\leq& \,
	C \paren{\normY{\zeta_1}{\nu} +
		\normY{\zeta_2}{\nu} }.
\end{align*}
\end{proof}

We introduce the fixed point operator
\begin{align}\label{def:operatorFF}
	\FF_{\rho} = \GG_{\rho} \circ \RRR_{\rho},
\end{align}
with $\RRR_{\rho}$ and $\GG_{\rho}$ as given in \eqref{def:operatorRRR} and \eqref{def:operatorGG}, respectively.
Then, equation~\eqref{eq:existence2Dtechincal} can be expressed as $\zuDper=\FF_{\rho}[\zuDper]$.

Proving Proposition \ref{proposition:existencia2D} is equivalent to prove the following result.

\begin{proposition}\label{proposition:existencia2Dtechnical}
Let $\rhoPeriodicOrbit>0$ be the constant given in Proposition~\ref{proposition:periodicOrbit}.  
There exist $\de_0>0$ and $\cttExistenciaDtech>0$ such that, for $\rho \in [0,\rhoPeriodicOrbit]$ and $\de \in (0,\de_0)$, the equation $\zuDper = \FF[\zuDper]$ has a unique solution $\zuDper \in \Ycal{\nu}^4$ satisfying
\begin{align*}
	\normYprod{\zuDper}{\nu} \leq \cttExistenciaDtech \de.
\end{align*}
\end{proposition}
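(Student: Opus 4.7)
The plan is to apply the Banach fixed point theorem to $\FF_{\rho}=\GG_{\rho}\circ\RRR_{\rho}$ on the closed ball $B=\claus{\zeta\in\Ycal{\nu}^4 \st \normYprod{\zeta}{\nu}\leq\cttExistenciaDtech\de}$, with $\cttExistenciaDtech>0$ chosen below and $\de\in(0,\de_0)$ sufficiently small. Since Lemma~\ref{lemma:boundsGG} bounds $\normYprod{\GG_{\rho}[\zeta]}{\nu}\leq C\normYprod{\zeta}{\nu}$ uniformly in $\rho,\de$, the whole analysis reduces to estimates on $\RRR_{\rho}$ in the $\normYprod{\cdot}{\nu}$-norm.

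The first step is to show $\normYprod{\RRR_{\rho}[0]}{\nu}\leq C\de$. Each component of $\RRR_{\rho}[0]$ is, up to the term $-T_{\rho}[0]$ in the second component, of the form $\pm\partial_{\al}H_1(\po_{\rho}+\s_{\pend})\mp\partial_{\al}H_1(\po_{\rho})$ with $\al\in\claus{\la,\La,x,y}$. The mean value theorem along the direction $\s_{\pend}=(\lap,\Lap,0,0)$ combined with Proposition~\ref{proposition:HamiltonianScaling} (the relevant second derivatives of $H_1$ involving $\la$ or $\La$ are at most $\OO(\de)$) bounds each such difference pointwise by $C\de(\vabs{\lap}+\vabs{\Lap})\leq C\de\,e^{-2\nu\vabs{\Re u}}$, where the exponential decay comes from Proposition~\ref{proposition:singularities}; the function $e^{-2\nu\vabs{\Re u}}$ has bounded norm in $\Ycal{\nu}$ on $\DmigU$, producing an $\OO(\de)$ contribution. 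For $T_{\rho}[0]=V'(\lap+\po_{\rho,1})-V'(\lap)-V'(\po_{\rho,1})$ with $\po_{\rho,1}=\de\rho\,\lapo(\tau)$, the identity
\begin{equation*}
V'(a+b)-V'(a)-V'(b)+V'(0)=\int_0^a\int_0^b V'''(s+t)\,dt\,ds
\end{equation*}
together with $V'(0)=0$ and the uniform boundedness of $V'''$ on the range in play yields $\vabs{T_{\rho}[0]}\leq C\vabs{\lap}\vabs{\po_{\rho,1}}\leq C\de\rho\,e^{-2\nu\vabs{\Re u}}$, hence an $\OO(\de\rho)$ contribution in $\Ycal{\nu}$. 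Composing with $\GG_{\rho}$ gives $\normYprod{\FF_{\rho}[0]}{\nu}\leq C_0\de$.

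For the contraction, let $\zeta^{(1)},\zeta^{(2)}\in B$. The mean value theorem applied to $\RRR_{\rho}[\zeta^{(1)}]-\RRR_{\rho}[\zeta^{(2)}]$, together with Proposition~\ref{proposition:HamiltonianScaling} (every second derivative of $H_1$ appearing is at most $\OO(\de)$), produces the pointwise bound $\vabs{\RRR_{\rho}[\zeta^{(1)}]-\RRR_{\rho}[\zeta^{(2)}]}\leq C\de\vabs{\zeta^{(1)}-\zeta^{(2)}}$ modulo the nonlinear piece. For the latter one writes
\begin{equation*}
T_{\rho}[\zeta^{(1)}_1]-T_{\rho}[\zeta^{(2)}_1]=\paren{V''(\eta)-V''(\lap)}(\zeta^{(1)}_1-\zeta^{(2)}_1),
\end{equation*}
with $\eta$ on the segment joining $\lap+\po_{\rho,1}+\zeta^{(1)}_1$ and $\lap+\po_{\rho,1}+\zeta^{(2)}_1$; since $\vabs{\eta-\lap}\leq\vabs{\po_{\rho,1}}+\max_j\vabs{\zeta_1^{(j)}}=\OO(\de)$ and $V''$ is smooth, this term is also bounded by $C\de\vabs{\zeta^{(1)}_1-\zeta^{(2)}_1}$. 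Applying $\GG_{\rho}$ yields $\normYprod{\FF_{\rho}[\zeta^{(1)}]-\FF_{\rho}[\zeta^{(2)}]}{\nu}\leq C_1\de\normYprod{\zeta^{(1)}-\zeta^{(2)}}{\nu}$.

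Choosing $\cttExistenciaDtech=2C_0$ (with $C$ being the operator norm of $\GG_{\rho}$) and $\de_0$ so small that $C_1\de_0<\tfrac12$, the operator $\FF_{\rho}$ sends $B$ into itself and is $\tfrac12$-Lipschitz on $B$. The Banach fixed point theorem then produces a unique $\zuDper\in B$ with $\zuDper=\FF_{\rho}[\zuDper]$, equivalent to equation~\eqref{eq:existence2Dtechincal}, and satisfying the bound $\normYprod{\zuDper}{\nu}\leq\cttExistenciaDtech\de$. The normalization $\sprod{\zuDper(0,\tau)}{\mathbf{e_2}}=0$ is automatic, since $\GG_{\rho,2}[\zeta](0,\cdot)\equiv 0$ by Lemma~\ref{lemma:boundsGG}, and is inherited by every fixed point of $\FF_{\rho}$. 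The only non-routine step is proving that $T_{\rho}[0]$ decays exponentially in $\Re u$, where the cancellation $V'(0)=0$ and the double-integral identity above are essential to avoid a uniform-in-$u$ contribution of size $\OO(\de^2\rho^2)$ that would lie outside $\Ycal{\nu}$.
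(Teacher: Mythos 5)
Your proof is correct and follows the paper's own strategy: a Banach fixed-point argument on the ball $B(\cttExistenciaDtech\de)\subset\Ycal{\nu}^4$, using Lemma~\ref{lemma:boundsGG} for the operator norm of $\GG_\rho$, the mean value theorem combined with the second- and third-derivative bounds of Proposition~\ref{proposition:HamiltonianScaling} to get $\normYprod{\RRR_\rho[\cdot]}{\nu}=\OO(\de)$, and an $\OO(\de)$ Lipschitz constant for $\FF_\rho=\GG_\rho\circ\RRR_\rho$. The only cosmetic difference is your double-integral identity (exploiting $V'(0)=0$) to extract the exponential decay of $T_\rho[0]$ in $\Re u$, where the paper instead writes a Taylor-type expansion of $\RRR_{\rho,2}$ around $\lap=\zeta_1=0$; both deliver the same decay, which is indeed the one delicate point.
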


\begin{proof}
For $\varsigma>0$, let us consider 
$
	B(\varsigma) = \claus{\zeta \in \Ycal{\nu}^4 \st
		\normYprod{\zeta}{\nu} \leq \varsigma}.
$
We will check that $\FF_{\rho}:B(\varsigma)\to B(\varsigma)$ is a contraction for a suitable $\varsigma$.

We first claim that there exist $\de_0>0$ such that, for $\rho \in [0,\rhoPeriodicOrbit]$ and
$\de \in (0,\de_0)$,
\begin{align}\label{proof:lemmaBoundsRRR}
	\normYprod{\RRR_{\rho}[\zeta]}{\nu} \leq C\de,
	\qquad
	\normYprod{\partial_j \RRR_{\rho}[\zeta]}{0}
	\leq C\de,
\end{align}
for $\zeta \in B(\varsigma \de)$ and $j=1,..,4$.
Indeed, we obtain the estimates for $\RRR_{\rho,2}[\zeta]$, the other cases are proven analogously.
For the derivatives it is enough to apply Cauchy estimates.

We recall the definitions
\begin{align}\label{proof:R1Def}
\begin{aligned}
\separatrix &= (\lap,\Lap,0,0)^T, 
\\
{\po}_{\rho} &=  (0,\de^2\LtresLa,\de^3\Ltresx,\de^3\Ltresy)^T + 
	\rho(0,0,e^{i\tau},e^{-i\tau})^T + 
	\de\rho\big(\lapo,\Lapo,\xpo,\ypo\big)^T,
\\
\RRR_{\rho,2}[\zeta] &=
-\partial_{\la} H_1(\po_{\rho}+\s_{\pend}+\zeta) 
+ \partial_{\la} H_1(\po_{\rho})
- T_{\rho}[\zeta_1],
\\
T_{\rho}[\zeta_1] &= V'(\lap+\de\rho\lapo+\zeta_1)
-V'(\lap) - V'(\de\rho\lapo)
- V''(\lap)\zeta_1,
\end{aligned}
\end{align}
where $V$ is the potential given in \eqref{def:potentialV}.
Then, by the mean value theorem, 
\begin{equation*}
\begin{split}
\RRR_{\rho,2}[\zeta](u,\tau) 
=&\,
-
\int_0^1 D \partial_{\la} 
H_1(s \s_{\pend}(u)+ s \zeta(u,\tau) +
\po_{\rho}(\tau) ) ds
\paren{\s_{\pend}(u)+\zeta(u,\tau)}
\\
&- \zeta_1(u,\tau) \boxClaus{V''(\lap(u)+\de\rho\lapo(\tau)) 
	- V'''(\lap(u))}
+ \OO\paren{\zeta_1(u,\tau)}^2
\\
&-
\de\rho\lapo(\tau)\lap(u) V'''(0) + \OO\paren{\de\rho\lapo(\tau)\lap(u)}^2.
\end{split}
\end{equation*}
%
%
From Proposition \ref{proposition:periodicOrbit} and Proposition \ref{proposition:singularities}, one easily checks that
\begin{align*}
	\normYprod{s \s_{\pend} + s\zeta+\po_{\rho}}{0} \leq C, \qquad \text{for } s\in [0,1].
\end{align*}
%
Thus, applying the estimates in Proposition \ref{proposition:HamiltonianScaling} and using that $\lap,\Lap \in \Ycal{2\nu}$,
\begin{align*}
	\normY{\RRR_{\rho,2}[\zeta]}{\nu} \leq& \,
	C \de \normY{\lap+\zeta_1}{\nu}+
	C \de^2 \normY{\Lap+\zeta_2}{\nu}+
	C \de \normY{\zeta_3}{\nu}+
	C \de \normY{\zeta_4}{\nu}
	\\
	&+C\normY{\zeta_1}{\nu}
	+C\de\rho\normY{\lap}{\nu}
	\leq C\de,
\end{align*}
which proves \eqref{proof:lemmaBoundsRRR}.


As a consequence of \eqref{proof:lemmaBoundsRRR} and using Lemma \ref{lemma:boundsGG}, there exists a constant $\cttExistenciaDtech>0$ such that
\begin{align}\label{proof:firstIterationExistence}
	\normYprod{\FF_{\rho}[0]}{\nu} \leq
	C \normYprod{\RRR_{\rho}[0]}{\nu} \leq \frac12 \cttExistenciaDtech \de.
\end{align} 
In addition, for $\zeta,\zetaB \in B(\cttExistenciaDtech\de)$ and by the mean value theorem,
\begin{align*}
	\RRR_{\rho}[\zeta]- \RRR_{\rho}[\zetaB]
	=
	\boxClaus{\int_0^1 D\RRR_{\rho}[s \zeta +(1-s)\zetaB] ds} 
	(\zeta-\zetaB).
\end{align*}
Then, from Lemma~\ref{lemma:boundsGG} and the estimates in \eqref{proof:lemmaBoundsRRR}, we deduce that
\begin{equation}\label{proof:FFFLipschitz}
\begin{split}	
	\normYprod{\FF_{\rho}[\zeta]- \FF_{\rho}[\zetaB]}{\nu}
	&\leq C
	\normYprod{\RRR_{\rho}[\zeta]- \RRR_{\rho}[\zetaB]}{\nu}
	\\
	&\leq
	\sup_{s\in[0,1]}
	\sum_{k=1}^4 \normY{\partial_k \RRR_{\rho}[s \zeta +(1-s)\zetaB]}{0}
	\normY{\zeta_k-\zetaB_k}{\nu} 
	\leq C \de \normYprod{\zeta-\zetaB}{\nu}. 
\end{split}
\end{equation}
This implies that, taking $\de$ small enough,
$
	\normYprod{\FF_{\rho}[\zeta]- \FF_{\rho}[\zetaB]}{\nu}
	\leq \frac12 \normYprod{\zeta-\zetaB}{\nu}
$
and, therefore, $\FF_{\rho}:B(\cttExistenciaDtech\de) \to B(\cttExistenciaDtech\de)$ is well defined and contractive.
Hence, $\FF_{\rho}$ has a fixed point $\zuDper \in B(\cttExistenciaDtech\de)$.
\end{proof}

Proposition \ref{proposition:existencia2Dtechnical} completes the proof of  Proposition \ref{proposition:existencia2D}. Note that, since $\mathcal{G}_{\rho,2}[\zeta](0,\cdot)\equiv 0$ (see \eqref{def:operatorGG}) and $\mathcal{F}_\rho=\mathcal{G}_{\rho}\circ\mathcal{R}_\rho$, the solution obtained in Proposition \ref{proposition:existencia2Dtechnical} satisfies
	\[
	\langle\zdDper(0,\tau),\mathbf{e_2}\rangle= 0,
	\qquad 	\text{for all }
	\tau \in \torusC.
	\]

\subsection{Proof of Proposition \ref{proposition:comparison1d2d}}
\label{section:proofComparison}

To prove Proposition \ref{proposition:comparison1d2d}, let  us consider the parametrizations $\zuDper(u,\tau)$ and $\zuUper(u)$ given in Proposition \ref{proposition:existencia2D} and Corollary \ref{corollary:existencia1D}, respectively.

Let us recall that, by Proposition \ref{proposition:existencia2Dtechnical}, $\zuDper$ satisfies $\zuDper = (\GG_{\rho}\circ\RRR_{\rho})[\zuDper]$ 
and, as a result, $\zuUper = (\GG_{0}\circ\RRR_{0})[\zuUper]$.
By Lemma \ref{lemma:boundsGG}, since $\zuUper$ does not depend on $\tau$, one has that
\begin{align}\label{eq:existence1D}
	\zuUper = \GG_{\rho}\circ\RRR_{0}[\zuUper],
	\qquad \text{for any }
	\rho \in [0,\rhoPeriodicOrbit].
\end{align}
Then, by Proposition \ref{proposition:existencia2Dtechnical},
\begin{equation}\label{proof:splittingZu}
	\begin{aligned}
		\zuDper-\zuUper
		&= \FF_{\rho}[\zuDper] - \GG_{\rho} \circ \RRR_{0}[\zuUper] \\
		&= \FF_{\rho}[\zuDper]
		- \FF_{\rho}[\zuUper] +
		\GG_{\rho} \paren{\RRR_{\rho}[\zuUper] - \RRR_0[\zuUper]},
	\end{aligned}	
\end{equation}	
where we recall that $\FF_{\rho} = \GG_{\rho} \circ \RRR_{\rho}$ (see \eqref{def:operatorFF}).	

Let us consider the constant $\cttExistenciaD$ as given in Proposition~\ref{proposition:existencia2D}.
It is clear that,
\begin{align*}
	\zuDper, \zuUper \in 
	B(\cttExistenciaD\de) := \claus{\zeta \in \Ycal{\nu}^4 \st
		\normYprod{\zeta}{\nu} \leq \cttExistenciaD \de}.
\end{align*} 
Since $\FF_{\rho}$ is contractive with Lipschitz constant $\Lip(\FF_{\rho})\leq C\de$ (see \eqref{proof:FFFLipschitz}), for $\de$ small enough, one has that
\begin{align*}
	\normYprod{\FF_{\rho}[\zuDper]
		- \FF_{\rho}[\zuUper]}{\nu} 
	\leq C\de \normYprod{\zuDper-\zuUper}{\nu}
	\leq \frac12 \normYprod{\zuDper-\zuUper}{\nu}.
\end{align*}
Thus, by \eqref{proof:splittingZu} and Lemma \ref{lemma:boundsGG},
\begin{align}\label{proof:estimateszuDzuU}
	\normYprod{\zuDper-\zuUper}{\nu} \leq \frac12 
	\normYprod{\zuDper-\zuUper}{\nu} + C\normYprod{\RRR_{\rho}[\zuUper]- \RRR_{0}[\zuUper]}{\nu},
\end{align}

We claim that, for $\rho \in [0,\rhoPeriodicOrbit]$ and $\de>0$ small enough,	
\begin{align}\label{proof:lemmaBoundsRRRcomparison}
\normYprod{\RRR_{\rho}[\zuUper]-\RRR_{0}[\zuUper]}{\nu} \leq C\de\rho.
\end{align}
Indeed, first we consider estimates for $\RRR_{\rho,1}$ as given in \eqref{def:operatorRRR}.
One has that
\begin{align*}
	\RRR_{\rho,1}[\zuUper]-\RRR_{0,1}[\zuUper] =& \, \big(\partial_{\La} H_1(\separatrix+\po_{\rho}+\zuUper)-\partial_{\La} H_1(\po_{\rho})\big)
	\\
	&-\big(\partial_{\La} H_1(\separatrix+\po_{0}+\zuUper)-\partial_{\La} H_1(\po_{0})\big).
\end{align*}
Denoting $\po^s = (1-s)\po_0+s\po_{\rho}$, by the mean value theorem,
\begin{align*}
	\RRR_{\rho,1}[\zuUper]-\RRR_{0,1}[\zuUper] = 
	\paren{\po_{\rho}-\po_0}^T
	\boxClaus{ \int_{[0,1]^2}
	D^2\partial_{\La} H_1(r(\separatrix+\zuUper)+\po^s)  dr ds }
	\paren{\separatrix + \zuUper}.
\end{align*}
Then, using Lemma \ref{lemma:sumnorms} and for $(\al_1,\al_2,\al_3,\al_4)=(\la,\La,x,y)$, one sees that
\begin{equation*}
	\begin{split}	
		\normY{\RRR_{\rho,1}[\zuUper]-\RRR_{0,1}[\zuUper]}{\nu}
		\leq
		\sum_{j=1}^4 \sum_{k=1}^4 
		\sup_{s\in[0,1]} 
		\sup_{r\in[0,1]} &
		\normY{\partial_{\al_j \al_k\La} H_1(r(\separatrix+\zuUper)+\po^s)}{0} \\
		&\cdot \normYprod{\separatrix+\zuUper}{\nu}
		\normYprod{\po_{\rho}-\po_{0}}{0}.
	\end{split}
\end{equation*}
Notice that, Proposition \ref{proposition:periodicOrbit} implies that $\normYprod{\po_{\rho}-\po_{0}}{0} \leq C\rho$
and Proposition \ref{proposition:singularities} and  Corollary~\ref{corollary:existencia1D} imply that 
$\normYprod{\separatrix+\zuUper}{\nu}\leq C$.
These estimates and those of Proposition \ref{proposition:HamiltonianScaling}, which bound $\normY{\partial_{\al_j \al_k \La} H_1}{0}$, imply that
\begin{equation*}
	\normY{\RRR_{\rho,1}[\zuUper]-\RRR_{0,1}[\zuUper]}{\nu}
	\leq C\de\rho.
\end{equation*}
Analogously, it can be seen that
\begin{equation*}
	\begin{split}
		\normY{\RRR_{\rho,2}[\zuUper]-\RRR_{0,2}[\zuUper]}{\nu}
		&\leq C\de\rho
		+ \normY{T_{\rho}[\zuUper]- T_0[\zuUper]}{\nu}, 
		\\
		\normY{\RRR_{\rho,3}[\zuUper]-\RRR_{0,3}[\zuUper]}{\nu}
		&\leq C\de\rho,
		\\
		\normY{\RRR_{\rho,4}[\zuUper]-\RRR_{0,4}[\zuUper]}{\nu}
		&\leq C\de\rho,
	\end{split}
\end{equation*}
with $T_{\rho}$ defined in~\eqref{def:operatorT}.
Therefore, it only remains to analyze $T_{\rho}[\zuUper]- T_0[\zuUper]$.
Indeed, applying the mean value theorem one sees that
\begin{align*}
	T_{\rho}[\zuUper]- T_0[\zuUper] &=
	V'(\lap+\po_{\rho,1}+\zuUper)-
	V'(\po_{\rho,1})-
	V'(\lap+\zuUper) +
	V'(0) \\
	&=
	\po_{\rho,1}
	\paren{\lap + \zuUper}
	\int_{[0,1]^2}
	V'''(s\lap+r\po_{\rho,1}+s\zuUper) dr ds.
\end{align*}
Then, since $\lap\in \Ycal{2\nu}$ and taking into account that $\po_{\rho,1}(\tau)=\de\rho\lapo(\tau)$ with ${\normY{\lapo}{0}\leq C}$ (see Proposition~\ref{proposition:periodicOrbit}),
one has that
$
\normY{T_{\rho}[\zuUper]- T_0[\zuUper]}{\nu} \leq C\de\rho.
$
This proves \eqref{proof:lemmaBoundsRRRcomparison} and, by \eqref{proof:estimateszuDzuU}, Proposition \ref{proposition:comparison1d2d} holds.

\appendix

\section{Lyapunov periodic orbits}
\label{appendix:periodicOrbits}

In this appendix we prove Proposition \ref{proposition:periodicOrbit}. 
Let us recall that,  by Proposition \ref{proposition:existenceFixedPoint}, the equilibrium point $L_3$ in the  coordinates $(\la,\La,x,y)$ (see \eqref{def:changeScaling}), is given by
\[
\Ltres(\de) = \paren{0,\de^2\LtresLa(\de),\de^3\Ltresx(\de),\de^3\Ltresy(\de)}^T,
\]
with
$\vabs{\LtresLa(\de)}, 
\vabs{\Ltresx(\de)}
\vabs{\Ltresy(\de)} \leq \cttTheoLtres$ for $\de>0$ small enough.
Using that one can write $H$ as  $H=H_0+H_1$, we have that 
\begin{equation}\label{eq:equationsL3}
\begin{aligned}
\partial_{\la}{H_1}(\Ltres(\de);\de)&=0,
\quad	&
\partial_{\La}{H_1}(\Ltres(\de);\de)&=	3\de^2\LtresLa(\de),
\\
\partial_x {H_1}(\Ltres(\de);\de)&=-\de\Ltresy(\de),
\quad	&
\partial_y {H_1}(\Ltres(\de);\de)&=-\de\Ltresx(\de).
\end{aligned}
\end{equation}
In addition, one can easily check that
\begin{align}\label{eq:equationsL3Hamiltonian}
	H(\Ltres(\de);\de) = -\frac12 -\frac32 \de^4 \LtresLa^2(\de) + \de^4 \Ltresx(\de) \Ltresy(\de) + H_1(\Ltres(\de);\de).
\end{align}
For $\rho>0$, we consider a polar symplectic change of coordinates $\phi_{\Lyap}:(\la,\Law,\phiA,\Iw)\to(\la,\La,x,y)$ given by
\begin{align}\label{def:changeLyap}
\La = \Law + \de^2\LtresLa(\de), 
\quad 
x = \sqrt{\rho^2+\Iw} e^{-i\phiA} + \de^3\Ltresx(\de),
\quad
y = \sqrt{\rho^2+\Iw} e^{i\phiA} + \de^3\Ltresy(\de).
\end{align}
%
The Hamiltonian $H$ expressed in the coordinates $(\la,\Law,\phiA,\Iw)$ becomes  $H^{\Lyap} = H \circ \phi_{\Lyap}$, given by
\begin{align*}
	H^{\Lyap}(\la,\Law,\phiA,\Iw;\rho,\de) =& \,
	-\frac32 \Law^2
	+ V(\la) + \frac{\rho^2+\Iw}{\de^2}
	+ H_1(\phi_{\Lyap}(\la,\Law,\phiA,\Iw);\de)
	-3\de^2 \Law \LtresLa
	\\
	&+\de\sqrt{\rho^2 + \Iw}\paren{
		e^{-i\phiA}\Ltresy + e^{i\phiA}\Ltresx}
	-\frac32 \de^4 \LtresLa
	+ \de^4 \Ltresx \Ltresy,
\end{align*}
which, using \eqref{eq:equationsL3} and \eqref{eq:equationsL3Hamiltonian}, can be rewritten as
\begin{equation}\label{def:HLyap}
\begin{aligned}
	H^{\Lyap}(\la,\Law,\phiA,\Iw;\rho,\de) =& -\frac32 \Law^2
	+ V(\la) +\frac12 + \frac{\Iw}{\de^2}   
	+ H_1(\phi_{\Lyap}(\la,\Law,\phiA,\Iw);\de)
	\\
	&- H_1(\Ltres;\de)
	- D H_1(\Ltres;\de) \cdot
	\paren{\phi_{\Lyap}(\la,\Law,\phiA,I) - \Ltres}^T \\
	&+ \frac{\rho^2}{\de^2}
	+ H(\Ltres;\de).
\end{aligned}
\end{equation}

We are interested in proving the existence of a periodic orbit in the energy level $H^{\Lyap}=\frac{\rho^2}{\de^2}+H(\Ltres;\de)$.
To this end, in the following lemma, we first obtain an expression of $\Iw$ in terms of the other coordinates.
Let us denote by
$
B(\varsigma) = \claus{z \in \complexs \st \vabs{z}<\varsigma}
$, 
the open ball of radius $\varsigma$.

\begin{lemma}\label{lemma:hamiltonianLevelPO}
Fix $\banda, \varsigma_\la, \varsigma_\Law, \rhoPeriodicOrbit>0$.
There exists $\de_0>0$ such that, for all $\rho \in (0,\rhoPeriodicOrbit]$ and $\de \in (0,\de_0)$, there exists a function 
\[
\Iwh:B(\de \rho \varsigma_\la)\times B(\de\rho\varsigma_\Law) \times \torusC \to \complexs,
\]
such that
$
H^{\Lyap}(\la,\Law,\phiA,\Iwh(\la,\Law,\phiA);\rho,\de) = \frac{\rho^2}{\de^2}+H(\Ltres;\de).
$

Moreover, there exists a constant $C>0$ independent of $\rho$ and $\de$ such that
\begin{align*}
	\vabss{\Iwh(\la,J,\phiA;\de)} 
	&\leq C \de^4 \rho^2,
	\qquad &
	\vabss{\partial_{\la} \Iwh(\la,J,\phiA;\de)} &\leq C \de^3 \rho,
	\\
	\vabss{\partial_{J} \Iwh(\la,J,\phiA;\de)} &\leq C \de^3 \rho,
	\qquad &
	\vabss{\partial_{\phiA}  \Iwh(\la,J,\phiA;\de)} &\leq C\de^4\rho^2.
\end{align*}
\end{lemma}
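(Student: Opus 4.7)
The plan is to solve the equation $H^{\Lyap}(\la,\Law,\phiA,\Iw;\rho,\de) = \frac{\rho^2}{\de^2}+H(\Ltres;\de)$ for $\Iw$ as a function of $(\la,\Law,\phiA)$ on the polydisc $B(\de\rho\varsigma_\la)\times B(\de\rho\varsigma_\Law)\times \torusC$ via a fixed-point / analytic implicit function argument. Using the expression \eqref{def:HLyap}, the equation is equivalent to
\[
\Iw = \de^2\,\mathcal{N}(\la,\Law,\phiA,\Iw),
\qquad
\mathcal{N} = \tfrac{3}{2}\Law^2 - \bigl(V(\la)+\tfrac{1}{2}\bigr) - R_1(\la,\Law,\phiA,\Iw),
\]
where $R_1 = H_1(\phi_{\Lyap}) - H_1(\Ltres) - DH_1(\Ltres)\cdot(\phi_{\Lyap}-\Ltres)$ is the second-order Taylor remainder of $H_1$ around $\Ltres$. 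The key facts underlying the bounds are (i) $V'(0)=0$, so $V(\la)+\tfrac{1}{2} = V(\la)-V(0) = \OO(\la^2)$, (ii) $\Law^2 = \OO(\Law^2)$, and (iii) $|\phi_{\Lyap}(\la,\Law,\phiA,\Iw) - \Ltres| \le C\bigl(|\la|+|\Law|+\sqrt{\rho^2+|\Iw|}\bigr)$.

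First I would show that the operator $\mathcal{F}[\Iw] := \de^2\mathcal{N}(\,\cdot\,,\Iw)$ maps the closed ball $\{\normXprod{\Iw}{} \le C_0\de^4\rho^2\}$ in the Banach space of analytic functions on the polydisc to itself. The size estimate comes from a careful accounting with Proposition~\ref{proposition:HamiltonianScaling}: $V(\la)+\tfrac{1}{2} = \OO(\la^2) = \OO(\de^2\rho^2)$ and $\Law^2 = \OO(\de^2\rho^2)$, while the Taylor remainder $R_1$ splits into pieces weighted by the second derivatives of $H_1$. The dominant contributions to $R_1$ come from $\partial_x^2, \partial_{xy}, \partial_y^2 H_1 = \OO(\de^2)$ times $|x-x_{L_3}|^2,\ldots = \OO(\rho^2)$, giving $\OO(\de^2\rho^2)$; the remaining mixed terms involving $\la$ or $\Law$ are of the same or smaller order (e.g.\ $\partial_\la^2 H_1 \cdot \la^2 \leq C\de \cdot \de^2\rho^2$, etc.). Hence $\mathcal{N} = \OO(\de^2\rho^2)$ and $\mathcal{F}[\Iw] = \OO(\de^4\rho^2)$. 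Contractivity follows from $\partial_\Iw \mathcal{F} = \de^2 \partial_\Iw \mathcal{N} = \OO(\de^2)$, using that $\partial_\Iw\phi_{\Lyap}$ contributes factors like $(\rho^2+\Iw)^{-1/2}e^{\pm i\phiA}$ which are bounded on the polydisc once $|\Iw|\le C_0\de^4\rho^2\ll \rho^2$. The Banach fixed point theorem then produces the desired $\Iwh$, and real-analyticity is preserved.

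For the derivative estimates, I would differentiate the fixed-point equation $\Iwh = \de^2\mathcal{N}(\cdot,\Iwh)$ implicitly and use the same anisotropic second-derivative bounds. For $\partial_\la\Iwh$: differentiating $\de^2(V(\la)+\tfrac12)$ gives $\de^2 V'(\la) = \OO(\de^2\la) = \OO(\de^3\rho)$; the derivative of $R_1$ in $\la$ contributes $\partial_\la R_1 = (D H_1(\phi_{\Lyap})-DH_1(\Ltres))\cdot\partial_\la\phi_{\Lyap} = \OO(D^2H_1\cdot \rho \cdot 1) = \OO(\de\rho)$, which multiplied by $\de^2$ is $\OO(\de^3\rho)$. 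Similarly $\partial_\Law\Iwh = \OO(\de^3\rho)$. The $\phiA$-derivative is subtler: $\phiA$ enters only through the $x,y$ components of $\phi_{\Lyap}$, so $\partial_{\phiA}R_1 = (DH_1(\phi_{\Lyap})-DH_1(\Ltres))\cdot\partial_{\phiA}\phi_{\Lyap} = \OO(D^2H_1\cdot\rho\cdot\rho) = \OO(\de^2\rho^2)$, giving $\partial_{\phiA}\Iwh = \OO(\de^4\rho^2)$. Alternatively, since the size bound $|\Iwh|\le C\de^4\rho^2$ is uniform in $\phiA\in\torusC$, Cauchy estimates on the complex strip yield the same conclusion.

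The main bookkeeping obstacle will be organizing the numerous contributions to $R_1$ so that the right power $\de^2\rho^2$ drops out crisply, since Proposition~\ref{proposition:HamiltonianScaling} distributes different powers of $\de$ across the nine second derivatives of $H_1$ and one must check that no combination with the $(\la,\Law,x-x_{L_3},y-y_{L_3})$ directions produces a term larger than $\OO(\de^2\rho^2)$. Once this is done, the fixed point and its derivative bounds follow by standard analytic arguments.
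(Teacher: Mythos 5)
Your proposal matches the paper's proof in all essentials: both solve the implicit equation $H^{\Lyap}=\tfrac{\rho^2}{\de^2}+H(\Ltres)$ for $\Iw$ by a fixed-point scheme on the polydisc, deriving the size bound $\OO(\de^4\rho^2)$ from $V(\la)+\tfrac12=\OO(\la^2)=\OO(\de^2\rho^2)$, $\Law^2=\OO(\de^2\rho^2)$, and the second-order Taylor remainder of $H_1$ controlled via the anisotropic second-derivative estimates of Proposition~\ref{proposition:HamiltonianScaling} (dominated by the $\partial_x^2,\partial_{xy},\partial_y^2 H_1=\OO(\de^2)$ against the $\OO(\rho)$ displacement in $(x,y)$), with contractivity from the $\Iw$-derivative and the derivative bounds from Cauchy estimates, which you also mention as an alternative to your implicit-differentiation argument. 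One small remark: your bound $|\mathcal{F}[0]|\le C_0\de^4\rho^2$ is the correct power; the displayed estimate $|F[0]|\le C\de^2\rho^2$ in the paper's own proof appears to be a typographical slip, since the $\de^2$ prefactor in $F$ multiplied by the $\OO(\de^2\rho^2)$ size of the bracket gives $\OO(\de^4\rho^2)$, consistent with the statement of the lemma.
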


\begin{proof}
One has that the function $\Iwh$ must satisfy the equation $\Iwh=F[\Iwh]$ with
\begin{align*}
F[\Iw](\la,\Law,\phiA) 
=& 
\, \de^2 H^{\Lyap}(\la,\Law,\phiA,\Iw;\rho,\de) - I
- \rho^2 - \de^2 H(\Ltres;\de)
\\
=& \, \de^2 \bigg[ \frac32 \Law^2
+ V(\la)  +\frac12 
+ H_1(\phi_{\Lyap}(\la,\Law,\phiA,\Iw);\de)
\\
&- H_1(\Ltres;\de)
- D H_1(\Ltres;\de) \cdot
\paren{\phi_{\Lyap}(\la,\Law,\phiA,I) - \Ltres}^T 
\bigg].
\end{align*}
Let $(\la, \Law, \phiA) \in B(\de \rho \varsigma_\la)\times B(\de\rho\varsigma_\Law) \times \torusC$.
Then, using the estimates of $D^2 H_1$ in Proposition~\ref{proposition:HamiltonianScaling}, one has that
\[
\vabs{F[0](\la,\Law,\phiA)} \leq C \de^2 \rho^2.
\]
In addition, for functions $\iota_1, \iota_2: B(\de \rho \varsigma_\la)\times B(\de\rho\varsigma_\Law) \times \torusC \to \complexs$ such that $\vabs{\iota_1(\la,\Law,\phiA)},\vabs{\iota_2(\la,\Law,\phiA)} \leq C\de^2\rho^2$, by the estimates of the third derivatives of $H_1$ in Proposition \ref{proposition:HamiltonianScaling} and the mean value theorem, one has that
\[
\vabs{F[\iota_1](\la,\Law,\phiA)-F[\iota_2](\la,\Law,\phiA)} 
\leq C\de^3\rho \vabs{\iota_1(\la,\Law,\phiA)-\iota_2(\la,\Law,\phiA)}
\leq C\de_0^3\rhoPeriodicOrbit \vabs{\iota_1(\la,\Law,\phiA)-\iota_2(\la,\Law,\phiA)}.
\]
Then, taking $\de_0$ small enough and applying the fixed point theorem, one obtains the existence of the function $\Iwh$ and its corresponding bounds.
The bounds for the derivatives of $\Iwh$ are a direct consequence of Cauchy estimates.
\end{proof}

By Lemma \ref{lemma:hamiltonianLevelPO}, the Hamiltonian system on the energy level
$H^{\Lyap}=\frac{\rho^2}{\de^2}+H(\Ltres;\de)$ is of the form
%
%
\begin{align}\label{eq:systemEquationsHalfWayPO}
\dot{\la} = -3 \Law + f_1(\la,\Law,\phiA),
\qquad
\dot{\Law} = -\frac78 \la + f_2(\la,\Law,\phiA),
\qquad
\dot{\phiA} = \frac1{\de^2} + g(\la,\Law,\phiA),
\end{align}
where, denoting $\Iwh=\Iwh(\la,\Law,\phiA)$ and using the expression of $H^{\Lyap}$ in \eqref{def:HLyap} and that $V''(0)=-\frac78$,
\begin{equation}\label{def:auxFunctionsPO}
\begin{aligned}
f_1(\la,\Law,\phiA) &=
\partial_{\La}{H_1}\paren{ \phi_{\Lyap}(\la,\Law,\phiA,\Iwh);\de} - \partial_{\La}{H_1}(\Ltres(\de);\de),
\\
f_2(\la,\Law,\phiA) &=
- V'(\la) + V''(0)\la
- \partial_{\la}{H_1}\paren{ \phi_{\Lyap}(\la,\Law,\phiA,\Iwh);\de}
+ \partial_{\la}{H_1}(\Ltres(\de);\de), 
\\
g(\la,\Law,\phiA) &= 
\frac{e^{-i\phiA}}{2\sqrt{\rho^2+\Iwh}} 
\paren{\partial_x H_1\paren{ \phi_{\Lyap}(\la,\Law,\phiA,\Iwh);\de}
	- \partial_x{H_1}(\Ltres(\de);\de)} \\
&+ \frac{e^{i\phiA}}{2\sqrt{\rho^2+\Iwh}} 
\paren{\partial_y{H_1}\paren{ \phi_{\Lyap}(\la,\Law,\phiA,\Iwh);\de}
	- \partial_y{H_1}(\Ltres(\de);\de)}.
\end{aligned}
\end{equation}
We look for the periodic orbit of the system \eqref{eq:systemEquationsHalfWayPO} as a graph over $\phiA$ provided $\dot{\phiA}\neq 0$ (which will be true on the periodic orbits).
In other words, we look for periodic functions
\[
w = (w_{\la},w_{\Law}): \torusC \to \complexs^2,
\qquad
w=w(\phiA),
\]
satisfying the invariance equation
$
	\LL w = \RRR[w],
$
with
\begin{equation}\label{def:operatorsPO}
\begin{aligned}	
	\LL w &= (\partial_{\phiA}-\de^2\AAA)w,
	\qquad
	\AAA = \begin{pmatrix}
	0 & -3 \\
	-\frac78 & 0
	\end{pmatrix},
	\\[0.5em]
	\RRR[w](\phiA) &= 
	\de^2 \paren{
	\frac{
	\AAA w + 	
	f(w_{\la}(\phiA),w_{\Law}(\phiA),\phiA)}{1+
	\de^2g(w_{\la}(\phiA),w_{\Law}(\phiA),\phiA)}
	-\AAA w} \quad \text{where}\quad f=(f_1,f_2).
\end{aligned}
\end{equation}
Let us consider the Banach space
\begin{align*}
	\ZcalR =& \claus{h: \torusC \to \complexs\st
		h \text{ analytic} , \, 
		\normZR{h} := 
		\sup_{\phiA \in \torusC} \vabs{h(\phiA)}<+\infty	
	},
\end{align*}
and the space $\ZcalR^2$ endowed with the product norm
$\normZRprod{h}= \normZR{h_1} + \normZR{h_2}$.

\begin{proposition}\label{proposition:periodicOrbitTechnical}
There exist $\rhoPeriodicOrbit, \de_0,\cttPeriodicOrbitTech>0$ such that, for $\rho \in (0,\rhoPeriodicOrbit]$ and $\de \in (0,\de_0)$, there exists a solution of $\LL w = \RRR[w]$ belonging to $\ZcalR^2$ and satisfying
\begin{align*}
	\normZRprod{w} \leq \cttPeriodicOrbitTech \de \rho.
\end{align*}
\end{proposition}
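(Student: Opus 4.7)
The strategy is a contraction mapping argument in the Banach space $\ZcalR^2$ after inverting the linear operator $\LL$ in~\eqref{def:operatorsPO}.

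\textbf{Step 1: Right-inverse of $\LL$.} Since the constant matrix $\AAA$ has real eigenvalues $\pm\sqrt{21/8}\neq 0$, neither $\AAA$ itself nor any matrix $ik\,\mathbf{Id}-\de^2\AAA$ with $k\in\integers$ vanishes. Hence the homogeneous equation $\partial_\phiA w=\de^2\AAA w$ admits no non-trivial $2\pi$-periodic solutions, and for any $g\in\ZcalR^2$ there is a unique periodic solution of $\LL w=g$. Explicitly, via variation of constants,
\[
(\GG g)(\phiA)=e^{\de^2\AAA\phiA}\bigl(\mathbf{Id}-e^{2\pi\de^2\AAA}\bigr)^{-1}\int_0^{2\pi}e^{\de^2\AAA(2\pi-s)}g(s)\,ds+\int_0^\phiA e^{\de^2\AAA(\phiA-s)}g(s)\,ds,
\]
extended analytically to $\torusC$. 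Because $\mathbf{Id}-e^{2\pi\de^2\AAA}$ has eigenvalues $1-e^{\pm 2\pi\de^2\sqrt{21/8}}=O(\de^2)$, its inverse is $O(\de^{-2})$ while the integral kernels are $O(1)$, yielding the operator bound $\normZRprod{\GG g}\leq C\de^{-2}\normZRprod{g}$.

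\textbf{Step 2: Estimates for $\RRR$.} A direct manipulation of~\eqref{def:operatorsPO} gives
\[
\RRR[w]=\frac{\de^2 f(w_\la,w_\Law,\phiA)-\de^4 g(w_\la,w_\Law,\phiA)\,\AAA w}{1+\de^2 g(w_\la,w_\Law,\phiA)},
\]
with $f=(f_1,f_2)$ and $g$ as in~\eqref{def:auxFunctionsPO}. On the ball $B(\varsigma\de\rho)\subset\ZcalR^2$, Lemma~\ref{lemma:hamiltonianLevelPO} yields $\Iwh=O(\de^4\rho^2)$, so that $\phi_\Lyap(w,\phiA)-\Ltres$ has $(\la,\Law)$-components of size $O(\de\rho)$ and $(x,y)$-components of size $\rho+O(\de^2\rho)$. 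Combining this with Proposition~\ref{proposition:HamiltonianScaling} (which controls $D^2H_1$ in $(\la,\Law,x,y)$) and the Taylor expansion $V'(\la)-V''(0)\la=O(\la^2)$ at the origin, one gets $\|f\|\leq C\de\rho$ and $\|g\|\leq C\de^2$ on $B(\varsigma\de\rho)$, hence $\normZRprod{\RRR[w]}\leq C\de^3\rho$. Differentiating in $w$ and applying the same derivative bounds (together with Cauchy estimates for the higher derivatives of $H_1$) produces the Lipschitz bound $\normZRprod{\RRR[w_1]-\RRR[w_2]}\leq C\de^3\normZRprod{w_1-w_2}$.

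\textbf{Step 3: Contraction.} The equation $\LL w=\RRR[w]$ is equivalent to the fixed-point equation $w=\FF[w]$ with $\FF:=\GG\circ\RRR$. By Steps~1 and~2, for $w\in B(\cttPeriodicOrbitTech\de\rho)$,
\[
\normZRprod{\FF[w]}\leq C\de^{-2}\cdot C\de^3\rho\leq \cttPeriodicOrbitTech\de\rho,
\]
for $\cttPeriodicOrbitTech$ large enough, and analogously $\normZRprod{\FF[w_1]-\FF[w_2]}\leq C\de\normZRprod{w_1-w_2}$. For $\de$ small enough, $\FF$ is a contraction on $B(\cttPeriodicOrbitTech\de\rho)$ and possesses a unique fixed point $w$ satisfying the claimed estimate.

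\textbf{Main obstacle.} The delicate point is the exact matching between the $\de^{-2}$ loss of the right-inverse $\GG$ (inherent to inverting $\de^2\AAA$ on the zero Fourier mode, where $\AAA$ is a hyperbolic saddle rather than an oscillator) and the $\de^2$ prefactor of $\RRR$. This compatibility forces one to keep track of which components of $f$ are only $O(\de\rho)$ rather than $O(\rho)$: the relevant smallness comes not from the size of $w$ but from the size of the derivatives of the perturbative part $H_1$ (Proposition~\ref{proposition:HamiltonianScaling}) together with the explicit cancellation with $V''(0)\la$ in $f_2$. Once these structural bounds are set up, the fixed-point argument is routine.
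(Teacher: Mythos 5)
Your proposal is correct and follows essentially the same route as the paper: a variation-of-constants right-inverse $\GG$ of $\LL$ with an $O(\de^{-2})$ operator norm (coming from $(\Id-e^{2\pi\de^2\AAA})^{-1}$), the bound $\normZRprod{\RRR[w]}\leq C\de^3\rho$ from the $O(\de\rho)$ size of $f_2$ plus the $\de^2$ prefactor, and a contraction on $B(\cttPeriodicOrbitTech\de\rho)$ with Lipschitz constant $O(\de)$. The paper splits the $\RRR$ estimate componentwise (obtaining the finer $\RRR_1=O(\de^5\rho)$, $\RRR_2=O(\de^3\rho)$) and diagonalizes $\AAA=\PP\DD\PP^{-1}$ before writing the matrix exponentials, but these are cosmetic differences; your formula for $\GG$ is algebraically the same and the fixed-point argument is identical.
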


To prove Proposition \ref{proposition:periodicOrbitTechnical} we first study the right-inverse of the operator $\LL=\partial_{\phiA}-\de^2\AAA$ in $\ZcalR^2$.
%
%
First, notice that
\begin{align*}
	\AAA = \PP \DD \PP^{-1}
	\quad	\text{where} \quad
	\DD = \begin{pmatrix}
		\nu & 0 \\
		0 & -\nu
	\end{pmatrix},
	\quad
	\PP = \begin{pmatrix}
		3 & 3 \\
		-\nu & \nu
	\end{pmatrix},
	\quad
	\nu=\sqrt{\frac{21}8}.
\end{align*}

\begin{lemma}\label{lemma:boundsGGPO}
The operator
$
{\GG: \ZcalR^2 \to \ZcalR^2}
$
defined as
\begin{equation}\label{def:operatorGGPO}
	\begin{aligned}
		\GG[h](\phiA) =& \,
		\PP e^{\phiA\de^2\DD} (e^{-2\pi\de^2\DD}-\Id)^{-1} \int_0^{2\pi} e^{-\tht\de^2\DD} \PP^{-1} h(\tht)d\tht \\
		&+ \PP e^{\phiA\de^2\DD}\int_0^{\phiA} e^{-\tht \de^2\DD}\PP^{-1} h(\tht)d\tht,
	\end{aligned}
\end{equation}	
is a right-inverse of the operator $\LL$ given in \eqref{def:operatorsPO}.
In addition, there exists $C>0$ such that, for $\de \in (0,1)$,
\[
\normZRprod{\GG[h]} \leq \frac{C}{\de^2} \normZRprod{h},
\qquad \text{for }
h \in \ZcalR^2.
\]
\end{lemma}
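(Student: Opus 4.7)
The plan is to diagonalize the constant-coefficient linear part so that the equation $\LL w=h$ decouples into two scalar nonautonomous first-order ODEs on the circle, then solve each one explicitly by variation of constants, and finally trace where factors of $\delta^{-2}$ can enter the final estimate. Concretely, set $v=\PP^{-1}w$ and $g=\PP^{-1}h$, so that $\LL w = h$ becomes
\[
\dot v - \de^2 \DD v = g, \qquad \text{i.e.,} \qquad \dot v_j - \de^2\lambda_j v_j = g_j, \quad \lambda_j=\pm\nu.
\]
For each scalar equation, the general solution is $v_j(\phiA)=e^{\de^2\lambda_j\phiA}v_j(0)+e^{\de^2\lambda_j\phiA}\int_0^{\phiA} e^{-\de^2\lambda_j\theta}g_j(\theta)d\theta$, and imposing the periodicity condition $v_j(2\pi)=v_j(0)$ forces $v_j(0)=(e^{-2\pi\de^2\lambda_j}-1)^{-1}\int_0^{2\pi} e^{-\de^2\lambda_j\theta}g_j(\theta)\,d\theta$, where I will use the identity $(e^{2\pi\de^2\lambda_j}-1)=-e^{2\pi\de^2\lambda_j}(e^{-2\pi\de^2\lambda_j}-1)$. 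Rewriting everything back in matrix form through $w=\PP v$ then yields precisely the formula given for $\GG[h]$, and direct differentiation confirms $\LL(\GG[h])=h$.

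For the bound, I will use that $\DD$ is diagonal with real eigenvalues $\pm\nu$ and that $\phiA$ varies in the bounded strip $\torusC$, so
\[
\bigl\|e^{\pm\phiA\de^2\DD}\bigr\|, \, \bigl\|e^{\pm\theta\de^2\DD}\bigr\| \le e^{(2\pi+\banda)\de^2\nu}\le C
\]
uniformly for $\de\in(0,1)$ and $\theta\in[0,2\pi]$. The matrices $\PP,\PP^{-1}$ have constant norm, and the integral over the compact interval $[0,2\pi]$ contributes another $O(1)$ factor times $\normZRprod{h}$. Hence the only source of $\de^{-2}$ in the expression for $\GG[h]$ is the resonance-type inverse $(e^{-2\pi\de^2\DD}-\Id)^{-1}$, whose diagonal entries are $(e^{\mp 2\pi\de^2\nu}-1)^{-1}$. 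Since
\[
\bigl|e^{\mp 2\pi\de^2\nu}-1\bigr|\ge c\,\de^2,\qquad \de\in(0,1),
\]
for some $c>0$ (by a Taylor expansion of the exponential around zero), this matrix has norm bounded by $C/\de^2$. Combining these bounds over the two terms in the definition of $\GG[h]$ yields $\normZRprod{\GG[h]}\le (C/\de^2)\normZRprod{h}$.

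The computation is essentially routine: this is variation of constants for a small-divisor-like obstruction on the circle, and the $\de^{-2}$ loss is exactly the "small denominator" $(e^{\pm 2\pi\de^2\nu}-1)^{-1}\sim(\pm 2\pi\de^2\nu)^{-1}$ associated with the fact that, as $\de\to 0$, the linearized flow $e^{\phiA\de^2\AAA}$ degenerates to the identity and the map $\LL$ acquires a nontrivial kernel (the constant functions). There is no real obstacle; the only point requiring care is to ensure that no other term hidden in the matrix exponentials or in $\PP,\PP^{-1}$ secretly contributes additional factors of $\de^{-1}$ — which it does not, because $\DD$ and $\PP$ are $\de$-independent and the integration domain $[0,2\pi]$ and the strip width $\banda$ are fixed.
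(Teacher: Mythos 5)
Your proof is correct and follows the same route as the paper: variation of constants in the eigenbasis of $\AAA$, enforcing $2\pi$-periodicity to fix the initial value, and tracing the $\de^{-2}$ loss to the small denominators $(e^{\mp 2\pi\de^2\nu}-1)^{-1}$. Your direct lower bound $|e^{\mp 2\pi\de^2\nu}-1|\geq c\,\de^2$ is, if anything, cleaner than the intermediate inequality the paper states for this norm.
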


\begin{proof}	
If $w$ is a solution of $\LL[w]=h$, it must exist $K_0 \in \reals^2$ such that
\[
w(\phiA) = \PP e^{\phiA \de^2\DD} \boxClaus{K_0 + \int_0^{\phiA} e^{-\tht\de^2\DD} \PP^{-1}h(\tht) d\tht}.
\]	
Then, imposing that $w$ is $2\pi$-periodic, one obtains~\eqref{def:operatorGGPO}.
The estimates for the operator are straightforward taking into account that	
\begin{align*}
	\norms{(e^{-2\pi\de^2\DD}-\Id)^{-1}}
	\leq
	(1-
	\norms{e^{-2\pi\de^2\DD}-\Id})^{-1}
	\leq
	\frac{C}{\de^2}.
\end{align*}
\end{proof}

For $\varsigma>0$, we denote 
$
\BB(\varsigma) = \claus{h \in \ZcalR^2 \st \normZRprod{h} \leq \varsigma}.
$

\begin{lemma}\label{lemma:boundsRRPO}
Fix constants $\rhoPeriodicOrbit, \varsigma>0$. 
Then, there exist $\de_0, C>0$ such that,
for $\rho \in (0,\rho_0]$, $\de\in(0,\de_0)$ and $h\in \BB(\varsigma\de\rho)$, the function $\RRR$  in \eqref{def:operatorsPO} satisfies
	\begin{align*}
		\normZR{\RRR_1[h]} \leq C \de^5 \rho,
		\qquad
		\normZR{\RRR_2[h]} \leq C \de^3 \rho
	\end{align*}
	and
	\begin{align*}
		\normZR{\partial_1 \RRR_1[h]} \leq C \de^4,
		\quad
		\normZR{\partial_2 \RRR_1[h]} \leq C \de^4,
		\quad
		\normZR{\partial_1 \RRR_2[h]} \leq C \de^3,
		\quad
		\normZR{\partial_2 \RRR_2[h]} \leq C \de^4.
	\end{align*}
\end{lemma}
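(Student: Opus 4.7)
The plan is to first simplify the expression for $\RRR$ using an algebraic identity, and then bound all the pieces using Proposition~\ref{proposition:HamiltonianScaling} together with the estimates on $\Iwh$ from Lemma~\ref{lemma:hamiltonianLevelPO}. Indeed, writing out the definition of $\RRR$ in~\eqref{def:operatorsPO} and putting over a common denominator,
\begin{equation*}
\RRR[h] \;=\; \de^2\,\frac{f(h_\la,h_\Law,\phiA)-\de^2\, g(h_\la,h_\Law,\phiA)\,\AAA h}{1+\de^2 g(h_\la,h_\Law,\phiA)}.
\end{equation*}
This isolates the ``small'' numerator $f-\de^2 g \AAA h$ from the denominator, which will turn out to be close to $1$.

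Next, for $h\in\BB(\varsigma\de\rho)$, the components of $\phi_{\Lyap}(h_\la,h_\Law,\phiA,\Iwh)-\Ltres(\de)$ are, by \eqref{def:changeLyap} and Lemma~\ref{lemma:hamiltonianLevelPO}, of order $\OO(\de\rho)$ in the $\la,\La$ directions and $\OO(\rho)$ in the $x,y$ directions (using $\sqrt{\rho^2+\Iwh}=\rho+\OO(\de^4\rho)$). Applying the mean value theorem to the increments of $\partial_\La H_1$, $\partial_\la H_1$ and $\partial_{x,y} H_1$ appearing in \eqref{def:auxFunctionsPO}, together with the size estimates of Proposition~\ref{proposition:HamiltonianScaling}, one obtains
\[
|f_1|\leq C\de^3\rho,\qquad |f_2|\leq C\de\rho,\qquad |g|\leq C\de^2,
\]
where for $f_2$ one exploits the cancellation $-V'(w_\la)+V''(0)w_\la=\OO(w_\la^2)$ coming from $V'(0)=0$. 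Since $\de^2 g=\OO(\de^4)$, the denominator is bounded away from $0$, and since $\AAA h = \OO(\de\rho)$, the correction $\de^2 g\,\AAA h$ is of order $\de^5\rho$, negligible against $f_2$ but of the same order as the main estimate for $\RRR_1$. The estimates on $\normZR{\RRR_j[h]}$ then follow.

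For the derivative estimates, the crucial point is that one cannot afford to apply naive Cauchy estimates in a ball of radius $\de\rho$, as this would only yield $\normZR{\partial_j\RRR_2}\leq C\de^2$. Instead, I would differentiate $f$ and $g$ directly with respect to $w_\la$ and $w_\Law$, using the chain rule through $\phi_\Lyap$ and $\Iwh$. In each case one collects terms such as $\partial_{\la\la}H_1,\partial_{\la\La}H_1,\partial_{\La\La}H_1$ weighted against $1$, $\partial_{w}\Iwh$ or $1/\sqrt{\rho^2+\Iwh}$, and reads off the size from Proposition~\ref{proposition:HamiltonianScaling} and Lemma~\ref{lemma:hamiltonianLevelPO}. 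The asymmetry $\partial_1\RRR_2=\OO(\de^3)$ versus $\partial_2\RRR_2=\OO(\de^4)$ reflects the anisotropy in the bounds of $H_1$: derivatives in $\la$ are only $\OO(\de)$ while those in $\La$ are $\OO(\de^2)$. The analogous asymmetry produces the $\OO(\de^4)$ bounds for $\partial_j\RRR_1$.

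I expect the main obstacle to be purely bookkeeping: the expression $\RRR[h]$ involves at least four different small quantities ($f_1,f_2,g,\AAA h$), each of which is a sum of terms whose orders in $\de$ depend on which mixed partial of $H_1$ appears and on whether the factor $1/\sqrt{\rho^2+\Iwh}$ is present. Keeping track of the sharp order in each case, especially for the derivative estimates, and checking that the factor $1/\rho$ produced by differentiating $\sqrt{\rho^2+\Iwh}$ is always compensated by the $\rho$ in $\partial_w\Iwh=\OO(\de^3\rho)$, are the only delicate points; there is no analytic difficulty beyond systematic application of Proposition~\ref{proposition:HamiltonianScaling}.
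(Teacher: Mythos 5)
Your proposal matches the paper's proof in essentially every respect: you bound $f_1$, $f_2$, $g$ and their $w$-derivatives by the mean value theorem together with Proposition~\ref{proposition:HamiltonianScaling} and Lemma~\ref{lemma:hamiltonianLevelPO}, and you obtain the derivative bounds for $\RRR$ by a direct chain-rule computation rather than Cauchy estimates, which is exactly what the paper does. The only loose remark is that $\de^2 g\,\AAA h=\OO(\de^5\rho)$ sits inside the numerator of $\RRR_1=\de^2\bigl(f_1-\de^2 g(\AAA h)_1\bigr)/(1+\de^2 g)$, so its contribution to $\RRR_1$ is $\OO(\de^7\rho)$ and is strictly subordinate to the $\de^5\rho$ coming from $f_1$, not ``of the same order''; likewise the cancellation $-V'(w_\la)+V''(0)w_\la=\OO(w_\la^2)$ is irrelevant for the size of $f_2$ (the $H_1$ increment already gives $\OO(\de\rho)$) but is needed for the derivative bound $\vabs{\partial_1 f_2}\leq C\de$, since without it $\partial_1[-V'(w_\la)]=-V''(w_\la)=\OO(1)$ — neither slip affects the stated conclusion.
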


\begin{proof}
Let $h=(h_1, h_2) \in  \BB(\varsigma\de\rho)$ and $\phiA \in \torusC$.
For $s\in[0,1]$, we denote
\begin{align*}
z^s(\phiA) = s \, \phi_{\Lyap}\paren{h_{1}(\phiA),h_{2}(\phiA),\phiA, \Iwh(h(\phiA)) }
+ (1-s) \Ltres(\de).
\end{align*}
We notice that, by the definition in \eqref{def:changeLyap} of $\phi_{\Lyap}$,
\begin{align*}
	z^1(\phiA)-z^0(\phiA) = \paren{h_{1}(\phiA),
	h_{2}(\phiA), \sqrt{\rho^2+\Iwh(h(\phiA))} e^{-i\phiA},
	\sqrt{\rho^2+\Iwh(h(\phiA))} e^{i\phiA}
	}^T.
\end{align*}
We recall that $f_1=\partial_{\La} H_1(\phi_{\Lyap})-\partial_{\La}H_1(\Ltres)$ (see \eqref{def:auxFunctionsPO}) 
and then, by the mean value theorem and the estimates in Proposition \ref{proposition:HamiltonianScaling} and Lemma \ref{lemma:hamiltonianLevelPO}, 
\begin{equation}\label{proof:estimatef1PO}
\begin{split}
	\vabs{f_1(h(\phiA),\phiA)}
	\leq \sup_{s \in [0,1]} \Big\{
	\vabs{\partial_{\La \la} H_1\paren{z^s(\phiA)}}
	\vabs{h_{1}(\phiA)}
	+
	\vabs{\partial^2_{\La} H_1\paren{z^s(\phiA)}}
	\vabs{h_{2}(\phiA)} \\
	+ 
	\big(
	\vabs{\partial_{\La x} 
		H_1\paren{z^s(\phiA)}} +
	\vabs{\partial_{\La y} 
		H_1\paren{z^s(\phiA)}}\big) \,
	\vabss{\rho^2+\Iwh(h(\phiA))}^{\frac12}
	\Big\} \leq C\de^3 \rho.
\end{split}
\end{equation}
	Analogously,
	\begin{align}\label{proof:estimatef2gPO}
		\vabs{f_2(h(\phiA),\phiA)} 
		\leq C \de \rho,
		\qquad
		\vabs{g(h(\phiA),\phiA)} 
		\leq C \de^2.
	\end{align}
To obtain estimates for the derivatives of $f_1$, $f_2$ and $g$, note that
\begin{align*}
\partial_{\la}{f_1}(h(\phiA),\phiA) 
=& \,
\partial_{\la \La}{H_1}\paren{z^1(\phiA)}
+ 
\frac{\partial_{\la} \Iwh }{2\sqrt{\rho^2+\Iwh}}
\boxClaus{
	e^{-i\phiA} \partial_{\La x}{H_1}\paren{z^1(\phiA)}
	+e^{i\phiA}\partial_{\La y}{H_1}\paren{z^1(\phiA)}},
\\
\partial_J{f_1}(h(\phiA),\phiA) 
=& \,
\partial^2_{\La}{H_1}\paren{z^1(\phiA)}
+ 
\frac{\partial_{J} \Iwh }{2\sqrt{\rho^2+\Iwh}}
\boxClaus{
e^{-i\phiA} \partial_{\La x}{H_1}\paren{z^1(\phiA)}
+e^{i\phiA}\partial_{\La y}{H_1} \paren{z^1(\phiA)}},
\end{align*}
where $\Iwh=\Iwh(h(\phiA))$.
Then, using the estimates in Proposition \ref{proposition:HamiltonianScaling} and Lemma \ref{lemma:hamiltonianLevelPO}, 
	\begin{align}\label{proof:estimatef1DerPO}
		\vabs{\partial_{\la}{f_1}(h(\phiA),\phiA)}
		\leq C \de^2, 
		\qquad
		\vabs{\partial_{J}{f_1}(h(\phiA),\phiA)}
		\leq C \de^2.
	\end{align}
	Analogously,
	\begin{equation}\label{proof:estimatef2gDerPO}
	\begin{aligned}
		\vabs{\partial_{\la}{f_2}(h(\phiA),\phiA)}
		&\leq C \de, &
		\quad
		\vabs{\partial_{J}{f_2}(h(\phiA),\phiA)}
		&\leq C \de^2,
		\\
		\vabs{\partial_{\la}{g}(h(\phiA),\phiA)}
		&\leq \frac{C \de}{\rho}, &
		\quad
		\vabs{\partial_{J}{g}(h(\phiA),\phiA)}
		&\leq \frac{C \de^3}{\rho}.
	\end{aligned}
	\end{equation}
	Finally, joining the just obtained bounds with the definition of the operator $\RRR$ in \eqref{def:operatorsPO}, we obtain the statement of the lemma.
\end{proof}

\begin{proof}[Proof of Proposition \ref{proposition:periodicOrbitTechnical}]
A fixed point of $w=\FF[w]$ with $\FF=\GG \circ \RRR$ is a periodic solution of $\LL w = \RRR[w]$.
By Lemmas \ref{lemma:boundsGGPO} and \ref{lemma:boundsRRPO}, there exists  $\cttPeriodicOrbitTech>0$ such that
 \begin{align}\label{proof:firstIterationPO}
 	\normZRprod{\FF[0]} \leq 
 	\frac{C}{\de^2}\paren{\normZR{\RRR_1[0]} + \normZR{\RRR_2[0]}}
 	\leq
 	\frac{\cttPeriodicOrbitTech}2  \de \rho.
 \end{align}
Moreover, for $h,\hat{h} \in \BB(\cttPeriodicOrbitTech\de\rho)$, by the mean value theorem,
\begin{align*}
	\normZRprod{\RRR[h]-\RRR[\hat{h}]} 
	\leq
	\sup_{s \in [0,1]} \bigg[&
		\normZRprod{D\RRR[(1-s)h + s\hat{h}]
		(h-\hat{h}) }\bigg].
\end{align*}
Thus, by Lemmas \ref{lemma:boundsGGPO} and \ref{lemma:boundsRRPO},
\begin{equation}\label{proof:FcontractivePO}
\begin{aligned}	
	\normZRprod{\FF[h]-\FF[\hat{h}]} &\leq
	\frac{C}{\de^2} 
	\normZRprod{\RRR[h]-\RRR[\hat{h}]} 
	\leq C\de \normZRprod{h-\hat{h}}.
\end{aligned}
\end{equation}
Then, if $\de$ is small enough, the operator $\FF:\BB(\cttPeriodicOrbitTech\de\rho) \to \BB(\cttPeriodicOrbitTech\de\rho)$ is well defined and contractive and, as a consequence, it has a fixed point $w \in \BB(\cttPeriodicOrbitTech\de\rho)$.
\end{proof}

\begin{proof}[End of the proof of Proposition \ref{proposition:periodicOrbit}]
Let $w(\phiA)=(w_{\la}(\phiA),w_J(\phiA))$ be the solution of $\LL w=\RRR[w]$ given by Proposition \ref{proposition:periodicOrbitTechnical} and introduce $w_{\Iw}(\phiA)=\Iwh(w(\phiA),\phiA)$ as given in Lemma \ref{lemma:hamiltonianLevelPO}.
Then, the curve $(w_{\la}(\phiA),w_J(\phiA),\phiA,w_{\Iw}(\phiA))$ is a graph parametrization of the Lyapunov periodic solution
 in the energy level $H^{\Lyap}=\frac{\rho^2}{\de^2} + H(\Ltres)$.
However, $\dot{\phiA}=\partial_t {\phiA} = \frac1{\de^2} + g(w({\phiA}),{\phiA})$. Then, to complete the proof of Proposition \ref{proposition:periodicOrbit}, we look for a reparametrization $\phiA=\wh{\phiA}(\tau)$ and a constant $\omegar$ such that $\dot{\tau}=\frac{\omegar}{\de^2}$.
Moreover, we impose $\phiA(t)|_{t=0}=0$ and therefore $\wh{\phiA}(2\pi)=2\pi$.
Then, $\wh{\phiA}$  must satisfy that
\begin{align*}
	\partial_{\tau} \wh{\phiA} = \frac{1+\de^2g(w(\wh\phiA),\wh\phiA)}{\omegar}
	\qquad \text{and} \qquad
	\wh\phiA(2\pi) = 2\pi.
\end{align*}
Notice that, by \eqref{proof:estimatef2gPO} and for $\de$ small enough, one has that $\partial_{\tau} \wh\phiA \neq 0$. 
Then, its inverse $\tau \equiv \wh\tau(\phiA)$ satisfies that
\begin{align*}
\partial_{\phiA} \wh\tau = \frac{\omegar}{1+\de^2 g(w(\phiA),\phiA)}
\qquad \text{and} \qquad
\wh\tau(2\pi) = 2\pi.
\end{align*}
These conditions give definitions for the function $\wh{\tau}(\phiA)$ and the constant $\omegar$,
\begin{align*}
\wh\tau(\phiA) = \omegar \int_0^{\phiA}
 \frac{d\eta}{1+\de^2 g(w(\eta),\eta)}
\qquad \text{and} \qquad
\omegar =  \frac{2\pi}{\int_0^{2\pi}
	\frac{d\phiA}{1+\de^2 g(w(\phiA),\phiA)}}.
\end{align*}
We notice that $\wh{\tau}(\phiA+2\pi)=2\pi+\wh{\tau}(\phiA)$.
By the estimate for $g$ in \eqref{proof:estimatef2gPO}, we obtain
\begin{align}\label{proof:estimatesPhiA}
\vabs{\omegar-1} \leq C\de^4, 
\qquad
\vabs{\wh\tau(\phiA)-\phiA} \leq C\de^4,
\qquad
\vabs{\wh\phiA(\tau)-\tau} \leq C\de^4.
\end{align}
Then, for $\tau\in \torusC$, the curve
\begin{align*}
&\po_{\rho}(\tau;\de) 
= 
\phi_{\Lyap}\big(w_{\la}(\wh\phiA(\tau)),
w_{\Law}(\wh\phiA(\tau)),\wh\phiA(\tau),
w_{\Iw}(\wh\phiA(\tau))\big) 
\end{align*} 
is a real-analytic and $2\pi$-periodic solution of the Hamiltonian system given by the Hamiltonian $H$ in \eqref{def:hamiltonianScaling} and it belongs to the energy level $H=\frac{\rho^2}{\de^2}+H(\Ltres)$.
In addition, the functions in \eqref{def:poSplitting} are given by
\begin{align*}
	\lapo(\tau) &= \frac{w_{\la}(\wh{\phiA}(\tau))}{\de\rho},
	&
	\xpo(\tau) &= \frac{\sqrt{\rho^2+w_{\Iw}(\wh{\phiA}(\tau))}e^{-i\wh{\phiA}(\tau)}-\rho e^{-i\tau}}{\de\rho},
	\\
	\Lapo(\tau) &=  
	\frac{w_{J}(\wh{\phiA}(\tau))}{\de\rho},
	&
	\ypo(\tau) &= \frac{\sqrt{\rho^2+w_{\Iw}(\wh{\phiA}(\tau))}e^{i\wh{\phiA}(\tau)}-\rho e^{i\tau}}{\de\rho},
\end{align*}
and, by Lemma \ref{lemma:hamiltonianLevelPO}, Proposition \ref{proposition:periodicOrbitTechnical} and \eqref{proof:estimatesPhiA}, satisfy that
$\vabs{\lapo(\tau)}, \vabs{\Lapo(\tau)}\leq C$
and
$\vabs{\xpo(\tau)}, \vabs{\ypo(\tau)}\leq C\de^3$.
\end{proof}

\section{Difference between the invariant manifolds of \texorpdfstring{$L_3$}{L3} on \texorpdfstring{$\Sigma_0$}{Sigma0}}
\label{appendix:changeSectionL3}

In this appendix we prove Corollary \ref{mainTheoremDistCorollary}, relying on the results  in Sections \ref{subsection:existencePerturbed} and
\ref{section:proofExistence}.
Let us consider the real-analytic time parametrizations $\zuU$ and $\zsU$ of the unstable and stable manifolds $\WW^{\unstable,+}(\Ltres)$ and $\WW^{\stable,+}(\Ltres)$ defined in  Corollary~\ref{corollary:existencia1D}.
Notice that, for $u\in\DmigU\cap\DmigS$ (see \eqref{def:dominiMig}), they satisfy
\begin{align}\label{proof:zusMenysSeparatrix}
\vabs{\zuU(u)-\s_{\pend}(u)-\de^2\LtresLa}
\leq C \de,
\qquad
\vabs{\zsU(u)-\s_{\pend}(u)-\de^2 \LtresLa}
\leq C  \de ,
\end{align}
where $\s_{\pend}=(\lap,\Lap,0,0)^T$ is given in \eqref{eq:separatrixParametrizationB}.
Moreover, $\zuU(0), \zsU(0) \in \claus{\La=\de^2\LtresLa}$ and, since $\zuU$ and $\zsU$ satisfy equation~\eqref{eq:existence2D} and are independent of $\tau$, for ${\zdU=(\la^{\diamond},\La^{\diamond},x^{\diamond},y^{\diamond})}$, $\diamond=\unstable,\stable$, one has that
\begin{equation}\label{proof:canviSeccioEquacions}
\begin{aligned}
	\Dt{\la^{\diamond}}{u}  &= -3\La^{\diamond} + \partial_{\La} H_1(\zuU;\de),
&
\Dt{x^{\diamond}}{u}  &= \frac{i}{\de^2}x^{\diamond} + i\partial_{y} H_1(\zuU;\de)
\\
\Dt{\La^{\diamond}}{u}  &= -V'(\la^{\diamond}) - \partial_{\la} H_1(\zuU;\de),
&
\Dt{y^{\diamond}}{u}  &= -\frac{i}{\de^2}y^{\diamond} -i\partial_{x} H_1(\zuU;\de).
\end{aligned}
\end{equation}


Fix $\la_* \in \paren{\frac{2\pi}3,\la_0}$, (see \eqref{def:la0}).
By Proposition \ref{proposition:singularities}, there exists $u_*>0$ such that $\la_* = \lap(u_*)$. 
Therefore, by \eqref{proof:zusMenysSeparatrix} and for $\de>0$ small enough, there exist $T^{\unstable}, T^{\stable}=u_* + \OO(\de)$ such that
$
\zuU(T^{\unstable}), \zsU(T^{\stable})  \in \claus{\la=\la_*, \La>0}.
$
Moreover, by Theorem~\ref{mainTheoremDist},
\begin{align}\label{proof:differenceSectionla}
	\zuU(T^{\unstable}) - \zsU(T^{\stable})
	= \sqrt[6]{2} \de^{\frac{1}{3}} e^{-\frac{A}{\de^2}}
	\boxClaus{ 
	\paren{0,0,\conj{\CInn},\CInn}^T
	+ \OO_{\de}},
\end{align}
where $\OO_{\de} = (0,\OO(\de),\OO(\vabs{\log \de}^{-1}),\OO(\vabs{\log \de}^{-1}))^T$.

To prove Corollary~\ref{mainTheoremDistCorollary}, we  deduce the difference $\zuU(0)-\zsU(0)$ from \eqref{proof:differenceSectionla}.
To this end, we  define $\D(u)=\zuU(u)-\zsU(u)$, for $u \in [0,T^{\unstable}]$.
It is clear that, by \eqref{proof:canviSeccioEquacions}, the function $\D(u)$ satisfies the linear equation
\begin{align*}
	\Dt{}{u}\D(u) = (M_0(u) + M_1(u))\D(u),
\end{align*}
with
\begin{align*}
	M_0(u) &=
	\begin{pmatrix}
		0 & -3 & 0 & 0 \\
		-V''(\lap(u)) & 0 & 0 & 0 \\
		0 & 0 & \frac{i}{\de^2} & 0 \\
		0 & 0 & 0 & -\frac{i}{\de^2}
	\end{pmatrix},
	\\
	M_1(u) &= 
	\begin{pmatrix}
		0 & 0 & 0 & 0 \\
		m(u) & 0 & 0 & 0 \\
		0 & 0 & 0 & 0 \\
		0 & 0 & 0 & 0
	\end{pmatrix}
	+
	\int_0^1 \mathbf{J} D^2 H_1\paren{\varsigma\zuU(u)+
		(1-\varsigma)\zsU(u)} d\varsigma,
	\\
	m(u) &= V''(\lap(u)) - \int_0^1 V''\paren{\varsigma\la^{\unstable}(u)+
		(1-\varsigma)\la^{\stable}(u)} d\varsigma,
\end{align*}
where $\mathbf{J}$ is the symplectic matrix associated with the form $d\la\wedge d\la + idx \wedge dy$.
%
%
Moreover, from Proposition~\ref{proposition:HamiltonianScaling} and Corollary~\ref{corollary:existencia1D}, we deduce that $\vabs{M_1(u)} \leq C\de$,  for $u \in [0,T^{\unstable}]$.
Let $\Phi(u)$ be the fundamental matrix of the differential equation $\frac{d}{du}\Phi(u) = M_0(u) \Phi(u)$ given in Lemma~\ref{lemma:fundamentalMatrixFlow}, which satisfies $\Phi(0)=\Id$.
Then,
\begin{align*}
\D(u) = \Phi(u) \boxClaus{
\Phi^{-1}(T^{\unstable}) \D(T^{\unstable})
+
\int_{T^{\unstable}}^u \Phi^{-1}(\sigma) M_1(\sigma) \D(\sigma) d\sigma}.
\end{align*}
On one hand, using Gronwall's Lemma, one has that $\vabs{\D(u)} \leq C\vabs{\D(T^{\unstable})}$ for $u \in [0,T^{\unstable}]$ and, on the other hand
\begin{align}\label{proof:NoSeQuinaEtiquetaPosar1}
\vabs{\D(0)-
\Phi^{-1}(T^{\unstable}) \D(T^{\unstable})}
\leq 
C\de T^{\unstable}\vabs{\D(T^{\unstable})}.
\end{align}
Thus, to obtain an asymptotic formula for $\D(0)$, we need to compute $\D(T^{\unstable})$.
We write
\begin{align}\label{proof:NoSeQuinaEtiquetaPosar2}
\D(T^{\unstable}) 
=
\zuU(T^{\unstable}) - \zsU(T^{\stable}) + \zsU(T^{\stable}) - \zsU(T^{\unstable}).
\end{align}
Since the difference $\zuU(T^{\unstable}) - \zsU(T^{\stable})$ is given by \eqref{proof:differenceSectionla}, we only need to analyze the term $\zsU(T^{\stable}) - \zsU(T^{\unstable})$. To do so, we first bound $T^u-T^s$.
%
%
Since $\zsU=(\la^{\stable},\La^{\stable},x^{\stable},y^{\stable})$ satisfies equation~\eqref{proof:canviSeccioEquacions} and using the mean value theorem, we obtain that
\begin{align*}
	T^{\unstable}-T^{\stable} = \frac{\La^{\unstable}(T^{\stable})-\La^{\unstable}(T^{\unstable})}{V'(\lap(u_*)) + \beta(T^{\unstable},T^{\stable})},
\end{align*}
where, denoting $T (r) = r T^{\unstable} + (1-r)T^{\stable}$, the function $\beta$ is given by
\begin{align*}
\beta(T^{\unstable},T^{\stable}) = 	\int_0^1 
\boxClaus{V'(\la^{\unstable}(T(r)))
- V'(\lap(u_*))} dr
+
\int_0^1 \partial_{\la} H_1(z^{\unstable}(T(r))) dr.
\end{align*}
Notice that $V'(\lap(u_*)) = V'(\la_*) \neq 0$ (see \eqref{def:potentialV}).
%
Moreover, since ${T^{\unstable}, T^{\stable} = u_* + \OO(\de)}$, by \eqref{proof:zusMenysSeparatrix} and the estimates in Proposition \ref{proposition:HamiltonianScaling}, one can see that
$\vabs{\beta(T^{\unstable},T^{\stable})} \leq C \de$.
Therefore, one has that 
$
\vabs{T^{\unstable}-T^{\stable}} \leq C \de^{\frac43} e^{-\frac{A}{\de^2}}
$. 
Then, since
\begin{align*}
	\zsU(T^{\stable}) - \zsU(T^{\unstable}) = (T^{\stable}-T^{\unstable})
	\int_0^1 \partial_u \zsU(r T^{\unstable} + (1-r)T^{\stable}) d r,
\end{align*}
we have
$\zsU(T^{\stable}) - \zsU(T^{\unstable}) = \OO \paren{\de^{\frac43}e^{-\frac{A}{\de^2}}}$.
Therefore, by \eqref{proof:differenceSectionla} and \eqref{proof:NoSeQuinaEtiquetaPosar2}
\begin{align}\label{proof:NoSeQuinaEtiquetaPosar3}
\D(T^{\unstable}) 
=
\sqrt[6]{2} \de^{\frac{1}{3}} e^{-\frac{A}{\de^2}}
\boxClaus{ 
	\paren{0,0,\conj{\CInn},\CInn}^T
+ \wt{\OO_{\de}}},
\end{align}
where $\wt{\OO_{\de}}=(\OO(\de),\OO(\de),\OO(\vabs{\log \de}^{-1}),\OO(\vabs{\log \de}^{-1}))^T$.
Lastly, joining the results in \eqref{proof:NoSeQuinaEtiquetaPosar1} and \eqref{proof:NoSeQuinaEtiquetaPosar3}, we obtain
\begin{align*}
\vabs{\D(0)} = 
\sqrt[6]{2} \de^{\frac{1}{3}} e^{-\frac{A}{\de^2}}
\boxClaus{ 
	\vabss{\Phi^{-1}(T^{\unstable})
	\paren{0,0,\conj{\CInn},\CInn}^T}
	+ \wt{\OO_{\de}}}.
\end{align*}
Then, applying the expression of the fundamental matrix $\Phi$ given in Lemma~\ref{lemma:fundamentalMatrixFlow}, we obtain the statement of the corollary.

\bibliographystyle{alpha}
\addcontentsline{toc}{section}{Bibliography}
{\small\bibliography{biblio}}

\end{document}